\newtheorem{theorem}{Theorem}[section]
\newtheorem{thm}[theorem]{Theorem}
\newtheorem{corollary}[theorem]{Corollary}
\newtheorem{cor}[theorem]{Corollary}
\newtheorem{lemma}[theorem]{Lemma}
\newtheorem{proposition}[theorem]{Proposition}
\newtheorem{prop}[theorem]{Proposition}
\newtheorem{definition}[theorem]{Definition}
\newtheorem{remark}[theorem]{Remark}
\newtheorem{example}[theorem]{Example}
\numberwithin{equation}{section}
\def\be{\begin{equation}}
\def\ee{\end{equation}}
\def\bes{\begin{equation*}}
\def\ees{\end{equation*}}
\def\vp{{\varphi}}
\def\wt{\widetilde}
\def\wh{\widehat}
\def\eps{\varepsilon}
\def\lam{{\lambda}}
\def\ol{\overline}
\def\qed{{\hfill $\square$ \bigskip}}
\def\esssup{{\mathop{\rm ess \; sup \, }}}
\def\essinf{{\mathop{\rm ess \; inf \, }}}
\def\essosc{{\mathop{\rm ess \; osc \, }}}
\def\sE {{\cal E}} \def\sF {{\cal F}}\def\sL {{\cal L}}
\def\sN {{\cal N}}
\def\sS {{\cal S}}
\def\bE {{\mathbb E}} \def\bN {{\mathbb N}}
\def\bP {{\mathbb P}} \def\bR {{\mathbb R}} \def\bZ {{\mathbb Z}}
\def\sms{\smallskip}
\def\EHR{\mathrm{EHR}}
\def\PHR{\mathrm{PHR}}
\def\PHI{\mathrm{PHI}}
\def\UHK{\mathrm{UHK}}
\def\LHK{\mathrm{LHK}}
\def\HK{\mathrm{HK}}
\def\CSJ{\mathrm{CSJ}}
\def\SCSJ{\mathrm{SCSJ}}
\def\PI{\mathrm{PI}}
\def\FK{\mathrm{FK}}
\def\VD{\mathrm{VD}}
\def\RVD{\mathrm{RVD}}
\def\PHI{\mathrm{PHI}}
\def\UHKD{\mathrm{UHKD}}
\definecolor{dred}{rgb}{0.8, 0.0, 0.0}
\def\NDL{\mathrm{NDL}}
\def\UJS{\mathrm{UJS}}
\def\EHI{\mathrm{EHI}}
\def\E{\mathrm{E}}
\def\J{\mathrm{J}}
\def\<{\langle}
\def\>{\rangle}
\def\T{\mathrm{Tail}}
\begin{document}

\title{\bf Stability of parabolic Harnack inequalities
for symmetric non-local Dirichlet forms}

\author{{\bf Zhen-Qing Chen}\footnote{Research partially supported by
Simons Foundation grant 520542, a Victor Klee Faculty Fellowship at UW, and
NNSFC 11731009.},
\quad
{\bf Takashi Kumagai}\footnote{Research
partially supported by the Grant-in-Aid for Scientific Research (A) 25247007 and 17H01093.}
\quad and \quad  {\bf Jian Wang}\footnote{Research partially supported by
the National Natural Science Foundation of China (No.\ 11831014), the Program for Probability and Statistics: Theory and Application (No.\ IRTL1704), the Program for Innovative Research Team in Science and Technology in Fujian Province University (IRTSTFJ),
and the JSPS postdoctoral fellowship (26$\cdot$04021).
}
\vspace{1cm}\\
{\it In memory of Kazumasa Kuwada.}}
\date{}
\maketitle

\begin{abstract} In this paper,
we establish stability of parabolic Harnack inequalities
for symmetric non-local Dirichlet forms  on metric
measure spaces under general volume doubling condition.
We obtain
their stable equivalent characterizations in terms of the jumping
kernels, variants of cutoff Sobolev inequalities, and  Poincar\'e
inequalities. In particular, we establish
the connection between parabolic Harnack inequalities and
two-sided heat kernel estimates,
as well as with the H\"older regularity of parabolic functions
for symmetric non-local Dirichlet forms.
\end{abstract}

\medskip
\noindent
{\bf AMS 2010 Mathematics Subject Classification}: Primary 35B51, 35B35; Secondary 35B65,  28A80,    60J75.

\medskip\noindent
{\bf Keywords and phrases}: non-local Dirichlet form, parabolic Harnack inequality,  H\"older regularity, stability

\section{Introduction and Main Results} \label{sec:intro}

Harnack inequalities are inequalities that control the growth of  non-negative harmonic functions and caloric functions
(solutions of heat equations) on domains.
The inequalities were first proved for harmonic functions
for Laplacian
in the plane by Carl Gustav Axel von Harnack, and later became fundamental in the theory of harmonic
analysis, partial differential equations and probability.
One of the most significant implications
of the inequalities
is that (at least for the cases of local operators/diffusions) they imply H\"older continuity of harmonic/caloric functions.
We refer readers to \cite{K} for the history and the basic introduction of Harnack inequalities.

Because of their fundamental importance,
there has been a long history of research on Harnack inequalities.
Harnack inequalities and H\"older regularities for harmonic functions are important
components of the celebrated
De Giorgi-Nash-Moser theory in harmonic analysis and partial differential equations.
In early 90's,
equivalent characterizations for parabolic Harnack inequalities (that is, Harnack inequalities for caloric functions) were obtained
by Grigor'yan \cite{Gr} and Saloff-Coste \cite{Sa0} for Brownian motions (or equivalently, Laplace-Beltrami operators) on complete Riemannian manifolds.
They showed
that parabolic Harnack inequalities are equivalent to
doubling condition of the volume measures plus Poincar\'e inequalities, which are also equivalent to the two-sided Gaussian-type heat
kernel estimates.
An important consequence of this equivalence is that the parabolic Harnack inequalities are stable under transformations of the
Riemannian manifolds by
quasi-isometry.
This result was later extended to symmetric diffusions on  metric measure spaces by Sturm \cite{St} and to random walks on graphs by
Delmotte \cite{De}. It has been further extended to symmetric anomalous diffusions on metric measure spaces  including fractals in \cite{BBK1}.

\medskip

In this paper, we consider the stability of parabolic Harnack inequalities for symmetric non-local
Dirichlet forms (or equivalent, symmetric jump processes)
on metric measure spaces. Let $(M, d, \mu)$ be a metric measure space where $d$ is a metric and $\mu$
is a Radon measure (see Section \ref{setup} for a precise setting).
We consider a symmetric regular \emph{Dirichlet form} $(\sE, \sF)$
on $L^2(M; \mu)$ of pure jump type; that is,
\begin{equation}\label{eq:DFregjump}
\sE(f,g)=\int_{M\times M\setminus \textrm{diag}}(f(x)-f(y))(g(x)-g(y))\,J(dx,dy),\quad f,g\in \sF,
\end{equation}
where \textrm{diag} denotes the diagonal set $\{(x, x): x\in M\}$
and $J(\cdot,\cdot)$ is a symmetric jumping
 measure on $M\times M\setminus \textrm{diag}$.
Let $X$ be Hunt process corresponding to $(\sE, \sF)$.
An important example of the jumping kernel $J$ is
$J(dx,dy)=\frac{c(x,y)}{d(x,y)^{d+\alpha}}\,\mu(dx)\,\mu(dy)$, where
$c(x,y)$ is a symmetric function bounded between two positive constants and $\alpha> 0$.
The corresponding process is called
a symmetric $\alpha$-stable-like process.
When $M=\bR^d$, or more general,  an Ahlfors $d$-regular space, $\mu$
is the Hausdorff measure on $M$
and $\alpha \in (0, 2)$,  various properties of the
  symmetric $\alpha$-stable-like processes including two-sided heat kernel estimates and parabolic
	Harnack inequalities
	have been studied in \cite{CK1}. In particular, when $M= \bR^d$, $\mu$ is the Lebesgue measure
	on $\bR^d$ and $c(x,y)$ is a constant function, this corresponds simply to a
	rotationally symmetric $\alpha$-stable L\'evy process. However, on some metric measure spaces $M$
	such as the Sierpinski gasket and the Sierpinski
	carpet, the index $\alpha$ can be larger
 than $2$; see Example \ref{E:5-1SG}.

\medskip

Let $\phi$ be a strictly increasing continuous function on $[0,\infty)$ with $\phi(0)=0$.

\begin{definition}\label{theorem:HI-PHI} \rm
We say that the {\it parabolic Harnack inequality} $\PHI(\phi)$ holds for the process $X$, if there exist constants $0<C_1<C_2<C_3<C_4$,  $0<C_5<1$ and  $C_6>0$ such that for every $x_0 \in M $, $t_0\ge 0$, $R>0$ and for
every non-negative function $u=u(t,x)$ on $[0,\infty)\times M$ that is
caloric (or space-time harmonic) in
 cylinder $Q(t_0, x_0,C_4\phi(R),R):=(t_0, t_0+C_4\phi(R))\times B(x_0,R)$,
\be\label{e:phidef}
  \esssup_{Q_- }u\le C_6 \,\essinf_{Q_+}u,
 \ee where $Q_-:=(t_0+C_1\phi(R),t_0+C_2\phi(R))\times B(x_0,C_5R)$ and $Q_+:=(t_0+C_3\phi(R), t_0+C_4\phi(R))\times B(x_0,C_5R)$.
\end{definition}
We call the function $\phi$ the scale function for $\PHI (\phi)$.
The $\PHI(\phi )$ results obtained in \cite{Gr, Sa, St, De} are for $\phi (r)=r^2$.
It is proved in \cite{CK1} that symmetric $\alpha$-stable-like processes with $\alpha\in (0,2)$ enjoy $\PHI(\phi)$ for $\phi(r)=r^\alpha$. In \cite{CK2}, $\PHI (\phi)$ is obtained for symmetric jump processes of mixed types
on metric measure spaces with variable scale $\phi$.

Here is the question we consider in this paper.

\begin{itemize}
\item[{\bf(Q)}]  Suppose $(\sE, \sF)$ and $(\wh \sE , \sF)$
are regular Dirichlet forms on $L^2(M; \mu)$ of the form
\eqref{eq:DFregjump},
whose corresponding jumping measures and processes are $J$, $\wh J$ and $X$, $\wh X$, respectively.
Suppose further there exist constants $c_1,c_2>0$ such that $c_1J(A,B)\le \wh J (A,B)\le c_2 J(A,B)$ for all
$A,B\subset M$ with $A\cap B=\emptyset$.
If $\PHI(\phi)$ holds for $X$, does $\PHI(\phi)$ also hold for the process $\wh X$?
\end{itemize}

Assume the metric measure space $(M, d, \mu)$ satisfies the volume doubling and reversed volume doubling condition;
see Definition \ref{D:1.4} for a precise definition.
In Theorem \ref{T:PHI}, the main result of this paper,  we will not only answer the question affirmatively
but also give
an equivalent characterization of $\PHI(\phi)$ that
is stable under such perturbations:
\begin{equation}\label{e:1.3}
\PHI (\phi) \Longleftrightarrow
 \PI(\phi) + \J_{\phi,\le} + \CSJ(\phi) + \UJS;
\end{equation}
see \eqref{eq:PIn}, \eqref{jsigm}, \eqref{e:csj1} and \eqref{ujs} below
for related notations and definitions.
Moreover, Theorem \ref{T:PHI} also gives
 the precise relations among
the parabolic Harnack inequality $\PHI (\phi)$,
 the H\"older regularity ${\rm PHR} (\phi)$ of caloric functions,
and the elliptic H\"older regularity (${\rm EHR}$) of harmonic functions:
\begin{equation*}
 \PHI (\phi)\Longleftrightarrow {\rm PHR} (\phi) +
 \E_\phi + \UJS
\Longleftrightarrow {\rm EHR}   +  \E_\phi + \UJS;
\end{equation*}
see \eqref{eq:ephiaq}, \eqref{e:phr} and \eqref{e:ehr}
for definitions.
As we will see from Examples \ref{E:1.2}-\ref{E:1.3},  characterization \eqref{e:1.3} also gives us
an effective tool to establish $\PHI (\phi)$ for a class of symmetric jump processes.

To our knowledge, there has been no literature on the equivalence of parabolic Harnack inequalities
for non-local Dirichlet forms on general metric measure spaces despite of the importance of parabolic Harnack inequalities. We note that when the underlying space is a graph satisfying the Ahlfors regular condition, some equivalence conditions for
$\PHI(\phi)$ with $\phi(r)=r^\alpha$ for $\alpha\in (0,2)$ are obtained in
Barlow, Bass and Kumagai \cite{BBK2}.
In some general metric measure spaces including certain
 fractals mentioned above,
it is known that $\PHI(\phi)$ may hold for $\phi(r)=r^\alpha$ with
$\alpha\ge 2$ (see, for instance, \cite[Section 6.1]{CKW1}).
In this paper, we establish the stability of $\PHI(\phi)$ for
a large class of scale functions $\phi$ including
those $\phi (r)=r^\alpha$ with $\alpha \geq 2$.
We also emphasize that our metric measure spaces are only assumed to
satisfy general volume doubling and reverse volume doubling properties;
see Definition \ref{thm:defvdrvd} for definitions.
These make
 the study of stability of $\PHI(\phi)$
extremely challenging.

The characterization \eqref{e:1.3} in particular implies that $\PHI (\phi)$ is invariant under time change of the symmetric jump process
$X$ by a positive continuous additive functional  $A_t = \int_0^t q(X_s) \,ds$ for some measurable function $q$ that is bounded between
two positive constants. This is because the time-changed process $(Y_t)_{t\ge0}=(X_{\tau_t})_{t\ge0}$ is an
$m$-symmetric
jump process on $M$ having the same jumping kernel $J(x, y)$, where $\tau_t=\inf\{s>0: A_s>t\}$ and
$m(dx):=q(x) \,\mu (dx)$.
Clearly the right hand side of \eqref{e:1.3} holds for $(M, d, \mu)$ and $J$ if and only if it holds for
$(M, d, m)$ and $J$.

We point out that the characterization \eqref{e:1.3} of $\PHI (\phi)$ is  new even in the Euclidean space case.
Suppose
that $(M, d)$ is the Euclidean space $\bR^d$, $\mu$
is a measure on $\bR^d$ that is comparable to the Lebesgue measure,
and $\phi (r)=\int_{\alpha_1}^{\alpha_2} r^\beta \,\nu (d\beta)$ or $\phi (r)=1/ \int_{\alpha_1}^{\alpha_2} r^{-\beta} \,\nu (d\beta)$,
where  $0<\alpha_1 <\alpha_2<2$ and $\nu$ is a probability measure on $[\alpha_1, \alpha_2]$.
Then, as a special case  of  \cite[Rermark 1.7]{CKW1}, $\CSJ (\phi)$ is implied by
 $\J_{\phi, \leq }$.
 In this case,  our result \eqref{e:1.3}
says that
\begin{equation}\label{e:1.4}
\PHI (\phi) \Longleftrightarrow
 \PI(\phi) + \J_{\phi,\le}   + \UJS.
 \end{equation}
In  \cite[Theorem 1.6]{BBK2} , \eqref{e:1.4} is proved
for continuous time random walks on
graphs that satisfy
the Ahlfors $d$-regular condition with  $\phi (r)=r^\alpha$ for $\alpha \in (0, 2)$.
 We will illustrate the utility of \eqref{e:1.4} in  Examples \ref{E:1.2} and \ref{E:1.3} below.

 \medskip

Parabolic Harnack inequalities are closely related to heat kernel estimates.
In the  recent paper \cite{CKW1}, we obtained stability of two-sided heat kernel estimates
and upper bound heat kernel estimates for symmetric jump processes of
mixed types
on general metric measure spaces (see Section \ref{HKE-sect} for a brief survey of the
results of \cite{CKW1}).
There are also recent work on the stability of two-sided heat kernel estimates for
stable-like jumps processes
with Ahlfors $d$-set condition in the framework of metric measure spaces \cite{GHH} and
in the framework of infinite connected locally finite graphs \cite{MS}.
In contrast to the cases of local operators/diffusions,
parabolic Harnack inequalities are
no longer equivalent to (in fact weaker than) the two-sided heat kernel estimates.
In fact Corollary \ref{C:1.25} of this paper asserts
$$ \HK (\phi)\Longleftrightarrow\PHI(\phi)+ \J_{\phi,\ge};
$$
see \eqref{jsigm} and \eqref{HKjum} for definitions.
This discrepancy is caused by the heavy tail of the jumping kernel.
This heavy tail phenomenon is also one of main sources of difficulties in analyzing non-local operators/jump processes.

\medskip

Due to the above difficulties and differences, obtaining the stability of $\PHI(\phi)$ for non-local operators/jump processes requires new ideas. Our approach contains the following two key ingredients, and both of them are highly non-trivial:
\begin{itemize}
\item[(i)] We
make full use of the probabilistic properties of jump process $X$ (in particular the L\'evy system of $X$ that describes how the process $X$ jumps) to connect $\PHI(\phi)$ with the properties of the associated heat kernel and jumping kernel.
For instance, $\UJS$
yielded by
a probabilistic consideration
and motivated by \cite{BBK2} in a graph setting plays one of key roles for the characterization of $\PHI(\phi)$ in the present
framework; see the main result of this paper, Theorem \ref{T:PHI}.

\item[(ii)] We adopt
some PDE's techniques from the recent study of fractional $p$-Laplacian operators in \cite{CKP1} to
derive some useful
properties of the process $X$. We emphasis that, to get the stability of $\PHI(\phi)$ in our general framework we should use
  cutoff Sobolev inequalities
$\CSJ (\phi)$ for non-local Dirichlet forms, instead of the fractional Poincar\'e inequalities or Sobolev inequalities in the existing literature (e.g.\ see \cite{CKP1, DK, K2}), since the latter two functional inequalities require some regularity of state space and non-local operators.
See the equivalence condition (7)
in Theorem \ref{T:PHI}.
\end{itemize}

\smallskip

 The following example, partly motivated by \cite{BBK2} in a graph setting,  illustrates the power of \eqref{e:1.3} characterizing $\PHI (\phi)$ even in the Euclidean space case. In this example, although for each fixed $x\in \bR^d$, the jumping kernel $J(x, y)$ vanishes outside a double cone in $\bR^d$ with apex at $x$,
  $\PHI(\phi)$ holds nevertheless.
 It also clearly indicates that only $\PHI(\phi)$ can not imply $\HK(\phi)$ without additional condition  on the lower bound of the jumping kernel $J$.

 \begin{example}\label{E:1.2}
{\bf($\PHI(\phi)$ holds but  $\HK(\phi)$ fails.)}\quad \rm
Let $M=\bR^d$, $\mu$ be the
Lebesgue measure on $\bR^d$,   $0<\alpha <2$ and $\phi (r)= r^\alpha$.
For $0<\theta< \pi/2$
 and $v\in \bR^d$ with $|v|=1$, define $A=\{h\in \bR^d: |(h/|h|,v)|\ge
\cos\theta\}$ and
\[
J(x,y)={\bf1}_A(x-y)|x-y|^{-d-\alpha}.
\]
 We can apply \eqref{e:1.4} to show that $\PHI(\phi)$ holds. Clearly   $\J_{\phi,\ge}$ does not hold and so $\HK(\phi)$ fails.
   In particular,
   caloric functions of the corresponding  symmetric jump process are jointly H\"older continuous. See Section \ref{Sectin-ex} for details.
\end{example}

When $d=2$, the above example is a special case of the following example
where the direction of the cones can vary but  $\PHI(\phi)$ still holds.
Note that, the non-degenerate part of the jumping kernel in the example below can not only be of stable-like but also be of mixed  stable-like type considered in \cite{CK2}.

  \begin{example}\label{E:1.3}  \rm
Let $M=\bR^2$, $\mu$ be a measure on $\bR^2$
that is comparable to the Lebesgue measure,
and $\phi (r)=1/\int_{\alpha_1}^{\alpha_2} r^{-\beta}\, \nu (d\beta)$, where $0<\alpha_1<\alpha_2 <2$ and
$\nu$ is a probability measure on $[\alpha_1, \alpha_2]$.
 Fix $\theta \in (0, \pi/2 )$
and a positive constant $c_\theta >0$.
Let $\xi$ be an increasing function  on $\bR$ so that $\xi (0) \in [0, 2\pi)$, $\xi (x+c_\theta ) = \xi (x) + 2\pi$   for every $x\in \bR$, and
\begin{equation}\label{eq:xixisin}
 \xi (x+r)-\xi (x) \in [\sin^{-1}r,  \,
\theta\,] \qquad \mbox{ for  any }  x\in \bR \hbox{ and }
r\in [ 0,  r_\theta]
\end{equation}
 with some  $r_\theta \in (0,  \sin \theta )$
 that  is small enough.
Define  $v(x)=(\cos (\xi (x_1)), \sin (\xi(x_1)))$ for $x=(x_1,x_2)\in \bR^2$, and a two-sided cone $\Gamma_{\theta}(x)$ with
apex angle $2 \theta$ and vertex at $x$ by
$$ \Gamma_{\theta}(x)=x+\left\{h\in \bR^d:  |\<  h/|h|,v(x) \>| \ge
\cos \theta \right\}.
$$
Note that $v(x)$ is a unit vector that depends on the first coordinate $x_1$ of $x$ and is $c_\theta$-periodic in $x_1$.

Let $J(x, y)$ be any measurable symmetric  kernel on $\bR^d \times \bR^d$ with the following property; there exists a constant $C\geq 1$ such that for all $x, y\in \bR^d$,
\begin{equation}\label{e:1.5}
 C^{-1} \frac{ {\bf 1}_{\Xi(x)}(y)+ {\bf 1}_{\Xi(y)}(x)}{ |x-y|^d \phi (|x-y|)}
 \leq  J(x,y)\leq   C \frac{{\bf 1}_{\Xi(x)}(y)+ {\bf 1}_{\Xi(y)}(x)}{ |x-y|^d \phi (|x-y|)},\end{equation}
 where $\Xi(x):=\Gamma_{\theta}(x)\cup \ol{B(x, 1)}$.
   We can again apply \eqref{e:1.4} to show that $\PHI (\phi)$ holds.
Consequently, bounded
 caloric functions for the corresponding symmetric jump process are jointly H\"older continuous.
See Section \ref{Sectin-ex} for details.
\end{example}

In addition, we show in Example \ref{E:5.2} that the trace process of Brownian motion on the Sierpinski
gasket on one side of the big triangle enjoys a bounded version of $\PHI (r^\alpha)$ for some
$\alpha \in (1, 2)$ but
$\HK (r^\alpha)$ fails on any bounded time interval $(0, T_0]$.

\medskip

Finally, we should mention that, even though non-local operators appear naturally in the study of stochastic processes with jumps,
there are huge amount of interests among analysts to study Harnack inequalities and related
properties for non-local operators; see \cite{CS, CKP1, CKP2, DK, K, K2, Sil} and the references therein.
Combining probabilistic methods with analytic methods in the study of heat kernel estimates
and parabolic Harnack inequalities for non-local operators proves to be quite powerful and fruitful,
as is the  case for this paper and for \cite{CKW1}.

\medskip

In the following, we give the framework of this paper in details and
present the main results of this paper. We also recall some theorems from \cite{CKW1}
that will be used in this paper.

\subsection{Setting}\label{setup}

Let $(M,d)$ be a locally compact separable metric space, and   $\mu$
a positive Radon measure on $M$ with full support.
A triple $(M, d, \mu)$ is called a \emph{metric measure space},
and we denote by
$\langle  \cdot , \cdot \rangle$ the inner product in $L^2(M; \mu)$.
For simplicity, we assume that $\mu (M)=\infty$ throughout the paper.
(See Remark \ref{rem:boun} below for further comments.)
Let us emphasize that we do not assume $M$ to be connected nor
 $(M, d)$ to be geodesic.

Let $(\sE, \sF)$ be a regular Dirichlet form on $L^2(M; \mu)$ given in \eqref{eq:DFregjump}.
We assume throughout this paper that, for each $x\in M$,  there is a kernel $J(x, dy)$ so that
\[
J(dx, dy) = J(x, dy)\, \mu (dx).
\]
In this paper, we will abuse notation and always take
the quasi-continuous version for an element of $\sF$ (note that since $(\sE,\sF)$ is regular,
each function in $\sF$ admits a quasi-continuous version).
Denote by  $\sL$ the (negative definite) $L^2$-\emph{generator} of $(\sE, \sF)$.
Let $\{P_t\}$  be
the associated \emph{semigroup} on $L^2(M; \mu)$. \sms Associated with the regular
Dirichlet form $(\sE, \sF)$ on $L^2(M; \mu)$ is an $\mu$-symmetric
\emph{Hunt process} $X=\{X_t, t \ge 0, \bP^x, x \in M\setminus
\sN\}$, where $\sN$ is a properly exceptional set for $(\sE, \sF)$ in that
 $\mu(\sN)=0$ and
$\bP^x( X_t \in \sN \hbox{ for some } t>0 )=0$ for all $x \in M\setminus\sN$. This Hunt process is unique up to a
properly exceptional set (see \cite[Theorem 4.2.8]{FOT}).
A more precise version of $\{P_t\}$ with better regularity properties can be obtained as follows:
for any bounded Borel measurable function $f$ on $M$,
$$ P_t f(x) = \bE^x f(X_t), \quad x \in M_0:=M \setminus \sN. $$
The \emph{heat kernel} associated with $\{P_t\}$ (if it exists) is a
measurable function
$p(t, x,y):M_0 \times M_0 \to (0,\infty)$ for every $t>0$, such that
\begin{align*}
\bE^x f(X_t) &= P_tf(x) = \int p(t, x,y) f(y) \, \mu(dy), \quad  x \in M_0,
f \in L^\infty(M;\mu), \\
p(t, x,y) &= p(t, y,x) \quad \hbox{ for all } t>0,\, x ,y \in M_0, \\
p(s+t , x,z) &= \int p(s, x,y) p(t, y,z) \,\mu(dy)
\quad \hbox{ for all } s, t>0 \hbox{ and }  x,z \in M_0.
\end{align*}
We call $p(t, x,y)$ the {\em
heat kernel} on $(M, d, \mu, \sE)$. Note that we can extend $p(t,x,y)$ to all
$x$, $y\in M$ by setting $p(t,x,y)=0$ if $x$ or $y$ is outside
$M_0$.

The goal of this paper is to present stable characterizations of
parabolic Harnack inequality for the symmetric
jump process $X$. To state our results precisely and show the relations between heat kernel estimates and parabolic Harnack inequalities, we need a number of
definitions and also recall the stable characterizations of
two-sided estimates and upper bound estimates for heat
kernels from \cite{CKW1}.

\begin{definition}\label{D:1.4}
{\rm Denote by $B(x,r)$ the ball in $(M,d)$ centered at $x$ with radius $r$, and set
\[
V(x,r) = \mu(B(x,r)).
\]
(i) We say that $(M,d,\mu)$ satisfies the {\it volume doubling property} ($\VD$) if
there exists a constant ${C_\mu}\ge 1$ such that
for all $x \in M$ and $r > 0$,
\be \label{e:vd}
 V(x,2r) \le {C_\mu} V(x,r).
 \ee
(ii) We say that $(M,d,\mu)$ satisfies the {\it reverse volume doubling property} ($\RVD$)
if there exist positive constants
$d_1$ and ${c_\mu}$ such that
for all $x \in M$ and $0<r\le R$,
\be \label{e:rvd}
\frac{V(x,R)}{V(x,r)}\ge {c_\mu}  \Big(\frac Rr\Big)^{d_1}.\ee
 } \label{thm:defvdrvd}\end{definition}

VD condition \eqref{e:vd} is equivalent to the following: there exist $d_2, {\wt C_\mu}>0$ so that
\be \label{e:vd2}
\frac{V(x,R)}{V(x,r)} \leq   {\wt C_\mu}  \Big(\frac Rr\Big)^{d_2} \quad \hbox{for all } x\in M \hbox{ and }
0<r\le R.
\ee
RVD condition \eqref{e:rvd} is equivalent to
the existence of positive constants ${l_\mu}$ and  ${\wt c_\mu}>1$
so that
\be \label{e:rvd2}
 V(x,{l_\mu}  r) \geq  {\wt c_\mu} V(x,r) \quad \hbox{for all } x\in M \hbox{ and } r>0.
 \ee
It is  known that $\VD$ implies
$\RVD$
if $M$ is connected and unbounded
(see, for example \cite[Proposition 5.1 and Corollary 5.3]{GH}).

\sms

Let $\bR_+ :=[0,\infty)$ and $\phi: \bR_+\to \bR_+$ be a strictly increasing continuous
function  with $\phi (0)=0$, $\phi(1)=1$ that satisfies the following: there exist $c_1,c_2>0$ and $\beta_2\ge \beta_1>0$ such that
\be\label{polycon}
 c_1 \Big(\frac Rr\Big)^{\beta_1} \leq
\frac{\phi (R)}{\phi (r)}  \ \leq \ c_2 \Big(\frac
Rr\Big)^{\beta_2}
\quad \hbox{for all }
0<r \le R.
\ee

\begin{definition}{\rm We say $\J_\phi$ holds if for any $x,y\in M$
there exists a non-negative symmetric
function $J(x, y)$ so that for $\mu\times \mu $-almost  all $x, y \in M$,
\begin{equation}\label{e:1.2}
J(dx,dy)=J(x, y)\,\mu(dx)\, \mu (dy),
\end{equation} and
\begin{equation}\label{jsigm}
 \frac{c_1}{V(x,d(x, y)) \phi (d(x, y))}\le J(x, y) \le \frac{c_2}{V(x,d(x, y)) \phi (d(x, y))}
 \end{equation}
 for some constants $c_2\geq c_1>0$.
We say that $\J_{\phi,\le}$ (resp. $\J_{\phi,\ge}$) if
\eqref{e:1.2} holds and
the upper bound (resp. lower bound) in \eqref{jsigm} holds.}
\end{definition}

For the non-local Dirichlet form $(\sE, \sF)$, we
define the carr\'e du-Champ operator $\Gamma (f, g)$
for $f, g\in \sF$ by
$$
\Gamma (f, g) (dx) = \int_{y\in M} (f(x)-f(y))(g(x)-g(y))\,J(d x,d y)  .
$$

 \subsection{Heat kernel estimates}\label{HKE-sect}
 The following $\CSJ (\phi)$ and $\SCSJ (\phi)$ conditions that control
the energy of cutoff functions are first introduced in \cite{CKW1}.
See \cite[Remark 1.6]{CKW1} for  background on these conditions.
Recall that $\phi$ is a strictly increasing continuous
function on $\bR_+$ satisfying $\phi (0)=0$, $\phi(1)=1$  and \eqref{polycon}.

\begin{definition} \rm
\begin{itemize}
\item[(i)] Let $U \subset V$ be open sets in $M$ with
$U \subset \ol U \subset V$.
We say a non-negative bounded measurable function $\vp$ is a {\it cutoff function for $U \subset V$},
if $\vp  = 1$ on $U$,  $\vp=0$ on $V^c$ and $0\leq \vp \leq 1$ on $M$.
\item[(ii)]
We say that $\CSJ(\phi)$ holds if there exist constants $C_0\in (0,1]$ and $C_1, C_2>0$
such that for every
$0<r\le R$, almost all $x\in M$ and any $f\in \sF$, there exists
a cutoff function $\vp\in \sF_b:=\sF\cap L^\infty(M,\mu)$
for $B(x,R) \subset B(x,R+r)$
so that
\be \label{e:csj1} \begin{split}
 \int_{B(x,R+(1+C_0)r)} f^2 \, d\Gamma (\vp,\vp)
\le &C_1 \int_{U\times U^*}(f(x)-f(y))^2\,J(dx,dy) \\
&+ \frac{C_2}{\phi(r)}  \int_{B(x,R+(1+C_0)r)} f^2  \,d\mu,
\end{split}
\ee
where $U=B(x,R+r)\setminus B(x,R)$ and $U^*=B(x,R+(1+C_0)r)\setminus B(x,R-C_0r)$.

\item[(iii)]
We say that $\SCSJ(\phi)$ holds if there exist constants $C_0\in (0,1]$ and $C_1, C_2>0$
such that for every
$0<r\le R$ and almost all $x\in M$, there exists
a cutoff function $\vp\in \sF_b$ for $B(x,R) \subset B(x,R+r)$ so that \eqref{e:csj1} holds for any $f\in \sF$.

\end{itemize}

\end{definition}

Clearly  $\SCSJ(\phi)  \Longrightarrow \CSJ(\phi)$.

\begin{remark}\rm \label{R:csjrem}
As is pointed out  in
\cite[Remark 1.7]{CKW1}, under $\VD$, \eqref{polycon} and $\J_{\phi,\le}$, $\SCSJ(\phi)$ always holds if $\beta_2<2$, where $\beta_2$ is the exponent in \eqref{polycon}. In particular, $\SCSJ(\phi)$  holds for $\phi (r)=r^\alpha$ always
when $0<\alpha< 2$.
\end{remark}

\medskip

We next introduce the Faber-Krahn inequality.
For any open set $D \subset M$, $\sF_D$ is defined to be the
$\| \cdot \|_{\sE_1}$-closure
in $\sF$ of  $\sF\cap C_c(D)$,
where
$\|\cdot\|_{\sE_1}^2=\sE(\cdot, \cdot)+\|\cdot\|_2^2$
Here $C_c(D)$ is the space of continuous functions on $M$ with compact support in $D$.
Define
$$
 \lam_1(D)
= \inf \left\{ \sE(f,f):  \,  f \in \sF_D \hbox{ with }  \|f\|_2 =1 \right\},
$$
the bottom of the Dirichlet spectrum of $-\sL$ on $D$.

\begin{definition}\label{Faber-Krahnww}
{\rm $(M,d,\mu,\sE)$ satisfies the {\em Faber-Krahn
inequality} $\FK(\phi)$, if there exist positive constants $C$ and
$\nu$ such that for any ball $B(x,r)$ and any open set $D \subset
B(x,r)$, \be \label{e:fki}
 \lam_1 (D) \ge \frac{C}{\phi(r)} (V(x,r)/\mu(D))^{\nu}.
\ee
} \end{definition}

\medskip

For a set
$A\subset M$, define the exit time $\tau_A = \inf\{ t >0 : X_t \in A^c \}$.

\begin{definition}{\rm We say that $\E_\phi$ holds if
there is a constant $c_1>1$ such that for all $r>0$ and  all $x\in M_0$,
\be\label{eq:ephiaq}
c_1^{-1}\phi(r)\le \bE^x [ \tau_{B(x,r)} ] \le c_1\phi(r).
\ee
We say that $\E_{\phi,\le}$ (resp. $\E_{\phi,\ge}$) holds
if the upper bound (resp. lower bound) in the inequality above holds. }
\end{definition}

\begin{definition}\label{D:1.11}   \rm \begin{description}
\item{\rm (i)} We say that $\HK(\phi)$ holds if there exists a heat kernel
$p(t, x,y)$
of the semigroup $\{P_t\}$ for $(\sE,\sF)$,
 which has the
following estimates for all $t>0$ and all $x,y\in M_0$,
\begin{equation}\label{HKjum}\begin{split}
c_1\Big(\frac 1{V(x,\phi^{-1}(t))} &\wedge
\frac{t}{V(x,d(x,y))\phi(d(x,y))}\Big)\\
&\le p(t, x,y) \\
&\le c_2\Big(\frac 1{V(x,\phi^{-1}(t))}\wedge
\frac{t}{V(x,d(x,y))\phi(d(x,y))}\Big),
\end{split}\end{equation} where $c_1, c_2>0$ are constants independent of $x,y\in M_0$ and $t>0$.
Here $\phi^{-1}(t)$ is the inverse function of the strictly increasing function
$t\mapsto \phi (t)$.

\item{(ii)} We say $\UHK(\phi)$ (resp. $\LHK(\phi)$) holds if the
upper bound (resp. the lower bound) in \eqref{HKjum} holds.

\item{(iii)}  We say $\UHKD(\phi)$ holds if there is a constant $c>0$ such that
$$
p(t, x,x)\le \frac c{V(x,\phi^{-1}(t))} \quad
\hbox{for all } t>0 \hbox{ and } x\in M_0 .
$$
\end{description}
\end{definition}

\medskip

It is pointed out in \cite[Remark 1.12]{CKW1} that
$$
 \frac 1{V(y,\phi^{-1}(t))}\wedge\frac{t}{V(y,d(x,y))\phi(d(x,y))}
\asymp  \frac 1{V(x,\phi^{-1}(t))}\wedge
\frac{t}{V(x,d(x,y))\phi(d(x,y))} .
$$
We may thus replace
$V(x, \phi^{-1}(t))$ and $V(x,d(x,y))$ by $V(y, \phi^{-1}(t))$ and $V(y,d(x,y))$
in \eqref{HKjum} by modifying the values of
$c_1$ and $c_2$. On the other hand, it follows from \cite[Theorem 1.13 and Lemma 5.6]{CKW1}  that  if $\HK(\phi)$ holds, then the heat kernel $p(t, x,y)$ is H\"{o}lder continuous on $(x,y)$ for every $t>0$, and so \eqref{HKjum} holds for all $x,y\in M$.

We say
{\it $(\sE, \sF)$ is conservative}   if its associated Hunt process $X$
has infinite lifetime.
This is equivalent to $P_t 1 =1$ a.e. on $M_0$ for every $t>0$.

\medskip

The following are the main results of \cite{CKW1},
which will be used later in this paper.

\begin{thm} {\rm (\cite[Theorem 1.13]{CKW1})} \label{T:main}
Assume that the metric measure space $(M, d , \mu)$ satisfies $\VD$ and $\RVD$, and $\phi$ satisfies \eqref{polycon}.
Then the following are equivalent: \\
$(1)$ $\HK(\phi)$. \\
$(2)$ $\J_\phi$ and $\E_\phi$.  \\
$(3)$ $\J_\phi$ and $\SCSJ(\phi)$.\\
$(4)$ $\J_\phi$ and $\CSJ(\phi)$.
\end{thm}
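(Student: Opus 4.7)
The plan is to close the square of implications $(3) \Rightarrow (4) \Rightarrow (1) \Rightarrow (2) \Rightarrow (3)$, so that any of the four conditions forces the others. The step $(3) \Rightarrow (4)$ is immediate, since $\SCSJ(\phi)$ is just $\CSJ(\phi)$ with the cutoff function chosen independently of the test function $f$. The step $(1)\Rightarrow(2)$ is the ``soft'' direction. From the upper bound in $\HK(\phi)$ and $\VD$, I would integrate the off-diagonal estimate over $B(x,r)^c$ to get $\bP^x(X_t\notin B(x,r))\le C t/\phi(r)$, and combine with the near-diagonal lower bound to get $\bE^x\tau_{B(x,r)}\asymp\phi(r)$ via $\bE^x\tau_{B(x,r)}=\int_0^\infty \bP^x(\tau_{B(x,r)}>t)\,dt$. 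The jumping kernel estimate $\J_\phi$ then follows from the L\'evy system identity $\lim_{t\downarrow 0}t^{-1}\bP^x(X_t\in A)=\int_A J(x,y)\,\mu(dy)$ for $A$ separated from $x$, combined with the off-diagonal bound $p(t,x,y)\asymp t/(V(x,d(x,y))\phi(d(x,y)))$ in the regime $t\ll \phi(d(x,y))$.

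For $(2)\Rightarrow(3)$ I would construct a cutoff function on the shell $U=B(x,R+r)\setminus B(x,R)$ by using an equilibrium potential for the part process on a slightly enlarged annulus, or by averaging many layered cutoffs over nested sub-annuli of width $r/n$ to smear out the singularity of $\Gamma(\vp,\vp)$. The two key inputs are $\E_\phi$, which sets $\phi(r)^{-1}$ as the natural energy scale of cutoffs of width $r$ (so $\vp$ is harmonic with exit-time of order $\phi(r)$), and $\J_\phi$, which lets one split $\int f^2\,d\Gamma(\vp,\vp)$ into a short-range piece estimated via $J(x,y)\le C/(V(x,d)\phi(d))$ against $(\vp(x)-\vp(y))^2$ and $\VD$, and a long-range piece that can be absorbed into the bulk term $\phi(r)^{-1}\int f^2\,d\mu$. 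Care must be taken to localize the non-local interaction so that the resulting inequality has the form \eqref{e:csj1} with the correct $U$ and $U^*$.

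The main obstacle, and the heart of the proof, is $(4)\Rightarrow(1)$. From $\J_{\phi,\le}+\CSJ(\phi)$ I would first extract a Faber--Krahn inequality $\FK(\phi)$ by testing the bottom of the spectrum against the cutoff function on a ball, splitting the truncated quadratic form into an interior Dirichlet energy and a tail controlled by the $\phi(r)^{-1}\int f^2\,d\mu$ bulk term, then running a Nash-type argument to obtain $\UHKD(\phi)$. The full off-diagonal upper bound requires a Meyer decomposition of $J$ into small and large jumps, with the small-jump semigroup handled by Davies' method; the $\CSJ(\phi)$ inequality is precisely what allows one to absorb the non-local energy of the exponential weight in Davies' perturbation. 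For the lower bound, $\UHKD(\phi)$ together with conservativeness (from $\E_{\phi,\ge}$, which follows from $\CSJ$ plus $\J_\phi$ by an exit-time estimate) gives a near-diagonal lower bound on $p(t,x,x)$, propagated to $p(t,x,y)\gtrsim 1/V(x,\phiinv(t))$ for $d(x,y)\le \eps\phiinv(t)$ by the chaining trick using $\VD$, and finally to the heavy-tail bound $p(t,x,y)\gtrsim tJ(x,y)$ by forcing a single large jump via the L\'evy system. The principal difficulty is that in the non-local setting there is no Gaussian tail to absorb errors, so one must show that the localized form of \eqref{e:csj1} is strong enough both to run Davies' method and to produce the correct $t/(V\phi)$ tail; keeping the constants in the cutoff inequality self-improving (so that the $\CSJ$ on different scales can be chained) is the technical crux.
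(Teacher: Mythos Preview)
This theorem is not proved in the present paper: it is quoted verbatim from \cite[Theorem~1.13]{CKW1} and used as a black box. There is therefore no proof in this paper to compare your proposal against.

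That said, your outline is broadly faithful to the strategy in \cite{CKW1}. The cycle $(3)\Rightarrow(4)\Rightarrow(1)\Rightarrow(2)\Rightarrow(3)$ is the right shape, and your descriptions of the individual steps capture the main ideas: $(1)\Rightarrow(2)$ via exit-time estimates and the small-time asymptotics of $p(t,x,y)$; $(2)\Rightarrow(3)$ via a cutoff construction using $\E_\phi$ as the scale input; $(4)\Rightarrow(1)$ via Faber--Krahn, Nash/Davies for the upper bound, and near-diagonal lower bound plus a single-jump argument for the lower bound. A few places are looser than the actual proof: for $(2)\Rightarrow(3)$ the construction in \cite{CKW1} is more delicate than a straightforward equilibrium potential (it passes through a self-improvement of $\CSJ$ and a careful layered averaging to get a cutoff that works \emph{uniformly} in $f$); and in $(4)\Rightarrow(1)$ the order of deductions matters, since $\E_\phi$ is obtained from $\FK(\phi)+\J_{\phi,\le}+\CSJ(\phi)$ rather than directly from $\CSJ+\J_\phi$, and conservativeness is then derived from $\J_{\phi,\le}+\E_\phi$. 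These are refinements rather than gaps in your plan.
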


\begin{thm}  {\rm (\cite[Theorem 1.15]{CKW1})} \label{T:main-1}
Assume that the metric measure space $(M, d, \mu)$ satisfies $\VD$ and $\RVD$, and $\phi$ satisfies \eqref{polycon}.
Then the following are equivalent: \\
$(1)$ $\UHK(\phi)$ and
$(\sE, \sF)$ is conservative. \\
$(2)$ $\UHKD(\phi)$, $\J_{\phi,\le}$ and $\E_\phi$.\\
$(3)$ $\FK(\phi)$, $\J_{\phi,\le}$ and $\SCSJ(\phi)$.\\
$(4)$ $\FK(\phi)$, $\J_{\phi,\le}$ and $\CSJ(\phi)$.
\end{thm}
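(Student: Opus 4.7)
The plan is to prove the chain of implications (1)$\Rightarrow$(2)$\Rightarrow$(3)$\Rightarrow$(4)$\Rightarrow$(1). Since $\SCSJ(\phi)$ is by definition stronger than $\CSJ(\phi)$, the step (3)$\Rightarrow$(4) is immediate, and the three nontrivial directions are the endpoints and the passage through (2) and (3).

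For (1)$\Rightarrow$(2), I would read off $\UHKD(\phi)$ by specializing $\UHK(\phi)$ to $y=x$. To derive $\J_{\phi,\le}$, I would apply the L\'evy system identity
\[
\bE^x\Bigl[\sum_{s\le t}\indicator_{\{X_{s-}\in A,\, X_s\in B\}}\Bigr] = \int_0^t\!\!\int_A\!\!\int_B J(u,v)\,p(s,x,u)\,\mu(dv)\,\mu(du)\,ds
\]
for disjoint Borel sets $A,B$, insert the $\UHK(\phi)$ upper bound, and let $A,B$ shrink to small balls centered at two distinct base points to recover the pointwise upper bound on $J(x,y)$. For $\E_\phi$, the upper bound $\bE^x[\tau_{B(x,r)}]\le c\phi(r)$ follows by estimating $\bP^x(\tau_{B(x,r)}>t)$ through $\bP^x(X_t\in B(x,r/2))$ and integrating in $t$, while the matching lower bound uses conservativeness of $(\sE,\sF)$ to prevent the process from vanishing before exiting the ball.

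For (2)$\Rightarrow$(3), the Faber-Krahn inequality $\FK(\phi)$ follows from $\UHKD(\phi)$ by a standard spectral argument: if $u\in \sF_D$ is a non-negative eigenfunction for $\lam_1(D)$ with $\|u\|_2=1$, then $e^{-\lam_1(D)t} = \langle P_t^D u, u\rangle \le \|p(t,\cdot,\cdot)\|_\infty\mu(D)\le c\mu(D)/V(x,\phiinv(t))$, and optimizing in $t$ yields \eqref{e:fki}. For $\SCSJ(\phi)$, which is the more delicate half, I would construct the required cutoff function via the equilibrium potential (or mean-exit-time functional) of the part process killed on leaving the annulus $B(x,R+r)\setminus B(x,R)$, and then split the carr\'e-du-champ measure $d\Gamma(\vp,\vp)$ into a near part (jumps staying inside $U^*$) and a tail part (jumps crossing $U^*$). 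The tail part is controlled by $\J_{\phi,\le}$ and the near part by $\E_\phi$, so that the combined bound matches \eqref{e:csj1} with the first term absorbing the near contribution and the second term absorbing the tail.

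The hardest step, and the main obstacle, is (4)$\Rightarrow$(1). My approach combines Meyer's decomposition with a Nash-type iteration. For each truncation level $\rho>0$, introduce the truncated form $(\sE^{(\rho)},\sF)$ whose jumping kernel is $J^{(\rho)}(x,y):=J(x,y)\indicator_{\{d(x,y)\le\rho\}}$; this form inherits appropriate analogues of $\FK(\phi)$ and $\CSJ(\phi)$. Feeding $\CSJ(\phi)$ into Nash's method with the Faber-Krahn lower bound yields an on-diagonal estimate $p^{(\rho)}(t,x,x)\le c/V(x,\phiinv(t))$ for the truncated semigroup. Meyer's decomposition
\[
p(t,x,y)\le p^{(\rho)}(t,x,y) + ct\sup_{u\in M}\int_{\{v:d(u,v)>\rho\}} J(u,v)\,\mu(dv)
\]
then reconstructs the full heat kernel, and optimizing $\rho \asymp d(x,y)$ produces the off-diagonal $\UHK(\phi)$ bound. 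Conservativeness is recovered from $\FK(\phi)+\CSJ(\phi)$ by a standard exhaustion argument. The chief difficulty lies in the non-local Nash iteration itself: the absence of a chain rule for the Dirichlet form forces the cutoff function in $\CSJ(\phi)$ to be tailored afresh at each iteration stage, and the non-local tail term in \eqref{e:csj1} must be repeatedly absorbed using $\J_{\phi,\le}$ without losing the sharp volume factor coming from $\FK(\phi)$.
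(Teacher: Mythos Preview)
This theorem is not proved in the present paper; it is quoted verbatim from \cite[Theorem~1.15]{CKW1} and used as a black box. So there is no proof here to compare against, only the original argument in \cite{CKW1}.

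Your cycle $(1)\Rightarrow(2)\Rightarrow(3)\Rightarrow(4)\Rightarrow(1)$ is the route taken in \cite{CKW1}, and most of your sketch points in the right direction. Two places are under-specified in a way that matters. First, in $(1)\Rightarrow(2)$ the lower bound $\E_{\phi,\ge}$ does not follow from conservativeness alone: you need the already-derived $\J_{\phi,\le}$ to bound the probability of a large jump out of $B(x,r)$ by $ct/\phi(r)$ via the L\'evy system, and conservativeness only rules out killing, not fast exit. Second, and more seriously, your sketch of $\SCSJ(\phi)$ in $(2)\Rightarrow(3)$ glosses over the genuine difficulty. In \cite{CKW1} the cutoff $\vp$ is not simply an equilibrium potential; it is built from superlevel sets of a regularized mean-exit-time function, and the proof of \eqref{e:csj1} runs through a self-improvement argument (a weak generalized capacity inequality is bootstrapped to the full $\SCSJ(\phi)$). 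The splitting you describe into ``near'' and ``tail'' parts is present, but the near part cannot be controlled by $\E_\phi$ directly---one has to exploit that $\vp$ is itself built from exit times, and the argument is circular unless set up with care.

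For $(4)\Rightarrow(1)$ your Meyer-plus-Nash outline is correct and is exactly the mechanism in \cite{CKW1}; the truncated kernel $p^{(\rho)}$ actually enjoys an exponential (Davies-type) off-diagonal bound, which after optimization in $\rho$ yields the polynomial tail in $\UHK(\phi)$. Conservativeness in this direction comes from $\J_{\phi,\le}+\E_\phi$ (the latter following from $\FK(\phi)+\J_{\phi,\le}+\CSJ(\phi)$ via \cite[Proposition~4.14]{CKW1}), not from $\FK(\phi)+\CSJ(\phi)$ alone as you wrote.
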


As a consequence of \cite[Proposition 3.1(ii)]{CKW1}
(recalled in Proposition  \ref{P:3.1} of this paper),
$\LHK (\phi)$ implies that $X$ has infinite lifetime.
As is remarked in \cite{CKW1},   $\UHK(\phi)$ alone
does not imply the conservativeness of the associated Dirichlet form $(\sE, \sF)$.

\subsection{Parabolic Harnack inequalities}

We first give probabilistic definitions of harmonic and
caloric functions
in the general context of metric measure spaces.

Let $Z:=\{V_s,X_s\}_{s\ge0}$ be the space-time process corresponding to $X$ where $V_s=V_0-s$.
The filtration generated by $Z$ satisfying the usual conditions will be denoted by $\{\widetilde{\mathcal{F}}_s;s\ge0\}$. The law of the space-time process $s\mapsto Z_s$ starting from $(t,x)$ will be denoted by $\bP^{(t,x)}$. For every open subset $D$ of $[0,\infty)\times M$, define
$\tau_D=\inf\{s>0:Z_s\notin D\}.$

Recall that
a set $A\subset [0, \infty)\times M$ is said to be nearly Borel measurable
if for any probability measure
$\mu_0$ on $[0, \infty)\times M$, there are
Borel measurable subsets $A_1$, $A_2$ of $[0, \infty)\times M$ so that
$A_1\subset A\subset A_2$ and that
$\bP^{\mu_0} (Z_t\in A_2\setminus A_1
\hbox{ for some } t\geq 0)=0$. The collection of all nearly Borel measurable subsets
of $[0, \infty)\times M$ forms a $\sigma$-field, which is called nearly Borel measurable
$\sigma$-field.

\begin{definition}\label{pro-har}  \rm \begin{description}
\item{(i)}
We say that a nearly  Borel measurable function $u(t,x)$ on
$[0,\infty)\times M$ is
\emph{caloric} (or \emph{space-time harmonic}) on
$D=(a,b)\times B(x_0,r)$ for the Markov process $X$
if there is a properly
exceptional set $\mathcal{N}_u$ of the Markov process $X$ so that for every
relatively compact open subset $U$ of $D$,
$u(t,x)=\bE^{(t,x)}u(Z_{\tau_{U}})$ for every $(t,x)\in
U\cap([0,\infty)\times (M\backslash \mathcal{N}_u)).$

\item{\rm (ii)} A nearly Borel measurable function $u$ on $M$ is said to be \emph{subharmonic}  (resp. \emph{harmonic, superharmonic})
in ${D}$ (with respect to the process $X$)
if for any relatively compact subset $U\subset D$,
$t\mapsto   u (X_{t\wedge \tau_U}) $ is a uniformly integrable submartingale
(resp. martingale, supermartingale) under $\bP^x$ for q.e. $x\in U$.
\end{description}
\end{definition}

\begin{remark}\rm  Concerning the definition of the space-time process $Z:=\{V_s,X_s\}_{s\ge0}$, the time evolves as $V_s=V_0+s$ in \cite[p.\! 37]{CK1} and \cite[p.\! 307]{CK2}, which is opposed to $V_s=V_0-s$ in the present paper (as well as
in \cite{BBK2,CKK1, CKK2}). The advantage of using time backwards (i.e., $V_s=V_0-s$ for all $s>0$) is due to that $u(t,x)=P_tf(x)$ is an example of caloric function. Indeed, for every $(t,x)\in [0,\infty)\times M$ and bounded measurable function $f$, let $u(t,x)=P_tf(x)$.
We have by the Markov property of $X$ that for any $(t_0,x_0)\in [0,\infty)\times M$ and $0<s<t$,
\begin{align*}\bE^{(t_0,x_0)}(u(Z_t)| \widetilde{\mathcal{F}}_s))&=\bE^{x_0}(u(t_0-t,X_t)|{\mathcal{F}}_s)=\bE^{X_s}u(t_0-t,X_{t-s})\\
&=\bE^{X_s}P_{t_0-t} f(X_{t-s})=P_{t_0-s}f(X_s)=u(Z_s),\end{align*} which implies that $u(t,x)$ on
$[0,\infty)\times M$ is caloric.  Similarly, if the heat kernel $p(t,x,y)$ exists, then we can prove that $(t,x)\mapsto p(t,x,y_0)$ is caloric on $(0,\infty)\times M$ for any fixed $y_0\in M$. (Note that, in contrast with the present paper, $(t,x)\mapsto p(t_0-t,x,y_0)$ is caloric on $[0,t_0)\times M$ in the time forwards case, see \cite[Lemma 4.5]{CK1}.) This causes the corresponding difference of the definition for the parabolic H\"older regularity (see Definition \ref{PER} (iii) below) between \cite{CK1} and the present paper,
but they are equivalent under a time-reversal.
\end{remark}

\begin{definition}\label{PER}
\rm \begin{description}
\item{(i)} We say that the \emph{parabolic Harnack inequality} $\PHI^+(\phi)$ holds for
Markov the process $X$
if Definition \ref{theorem:HI-PHI}
holds for some constants $C_1>0$, $C_k =kC_1$ for $k=2, 3, 4$, $0<C_5<1$ and  $C_6>0$.

\item{(ii)} We say that the  \emph{elliptic Harnack inequality} ($\EHI$) holds for the
Markov process $X$ if there exist constants $c>0$ and $\delta\in(0,1)$ such that
for every $x_0 \in M $, $r>0$ and for
 every non-negative function $u$ on $M$ that is harmonic in $B(x_0,r)$,
$$
\esssup_{ B(x_0,\delta r)} u\le c\, \essinf_{ B(x_0,\delta r)} u.
$$
\item{(iii)} We say that the
\emph{parabolic H\"older regularity}
$\PHR(\phi)$
holds for the Markov process $X$ if there exist  constants $c>0$,
$\theta\in (0, 1]$ and $\eps \in (0, 1)$ such that for every $x_0 \in M $, $t_0\ge0$, $r>0$ and for
 every bounded measurable function $u=u(t,x)$ that is  caloric in
$Q(t_0,x_0,\phi(r), r)$,    there is a properly exceptional set ${\cal N}_u\supset {\cal N}$ so that
\be\label{e:phr}
|u(s,x)-u(t, y)|\le c\left( \frac{\phi^{-1}(|s-t|)+d(x, y)}{r} \right)^\theta  \esssup_{ [t_0, t_0+\phi (r)] \times M}|u|
\ee
 for  every
 $s, t\in (t_0+\phi(r)-\phi(\eps r), t_0+\phi (r))$ and $x, y \in B(x_0, \eps r)\setminus {\cal N}_u$.

\item{(vi)} We say that the \emph{elliptic H\"older regularity}
($\EHR$)
holds for the process $X$, if there exist constants $c>0$,
$\theta\in (0, 1]$ and $\eps \in (0, 1)$ such that  for every $x_0 \in M $, $r>0$ and for
 every bounded measurable function $u$ on $M$ that is harmonic in $B(x_0, r)$, there is a properly exceptional set ${\cal N}_u\supset {\cal N}$
so that
\be\label{e:ehr}
|u(x)-u(y)| \leq c   \left( \frac{d(x, y)}{r}\right)^\theta\esssup_{M}  |u| \ee
for any $x,$ $y \in B(x_0, \eps r)\setminus {\cal N}_u.$

\end{description}
\end{definition}

Clearly  $\PHI^+(\phi) \Longrightarrow \PHI (\phi)  \Longrightarrow \EHI$ and $\PHR(\phi) \Longrightarrow \EHR$.

\begin{remark}\label{R:phrehr}
\rm
\begin{itemize}
\item[(i)]
$\PHI(\phi)$ in Definition \ref{theorem:HI-PHI}
is called a weak parabolic Harnack inequality in \cite{BGK2},
in the sense that \eqref{e:phidef} holds for some $C_1, \cdots, C_5$.
It is called a parabolic Harnack inequality in \cite{BGK2}
if \eqref{e:phidef} holds for any choice of positive constants $C_4>C_3>C_2>C_1>0$, $0<C_5<1$
with $C_6=C_6(C_1, \dots, C_5)<\infty$. Since our underlying metric measure space may not be geodesic,
one can not expect to deduce  parabolic Harnack inequality from weak  parabolic Harnack inequality.
See \cite{BGK2} for related discussion on diffusions.

\item[(ii)]
We will show
in Proposition \ref{phi-phi} that under $\VD$, $\RVD$ and \eqref{polycon},
$\PHI^+ (\phi)$ and $\PHI (\phi)$ are equivalent.

\item[(iii)] Clearly, $\PHI (\phi)$ holds if and only if the desired property holds for every bounded
 caloric function on cylinder $Q(t_0, x_0, C_4\phi (R), R)$.
Same  for $\PHI ^+ (\phi)$ and $\EHI$.

\item[(iv)] Note that in the definition of $\PHR(\phi)$ (resp. $\EHR$) if the inequality \eqref{e:phr} (resp. \eqref{e:ehr}) holds for some $\varepsilon\in(0,1)$, then it holds for all $\eps \in(0,1)$ (with possibly different constant $c$). We take $\EHR$ for example. For every $x_0 \in M $ and $r>0$, let $u$ be a bounded function on $M$ such that it is harmonic in $B(x_0, r)$. Then, for any $\eps' \in (0, 1)$
    and $x\in B(x_0,\eps'r)\setminus {\cal N}_u$, $u$ is harmonic on $B(x,(1-\eps')r)$. Applying \eqref{e:ehr} for $u$ on
    $B(x,(1-\eps')r))$, we find that for any $y\in B(x_0,\eps'r)\setminus {\cal N}_u$ with $d(x,y)\le (1-\eps')\eps r$, $$|u(x)-u(y)|\le c \left( \frac{d(x, y)}{r}\right)^\theta\esssup_{z\in M}  |u(z)|.$$ This implies that for any $x,y\in B(x_0,\eps'r)\setminus {\cal N}_u$, \eqref{e:ehr} holds with $c'=c\vee \frac{2}{[(1-\eps')\eps]^\theta}.$

\end{itemize}
\end{remark}

\medskip

Below we discuss  stability of parabolic Harnack inequalities. This requires further
definitions.

\begin{definition}\label{ndl1}
\rm We say that \emph{a
near diagonal lower bounded estimate for Dirichlet heat kernel} $\NDL(\phi)$ holds, i.e. there exist $\eps\in (0,1)$ and $c_1>0$ such that for any $x_0\in M$, $r>0$,
$0<t\le \phi(\eps r)$ and $B=B(x_0,r)$,
\be\label{NDLdf1} p^{B}(t, x
,y )\ge \frac{c_1}{V(x_0, \phi^{-1}(t))},\quad x ,y\in B(x_0,
\eps\phi^{-1}(t)) \cap M_0.
\ee
\end{definition}
Under $\VD$, we may replace $V(x_0, \phi^{-1}(t))$ in the definition
by either $V(x, \phi^{-1}(t))$ or $V(y, \phi^{-1}(t))$.
Under \eqref{polycon}, we also may replace $ \phi(\eps r)$ and
$\eps\phi^{-1}(t)$ in the definition above by $\eps\phi(r)$ and
$\phi^{-1}(\eps t)$, respectively.

\medskip

The following inequality was introduced in \cite{BBK2} in the setting of graphs.
See \cite{CKK1} for the general setting of metric measure spaces.

\begin{definition}\label{thm:defUJS} {\rm
We say that $\UJS$ holds if
there is a symmetric function $J(x, y)$ so that
$J(x, dy)=J(x, y)\,\mu (dy)$, and
there is a constant $c>0$ such that for $\mu$-a.e. $x, y\in M$ with $x\not= y$,
\begin{equation}\label{ujs}
J(x,y)\le  \frac{c}{V(x,r)}\int_{B(x,r)}J(z,y)\,\mu(dz)
\quad\hbox{for every }
0<r\le   d(x,y) /2.
\end{equation}
} \end{definition}

Note that $\UJS$ is implied by the following
pointwise comparability condition of the
jump kernel $J(x,y)$: there is a constant $c>0$ such that
$J(x,y)\le cJ(z,y)$ for
$\mu$-a.e. $x, y,z\in M$ with $x\not= y$ and $0<d(x,z)\le   d(x,y) /2$.
 Some sufficient conditions for  $\UJS$ can be found in
\cite[Lemma 2.1 and Example 2.2]{CKK2}.

\begin{definition} {\rm
We say that
the {\em $($weak$)$ Poincar\'e inequality}
$\PI(\phi)$
holds if there exist constants $C>0 $ and $\kappa\ge1$ such that
for any  ball $B_r=B(x,r)$ with $x\in M$  and for any $f \in \sF_b$,
\begin{equation}\label{eq:PIn}
\int_{B_r} (f-\ol f_{B_r})^2\, d\mu \le C \phi(r)\int_{{B_{\kappa r}}\times {B_{\kappa r}}} (f(y)-f(x))^2\,J(dx,dy),
\end{equation}
where $\ol f_{B_r}= \frac{1}{\mu({B_r})}\int_{B_r} f\,d\mu$ is the average value of $f$ on ${B_r}$.} \end{definition}
 If the integral on the right hand side of \eqref{eq:PIn} is over ${B_{r}}\times {B_r}$ (i.e. $\kappa=1$), then it is called strong Poincar\'e inequality.
If the metric is geodesic, it is known that
(weak) Poincar\'e inequality implies
strong Poincar\'e inequality (see for instance \cite[Section 5.3]{Sa}), but in general they are not the same. In this paper, we only use weak Poincar\'e inequality.
Note also that the left hand side of \eqref{eq:PIn} is
equal to $\inf_{a \in \bR} \int_{B_r} (f-a)^2\, d\mu$.

The following is the main result of this paper.

\begin{theorem}\label{T:PHI}
Suppose that the metric measure space  $(M, d,  \mu)$ satisfies $\VD$ and $\RVD$, and $\phi$ satisfies \eqref{polycon}.
Then the following are equivalent:\\
$(1)$ $\PHI(\phi)$. \\
$(2)$ $\PHI^+ (\phi)$. \\
$(3)$ $\UHK(\phi)$, $\NDL(\phi)$ and $\UJS$.\\
$(4)$ $\NDL(\phi)$ and $\UJS$.\\
$(5)$ {\rm PHR}$(\phi)$, $\E_\phi$ and $\UJS$. \\
$(6)$ {\rm EHR}, $\E_\phi$ and $\UJS$. \\
$(7)$ $\PI(\phi)$, $\J_{\phi,\le}$, $\CSJ(\phi)$ and $\UJS$.
\end{theorem}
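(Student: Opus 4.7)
My strategy is to establish a main cyclic equivalence $(2) \Rightarrow (3) \Rightarrow (4) \Rightarrow (6) \Rightarrow (1) \Rightarrow (2)$, with the side chain $(4) \Rightarrow (5) \Rightarrow (6)$, and separately the equivalence $(2) \Leftrightarrow (7)$ that bridges to the analytic conditions by leveraging the stable characterizations of $\HK(\phi)$ and $\UHK(\phi)$ from Theorems \ref{T:main}--\ref{T:main-1}. The equivalence $(1) \Leftrightarrow (2)$ itself comes from Proposition \ref{phi-phi}, whose proof is a Harnack chaining argument using $\VD$, $\RVD$ and the scaling bounds \eqref{polycon}.

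For $(2) \Rightarrow (3)$, applying $\PHI^+(\phi)$ to the caloric function $(s,x)\mapsto p(s,x,y)$ (respectively, to the Dirichlet heat kernel $p^B(s,x,y)$ on a ball $B$) produces an on-diagonal estimate $\UHKD(\phi)$ (respectively, the near-diagonal lower bound $\NDL(\phi)$) once a base case is seeded via $\E_\phi$, the latter being extracted from $\PHI$ by applying the Harnack inequality to $(s,x) \mapsto \bE^x\bigl[\tau_{B(x_0,r)} \wedge (t_0 - s)\bigr]$ in the standard way. To upgrade $\UHKD$ to the off-diagonal statement $\UHK(\phi)$ I then use $\E_\phi$ and the characterization in Theorem \ref{T:main-1}, together with the fact that $\PHI$ extracts enough control on the jumping kernel to yield $\J_{\phi,\le}$. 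The condition $\UJS$ is obtained from $\PHI$ by the probabilistic scheme of \cite{BBK2}: apply $\PHI$ to the caloric function $(s,x)\mapsto \bE^x\bigl[f(X_{s\wedge \tau_{B(x_0,r)}})\bigr]$ for $f$ supported at positive distance from $B(x_0,r)$, then invoke the L\'evy system formula and pass to the limit $r \downarrow 0$ to read off the averaged comparison of $J(\cdot,y)$.

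The implication $(3) \Rightarrow (4)$ is immediate. For $(4) \Rightarrow (6)$, I first deduce $\UHK(\phi)$ from $\NDL(\phi) + \UJS$ (this goes through establishing $\FK(\phi)$, $\J_{\phi,\le}$ and $\CSJ(\phi)$ and then invoking Theorem \ref{T:main-1}, where $\UJS$ is what allows the necessary tail control on the jumping kernel), whence $\E_\phi$ follows, and then I derive $\EHR$ from $\NDL$ by a non-local oscillation-decrement iteration in which $\UJS$ dominates the contribution of jumps out of the ball to values outside. The side chain $(4) \Rightarrow (5) \Rightarrow (6)$ proceeds analogously but with parabolic rather than elliptic oscillation estimates, and $\PHR \Rightarrow \EHR$ is immediate.

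The heart of the argument, and the step where I expect the main difficulty, is $(6) \Rightarrow (1)$: assembling $\PHI(\phi)$ from $\EHR$, $\E_\phi$ and $\UJS$. My plan is to first bootstrap $\EHR$ into $\EHI$ on all scales via a Moser-type iteration whose compactness inputs are $\E_\phi$ and $\UJS$ (the latter controlling the interaction of the ball with far-away singularities of a harmonic function), and then to promote $\EHI$ to the parabolic statement by the space-time ball-crossing technique of \cite{BBK2,CK1}, with $\UJS$ playing the role of taming the tail of the space-time jumps. Finally, for $(2) \Leftrightarrow (7)$: given (7), $\PI(\phi)$ together with $\VD$ upgrades to $\FK(\phi)$, so that combined with $\J_{\phi,\le}$ and $\CSJ(\phi)$ Theorem \ref{T:main-1} yields $\UHK(\phi)$, which together with $\UJS$ produces $\NDL(\phi)$ via a covering/chaining argument, placing us in condition (4); conversely, from $\PHI$ together with the already-established $\UHK(\phi)$ and $\NDL(\phi)$ I obtain $\HK(\phi)$, so that $\CSJ(\phi)$ and $\J_{\phi,\le}$ come from Theorem \ref{T:main}, while $\PI(\phi)$ is extracted by the standard application of $\PHI$ to test functions adapted to a ball, using the Dirichlet form representation \eqref{eq:DFregjump}.
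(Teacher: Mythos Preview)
Your outline has two genuine gaps.

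First, your route $(6)\Rightarrow(1)$ via ``bootstrap $\EHR$ into $\EHI$, then promote $\EHI$ to $\PHI$'' does not work in the non-local setting. In fact $\EHI+\E_\phi$ alone do \emph{not} imply $\PHI(\phi)$: the paper's Example~\ref{E:82-3} exhibits a symmetric L\'evy process for which $\EHI$ and $\E_\phi$ hold but $\UJS$ (hence $\PHI$) fails, so any argument that does not use $\UJS$ in an essential way at this stage cannot succeed, and there is no known mechanism to pass from $\EHI+\UJS$ to $\PHI$ without further structure. The paper's actual path from $(6)$ is different and bypasses $\EHI$ entirely: from $\EHR+\E_{\phi,\le}$ one first derives $\FK(\phi)$ (Lemma~\ref{ehi-fk}), then uses the resulting upper bound on the Dirichlet heat kernel together with $\EHR$ applied to $x\mapsto p^B(t,x,y)$ to obtain space H\"older regularity of $p^B$ (Lemmas~\ref{com-osc-sup}--\ref{continuity}), which combined with the on-diagonal lower bound coming from $\E_\phi$ yields $\NDL(\phi)$ directly (Proposition~\ref{phi-ephi-2}). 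One is then in condition $(4)$, and the implication $(4)\Rightarrow(2)$ is carried out probabilistically via Lemmas~\ref{phi-l-1}--\ref{phi-l-3} and Theorem~\ref{phi-probab}, where $\UJS$ enters essentially in the tail comparison of Lemma~\ref{phi-l-1}.

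Second, in your $(7)\Rightarrow(4)$ step, the claim that ``$\UHK(\phi)+\UJS$ produces $\NDL(\phi)$ via covering/chaining'' is not justified: chaining propagates an existing near-diagonal lower bound along paths, but $\UHK$ is purely an upper bound and $\UJS$ is an upper-comparison condition on $J$, so neither supplies a seed lower bound. The paper instead proves $\EHR$ directly from $\PI(\phi)+\J_{\phi,\le}+\CSJ(\phi)$ via a logarithmic Caccioppoli inequality for $\log(u+l)$ (Proposition~\ref{L:log-l}) and a nonlocal oscillation-decay iteration controlling the nonlocal tail (Proposition~\ref{P:ho}), and then feeds into condition $(6)$. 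Finally, your assertion in $(2)\Rightarrow(7)$ that $\PHI$ yields $\HK(\phi)$ is false (Example~\ref{E:1.2}); this is easily repaired, since $\CSJ(\phi)$ and $\J_{\phi,\le}$ come from $\UHK(\phi)$ plus conservativeness via Theorem~\ref{T:main-1}, and $\PI(\phi)$ comes from $\NDL(\phi)$ (Proposition~\ref{pi-e}), without ever passing through $\HK(\phi)$.
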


We note that any of  the conditions above implies the conservativeness of the process $\{X_t\}$;
see Proposition  \ref{P:3.1} and \cite[Lemma 4.22]{CKW1},
 Proposition \ref{ndlb} and Proposition
\ref{phi-ephi-2}.

As a corollary of Theorem \ref{T:main} and Theorem
\ref{T:PHI} (noting that $\J_\phi$ implies $\UJS$), we have
the following.

\begin{cor}\label{C:1.25} Suppose that the metric measure space  $(M, d,  \mu)$ satisfies $\VD$ and $\RVD$, and $\phi$ satisfies \eqref{polycon}. Then
$$ \HK (\phi)\Longleftrightarrow\PHI(\phi)+ \J_{\phi,\ge}.$$
\end{cor}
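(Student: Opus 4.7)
Both implications will follow by pivoting through the two-sided jumping-kernel condition $\J_\phi$, using Theorems~\ref{T:main} and~\ref{T:PHI} as black boxes. For the easy direction $\PHI(\phi)+\J_{\phi,\ge}\Longrightarrow \HK(\phi)$, the plan is to invoke the implication $(1)\Rightarrow(7)$ of Theorem~\ref{T:PHI} to extract $\J_{\phi,\le}$ and $\CSJ(\phi)$ from $\PHI(\phi)$. Combining $\J_{\phi,\le}$ with the assumed $\J_{\phi,\ge}$ yields the two-sided $\J_\phi$, and the pair $(\J_\phi,\CSJ(\phi))$ is condition $(4)$ of Theorem~\ref{T:main}, which produces $\HK(\phi)$.

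For the converse, I start from $\HK(\phi)$. Theorem~\ref{T:main} $(1)\Rightarrow(2)$ immediately gives $\J_\phi$, whose lower half is the desired $\J_{\phi,\ge}$. To obtain $\PHI(\phi)$, I will verify condition $(3)$ of Theorem~\ref{T:PHI}, namely $\UHK(\phi)+\NDL(\phi)+\UJS$. Here $\UHK(\phi)$ is part of $\HK(\phi)$ by definition. For $\UJS$, the two-sided bound in $\J_\phi$ together with $\VD$ gives $J(x,y)\asymp J(z,y)$ pointwise whenever $d(x,z)\le d(x,y)/2$, since then $d(z,y)\asymp d(x,y)$ and $V(z,d(z,y))\asymp V(x,d(x,y))$. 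The pointwise-comparability criterion recorded right after Definition~\ref{thm:defUJS} then yields $\UJS$.

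The main obstacle is establishing $\NDL(\phi)$ from $\HK(\phi)$. Fix $B=B(x_0,r)$, $t\le \phi(\eps r)$, and $x,y\in B(x_0,\eps\phi^{-1}(t))$, and use the Dynkin--Hunt decomposition
\[
p(t,x,y)=p^B(t,x,y)+\bE^x\bigl[p(t-\tau_B,X_{\tau_B},y);\,\tau_B\le t\bigr].
\]
For $\eps\le 1/2$, the left-hand side is bounded below by $c_1/V(x_0,\phi^{-1}(t))$ using $\LHK(\phi)$, the estimate $d(x,y)\le 2\eps\phi^{-1}(t)\le \phi^{-1}(t)$, and $\VD$. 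On $\{\tau_B\le t\}$ we have $d(X_{\tau_B},x_0)\ge r$, so for $\eps$ small enough $d(X_{\tau_B},y)\ge r-\eps\phi^{-1}(t)\ge r/2\ge \phi^{-1}(t-\tau_B)$; then $\UHK(\phi)$, $\VD$, and \eqref{polycon} yield $p(t-\tau_B,X_{\tau_B},y)\le c\,t/(V(x_0,r)\phi(r))$. Comparing the error with the main term via $\VD$ (exponent $d_2$) and \eqref{polycon} (exponent $\beta_1$),
\[
\frac{t\,V(x_0,\phi^{-1}(t))}{V(x_0,r)\,\phi(r)}\ \le\ c\,\eps^{\beta_1+d_2},
\]
which can be made arbitrarily small. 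Choosing $\eps$ correspondingly small absorbs the error term into half of the main term and delivers $\NDL(\phi)$. Applying Theorem~\ref{T:PHI} $(3)\Rightarrow(1)$ then completes the proof.
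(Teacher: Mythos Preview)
Your proof is correct. The forward direction $\PHI(\phi)+\J_{\phi,\ge}\Longrightarrow\HK(\phi)$ is essentially identical to the paper's: both extract $\J_{\phi,\le}$ and $\CSJ(\phi)$ from $\PHI(\phi)$ via Theorem~\ref{T:PHI}, combine with $\J_{\phi,\ge}$ to get $\J_\phi$, and then invoke Theorem~\ref{T:main}(4).

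For the converse, both you and the paper start by obtaining $\J_\phi$ (hence $\J_{\phi,\ge}$ and $\UJS$) from $\HK(\phi)$ via Theorem~\ref{T:main}. From there the paper simply records $\J_{\phi,\le}$, $\CSJ(\phi)$, and $\UJS$ and appeals to Theorem~\ref{T:PHI}, implicitly through condition~(7); the missing ingredient $\PI(\phi)$ is not spelled out and is left to the reader (it is available via the intermediate results in the body of the paper). You instead target condition~(3) and supply the one nontrivial piece, $\NDL(\phi)$, by hand using the Dynkin--Hunt decomposition $p=p^B+\bE^x[p(t-\tau_B,X_{\tau_B},y);\tau_B\le t]$, bounding the main term from below by $\LHK(\phi)$ and the correction from above by $\UHK(\phi)$. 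This argument is standard (it appears in earlier work such as \cite{CK1,CK2}) and your scaling estimate $t\,V(x_0,\phi^{-1}(t))/(V(x_0,r)\phi(r))\le c\,\eps^{\beta_1+d_2}$ is correct. The upshot is that your converse is more self-contained than the paper's terse appeal to Theorem~\ref{T:PHI}, at the cost of a short extra computation; the paper's version is quicker to write but relies on the reader filling in $\PI(\phi)$ (or equivalently $\NDL(\phi)$) from the surrounding machinery.
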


\begin{remark}\label{rem:boun}\rm
In this paper, the metric measure space $(M,d,\mu)$ is assumed to be unbounded.
This condition can be relaxed. In fact, if all the corresponding conditions on $(M, d, \mu)$
are imposed only for a finite range of radius (that is, assumed to hold for all $r\in(0,\bar R)$ for some
 $\bar R\in (0, {\rm diam}\, M]$), then with a minor adjustment of the proofs,
 all the results of this paper continue to hold but
with a localized version, for instance,  with the statement of $\PHI(\phi)$ changed to hold for all $r\in (0,\bar R)$, and  those
of $\UHK(\phi)$
and $\HK(\phi)$ changed to hold for $t\in (0,\phi(\bar R))$ and all $x, y\in M$.
In particular, all results of this paper hold on bounded metric measure spaces with the aforementioned modification.
We plan to spell out the details in a future publication.
We note that for the heat kernel estimates for stable-like with Ahlfors $d$-set condition, \cite{GHH} considers both bounded and unbounded cases.
\end{remark}

In this paper, we concentrate on the stability of parabolic Harnack
inequalities.
Stability of elliptic Harnack  inequalities  and the connection to
the H\"older regularity
of harmonic functions for symmetric non-local Dirichlet forms are studied in a
separate paper \cite{CKW3}.

\medskip

In addition to the papers mentioned above,
for other related work on Harnack inequalities and H\"older regularities
for harmonic functions of non-local operators, we mention \cite{BL1, ChZ, LS, Kom, MK, SU, SV} and the
references therein. We emphasize this is only a partial list of the vast literature
on the subject.

\medskip

The rest of the paper is organized as follows.
  The proof of Theorem \ref{T:PHI} is given in Section \ref{pf of 1-21}.
  In  Section \ref{section2}, we present some
 preliminary results.  Various consequences of parabolic Harnack inequalities are given in Section \ref{sectHarn}.
The proof of $(1) \Longleftrightarrow (2)\Longleftrightarrow (3)\Longleftrightarrow (4)$ is given in Subsection \ref{suffphi71},
the proof of $(1) \Longleftrightarrow (5)\Longleftrightarrow (6)$ is given in Subsection \ref{sec7-2}, while
$(1)\Longleftrightarrow (7)$ is shown in Subsection \ref{sec7-3}.
Figure \ref{diagfig} illustrates
implications of various conditions and flow of our proofs.

\begin{figure}[t]
~~~~~~~~~~~\centerline{\epsfig{file=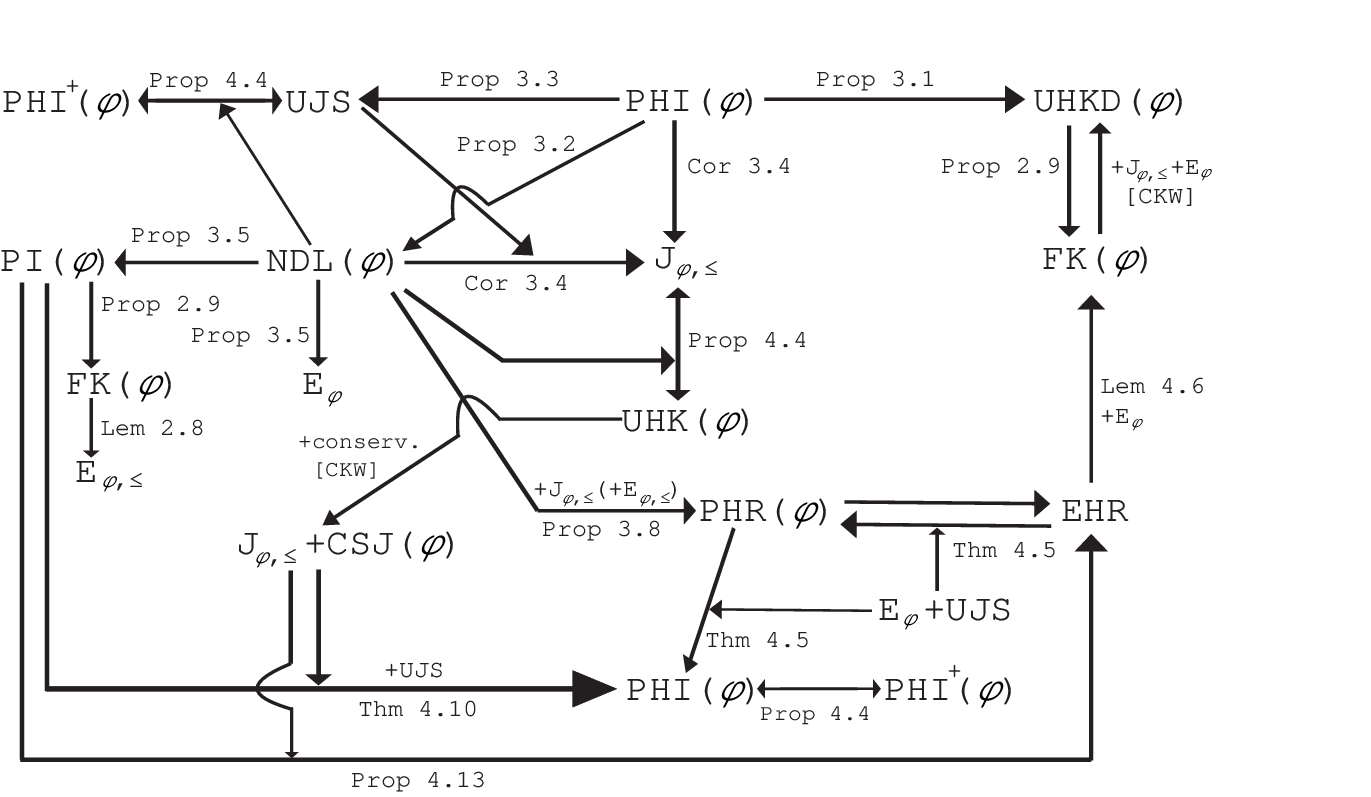, height=3.7in}}
\caption{diagram}\label{diagfig}
\end{figure}

\bigskip

Throughout this paper, we will use $c$, with or without subscripts,
to denote strictly positive finite constants whose values are
insignificant and may change from line to line. For functions $f$ and $g$ defined on a set $D$,
we write $f\asymp g$ if there exists a constant $c\geq 1$ such that $c^{-1}f(x)\le g(x)\le
c\,f(x)$ for all $x\in D$. For $p\in [1,
\infty]$, we will use $\| f\|_p$ to denote the $L^p$-norm in $L^p(M;\mu)$. For any $D\subset M$, denote by $C(D)$ (resp. $C_c(D)$) the set of continuous functions (resp. continuous functions with compact support) on $D$.
In this paper, we omit some of the proofs that are similar to those in literature.

\section{Preliminaries}\label{section2}

In this section we present some preliminary results that will be used in the sequel.

We first recall the analytic characterization of  harmonic and subharmonic functions.
Let $D$ be an open subset of $M$.
Recall that a function $f$ is said to be locally in $\sF_{D}$, denoted as $f\in \sF_{D}^{loc}$, if for every relatively compact subset $U$ of ${D}$, there is a function $g\in \sF_{D}$ such that $f=g$ $m$-a.e. on $U$.
The following is established in    \cite{Chen}.
\begin{lemma}{\rm (\cite[Lemma 2.6]{Chen})}
Let ${D}$ be an open subset of $M$. Suppose $u$ is a function in $\sF_{D}^{loc}$ that is locally bounded on ${D}$
and satisfies that
\begin{equation}\label{con-1}
\int_{U\times V^c} |u(y)|\,J(dx,dy)<\infty
\end{equation} for any relatively compact open sets $U$ and $V$ of $M$ with $\bar{U}\subset V \subset \bar{V} \subset {D}$.
 Then  for every $v\in C_c({D})\cap \sF$, the expression
$$
\int (u(x)-u(y))(v(x)-v(y))\,J(dx,dy)
$$ is well defined and finite; it will still be denoted as $\sE(u,v)$.
\end{lemma}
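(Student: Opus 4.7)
The plan is to decompose the domain of integration $M\times M$ according to the support of $v$, and to bound each piece separately by absolute integrability. First, since $K:=\mathrm{supp}(v)$ is a compact subset of the open set $D$, I would pick relatively compact open sets $U,V$ with $K\subset U\subset \overline{U}\subset V\subset \overline{V}\subset D$. Writing
$$M\times M = (V\times V)\cup (V\times V^c)\cup (V^c\times V)\cup (V^c\times V^c),$$
I note that on $V^c\times V^c$ both $v(x)$ and $v(y)$ vanish, so the integrand is zero there and no contribution arises from this piece.

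On $V\times V$, I would exploit the hypothesis $u\in \sF_{D}^{loc}$: there exists $g\in \sF$ with $g=u$ $\mu$-a.e.\ on $V$, and hence (since $J$ is absolutely continuous in the sense that $J(dx,dy)=J(x,dy)\mu(dx)$ and $\mu$-null sets produce $J$-null product sets)
$$\int_{V\times V}(u(x)-u(y))^{2}\,J(dx,dy)=\int_{V\times V}(g(x)-g(y))^{2}\,J(dx,dy)\le \sE(g,g)<\infty.$$
Combining this with $\sE(v,v)<\infty$, Cauchy--Schwarz yields that $(u(x)-u(y))(v(x)-v(y))$ is absolutely integrable with respect to $J$ on $V\times V$.

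For the mixed piece $V\times V^c$, I would observe that $v(y)=0$ when $y\in V^c$ and $v(x)=0$ when $x\in U^c$, so the integrand reduces to $(u(x)-u(y))v(x)\mathbf{1}_{U\times V^c}(x,y)$, which I split as $u(x)v(x)-u(y)v(x)$. The $u(y)v(x)$ contribution is immediately bounded by $\|v\|_{\infty}\int_{U\times V^c}|u(y)|\,J(dx,dy)$, which is finite by assumption \eqref{con-1}. For the $u(x)v(x)$ contribution, I would use local boundedness of $u$ on $\overline{U}\subset D$ together with the estimate $\int_{U\times V^c}J(dx,dy)<\infty$. To obtain the latter, I would use regularity of $(\sE,\sF)$ to produce a cutoff $\varphi\in \sF\cap C_c(D)$ with $0\le \varphi\le 1$, $\varphi\equiv 1$ on $\overline{U}$ and $\varphi\equiv 0$ outside $V$; since $(\varphi(x)-\varphi(y))^{2}=1$ on $U\times V^c$,
$$\int_{U\times V^c}J(dx,dy)\le \int_{M\times M}(\varphi(x)-\varphi(y))^{2}\,J(dx,dy)=\sE(\varphi,\varphi)<\infty.$$
The symmetric piece $V^c\times V$ is handled identically using the symmetry of $J$ and of the integrand.

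Adding the three bounds shows $(u(x)-u(y))(v(x)-v(y))\in L^{1}(M\times M;J)$, which gives both well-definedness and finiteness of the expression denoted $\sE(u,v)$. I do not anticipate any serious obstacle: the only non-cosmetic ingredient is the existence of the cutoff $\varphi$, which is a standard consequence of the regularity of the Dirichlet form $(\sE,\sF)$ combined with the normal-contraction (truncation) property. The hypothesis \eqref{con-1} is used precisely to control the one nonlocal term that the Cauchy--Schwarz / cutoff technique cannot absorb, namely the interaction of $v$ supported near $K$ with the (possibly unbounded) values of $u$ far away.
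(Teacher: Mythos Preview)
Your proof is correct and follows the standard route. The paper itself does not supply a proof of this lemma; it simply cites \cite[Lemma 2.6]{Chen}, and the argument there proceeds exactly as you outline: decompose according to where $v$ vanishes, use $\sF_D^{loc}$ plus Cauchy--Schwarz on $V\times V$, and handle the cross terms via local boundedness of $u$, the hypothesis \eqref{con-1}, and the cutoff estimate $\int_{U\times V^c}J(dx,dy)\le \sE(\varphi,\varphi)<\infty$ (which the present paper in fact records separately just after the lemma).
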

As noted in \cite[(2.3)]{Chen}, since $(\sE,\sF)$ is a regular Dirichlet form on $L^2(M; \mu)$, for any relatively compact open sets $U$ and $V$ with $\bar{U}\subset V$, there is a function $\psi\in \sF\cap C_c(M)$ such that $\psi=1$ on $U$ and $\psi=0$ on $V^c$. Consequently,
$$\int_{U\times V^c}\,J(dx,dy)=\int_{U\times V^c}(\psi(x)-\psi(y))^2\,J(dx,dy)\le \sE(\psi,\psi)<\infty,$$
so each bounded function $u$ satisfies \eqref{con-1}.

We say that a nearly Borel measurable function $u$ on $M$
is \emph{$\sE$-subharmonic}  (resp. \emph{$\sE$-harmonic, $\sE$-superharmonic})
in ${D}$ if $u\in\sF_{D}^{loc}$ that is locally bounded on $D$, satisfies
\eqref{con-1}
for any relatively compact open sets $U$ and $V$ of $M$ with $\bar{U}\subset V \subset \bar{V} \subset {D}$, and that
\begin{equation*}\label{an-har}
\sE(u,\varphi)\le 0 \quad (\textrm{resp.}\ =0, \ge0)
\quad \hbox{for any } 0\le\varphi\in\sF\cap C_c(D).
\end{equation*}

The following is established in \cite[Theorem 2.11  and Lemma 2.3]{Chen} first for harmonic functions,
and then extended in \cite[Theorem 2.9]{ChK} to subharmonic functions.

\begin{theorem}\label{equ-har} Let ${D}$ be an open subset of $M$, and   $u$ be  a bounded function.
Then $u$ is $\sE$-harmonic $($resp.  $\sE$-subharmonic$)$ in ${D}$ if and only if $u$ is  harmonic
 $($resp. subharmonic$)$
 in ${D}$.
 \end{theorem}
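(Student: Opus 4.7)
The plan is to prove Theorem \ref{equ-har} locally on each relatively compact open $U$ with $\ol U \subset V \subset \ol V \subset D$ by combining Fukushima's semimartingale decomposition with a cutoff argument. Concretely, choose $\psi \in \sF \cap C_c(V)$ with $\psi = 1$ on $\ol U$ and decompose $u = u_1 + u_2$, where $u_1 := u\psi \in \sF_b$ and $u_2 := u(1-\psi)$ vanishes on $\ol U$; the ``bulk'' $u_1$ then admits Fukushima's decomposition in $\sF$, while $u_2$ only affects $u(X_\cdot) - u(X_0)$ up to time $\tau_U$ through jumps from $U$ directly into $V^c$, a contribution made meaningful by the integrability assumption \eqref{con-1}.

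For the analytic-to-probabilistic direction, apply Fukushima's decomposition $u_1(X_t) - u_1(X_0) = M^{[u_1]}_t + N^{[u_1]}_t$ with $N^{[u_1]}$ a continuous additive functional of zero energy, and then incorporate the compensated jump contribution from $u_2$: up to $\tau_U$ this latter piece equals $\wt M_t - A_t$, where $\wt M$ is a purely discontinuous martingale and $A_t := \int_0^t \int_{V^c} u(y)\,J(X_s,dy)\,ds$, both well-defined by \eqref{con-1}. The hypothesis $\sE(u,\varphi) = 0$ (resp.\ $\le 0$) for $\varphi \in \sF \cap C_c(U)$, when interpreted through the Revuz correspondence, forces $N^{[u_1]} - A$ to vanish (resp.\ to be non-increasing) on $[0,\tau_U)$. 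Consequently $u(X_{t\wedge\tau_U})$ is a martingale (resp.\ submartingale), and uniform integrability follows from the boundedness of $u$ on $\ol U$ together with the $L^1$-bound on $A_{\tau_U}$ supplied by \eqref{con-1}.

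For the converse direction, assume $u$ is probabilistically harmonic (resp.\ subharmonic) in $D$ and fix $U$ as above. Let $h(x) := \bE^x[u(X_{\tau_U})]$ on $U$ and $h := u$ on $U^c$. The forward direction already shows that $h$ is $\sE$-harmonic on $U$, and optional sampling combined with uniform integrability yields $u = h$ q.e.\ on $U$ (resp.\ $u \le h$ q.e.\ in the subharmonic case), from which analytic (sub)harmonicity of $u$ transfers from $h$; in the subharmonic case one pairs $u - h \in \sF_U$ against a non-negative $\varphi \in \sF \cap C_c(U)$ and uses $\sE(h,\varphi)=0$.

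The main obstacle throughout will be the rigorous construction and identification of the tail compensator $A_t$ and its matching with the bilinear-form contribution $\int_{U \times V^c}(u(x)-u(y))\varphi(x)\,J(dx,dy)$ appearing in the extended definition of $\sE(u,\varphi)$; assumption \eqref{con-1} is precisely what makes this identification possible and allows the Revuz correspondence to be applied to the merely locally-in-$\sF$ function $u$. A secondary subtlety is that, for subharmonic $u$, one must identify $N^{[u_1]} - A$ with a CAF of bounded variation whose Revuz measure is non-positive, which requires standard regularization within Dirichlet form theory to promote the inequality $\sE(u,\varphi) \le 0$ from test functions in $\sF \cap C_c(U)$ to all non-negative $\varphi$ in the appropriate class.
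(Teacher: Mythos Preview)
The paper does not supply its own proof of Theorem~\ref{equ-har}; it simply records the result as established in \cite[Theorem 2.11 and Lemma 2.3]{Chen} for the harmonic case and \cite[Theorem 2.9]{ChK} for the subharmonic extension. So there is no in-paper argument to compare your proposal against.

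That said, your outline is in the spirit of those references: the cutoff $u = u\psi + u(1-\psi)$, Fukushima's decomposition for $u\psi \in \sF_b$, and the compensated-jump treatment of the tail piece via condition~\eqref{con-1} are precisely the ingredients used in \cite{Chen}. For the forward direction your sketch is essentially correct, though the identification of $N^{[u_1]} - A$ as a vanishing (resp.\ non-increasing) CAF requires first establishing that $N^{[u_1]}$ is locally of bounded variation on $[0,\tau_U)$, which is a genuine step rather than an immediate consequence of the Revuz correspondence.

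There is, however, a real gap in your converse direction. You write that ``the forward direction already shows that $h$ is $\sE$-harmonic on $U$,'' but the forward direction runs from $\sE$-harmonicity to probabilistic harmonicity, not the other way. What must actually be shown is that the hitting expectation $h(x) = \bE^x[u(X_{\tau_U})]$ lies in $\sF_U^{loc}$ and satisfies $\sE(h,\varphi)=0$; the membership $h \in \sF_U^{loc}$ is the non-trivial heart of the matter and is exactly the content of \cite[Lemma 2.3]{Chen}. Without this, your argument is circular: you cannot transfer analytic harmonicity from $h$ to $u$ before first establishing that $h$ has finite local energy, and nothing in your outline addresses that point.
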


We next recall four results from \cite{CKW1}.  Lemma \ref{intelem} is essentially given in \cite[Lemma 2.1]{CK2}.

\begin{lemma} {\rm (\cite[Lemma 2.1]{CKW1})}   \label{intelem}
Assume that $\VD$,
 \eqref{polycon}
and $\J_{\phi,\le}$ hold. Then there exists a constant $c_1>0$ such that
\[
 \int_{B(x,r)^c}
J(x,y)\,\mu (dy)\le \frac{c_1}{\phi (r)}
\quad\mbox{for every }
x\in M \hbox{ and } r>0.
\]
\end{lemma}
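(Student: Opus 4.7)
The plan is to prove the bound by a dyadic annular decomposition of the exterior $B(x,r)^c$, using the pointwise upper bound on $J$ from $\J_{\phi,\le}$ together with $\VD$ to bound the integral over each annulus, and then summing via the lower scaling exponent $\beta_1$ in \eqref{polycon}.

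First I would decompose $B(x,r)^c = \bigcup_{k\geq 0} A_k$ where $A_k := B(x, 2^{k+1}r)\setminus B(x, 2^k r)$. For $y \in A_k$, we have $d(x,y) \geq 2^k r$, so by monotonicity of $V(x,\cdot)$ and $\phi$ together with $\J_{\phi,\le}$,
$$
J(x,y) \;\leq\; \frac{c}{V(x,d(x,y))\,\phi(d(x,y))} \;\leq\; \frac{c}{V(x, 2^k r)\,\phi(2^k r)}.
$$
Since $A_k \subset B(x, 2^{k+1}r)$, $\VD$ gives $\mu(A_k) \leq V(x, 2^{k+1}r) \leq C_\mu V(x, 2^k r)$. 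Therefore
$$
\int_{A_k} J(x,y)\,\mu(dy) \;\leq\; \frac{c\, C_\mu}{\phi(2^k r)}.
$$

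Next I would sum over $k$. By \eqref{polycon}, $\phi(2^k r)/\phi(r) \geq c_1 2^{k\beta_1}$ with $\beta_1 > 0$, so
$$
\int_{B(x,r)^c} J(x,y)\,\mu(dy) \;\leq\; \sum_{k=0}^{\infty} \frac{c\, C_\mu}{\phi(2^k r)} \;\leq\; \frac{c\,C_\mu}{c_1\,\phi(r)} \sum_{k=0}^\infty 2^{-k\beta_1} \;=\; \frac{c'}{\phi(r)},
$$
with a constant $c'$ independent of $x$ and $r$. This yields the claimed bound.

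There is no real obstacle here: the argument is routine once one has the pointwise upper bound on $J$, $\VD$, and the lower polynomial scaling $\beta_1 > 0$ of $\phi$. The only mild subtlety is that $\J_{\phi,\le}$ is stated only $\mu\times\mu$-a.e., so the manipulations above are understood as holding for $\mu$-a.e.\ $y$ on each $A_k$, which is sufficient to integrate. The geometric series converges thanks precisely to $\beta_1 > 0$, which is why the lower bound in \eqref{polycon} is the operative hypothesis.
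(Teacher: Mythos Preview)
Your proof is correct. The paper itself does not give a proof of this lemma; it merely recalls the result from \cite{CKW1} (noting it is essentially \cite[Lemma 2.1]{CK2}). Your dyadic annular decomposition combined with $\VD$ and the lower scaling bound $\beta_1>0$ from \eqref{polycon} is exactly the standard argument used in those references, so there is nothing to add.
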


\begin{proposition} {\rm (\cite[Proposition 3.1(ii)]{CKW1})} \label{P:3.1}
Suppose that $\VD$ holds.
Then either $\LHK(\phi)$ or  $\NDL(\phi)$ implies  $\zeta
=\infty$ a.s., where $\zeta$ denotes the lifetime of the process $X$.
\end{proposition}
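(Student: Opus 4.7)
The plan is to establish conservativeness of the associated Hunt process, equivalently $P_t 1(x) = \bP^x(\zeta>t) = 1$ for $\mu$-a.e.\ $x\in M$ and every $t>0$, via a two-step strategy: first a uniform positive lower bound on the survival probability, then a promotion from inequality to equality.

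Step one extracts the uniform lower bound $P_t 1 \ge c$. Under $\LHK(\phi)$, this is immediate from integrating the on-diagonal heat kernel:
\[
P_t 1(x) = \int_M p(t,x,y)\,\mu(dy) \ge \int_{B(x,\phi^{-1}(t))} \frac{c_1}{V(x,\phi^{-1}(t))}\,\mu(dy) = c_1.
\]
Under $\NDL(\phi)$, the analogous estimate is obtained through the Dirichlet heat kernel: for any $x_0\in M$, $r>0$ and $t\le \phi(\eps r)$, integrating $p^{B(x_0,r)}(t,x_0,y)\ge c_1/V(x_0,\phi^{-1}(t))$ over $y\in B(x_0,\eps\phi^{-1}(t))$ and invoking $\VD$ yields $\bP^{x_0}(\tau_{B(x_0,r)}>t)\ge c_2$, with constants independent of $r$. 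Letting $r\to\infty$ through a sequence with $\phi(\eps r)\ge t$ and using the monotone convergence $\tau_{B(x_0,r)}\uparrow \zeta$ gives $\bP^{x_0}(\zeta>t)\ge c_2$ for every $t>0$.

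The main obstacle is promoting the uniform bound $P_t 1\ge c$ to the equality $P_t 1\equiv 1$. The plan is to use the monotonicity of $t\mapsto P_t 1$ together with the absence of a killing term in the pure-jump form \eqref{eq:DFregjump}. Since $P_t 1$ is non-increasing and bounded between $c$ and $1$, the pointwise limit $h(x) := \lim_{t\to\infty} P_t 1(x)$ exists in $[c,1]$, and the semigroup identity $P_s(P_t 1) = P_{s+t}1$ combined with monotone convergence shows that $P_s h = h$ for every $s\ge 0$. The heat kernel lower bound from $\LHK(\phi)$ or $\NDL(\phi)$ together with $\VD$ guarantees irreducibility of $\{P_t\}$, so any bounded $P_s$-invariant function is $\mu$-a.e.\ constant, giving $h\equiv h_0\in[c,1]$. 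The hard part is ruling out $h_0<1$: intuitively, $1-h_0$ would represent a strictly positive flux of mass disappearing at infinity in finite time, but the uniform bound $\bP^{x_0}(\tau_{B(x_0,r)}>t)\ge c_2$ uniform in $r$ should prohibit this. I would attempt to make this rigorous via a duality argument, pairing $1-P_t 1$ with compactly supported $L^2$ test functions $g$ and using the $L^2$-symmetry $\int (1-P_t 1)\,g\,d\mu = \int (1-P_t 1)(x) g(x)\,\mu(dx)$ together with $\int g(1-P_s 1)\,d\mu = \int P_s g\,d\mu \cdot 0 + \ldots$, eventually extracting from the pure-jump representation a consistency identity that forces $h_0=1$. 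The delicate point, where $\mu(M)=\infty$ prevents integrating $1$ directly and the proof must instead localize via suitable cutoffs, is what I expect to be the chief difficulty of the argument.
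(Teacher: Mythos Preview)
The paper does not prove this proposition; it is simply quoted from \cite{CKW1}. So the comparison here is whether your argument stands on its own.

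Your Step~1 is correct and is the natural opening move: both $\LHK(\phi)$ and $\NDL(\phi)$ together with $\VD$ give the uniform survival bound $\bP^{x}(\zeta>t)\ge c$ for all $t>0$ and q.e.\ $x$, exactly as you wrote.

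Step~2 is where the argument is incomplete, and you say so yourself. Two points deserve emphasis. First, the inference ``irreducibility $\Rightarrow$ bounded $P_t$-invariant functions are constant'' is \emph{not} automatic for sub-Markovian semigroups: Brownian motion with positive drift on $(0,\infty)$ killed at $0$ is irreducible, yet $h(x)=\bP^x(\zeta=\infty)$ is strictly increasing in $x$. Second, even granting that $h\equiv h_0$ is constant, a uniform bound $P_t1\ge c$ alone cannot force $h_0=1$: a two-state chain with killing at one state already violates this. Both counterexamples involve killing, which the pure-jump form \eqref{eq:DFregjump} excludes, and that is exactly where the missing work lies --- you must use the absence of a killing measure in a concrete way, not just invoke it. The ``duality argument'' you sketch is too vague to fill this gap.

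One route that does close the argument: the martingale $M_t=\bP^x(\zeta=\infty\mid\sF_t)=h(X_t)\mathbf{1}_{\{t<\zeta\}}$ converges a.s.\ to $\mathbf{1}_{\{\zeta=\infty\}}$, so on $\{\zeta=\infty\}$ (which has probability $\ge c$) one has $h(X_t)\to 1$. Combined with strict positivity of the heat kernel from $\LHK(\phi)$ (or chaining via $\NDL(\phi)$) and the lower bound $h\ge c$, this forces $\esssup h=1$ and, with more care, $h\equiv 1$. But this last step still requires a genuine argument that you have not supplied.
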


For a Borel measurable function $u$ on $M$, following
\cite{CKP1}, we define its
\emph{nonlocal tail} $\T(u; x_0,r)$ in the ball
$B(x_0,r)$ by
\begin{equation}\label{def-T}
\T\, (u; x_0,r):=\phi(r)\int_{B(x_0,r)^c}\frac{|u(z)|}{V(x_0,d(x_0,z))\phi(d(x_0,z))}\,\mu(dz).
\end{equation}
In the following, for any $x\in M$ and $r>0$, set
$B_r (x) = B(x,r)$.

\begin{lemma} {\rm (\cite[Lemma 4.8]{CKW1})} \label{oppo}
Suppose $\VD$, \eqref{polycon}, $\FK(\phi)$,
$\CSJ(\phi)$ and $\J_{\phi,\le}$ hold. Let $x_0 \in M$, $R, {r_1},{r_2}>0$  with
 ${r_1}\in [R/2 ,R]$ and ${r_1}+{r_2}\le R$, and $u$ be an $\sE$-subharmonic
function in $B_R(x_0)$.
For $\theta>0$, set $v :=
(u-\theta)_+$.
We have
\begin{equation*}\begin{split}
 \int_{B_{{r_1}}(x_0)}v^2\,d\mu\le&
\frac{ c_1 }{ \theta^{2\nu} V(x_0,R)^\nu} \left(\int_{B_{{r_1}+{r_2}}(x_0)}u^2\,d\mu\right)^{1+\nu}\\
&\times \left(1+\frac{{r_1}}{{r_2}}\right)^{\beta_2} \left[ 1+
\left(1+\frac{{r_1}}{{r_2}}\right)^{d_2+\beta_2-\beta_1}
\frac{\T\,(u; x_0,R/2)}{\theta} \right],\end{split}
\end{equation*}
where $\nu$ is the constant in
$\FK(\phi)$, $d_2$ is the constant in
\eqref{e:vd2},
$\beta_1, \beta_2$ are the constants in \eqref{polycon}, and $c_1$ is a constant independent of $\theta, x_0, R, {r_1}$
and ${r_2}$.
\end{lemma}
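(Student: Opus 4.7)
The plan is to carry out one step of a De Giorgi iteration adapted to the non-local metric-measure setting, following the approach of \cite{CKP1}: first establish a non-local Caccioppoli estimate for the truncation $v=(u-\theta)_+$, then convert the resulting energy bound into the claimed super-linear bound on $\int_{B_{r_1}(x_0)}v^2\,d\mu$ by means of $\FK(\phi)$ and Markov's inequality applied to the level set $\{u>\theta\}$.

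First I would use $\CSJ(\phi)$ to choose a cutoff $\varphi\in \sF_b$ equal to $1$ on $B_{r_1}(x_0)$ and vanishing outside $B_{r_1+r_2}(x_0)$ (after absorbing the factor $1+C_0$ into a suitable rescaling of $r_2$). Since $u$ is $\sE$-subharmonic on $B_R(x_0)$ and the test function $\varphi^2 v$ lies in $\sF\cap C_c(B_R(x_0))$ (justified by the compact support of $\varphi$ together with $u\in \sF_{B_R}^{loc}$), Theorem \ref{equ-har} yields $\sE(u,\varphi^2 v)\le 0$. Expanding this inequality and applying the standard algebraic bound
\begin{equation*}
(u(x)-u(y))\bigl(\varphi(x)^2 v(x)-\varphi(y)^2 v(y)\bigr)\ge \tfrac12\bigl(v(x)\varphi(x)-v(y)\varphi(y)\bigr)^2-C\bigl(v(x)^2+v(y)^2\bigr)\bigl(\varphi(x)-\varphi(y)\bigr)^2
\end{equation*}
on pairs $x,y\in B_R(x_0)$, while controlling the remaining contributions with $y\in B_R(x_0)^c$ by $v(x)\varphi(x)^2\,|u(y)|\,J(x,y)$ and integrating via $\J_{\phi,\le}$, I would arrive at a Caccioppoli-type estimate of the form
\begin{equation*}
\sE(v\varphi,v\varphi)\le C\int v^2\,d\Gamma(\varphi,\varphi)+C\,\mu\bigl(\{v>0\}\cap B_{r_1+r_2}(x_0)\bigr)\,\frac{\T(u;x_0,R/2)}{\phi(R)}.
\end{equation*}
The reference radius $R/2$ in the tail appears because, for $x\in B_{r_1+r_2}(x_0)$, the distances $d(x,y)$ and $d(x_0,y)$ are comparable only once $d(x_0,y)\gtrsim R/2$; this is precisely where the hypothesis $r_1\ge R/2$ is used.

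Next I would feed $f=v$ into $\CSJ(\phi)$ to bound $\int v^2\,d\Gamma(\varphi,\varphi)$ by $C_1\iint_{U\times U^*}(v(x)-v(y))^2\,J(dx,dy)+\frac{C_2}{\phi(r_2)}\int v^2\,d\mu$, and absorb the jump integral into $\sE(v\varphi,v\varphi)$ by a standard manipulation. Then $\FK(\phi)$ applied to $D:=\{v\varphi>0\}\subset B_{r_1+r_2}(x_0)$ delivers $\int (v\varphi)^2\,d\mu\le C\phi(r_1+r_2)\bigl(\mu(D)/V(x_0,r_1+r_2)\bigr)^\nu\sE(v\varphi,v\varphi)$; Markov's inequality yields $\mu(D)\le \theta^{-2}\int_{B_{r_1+r_2}(x_0)}u^2\,d\mu$; and $v\le u$ on $\{v>0\}$ converts $\int v^2$ on the right-hand side into $\int u^2$. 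Assembling these ingredients, invoking $\VD$ to pass from $V(x_0,r_1+r_2)$ to $V(x_0,R)$ and \eqref{polycon} to bound $\phi(r_1+r_2)/\phi(r_2)\le C(1+r_1/r_2)^{\beta_2}$, produces the stated inequality.

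The main difficulty will be the bookkeeping of the two geometric factors: $(1+r_1/r_2)^{\beta_2}$ arises naturally from the $\phi$-loss in comparing $\phi(r_1+r_2)$ to $\phi(r_2)$, while $(1+r_1/r_2)^{d_2+\beta_2-\beta_1}$ emerges when rewriting the naive tail integral $\int_{B_R(x_0)^c}|u(y)|/(V(x,d(x,y))\phi(d(x,y)))\,\mu(dy)$ in terms of $\T(u;x_0,R/2)$: this costs a $V$-doubling factor with exponent $d_2$ (from \eqref{e:vd2}) and a $\phi$-mismatch with exponent $\beta_2-\beta_1$ (from \eqref{polycon}). A secondary technical point is the admissibility of $\varphi^2 v$ as a test function for the subharmonicity of $u$, which requires a localization argument since $u$ is only assumed to lie in $\sF_{B_R}^{loc}$ rather than globally in $\sF$; this can be handled by a standard truncation-approximation procedure exploiting the compact support of $\varphi$.
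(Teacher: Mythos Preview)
The paper does not actually prove this lemma; it is merely recalled from \cite[Lemma 4.8]{CKW1} as a known input, so there is no in-paper argument to compare against.  Your proposed strategy is, however, exactly the one carried out in \cite{CKW1}: test the $\sE$-subharmonicity of $u$ against $\varphi^2 v$ with a $\CSJ(\phi)$ cutoff to derive a non-local Caccioppoli inequality for $v\varphi$, control the carr\'e-du-champ term via $\CSJ(\phi)$ and the far-field contribution via $\J_{\phi,\le}$ and the tail $\T(u;x_0,R/2)$, then apply $\FK(\phi)$ on the sub-level-set $D=\{v\varphi>0\}$ together with Chebyshev's bound $\mu(D)\le\theta^{-2}\int_{B_{r_1+r_2}}u^2\,d\mu$ to obtain the super-linear estimate.

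Two minor caveats.  First, the ``absorption'' of the $\CSJ$ term $C_1\iint_{U\times U^*}(v(x)-v(y))^2\,J(dx,dy)$ into $\sE(v\varphi,v\varphi)$ is not quite automatic, because on the annulus $U$ the cutoff $\varphi$ need not be bounded below; in \cite{CKW1} this is handled by invoking a refined form of $\CSJ(\phi)$ in which the constant $C_1$ can be taken arbitrarily small at the expense of enlarging $C_2$ (see \cite[Proposition 2.4]{CKW1}), so that the term can indeed be absorbed.  Second, your heuristic for the exponent $d_2+\beta_2-\beta_1$ is slightly off: the $-\beta_1$ does not come from the recentring of the tail alone but from combining the recentred tail bound (which contributes $d_2+\beta_2$) with the ratio $\phi(r_2)/\phi(R)$ arising when matching the Caccioppoli output to the definition of $\T(u;x_0,R/2)$, which by \eqref{polycon} contributes a factor bounded by $c\,(r_2/R)^{\beta_1}\asymp (1+r_1/r_2)^{-\beta_1}$.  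Neither point affects the correctness of your overall plan.
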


\begin{proposition} {\rm (\cite[Proposition 4.10]{CKW1})} \label{P:mvi2g}
{\bf($L^2$-mean value inequality)}\,  Assume $\VD$, \eqref{polycon}, $\FK(\phi)$, $\CSJ(\phi)$ and
$\J_{\phi,\le}$ hold. For any $x_0\in M$ and $r>0$,
let $u$ be a bounded
$\sE$-subharmonic
in $B_r(x_0)$.  Then
there is a constant $C_0>0$ independent of
$x_0$ and $r$ so that
\be \label{e:mvi2g-1}
 \esssup_{B_{r/2}(x_0)}u\le  C_0\left(  \left(\frac{1}{V(x_0,r)}\int_{B_r(x_0)} u^2\,d\mu \right)^{1/2}+ \T\, (u; x_0,r/2)\right).
\ee
\end{proposition}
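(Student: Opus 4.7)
The plan is a Moser-type iteration driven by Lemma \ref{oppo}. Fix $x_0\in M$ and $r>0$, and introduce a geometric sequence of radii $r_n := r/2 + r/2^{n+1}$ decreasing from $r$ to $r/2$, so that $r_{n-1}-r_n \asymp r/2^n$. Pick a parameter $\theta>0$ to be optimized at the end and set $\theta_n := \theta(1-2^{-n})$, $v_n := (u-\theta_n)_+$ and $A_n := \int_{B_{r_n}(x_0)} v_n^2 \,d\mu$. Since truncations $(u-c)_+$ of $\sE$-subharmonic functions are themselves $\sE$-subharmonic (a standard consequence of testing the inequality $\sE(u,\varphi)\le 0$ against nonnegative $\varphi$ of the form $(u-c)_+\psi$, cf.\ \cite{ChK}), each $v_{n-1}$ is $\sE$-subharmonic in $B_r(x_0)$.

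The engine of the iteration is Lemma \ref{oppo} applied to $v_{n-1}$ with $R=r$, $r_1=r_n$, $r_2=r/2^{n+1}$, and truncation level $\theta_n-\theta_{n-1}=\theta\cdot 2^{-n}$. Since $(v_{n-1}-(\theta_n-\theta_{n-1}))_+=v_n$ pointwise, this yields, after invoking \eqref{polycon} and $\VD$ (so that $V(x_0,r)\asymp V(x_0,r_{n-1})$),
$$A_n \le c\cdot 2^{\kappa n}\cdot \frac{A_{n-1}^{1+\nu}}{\theta^{2\nu}\,V(x_0,r)^\nu}\cdot\left(1+\frac{\T(u; x_0, r/2)}{\theta\cdot 2^{-n}}\right)$$
for some exponent $\kappa=\kappa(\beta_1,\beta_2,d_2,\nu)>0$, where I have used $\T(v_{n-1}; x_0, r/2)\le \T(u; x_0, r/2)$ since $0\le v_{n-1}\le u_+$.

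Provided $\theta\ge \T(u; x_0, r/2)$, the bracket is at most $c\cdot 2^n$, and normalizing $a_n := A_n/(\theta^2 V(x_0,r))$ produces a recursion of the form $a_n \le b^n\,a_{n-1}^{1+\nu}$ with a constant $b>1$ independent of $n$ and $\theta$. A standard iteration lemma (De Giorgi--Giaquinta bootstrap) then gives $a_n\to 0$ as $n\to\infty$ whenever $a_0\le b^{-1/\nu^2}$, i.e.\ whenever $\theta^2\ge c_*\cdot V(x_0,r)^{-1}\int_{B_r(x_0)}u^2\,d\mu$. Taking
$$\theta := C_0\left(\left(\frac{1}{V(x_0,r)}\int_{B_r(x_0)} u^2\,d\mu\right)^{1/2}+\T(u; x_0, r/2)\right)$$
with $C_0$ large enough forces both constraints simultaneously, so $A_n\to 0$ and hence $(u-\theta)_+\equiv 0$ a.e.\ on $B_{r/2}(x_0)$. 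Applying this to $u_+$ (itself $\sE$-subharmonic) yields \eqref{e:mvi2g-1}.

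The main technical obstacle is balancing the $n$-dependent prefactors: the $2^{\kappa n}$ from the annular geometry in Lemma \ref{oppo}, the $2^{2\nu n}$ coming from the denominator $(\theta_n-\theta_{n-1})^{2\nu}$, and the extra $2^n$ from the tail bracket must all be absorbed by the super-linear exponent $1+\nu$ on the right. This is the classical balance underlying the De Giorgi iteration; it succeeds because for any $b>1$ the elementary estimate guaranteeing $a_n\le a_0 b^{-n/\nu}\to 0$ requires only $a_0\le b^{-1/\nu^2}$, and this smallness is precisely what is engineered by the choice of $\theta$ above. The bounded-ness assumption on $u$ is used only to guarantee that the starting quantity $A_0$ and the tail $\T(u; x_0, r/2)$ are finite.
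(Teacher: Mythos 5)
Your argument is correct and is essentially the proof given in \cite[Proposition 4.10]{CKW1} (the present paper only quotes the result): a De Giorgi-type iteration on the truncations $v_n=(u-\theta_n)_+$ over the shrinking balls $B_{r_n}(x_0)$, with Lemma \ref{oppo} supplying the recursive inequality and the elementary bootstrap Lemma \ref{L:it} closing the iteration once $\theta$ is chosen to dominate both the normalized $L^2$-norm and the tail $\T(u;x_0,r/2)$. The only points worth double-checking, which you handle correctly, are that $((u-a)_+-b)_+=(u-a-b)_+$ so the truncations nest, that truncations of bounded $\sE$-subharmonic functions remain $\sE$-subharmonic (via Theorem \ref{equ-har} and Jensen), and that $\T(v_{n-1};x_0,r/2)\le \T(u;x_0,r/2)$.
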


The  following three results are proved in \cite{CKW1}.

\begin{proposition} {\rm (\cite[Proposition 4.14]{CKW1})} \label{P:exit}
Assume $\VD$, \eqref{polycon}, $\FK(\phi)$, $\J_{\phi,\le}$ and $\CSJ(\phi)$ hold. Then, $\E_{\phi}$ holds.
\end{proposition}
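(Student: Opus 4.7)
The plan is to prove the upper and lower bounds $\E_{\phi,\le}$ and $\E_{\phi,\ge}$ separately, both using the tools already assembled in this section. For the upper bound, fix $x_0 \in M_0$, $r > 0$, and set $B := B(x_0, r)$. I would first apply $\FK(\phi)$ with $D = B$ to obtain $\lambda_1(B) \ge c/\phi(r)$, which yields the $L^2$-spectral decay $\|P_t^B\|_{L^2\to L^2}\le e^{-ct/\phi(r)}$. Writing $P_t^B \mathbf{1}_B(y) = \bP^y(\tau_B > t)$, I would upgrade this $L^2$-decay to a pointwise decay via an $L^2$-to-$L^\infty$ argument: the on-diagonal upper bound for $p^B(\cdot, x_0, \cdot)$ that follows from $\FK(\phi)$, $\CSJ(\phi)$, and $\J_{\phi,\le}$ via Proposition \ref{P:mvi2g} and Lemma \ref{oppo} (Moser-type iteration with nonlocal tail correction) gives $\bP^{x_0}(\tau_B > t) \le 1/2$ for some $t = c_2\phi(r)$. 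Standard iteration and integration in $t$ then produce $\bE^{x_0}[\tau_B] \le c_1\phi(r)$.

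For the lower bound, fix $x_0 \in M_0$, $r > 0$, and set $\tau := \tau_{B(x_0, r)}$. The plan is to choose, using $\CSJ(\phi)$ and $\J_{\phi,\le}$, a cutoff function $\vp \in \sF_b$ for $B(x_0, r/2) \subset B(x_0, r)$ for which $\|\sL\vp\|_{L^\infty(B(x_0, r))} \le C/\phi(r)$. On the inner ball $B(x_0, r/4)$ this follows immediately from $\J_{\phi,\le}$ via Lemma \ref{intelem}: since $\vp \equiv 1$ there and $(1 - \vp) \le \mathbf{1}_{B(y, r/4)^c}$,
$$
|\sL\vp(y)| \;\le\; \int_{B(y, r/4)^c} J(y, dz) \;\le\; \frac{c}{\phi(r/4)} \;\le\; \frac{c}{\phi(r)}.
$$
For $y$ in the transition annulus $B(x_0, r) \setminus B(x_0, r/4)$, I would construct $\vp$ iteratively across a nested sequence of annuli so that $\CSJ(\phi)$ applied at each scale controls the local energy, and then promote the resulting $L^2$-energy estimates to the required pointwise bound by applying Proposition \ref{P:mvi2g} to a suitable $\sE$-subharmonic auxiliary function. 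Once the $L^\infty$ bound on $\sL\vp$ is secured, Dynkin's formula applied to the Fukushima decomposition of $\vp(X_{t\wedge\tau})$, combined with $\vp(x_0) = 1$ and $\vp(X_\tau) = 0$, yields
$$
1 - \bP^{x_0}(\tau \le t) \;\ge\; \bE^{x_0}\!\left[\vp(X_{t \wedge \tau})\right] \;=\; 1 + \bE^{x_0}\!\int_0^{t \wedge \tau} \sL\vp(X_s)\,ds \;\ge\; 1 - \frac{Ct}{\phi(r)}.
$$
Choosing $t = \phi(r)/(2C)$ gives $\bP^{x_0}(\tau \le t) \le 1/2$, whence $\bE^{x_0}[\tau] \ge t/2 \ge c_1^{-1}\phi(r)$.

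The principal obstacle is precisely the pointwise $L^\infty$ bound on $\sL\vp$ in the annular region. Since $\CSJ(\phi)$ natively supplies only $L^1/L^2$-type control of $\Gamma(\vp, \vp)$, and since Lipschitz-type cutoffs fail to produce a finite $\sL\vp$ once $\beta_2 \ge 1$, this step is where the non-local nature of the problem and the allowed generality of the scale function $\phi$ make the argument technically heavy. It is here that $\FK(\phi)$, $\CSJ(\phi)$, and $\J_{\phi,\le}$ must genuinely interact, rather than each contributing independently.
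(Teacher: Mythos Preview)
Your plan for $\E_{\phi,\le}$ is fine, and in fact more than you need: Lemma~\ref{upper-e} already records that $\FK(\phi)$ alone (with $\VD$ and \eqref{polycon}) yields $\E_{\phi,\le}$, so Proposition~\ref{P:mvi2g} and Lemma~\ref{oppo} are not required for that half.

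For $\E_{\phi,\ge}$ your overall strategy---construct a cutoff $\vp$ with $\sL\vp\ge -C/\phi(r)$ on $B(x_0,r)$ and apply Dynkin's formula---is the route taken in \cite{CKW1}, and your instinct to assemble $\vp$ from $\CSJ$-cutoffs across a nested family of annuli is also correct. The gap is in how you propose to verify the pointwise bound. You write that you would ``promote the resulting $L^2$-energy estimates to the required pointwise bound by applying Proposition~\ref{P:mvi2g} to a suitable $\sE$-subharmonic auxiliary function.'' This step does not work: Proposition~\ref{P:mvi2g} bounds the supremum of a subharmonic function by its $L^2$-norm plus nonlocal tail, but $\sL\vp$ is not subharmonic (nor is there any natural subharmonic majorant of $|\sL\vp|$ whose $L^2$-norm you control), so the mean value inequality has nothing to act on here.

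In \cite{CKW1} the bound on $\sL\vp$ is obtained by \emph{direct computation on the multi-scale construction}, not by elliptic regularity. One first uses the self-improvement of $\CSJ(\phi)$ to replace the constant $C_1$ in \eqref{e:csj1} by an arbitrarily small one (at the cost of enlarging $C_2$); with $C_1<1/8$ the dyadic iteration across annuli converges, and the carr\'e-du-champ contributions telescope. The far-field part of $\sL\vp$ is handled by $\J_{\phi,\le}$ via Lemma~\ref{intelem}, exactly as you do on the inner ball. The point is that the passage from the integrated $\CSJ$ estimate to a pointwise generator bound is achieved by the geometry of the telescoping sum, not by Proposition~\ref{P:mvi2g}. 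So your ingredients are right, but the tool you name for binding them together is not the one that does the job.
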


\begin{lemma} {\rm (\cite[Lemma 4.15]{CKW1})}  \label{upper-e}
Assume that $\VD$, \eqref{polycon} and $\FK(\phi)$ hold. Then, $\E_{\phi,\le} $ holds.\end{lemma}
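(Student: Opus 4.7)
The plan is to bound the torsion function of a ball directly from $\FK(\phi)$ via a De Giorgi iteration on its level sets, without invoking $\CSJ(\phi)$ or any two-sided jumping bound. Fix a ball $B=B(x_0,r)$ and consider $u(x):=\bE^x[\tau_B]$. Since $\VD$ gives $\mu(B)<\infty$ and $\FK(\phi)$ applied with $D=B$ gives $\lambda_1(B)\ge c_0\phi(r)^{-1}>0$, the Green operator $G_B=(-\sL^B)^{-1}$ on $L^2(B)$ is bounded, so up to a properly exceptional set $u$ coincides with $G_B 1\in\sF_B$ and satisfies the variational identity
\[
\sE(u,v)=\int_B v\,d\mu \quad \text{for every } v\in\sF_B.
\]

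For each $t\ge 0$ set $D_t:=\{x\in B: u(x)>t\}$ and $v_t:=(u-t)_+\in\sF_{D_t}\subset\sF_B$. A short case analysis on the signs of $a-t,b-t$ shows the pointwise inequality
\[
(a-b)\bigl((a-t)_+-(b-t)_+\bigr)\ge\bigl((a-t)_+-(b-t)_+\bigr)^2,
\]
which, applied with $a=u(x)$, $b=u(y)$ and integrated against $J(dx,dy)$, yields $\sE(v_t,v_t)\le\sE(u,v_t)=\int v_t\,d\mu$. Cauchy--Schwarz bounds the right side by $\|v_t\|_2\,\mu(D_t)^{1/2}$, and $\FK(\phi)$ applied to $D_t\subset B$, namely $\lambda_1(D_t)\ge c_0\phi(r)^{-1}(V(x_0,r)/\mu(D_t))^\nu$, combined with $\lambda_1(D_t)\|v_t\|_2^2\le\sE(v_t,v_t)$ gives
\[
\|v_t\|_2\le c_1\,\phi(r)\,V(x_0,r)^{-\nu}\,\mu(D_t)^{\nu+1/2}.
\]
Since $\|v_t\|_2^2\ge (t'-t)^2\mu(D_{t'})$ for any $t'>t$, I obtain the key recursion
\[
\mu(D_{t'})\le \frac{c_1^2\,\phi(r)^2}{(t'-t)^2\,V(x_0,r)^{2\nu}}\,\mu(D_t)^{1+2\nu}.
\]

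To convert this recursion into a pointwise bound I run a De Giorgi iteration along the levels $t_k:=T(1-2^{-k})$ with $T=c_2\phi(r)$. Writing $a_k:=\mu(D_{t_k})/V(x_0,r)$, the recursion above reads $a_{k+1}\le A\cdot 4^k\,a_k^{1+2\nu}$ with $A=O(c_2^{-2})$. Since $a_0\le 1$ by $\VD$, the standard De Giorgi lemma shows that for $c_2$ large enough (depending only on $\nu$, on the constants in $\VD$, and on the constant in $\FK(\phi)$) one has $a_k\to 0$; then $D_{t_k}\downarrow D_T$ forces $\mu(D_T)=0$, so $u\le T=c_2\phi(r)$ quasi-everywhere on $B$. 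Enlarging the properly exceptional set of $X$ if necessary yields $\bE^x[\tau_B]\le c_2\phi(r)$ for every $x\in M_0$, which is $\E_{\phi,\le}$. The main technical point is the non-local truncation inequality $\sE(u,v_t)\ge\sE(v_t,v_t)$, handled by the elementary case analysis above; everything else is bookkeeping in the iteration, where the decisive balance is between the geometric factor $4^k$ coming from the widths $t_{k+1}-t_k$ and the super-linear exponent $1+2\nu$ produced by $\FK(\phi)$.
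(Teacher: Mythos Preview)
Your argument is correct and is essentially the standard proof of this fact (and the one given in \cite[Lemma~4.15]{CKW1}): De Giorgi iteration on the level sets of the torsion function $u=G^B1$, with the Faber--Krahn inequality supplying the superlinear gain $\mu(D_{t'})\lesssim (t'-t)^{-2}\mu(D_t)^{1+2\nu}$.

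One small technical point deserves a sentence in your write-up. The set $D_t=\{u>t\}$ is only \emph{quasi}-open, since $u$ is merely quasi-continuous, whereas $\FK(\phi)$ is stated for open $D$. The fix is immediate: by outer regularity of $\mu$ choose open $U\subset B$ with $D_t\subset U$ and $\mu(U)\le \mu(D_t)+\eps$; since $v_t=0$ q.e.\ on $D_t^c\supset U^c$ you have $v_t\in\sF_U$, apply $\FK(\phi)$ to $U$, and let $\eps\downarrow 0$. Everything else --- the truncation inequality $\sE(v_t,v_t)\le\sE(u,v_t)$, the Chebyshev step $\|v_t\|_2^2\ge (t'-t)^2\mu(D_{t'})$, and the choice of $c_2$ large enough to trigger Lemma~\ref{L:it} --- is exactly right.
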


\begin{proposition} {\rm (\cite[Proposition 7.6]{CKW1})} \label{pi-e-pre}
Assume that $\VD$, $\RVD$ and \eqref{polycon} are satisfied.
Then either $\PI(\phi)$ or $\UHKD(\phi)$ implies $\FK(\phi)$.
\end{proposition}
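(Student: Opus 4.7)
The plan is to establish the two implications separately, both relying on $\VD$, $\RVD$, and \eqref{polycon}.

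For $\UHKD(\phi) \Rightarrow \FK(\phi)$, I would implement the Grigor'yan-type trace argument. For any open $D \subset B(x_0, r)$ and the Dirichlet heat kernel $p^D$ of $X$ killed on exiting $D$, domain monotonicity together with $\UHKD(\phi)$ gives $p^D(t,x,x) \leq c/V(x, \phi^{-1}(t))$, while the spectral decomposition of $P_t^D$ yields $\int_D p^D(t,x,x)\,\mu(dx) \geq e^{-t\lambda_1(D)}$. Combining these and invoking $\VD$ to compare $V(x, \phi^{-1}(t))$ with $V(x_0, \phi^{-1}(t))$ for $x \in D \subset B(x_0, r)$ produces
\[
e^{-t \lambda_1(D)} \leq c\,\bigl(1 \vee (r/\phi^{-1}(t))^{d_2}\bigr)\, \frac{\mu(D)}{V(x_0, \phi^{-1}(t))}.
\]
Optimizing with $t = \phi(R)$ and $R = r\,(\mu(D)/V(x_0, r))^{1/(d_1+d_2)}$, so that $\VD$ and $\RVD$ together force the right-hand side below $1/2$, and using the polynomial growth of $\phi$ from \eqref{polycon}, produces $\lambda_1(D) \geq c \phi(r)^{-1} (V(x_0,r)/\mu(D))^{\nu}$ with $\nu = \beta_1/(d_1+d_2) > 0$, delivering $\FK(\phi)$.

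For $\PI(\phi) \Rightarrow \FK(\phi)$, starting from $f \in \sF_D$ with $D \subset B(x_0, r)$, I would apply $\PI(\phi)$ on $B(x_0, R)$ with $R \geq r$ and bound $\mu(B(x_0,R)) \bar f_{B(x_0, R)}^2 \leq \mu(D)\|f\|_2^2 / \mu(B(x_0, R))$ by Cauchy-Schwarz, which yields
\[
\|f\|_2^2 \leq C \phi(R)\, \sE(f,f) + \frac{\mu(D)}{\mu(B(x_0, R))}\, \|f\|_2^2.
\]
Choosing $R$ via $\RVD$ so that $\mu(B(x_0, R)) \geq 2\mu(D)$ only delivers the trivial bound $\lambda_1(D) \geq c/\phi(r)$; to extract the $(V(x_0, r)/\mu(D))^\nu$ improvement I would promote $\PI(\phi)$ to a Nash-type inequality $\|f\|_2^{2(1+\nu)} \leq C \phi(r)\, V(x_0, r)^{-\nu}\, \sE(f, f)\, \|f\|_1^{2\nu}$ for $f \in \sF_{B(x_0, r)}$, obtained by running a Moser iteration on the truncations $(f - k)_+$ at a geometric sequence of levels and using $\VD$ to sum the pieces. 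Then Cauchy-Schwarz $\|f\|_1 \leq \mu(D)^{1/2} \|f\|_2$ for $f \in \sF_D$ yields $\FK(\phi)$.

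The principal obstacle lies in the second direction: the weak, non-local Poincar\'e inequality alone gives only the $\mu(D)$-independent bound $\lambda_1(D) \geq c/\phi(r)$, and capturing the essential improvement factor $(V(x_0, r)/\mu(D))^\nu$ requires upgrading $\PI(\phi)$ to a Nash- or Sobolev-type inequality. In the local, geodesic setting this would be Jerison's chaining argument, but here the metric may be non-geodesic and the form is non-local, so the upgrade must proceed by Moser--Nash iteration married with the volume comparisons supplied by $\VD$ and $\RVD$; this is the technical heart of the proof.
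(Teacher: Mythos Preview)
This proposition is not proved in the present paper; it is quoted verbatim from \cite[Proposition~7.6]{CKW1}, so there is no in-paper argument to compare your proposal against. I can only assess your sketch on its own terms.

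Your $\UHKD(\phi)\Rightarrow\FK(\phi)$ argument is the standard trace argument and is essentially correct: the spectral inequality $\int_D p^D(t,x,x)\,\mu(dx)\ge e^{-t\lambda_1(D)}$ (which follows once $\UHKD$ forces $P^D_t$ to be Hilbert--Schmidt), the on-diagonal bound via domain monotonicity, and the optimization in $t$ using $\VD$, $\RVD$ and \eqref{polycon} all go through as you describe.

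For $\PI(\phi)\Rightarrow\FK(\phi)$, you correctly identify both the naive bound $\lambda_1(D)\ge c/\phi(r)$ and the fact that the missing factor $(V(x_0,r)/\mu(D))^\nu$ requires an upgrade of $\PI(\phi)$ to a Nash-type inequality. However, your description of that upgrade as ``Moser iteration on the truncations $(f-k)_+$'' is at best a misnomer and at worst suggests you do not yet have the mechanism in hand. Moser iteration runs in the opposite direction: it \emph{consumes} a Sobolev/Nash inequality to produce $L^\infty$ bounds. What actually converts $\PI$ into a Nash inequality on general (non-geodesic) doubling spaces is a Maz'ya-type truncation-and-summation argument: one applies $\PI(\phi)$ to the dyadic level functions $f_j=\min\{(|f|-2^j)_+,\,2^j\}$, uses $\RVD$ to control the measure of the superlevel sets, and sums in $j$ to obtain a weak-$L^p$ or Nash bound. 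This is the route taken in \cite{CKW1} (following ideas going back to \cite{Sa} and Bakry--Coulhon--Ledoux--Saloff-Coste). Your proposal would be complete once you replace ``Moser iteration'' by this truncation argument and carry it out; as written, the crucial step is named but not supplied.
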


We also record the following
elementary iteration lemma,
 see, e.g., \cite[Lemma 7.1]{G} or \cite[Lemma 4.9]{CKW1}.

\begin{lemma}\label{L:it} Let $\beta>0$ and let $\{A_j\}$ be a sequence of real positive numbers such that
$A_{j+1}\le c_0b^jA_j^{1+\beta}$ for every $j\geq 0$ with $c_0>0$ and $b>1$. If
$A_0\le c_0^{-1/\beta}b^{-1/\beta^2}$,
 then we have $ A_j\le b^{-j/\beta}A_0$ for $j\geq 1$,
 which in particular yields $\lim_{j\to\infty}{ A_j}=0.$
\end{lemma}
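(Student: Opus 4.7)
The plan is to prove the lemma by straightforward induction on $j$, showing $A_j \le b^{-j/\beta} A_0$ for all $j \ge 0$. The base case $j=0$ is trivial, and the hypothesis on $A_0$ is precisely tuned to make the induction step close.

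For the inductive step, assume $A_j \le b^{-j/\beta} A_0$. Substituting into the recursion gives
\[
A_{j+1} \le c_0 b^j A_j^{1+\beta} \le c_0 b^j \bigl(b^{-j/\beta} A_0\bigr)^{1+\beta}
= c_0 \, b^{j - j(1+\beta)/\beta} \, A_0^{1+\beta} = c_0 \, b^{-j/\beta} \, A_0^{1+\beta},
\]
using the identity $j - j(1+\beta)/\beta = -j/\beta$. Rewriting this as $c_0 A_0^\beta \cdot b^{-j/\beta} A_0$, the desired bound $A_{j+1} \le b^{-(j+1)/\beta} A_0$ reduces to the requirement $c_0 A_0^\beta \le b^{-1/\beta}$, i.e., $A_0 \le c_0^{-1/\beta} b^{-1/\beta^2}$, which is exactly the standing hypothesis. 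This closes the induction.

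The conclusion $\lim_{j\to\infty} A_j = 0$ then follows immediately from $b > 1$, since $b^{-j/\beta} \to 0$. There is no real obstacle here; the only subtlety is the arithmetic check that the exponent $j(1+\beta)/\beta - j$ equals $j/\beta$, and that the threshold $c_0^{-1/\beta} b^{-1/\beta^2}$ in the hypothesis is exactly what is needed to absorb the $c_0$ and gain an extra factor of $b^{-1/\beta}$ per step. Since this is a purely algebraic lemma whose proof is standard (cf.\ the cited references \cite{G} and \cite{CKW1}), I would keep the write-up to just the two displayed calculations above plus the concluding remark.
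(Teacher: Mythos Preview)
Your proof is correct and is exactly the standard induction argument. The paper does not give its own proof of this lemma but simply cites \cite[Lemma 7.1]{G} and \cite[Lemma 4.9]{CKW1}, where this same argument appears.
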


The following formula,
often called the L\'{e}vy system formula,
will be used many times in this paper. See, for example \cite[Appendix A]{CK2}
for a proof.

\begin{lemma}\label{Levy-sys}
Let $f$ be a non-negative measurable function on ${\mathbb R}_+
\times M \times M$ that vanishes along the diagonal. Then for every
$t\geq 0 $, $x\in M_0$ and  stopping time $T$ $($with
respect to the filtration of $\{X_t\}$$)$,
$$
{\mathbb E}^x \left[\sum_{s\le T} f(s,X_{s-}, X_s) \right]={\mathbb
E}^x \left[ \int_0^T  \int_M f(s,X_s, y)\, J(X_s,dy)
\,ds \right].
 $$
\end{lemma}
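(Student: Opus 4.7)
The plan is to derive this Lévy system formula in two stages: first obtain the time-independent version for the Hunt process $X$ using the standard machinery of Lévy systems for Hunt processes, and then bootstrap to time-dependent integrands via the space-time process (or, equivalently, a monotone class argument on simple functions in the time variable).

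First I would identify the Lévy system $(N, H)$ of $X$. Since $(\sE, \sF)$ is a regular Dirichlet form of pure-jump type on $L^2(M;\mu)$ with jumping measure $J(dx,dy)=J(x,dy)\,\mu(dx)$, the Beurling-Deny decomposition together with the identification of the jumping measure of a Hunt process via its associated regular Dirichlet form (see, e.g., Fukushima-Oshima-Takeda, Theorem 4.5.3 and Lemma 4.5.2) shows that $N(x,dy):=J(x,dy)$ and $H_s:=s$ constitute a Lévy system for $X$. That is, for every nonnegative Borel function $g$ on $M\times M$ vanishing on the diagonal and every $(\mathcal F_t)$-stopping time $T$,
\begin{equation*}
\bE^x\Bigl[\sum_{s\le T} g(X_{s-},X_s)\Bigr]
=\bE^x\Bigl[\int_0^T \!\!\int_M g(X_s,y)\,J(X_s,dy)\,ds\Bigr],\qquad x\in M_0.
\end{equation*}
This is the content of the abstract Lévy system formula of Benveniste-Jacod for Hunt processes (cf. Sharpe, \emph{General Theory of Markov Processes}, Chapter VII); once one knows that $(J(x,dy), s)$ is a Lévy system, the identity is essentially a computation with the dual predictable projection of the jump measure of $X$.

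To pass from $g(x,y)$ to the time-dependent integrand $f(s,x,y)$, I would apply the previous identity to the space-time process $\widetilde Z_s:=(s, X_s)$ (which is itself a Hunt process, with Lévy system $\widetilde N((t,x),(dr,dy)) = \delta_t(dr)\,J(x,dy)$ and $\widetilde H_s = s$). Alternatively, one can proceed by a direct monotone class argument: first verify the formula for indicator integrands $f(s,x,y)=\mathbf 1_{(a,b]}(s)\,\mathbf 1_E(x,y)$ with $E\subset M\times M\setminus \mathrm{diag}$ Borel, where it reduces to the time-independent formula applied between the stopping times $T\wedge a$ and $T\wedge b$ by the strong Markov property; then extend by linearity to simple functions and by monotone convergence to all nonnegative measurable $f$ vanishing on the diagonal.

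The main technical obstacle is not the monotone class step, which is routine, but the initial identification that the kernel $N(x,dy)$ in the Lévy system of the Hunt process $X$ coincides $\mu$-a.e. with the kernel $J(x,dy)$ coming from the Beurling-Deny representation of the Dirichlet form. This identification uses that $(\sE,\sF)$ is regular so that a suitable Revuz correspondence can be invoked, and that the carré du champ contribution from the killing and diffusion parts vanishes under the pure-jump hypothesis \eqref{eq:DFregjump}. Since a complete argument is given in \cite[Appendix A]{CK2}, I would simply cite that identification and focus the exposition on how the Lévy system is applied in the rest of the paper.
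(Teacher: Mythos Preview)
Your proposal is correct, and the paper takes exactly the same approach: it does not prove this lemma at all but simply refers the reader to \cite[Appendix~A]{CK2} for the proof. Your sketch of the argument (identifying the L\'evy system via the Beurling--Deny decomposition and then extending to time-dependent integrands by a monotone class or space-time argument) is a faithful summary of what that reference contains.
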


\section{Consequences of Harnack inequalities}\label{sectHarn}

\subsection{Consequences of $\PHI(\phi)$}\label{phi-section-1}

 In this subsection (together with some of the results from next subsection), we prove that
 $\PHI(\phi)$ implies $\UHK(\phi)$, $\NDL(\phi)$ and $\UJS$. Without further mention, throughout the proof we will assume that $\mu$ and $\phi$ satisfy $\VD$ and \eqref{polycon}, respectively.
Noting that $V(y, r)>0$ for every $y\in M$ and $r>0$ (since $\mu$ has full support),
we have from \eqref{e:vd2} that for all $x,y\in M$ and $0<r\le R$,
\be\label{eq:vdeno}
\frac{V(x,R)}{V(y,r)}\le
\frac{V(y, d(x, y)+R)}{V(y,r)} \leq {\wt C_\mu}\Big(\frac {d(x,y)+R}r\Big)^{d_2}.
\ee

 \begin{proposition}\label{uodest}  Under $\VD$ and \eqref{polycon}, $\PHI(\phi)$ implies $\UHKD(\phi)$.
 \end{proposition}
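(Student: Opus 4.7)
The plan is to deduce from $\PHI(\phi)$ a weighted $L^1$-to-$L^\infty$ bound on the semigroup $\{P_t\}$, and then extract the existence of the heat kernel together with the on-diagonal estimate via Riesz representation and the Chapman--Kolmogorov identity.

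First, fix $f\in L^1_+(M;\mu)\cap L^\infty_+(M;\mu)$ with $\|f\|_1\le 1$ and consider $u(s,y):=P_s f(y)=\bE^y f(X_s)$. Using the strong Markov property together with the time-reversal convention $V_s=V_0-s$ in the space-time process $Z$, $u$ is a non-negative, bounded, nearly Borel measurable caloric function on $(0,\infty)\times M_0$ (as noted in the remark following Definition \ref{pro-har}). Given $t>0$ and $x_0\in M$, the idea is to select a cylinder of the form in Definition \ref{theorem:HI-PHI} for which $(t,x_0)\in Q_-$: concretely, choose $R>0$ so that $\phi(R)=\frac{2t}{C_1+C_2}$, and take $t_0=0$. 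Then $t=\tfrac{C_1+C_2}{2}\phi(R)\in (C_1\phi(R),C_2\phi(R))$, and $\PHI(\phi)$ applied to $u$ on $(0,C_4\phi(R))\times B(x_0,R)$ yields
\[
\esssup_{Q_-}u\;\le\;C_6\,\essinf_{Q_+}u\;\le\;\frac{C_6}{V(x_0,C_5R)(C_4-C_3)\phi(R)}\int_{Q_+}u(s,y)\,ds\,\mu(dy).
\]

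Second, I would estimate the right-hand side by Fubini and the $L^1$-contractivity $\|P_s f\|_1\le \|f\|_1\le 1$, giving $\int_{Q_+}u\le (C_4-C_3)\phi(R)$, hence $\esssup_{Q_-}u\le C_6/V(x_0,C_5R)$. Since $\phi(R)\asymp t$ by \eqref{polycon}, also $R\asymp \phi^{-1}(t)$, so by $\VD$ we have $V(x_0,C_5R)\ge c\,V(x_0,\phi^{-1}(t))$. In particular, for $\mu$-a.e.\ $y\in B(x_0,C_5R)$,
\[
P_t f(y)\;\le\;\frac{C}{V(x_0,\phi^{-1}(t))}.
\]
Because $d(x_0,y)\le C_5 R\lesssim \phi^{-1}(t)$, \eqref{eq:vdeno} gives $V(x_0,\phi^{-1}(t))\asymp V(y,\phi^{-1}(t))$; covering $M$ by countably many such balls (for each fixed $t$, using separability), we obtain the weighted $L^1$-to-$L^\infty$ estimate
\[
P_t f(y)\;\le\;\frac{C'}{V(y,\phi^{-1}(t))}\qquad\text{for $\mu$-a.e. }y\in M_0.
\]

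Third, to extract $\UHKD(\phi)$, I would observe that the above estimate means $f\mapsto P_tf(y)$ extends to a bounded linear functional on $L^1(M;\mu)$ of norm at most $C'/V(y,\phi^{-1}(t))$, so Riesz representation yields a measurable density $p(t,y,\cdot)\in L^\infty(M;\mu)$ satisfying $\|p(t,y,\cdot)\|_\infty\le C'/V(y,\phi^{-1}(t))$ and $P_tf(y)=\int p(t,y,z)f(z)\,\mu(dz)$. The symmetry $p(t,x,y)=p(t,y,x)$ (from symmetry of $P_t$ on $L^2$) together with Chapman--Kolmogorov gives
\[
p(2t,x,x)\;=\;\int p(t,x,z)^2\,\mu(dz)\;\le\;\|p(t,x,\cdot)\|_\infty\!\int p(t,x,z)\,\mu(dz)\;\le\;\frac{C'}{V(x,\phi^{-1}(t))};
\]
relabeling $2t\mapsto t$ and using the $\phi$- and volume-doubling properties produce $p(t,x,x)\le c/V(x,\phi^{-1}(t))$ for q.e.\ $x$, which is $\UHKD(\phi)$ after absorbing the exceptional set into $\sN$.

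The principal obstacle is bookkeeping rather than estimation: $\PHI(\phi)$ delivers information only in the $\esssup/\essinf$ sense, whereas $\UHKD(\phi)$ is a pointwise statement about the heat kernel. The passage requires simultaneously \emph{constructing} the heat kernel from the $L^1$-to-$L^\infty$ bound (via Riesz representation, which only determines $p(t,x,\cdot)$ up to $\mu$-null sets), identifying it with a version symmetric in $(x,y)$, and then using Chapman--Kolmogorov to evaluate on the diagonal. Care is also needed to ensure the construction is compatible with the properly exceptional structure so that the resulting bound holds uniformly for every $x\in M_0$ outside a single exceptional set; once a symmetric jointly measurable version is fixed, this follows from the semigroup argument above.
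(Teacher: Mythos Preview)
Your argument is correct and follows the same overall strategy as the paper: apply $\PHI(\phi)$ to the caloric function $(s,y)\mapsto P_sf(y)$ and average the $\essinf$ over $Q_+$ to obtain the weighted bound $P_tf(y)\le c\,\|f\|_1/V(y,\phi^{-1}(t))$ for a.e.\ $y$. The only substantive difference is in how you pass from this $L^1\to L^\infty$ estimate to a pointwise on-diagonal bound for the heat kernel on $M_0$. You use Riesz representation to produce the density and then the Chapman--Kolmogorov identity $p(2t,x,x)=\int p(t,x,z)^2\,\mu(dz)\le\|p(t,x,\cdot)\|_\infty$; the paper instead invokes \cite[Proposition~7.7]{CKW1}, which upgrades the a.e.\ bound to one valid for every $x\in M_0$ by constructing an upper semi-continuous majorant $\varphi(x,t)\asymp V(x,\phi^{-1}(t))^{-1}$ (via averages over small balls) and appealing to results from \cite{BBCK,GT}. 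Your route is more self-contained and conceptually cleaner; the paper's route handles the properly exceptional bookkeeping more carefully, which is precisely the ``principal obstacle'' you flag at the end. One small point: the $\esssup$ in $\PHI$ is over a space-time set, so the bound at the specific slice $s=t$ is not automatic; this is easily repaired by noting that $s\mapsto\|P_sf\|_\infty$ is non-increasing (Markov contraction on $L^\infty$), so any good $s\le t$ in the $Q_-$ time window suffices.
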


 \begin{proof}
 Let $C_i$ $(i=1,\ldots, 6)$ be the constants taken from the definition of $\PHI(\phi)$.
 For any $x_0\in M$, $r>0$, $t= C_4\phi(r)$ and any $0\le f\in L^2(M; \mu)\cap L^1(M;\mu)$,
 applying $\PHI(\phi)$ to the caloric function $v(s,x):=P_s f(x)$ in $Q(0,x_0,t,r)$, we have for $x,y\in B(x_0,C_5r)\setminus {\cal N}_v$,
 \begin{equation*}\label{uodest-1-1}
 P_{(C_1+C_2)\phi(r)/2}f(x)\le  C_6 P_{(C_3+C_4)\phi(r)/2}f(y),
 \end{equation*}
 where ${\cal N}_v$ is the properly exceptional set associated with $v$.  Then,
 $$V(x_0,C_5r)P_{(C_1+C_2)\phi(r)/2}f(x)\le
 C_6\int_{B(x_0,C_5r)}P_{(C_3+C_4)\phi(r)/2}f(y)\,\mu(dy)\le C_6\int f(y)\,\mu(dy).$$ Therefore, there is a constant $c_1>0$ such that for almost all $x\in M$ and $t>0$,
 \begin{equation}\label{uodest-1} P_{t}f(x)\le \frac{c_1}{V\left(x,{\phi^{-1}(t)}\right)}
 \|f\|_{1},\end{equation} where we have used $\VD$ and
 \eqref{polycon} in the inequality above. In particular, the semigroup $\{P_t\}$ is locally ultracontractive.
According to \cite[Proposition 7.7]{CKW1}
 (see also \cite[Theorem 3.1]{BBCK} and \cite[Theorem 2.12]{GT}), there exists a properly exceptional set $\mathcal{N}\subset M$ such that, the semigroup $\{P_t\}$ possesses the heat kernel $p(t,x,y)$ with domain $(0,\infty)\times (M\setminus \mathcal{N} )\times (M\setminus \mathcal{N})$.

By \eqref{uodest-1} again, for almost all $x$, $y\in M$,
$$p(t,x,y)\le\frac{c_1}{V\left(x,{\phi^{-1}(t)}\right)}.$$ In the following, for any $x\in M$ and $t>0$, define $$ \varphi(x,t)=\inf_{0<r\le \phi^{-1}(t)}\frac{1}{\mu(B(x,r))}\int_{B(x,r)}\frac{1}{V\left(z,{\phi^{-1}(t)}\right)}\,\mu(dz).$$
On the one hand, by \eqref{eq:vdeno} from $\VD$, there is a constant $c_2>1$ such that for all $x\in M$ and $t>0$,
$$\frac{1}{c_2V\left(x,{\phi^{-1}(t)}\right)}\le  \varphi(x,t)\le  \frac{c_2}{V\left(x,{\phi^{-1}(t)}\right)}.$$On the other hand,
for any $t>0$, $x\mapsto\varphi(x,t)$ is an upper semi-continuous
function on $M$. Indeed, for any $x\in M$,
\begin{align*}\limsup_{y\to x}\varphi(y,t)=&\lim_{s\to0}\sup_{0<d(y,x)\le s}\inf_{0<r\le \phi^{-1}(t)}\frac{1}{\mu(B(y,r))}\int_{B(y,r)}\frac{1}{V\left(z,{\phi^{-1}(t)}\right)}\,\mu(dz)\\
\le&\inf_{0<r\le \phi^{-1}(t)}\lim_{s\to0}\sup_{0<d(y,x)\le s}\frac{1}{\mu(B(y,r))}\int_{B(y,r)}\frac{1}{V\left(z,{\phi^{-1}(t)}\right)}\,\mu(dz)\\
=&\inf_{0<r\le \phi^{-1}(t)}\frac{1}{\mu(B(x,r))}\int_{B(x,r)}\frac{1}{V\left(z,{\phi^{-1}(t)}\right)}\,\mu(dz)\\
=&\varphi(x,t).\end{align*} Combining all the conclusions above with
\cite[Proposition 7.7]{CKW1}
again, we have
$$
p(t,x,y)\le \frac{c_3}{V\left(x,{\phi^{-1}(t)}\right)}
\quad \hbox{for all } (x,y)\in (M\setminus \mathcal{N} )\times (M\setminus \mathcal{N}).
$$
This proves $\UHKD(\phi)$.
\qed
 \end{proof}

 A key consequence of $\PHI(\phi)$ is a near-diagonal lower bound estimate for $p^{D}(t,x,y)$.
For the cases of diffusions, similar fact was proved in \cite[Section 4.3.4]{BGK2},
but there is a gap in the middle of Page 1129. (Indeed, the proof uses
$B(x_0, R+\rho)=\cup_{x\in B(x_0,R)}B(x,\rho)$, which is not true in general
unless the metric is geodesic.) Our proof below fixes the issue (see step (ii) in the proof)
and proves $\NDL(\phi)$ in the framework of general metric spaces.

 \begin{proposition}\label{ndlb}
 Assume $\VD$, \eqref{polycon} and  $\PHI(\phi)$ hold. Then   $\NDL(\phi)$   holds.
 Consequently, $X=\{X_t\}$ is conservative.
 \end{proposition}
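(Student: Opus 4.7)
The plan is to prove $\NDL(\phi)$ in three stages: first establish a uniform lower bound for the free heat kernel $p(t,x,y)$ near the diagonal, then transfer this to the Dirichlet heat kernel $p^{B}(t,x,y)$, and finally deduce conservativeness from Proposition~\ref{P:3.1}.

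For the near-diagonal lower bound on $p$, I would first derive a scale-invariant confinement estimate: there exist $\eps_0, c_0 > 0$ such that
\[
\bP^{x_0}\bigl(X_{s}\in B(x_0,r)\bigr) \ge c_0 \qquad \text{for all }s\in\bigl[C_1\phi(\eps_0 r),\, C_4\phi(\eps_0 r)\bigr],
\]
uniformly in $x_0$ and $r$. The source of this estimate is $\PHI(\phi)$ applied to the caloric function $u(s,x)=P_{s}\mathbf{1}_{B(x_0,r/2)}(x)$ on the cylinder $Q(0,x_0,C_4\phi(\eps_0 r),\eps_0 r)$: right-continuity of paths at $s=0$ supplies a point in the lower time-slice $Q_-$ where $u\ge 1/2$, and $\PHI(\phi)$ then gives $u\ge 1/(2C_6)$ on the upper time-slice $Q_+$, which is subsequently promoted to a bound uniform in $(x_0,r)$ via a chaining of short PHI cylinders using the scaling in \eqref{polycon}. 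Via the semigroup identity $p(2t,x,x)=\int p(t,x,z)^{2}\,\mu(dz)$ together with Cauchy--Schwarz, the confinement estimate yields the diagonal bound $p(t,x,x)\ge c_1/V(x,\phi^{-1}(t))$, and a further application of $\PHI(\phi)$ to $(s,x)\mapsto p(s,x,x_0)$ on a cylinder centred at a point $y_0$ with $d(x_0,y_0)\le \eps_1\phi^{-1}(t)$ propagates this to
\[
p(t,x,y)\ge \frac{c_2}{V(x_0,\phi^{-1}(t))}\qquad\text{for }x,y\in B(x_0,\eps_1\phi^{-1}(t))\cap M_0.
\]

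To pass from $p$ to $p^{B}$, take $B=B(x_0,r)$ and $t\le \phi(\eps r)$ with $\eps>0$ small, and write
\[
p(t,x,y)=p^{B}(t,x,y)+\bE^{x}\!\bigl[p(t-\tau_B,X_{\tau_B},y);\,\tau_B<t\bigr].
\]
Combining the confinement estimate of the previous step (applied to a slightly enlarged ball to control $\bP^{x}(\tau_B\le t)$) with $\UHKD(\phi)$ from Proposition~\ref{uodest} and one further application of $\PHI(\phi)$ to $(s,z)\mapsto p(s,z,y)$ on cylinders centred at exit points $z\in B^{c}$ (which are automatically separated from $y$), the exit term is absorbed into at most $\tfrac{1}{2}\,p(t,x,y)$. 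Hence $p^{B}(t,x,y)\ge \tfrac{1}{2}\,p(t,x,y)\ge c_3/V(x_0,\phi^{-1}(t))$, which is $\NDL(\phi)$; conservativeness then follows from Proposition~\ref{P:3.1}.

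The main obstacle is the uniform confinement estimate: right-continuity of paths only provides a threshold time $s(x_0,r)$ depending on the starting point, whereas $\NDL(\phi)$ is scale-invariant and requires uniformity in $(x_0,r)$. In the diffusion setting one would typically bridge this gap via the identity $B(x_0,R+\rho)=\bigcup_{x\in B(x_0,R)}B(x,\rho)$, valid only in geodesic spaces -- this is precisely the gap flagged in \cite{BGK2}. The remedy in our general metric setting is to iterate $\PHI(\phi)$ along short chains of cylinders whose centres are chosen inside the region where a previous step has already furnished a uniform positive lower bound, thereby extending the estimate across scales without appealing to any geodesic covering identity.
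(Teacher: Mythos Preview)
Your proposal has a genuine gap in the first stage (the uniform confinement estimate), and the proposed remedy by ``chaining short PHI cylinders'' does not close it. The difficulty is precisely the one you flag: right-continuity only gives $P_s\mathbf{1}_{B(x_0,r/2)}(x_0)\ge 1/2$ for $s$ in some interval $(0,s(x_0,r))$ that depends on the point and scale, whereas the lower time-slice $Q_-$ of any PHI cylinder sits at a \emph{fixed positive} height $C_1\phi(\eps_0 r)$. To start a chain at a smaller cylinder of radius $\rho<\eps_0 r$ you would still need to know that $u$ is large at time $C_1\phi(\rho)$ at a suitable centre, but you have no such information unless the confinement estimate already holds at scale $\rho$ --- which is what you are trying to prove. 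There is no scale-independent base case to anchor the iteration, so the argument is circular. This is not a detail to be filled in; it is the heart of the matter in general (non-geodesic) metric spaces.

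The paper's proof avoids the confinement estimate entirely by a spectral argument that is genuinely different from your route. One works directly with the Dirichlet heat kernel $p^B$ on $B=B(x_0,r)$ (whose existence follows from $\UHKD(\phi)$), isolates an ``irreducible'' open set $U\subset B$ on which $p^B$ is strictly positive between any two points (this step, established via $\PHI(\phi)$, is exactly what replaces the geodesic covering identity), and then applies $\PHI(\phi)$ to the caloric function $v(t,x)=e^{-\lambda_0 t}\psi(x)$, where $(\lambda_0,\psi)$ is the bottom Dirichlet eigenpair on $U$. Since $\psi>0$ at a point in $B(x_0,C_5 r)$, $\PHI(\phi)$ yields $\lambda_0\le c/\phi(r)$ uniformly, hence $\sup_{x,y}p^B(t,x,y)\ge \mu(B)^{-1}e^{-t/\phi(\kappa r)}$; a further propagation by $\PHI(\phi)$ (as in \cite{BGK2}, (4.52)--(4.63)) then gives the full near-diagonal lower bound. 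The point is that the eigenfunction provides a caloric function whose behaviour at \emph{all} times is explicit, so no bootstrap over time-scales is needed.

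Your second stage is also problematic as written: to absorb the exit term $\bE^{x}[p(t-\tau_B,X_{\tau_B},y);\tau_B<t]$ into $\tfrac12 p(t,x,y)$ you need an off-diagonal \emph{upper} bound on $p(s,z,y)$ for $z$ far from $y$, but at this point only $\UHKD(\phi)$ is available (full $\UHK(\phi)$ is a consequence proved later), and $\PHI(\phi)$ supplies lower rather than upper bounds. The spectral route sidesteps this entirely by working with $p^B$ from the outset.
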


 \begin{proof}
 Note that by $\VD$ and
  Proposition \ref{P:3.1},
 $\NDL(\phi)$ implies the conservativeness of the process $X$. We only need to verify that $\NDL(\phi)$ holds.
Below we will prove $\NDL(\phi)$ with $ \phi(\eps r)$ and $\eps\phi^{-1}(t)$ replaced by  $\eps\phi(r)$ and $\phi^{-1}(\eps t)$ in the definition.

 (i) For any open ball $B:=B(x_0,r)$ with $x_0\in M_0$ and $r>0$, it
 follows from \eqref{uodest-1} and $\VD$ that for any $t>0$
 $$\|P^B_tf\|_\infty\le \frac{c_1}{V\left(x_0,{\phi^{-1}(t)}\right)}
 \|f\|_{1}.$$ Then, by \cite[Theorem 3.1]{BBCK}, the Dirichlet
 semigroup $\{P^B_t\}$ has the heat kernel $p^B(t,x,y)$ defined on
 $(0,\infty)\times (B\setminus \sN_1)\times (B\setminus \sN_1)$ such
 that
 $$p^B(t,x,y)\le \frac{c_1}{V(x_0, \phi^{-1}(t))},\quad x,y\in B \setminus \sN_1,
 $$
 where $\sN_1\subset B$ is a properly exceptional set of
 the killing process $\{X^B_t\}$
 such that $\sN_1\supset \sN\cap B$;
 moreover, there is an $\sE^B$-nest
 $\{F_k\}$ consisting of an increasing sequence of compact sets of $B$ so
 that $\sN_1 =B \setminus\cup_{k=1}^\infty F_k$ and that for every
 $t>0$, $y\in B\setminus \cal{N}$ and $k\ge1$, $x\mapsto p^B(t,x,y)$ is
 continuous on each $F_k$ (i.e. for every $t>0$ and $y\in B\setminus
 \sN_1$, the function $x\mapsto p^B(t,x,y)$ is quasi-continuous on
 $B$).

(ii) Choose an $\wh x_0 \in B(x_0, C_5r)\setminus \sN_1$, where
$C_5\in (0, 1)$ is the constant in $\PHI (\phi)$. Define
$$
\wh B=\{y\in B\setminus \sN_1:
p^B(t, \wh x_0, y)>0 \hbox{ for some } t>0 \}.
$$
We will show that for every $x, y\in \wh B$, there is some $t>0$ so that $p^B(t, x, y)>0$,
and that
\begin{equation}\label{e:3.3}
p^B(t, x, y)=0 \quad \hbox{on } (0, \infty) \times \wh B
\times (B\setminus (\wh B\cup \sN_1)).
\end{equation}
To prove these, first noting that
since $\bP^x (\lim_{t\downarrow 0} X^B_t=X^B_0=x)=1$ implies
$\bP^x (\tau_B>0)=1$, we must have
$p^B(t, \wh x_0, \wh x_0)=\int_B p^B(t/2, \wh x_0, y)^2 \,\mu (dy)>0$ for some $t>0$.
Thus $\wh x_0\in \wh B$. By $\PHI(\phi)$ applied to the caloric function
$(s, y)\mapsto p^B(s, y, \wh x_0)=p^B(s, \wh x_0, y)$, we see that if $x\in \wh B$,
then there are constants $r_x>0$ and $s_x>0$
so that
\begin{equation}\label{e:3.2}
p^B(s, \wh x_0, z)>0 \quad\hbox{ for every } z\in B(x, r_x)\setminus \sN_1 \hbox{ and }
s\geq s_x.
\end{equation}
Hence, there is an open subset $U$ of $B$ containing $\wh x_0$ so that $\wh B=U\setminus \sN_1$. Similarly,
for every $x, y\in \wh B$, by $\PHI (\phi)$, there are constants $r_0>0$ and $s_0>0$
so that
$$
p^B(s, x, z)>0 \quad \hbox{and} \quad p^B(s, y, z)>0 \quad \hbox{for every } z\in B(\wh x_0, r_0)\setminus \sN_1
\hbox{ and } s\geq s_0.
$$
In particular, it follows that for every $s, t\geq s_0$,
\begin{equation}\label{efef}
p^B(t+s, x, y)\geq \int_{B(\wh x_0, r_0)} p^B(s, x, z)p^B(t, z, y) \,\mu (dz) >0.
\end{equation}
For $x\in \wh B$, define
$$
\wh B_x = \{y\in B\setminus \sN_1: p^B(t, x, y)>0 \hbox{ for some } t>0 \}.
$$
Then $\wh B\subset \wh B_x$. We claim  $\wh B = \wh B_x$.
Were $\wh B \varsubsetneq \wh B_x$, take
$y\in \wh B_x \setminus \wh B$. By $\PHI (\phi)$ applied to the caloric function
$(s, z)\mapsto p^B(s, z, y)=p^B(s, y, z)$, there are constants $r_x>0$ and $s_x>0$
so that $p^B(s, y, z)>0$ for every $z\in B(x, r_x)\setminus \sN_1$ and $s\geq s_x$,
and \eqref{e:3.2} holds. Hence, for every $t, s \geq s_x$, we have
$$
p^B(t+s, \wh x_0, y)\geq \int_{B(x, r_x)} p^B(t, \wh x_0, z) p^B (s, z, y)\, \mu (dz)>0,
$$
which implies that $y\in \wh B$. This contradiction shows that $\wh B_x =\wh B$ for every
$x\in \wh B$.
We have thus established that for every $x, y\in \wh B$, there is some $t>0$ so that
$p^B(t, x, y)>0$, and that \eqref{e:3.3} holds.
Consequently, for every $t>0$ and $x, y\in \wh B=U\setminus \sN_1$,
\begin{equation}\label{e:3.4}
p^U(t, x, y)= p^B(t, x, y)- \bE_x \left[p^B(t-\tau_U, X^B_{\tau_U}, y); t<\tau_U \right]
=p^B(t, x, y)
\end{equation}

Observe that by the symmetry of $p^B(t, x, y)$, \eqref{e:3.3} implies that
$$
\int_{B\setminus U} P^B_t {\bf1}_U (x) \,\mu (dx) =
\int_{U\times (B\setminus U)}
p^B(t, x, y)\,\mu (dx)\,\mu (dy) =0;
$$
in other words, for every $t>0$,
\begin{equation}\label{e:3.5}
P^B_t {\bf1}_U =0  \quad \mu \hbox{-a.e. on }  B\setminus U .
\end{equation}
Let $\lambda_0>0$ be the bottom of the generator  $\sL^U$
associated with $\{P_t^U\}$ and $\psi\geq 0$ the corresponding eigenfunction with $\| \psi\|_{L^2(U; \mu)}=1$.
Note that $\psi =0$ on $B\setminus U$. In view of \eqref{e:3.4} and \eqref{e:3.5}, we have for every $t>0$ and
$x\in B\setminus \sN_1$,
$$
P^B_t \psi (x) = P^U_t \psi (x) = e^{-\lambda_0 t} \psi (x).
$$
Since
$$ e^{-\lambda_0 t} \| \psi \|_{L^\infty (B;\mu)} = \| P^B_t \psi \|_{L^\infty (B;\mu)}
\leq  \mu (B)  \| \psi \|_{L^\infty (B;\mu)} \, \sup_{x,y\in B\setminus \sN_1} p^B(t,x,y),
$$
we have
\begin{equation}\label{e:ndlp-1}
\sup_{x,y\in B\setminus \sN_1 } p^B(t,x,y)\ge
\frac{1}{\mu(B)}e^{-\lambda_0t}.
\end{equation}
We claim that $\psi >0$ on $\wh B$.  Noticing that
\begin{equation}
\label{e:ndlp-2}
v(t,x):=P_t^B\psi(x)=e^{-\lambda_0 t}\psi(x)
\end{equation}
is a caloric function on $(0, \infty) \times B$ and $\psi >0$ has
unit $L^2(B; \mu)$-norm, by $\PHI (\phi)$, there are some $y_0\in \wh B$
and $r_0>0$  so that $B(y_0, r_0)\setminus \sN_1
\subset \wh B$, and  $\psi>0$ on $B(y_0, r_0)$. On the other hand, for every $x\in \wh B$, by \eqref{efef}
(and so $p^B(s, x, y_0)>0$ for some $s>0$) and $\PHI (\phi)$ again,
there are constants $s_0>0$ and  $r_1\in (0, r_0]$ so that
$p^B(t, x, z)>0$ for every $t\geq s_0$ and $z\in B(y_0, r_1)\setminus \sN_1$.
It follows then
$$
\psi (x)= e^{\lambda_0 t} P^B_t \psi (x) \geq e^{\lambda_0 t} \int_{B(y_0, r_1)}
p^B (t, x, z)  \psi (z) \,\mu (dz) >0.
$$
The claim that $\psi >0$ on $\wh B$ is proved. In particular, $\psi (\wh x_0)>0$.

 (iii) Let $C_i$ $(i=1,\ldots, 6)$ be the constants in the definition of $\PHI(\phi)$. Applying $\PHI(\phi)$ to the function $v(t,x)=e^{-\lambda_0 t} \psi (x) $ in the cylinder $Q(0,  x_0, C_4\phi(r),r)$, we get that
 $$
 v(t_-,\wh x_0)\le C_6 v(t_+,\wh x_0),
 $$
 where $t_-=\frac{C_1+C_2}{2}\phi(r)$ and $t_+=\frac{C_3+C_4}{2}\phi(r)$.
It follows from \eqref{e:ndlp-2} that
$$
e^{-\lambda_0 t_-}\psi(\wh x_0)\le C_6 e^{-\lambda_0 t_+}
\psi(\wh x_0).
$$
Since $\psi(\wh x_0)>0$, we arrive at
$$
\lambda_0 \le \frac{\log C_6}{t_+-t_-}\le \frac{1}{\phi(\kappa r)},
$$
where $\kappa>0$ is chosen so that
$$\frac{(C_3+C_4)-(C_1+C_2)}{2}\phi({r}/{2})\ge
\phi(\kappa r)\log C_6
$$for all $r>0$. This along with \eqref{e:ndlp-1} further yields that for
all $t>0$,
$$\esssup_{x,y\in B}p^B(t,x,y)\ge\frac{1}{\mu(B)}e^{-\frac{t}{\phi(\kappa
r)}}.$$

Following the
 arguments between (4.52) and (4.60)
 in \cite[1130--1131]{BGK2} line by line with small modifications,
 we obtain that
 there is a constant $c'>0$ such that for all $x,$ $y\in
 B(x_0, C_5r)\setminus \mathcal{N}_1$ and $t\in (t_0+C_3\phi(r),
 t_0+C_4\phi(r))$ with $t_0=(C_3-C_1)\phi(r)$,
 \begin{equation}\label{e:lowerprof} p^{B}(t, x,y)\ge \frac{c'}{V(x_0,r)}.\end{equation} Note that,
 in order to get \eqref{e:lowerprof} we should change \cite[(4.57)]{BGK2} into $$\esssup_{x\in B'}p^B(s,x,z)\le C_6p^B(t,y,z),\quad y,z\in B':= B(x_0,C_5r)\setminus \mathcal{N}_1.$$
Furthermore, using \eqref{e:lowerprof} instead of
\cite[(4.60)]{BGK2}, one can verify that $\NDL(\phi)$ holds for this
case by the almost same argument between (4.60) and (4.63) in
 \cite[1131--1132]{BGK2}.
 \qed\end{proof}

We next prove that $\PHI(\phi)$ implies $\UJS$.

\begin{proposition}\label{ujs-jle} Under $\VD$ and \eqref{polycon},
$\PHI(\phi)$ implies $\UJS$.
\end{proposition}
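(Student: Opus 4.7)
My plan is to apply $\PHI(\phi)$ to the caloric function $u(t,z):=P_t{\bf 1}_{B(y_0,\eta)}(z)$ for a small ball around $y_0$, then use the symmetry of the heat kernel together with the L\'evy system formula (Lemma \ref{Levy-sys}) to convert the resulting Harnack-type inequality into bounds involving the jumping kernel, and finally let $\eta\downarrow 0$ and invoke Lebesgue differentiation to recover the pointwise $\UJS$ inequality.

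First, I fix $x_0\neq y_0$ in $M$ and $0<r\le d(x_0,y_0)/2$. For $\eta\in(0,r)$, let $B_1:=B(y_0,\eta)$, so that $B_1\cap B(x_0,r)=\emptyset$. The function $u(t,z):=P_t{\bf 1}_{B_1}(z)=\bP^z(X_t\in B_1)$ is non-negative, bounded, and caloric on $[0,\infty)\times M$. Applying $\PHI(\phi)$ to $u$ on the cylinder $Q(0,x_0,C_4\phi(r),r)$ and bounding the $\essinf$ by an average over $B(x_0,C_5 r)$, for any fixed $t_1\in(C_1\phi(r),C_2\phi(r))$ and $t_2\in(C_3\phi(r),C_4\phi(r))$ and $\mu$-a.e.\ $z\in B(x_0,C_5 r)$,
\[
u(t_1,z)\le \frac{C_6}{V(x_0,C_5 r)}\int_{B(x_0,C_5 r)}u(t_2,z')\,\mu(dz') = \frac{C_6}{V(x_0,C_5 r)}\int_{B_1}P_{t_2}{\bf 1}_{B(x_0,C_5 r)}(w)\,\mu(dw),
\]
where the last equality follows from the symmetry of the heat kernel.

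Next, I would use the L\'evy system on both sides. For the right-hand side, since each $w\in B_1\subset B(x_0,C_5 r)^c$, reaching $B(x_0,C_5 r)$ requires at least one jump, so Lemma \ref{Levy-sys} gives
\[
P_{t_2}{\bf 1}_{B(x_0,C_5 r)}(w)\le \bE^w\!\int_0^{t_2}J(X_s,B(x_0,C_5 r))\,ds.
\]
For the left-hand side, I would derive a lower bound $u(t_1,z)\gtrsim \phi(r)\,J(z,B_1)$ for $\mu$-a.e.\ $z\in B(x_0,C_5 r)$, by considering the event that starting from $z$ the process makes a single jump into $B_1$ before time $t_1/2$ (which contributes $\sim t_1 J(z,B_1)\sim \phi(r)J(z,B_1)$ via the L\'evy system) and then remains near $y_0$ until time $t_1$ (whose probability is bounded below by the near-diagonal estimate $\NDL(\phi)$ from Proposition \ref{ndlb}).

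Combining these bounds, dividing through by $\mu(B_1)$, and letting $\eta\downarrow 0$, Lebesgue differentiation applied to the locally integrable functions $y\mapsto J(z,y)$ and $y\mapsto J(y_0,z')$ extracts the pointwise inequality $J(z,y_0)\le \frac{c}{V(x_0,C_5 r)}\int_{B(x_0,C_5 r)}J(z',y_0)\,\mu(dz')$ for $\mu$-a.e.\ $z\neq y_0$ and $0<r\le d(z,y_0)/2$; taking $z=x_0$ and absorbing $C_5$ using volume doubling yields $\UJS$. The main obstacle is the lower bound $u(t_1,z)\gtrsim \phi(r)J(z,B_1)$: since $t_1\sim\phi(r)$ is macroscopic, the short-time expansion $P_t{\bf 1}_{B_1}(z)\approx t J(z,B_1)$ does not apply directly, and one must delicately combine the L\'evy-system identification of the first-jump rate with the staying-in-$B_1$ probability obtained from $\NDL(\phi)$, uniformly in $z$ and $\eta$.
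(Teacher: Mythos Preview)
Your strategy of applying $\PHI(\phi)$ to $u(t,z)=P_t{\bf 1}_{B_1}(z)$ is natural, but the lower bound $u(t_1,z)\gtrsim \phi(r)\,J(z,B_1)$ that you need is a genuine gap, not merely a delicate step. The L\'evy system yields the jump rate into $B_1$ at the \emph{current} position $X_s$, so the first-jump probability you invoke is $\bE^z\!\int_0^{\tau\wedge t_1/2}J(X_s,B_1)\,ds$; extracting $\sim t_1\,J(z,B_1)$ from this would require $J(w,B_1)\gtrsim J(z,B_1)$ for $w$ near $z$ --- precisely the local regularity of $J$ you are trying to establish. Restricting to a small ball $B(z,\delta)$ only trades this comparison problem for a factor $\phi(\delta)\ll\phi(r)$. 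The upper-bound side has the mirror issue: $\bE^w\!\int_0^{t_2}J(X_s,B(x_0,C_5r))\,ds$ involves $J$ along the whole trajectory from $w$, and Lebesgue differentiation in $w$ as $\eta\to 0$ does not localize the random $X_s$ to $y_0$, so you do not recover $\int_{B(x_0,C_5r)}J(z',y_0)\,\mu(dz')$.

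The paper avoids this circularity by choosing a caloric function that is, \emph{by construction}, a short-time smoothing of $H(z):=J\bigl(z,B(y,\varepsilon)\bigr)$. One sets
\[
u_h(t,z)=\bP^z\Bigl(X_{\tau_{B(x,r)}}\in B(y,\varepsilon),\ \tau_{B(x,r)}\in(t-T,\,t-T+h)\Bigr),
\]
which the L\'evy system rewrites exactly as $\int_{t-T}^{t-T+h}P_v^{B(x,r)}H(z)\,dv$. Applying $\PHI(\phi)$ and then $\UHKD(\phi)$ (already available from Proposition~\ref{uodest}) yields an averaged inequality for $u_h$; dividing by $h$ and sending $h\to 0$ via the $L^1$-continuity of the killed semigroup recovers the same inequality for $H$ itself, with no a~priori comparison of $J$ at nearby points required. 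Two further Lebesgue-differentiation limits (first in a small ball around $x$, then $\varepsilon\to 0$ around $y$) finish the proof. The key conceptual difference from your approach is that a short auxiliary parameter $h$ --- not the macroscopic time $t\sim\phi(r)$ --- carries the passage from the caloric function back to the jump kernel.
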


\begin{proof}
 (i) Since $(\sE,\sF)$ is a regular Dirichlet form on $L^2(M; \mu)$, for any relatively compact open sets $U$ and $V$ with $\bar{U}\subset V$, there is a function $\psi\in \sF\cap C_c(M)$ such that $\psi=1$ on $U$ and $\psi=0$ on $V^c$. Consequently,
\begin{equation}\label{ooo} \int_{U\times V^c}\,J(dx,dy)=\int_{U\times V^c}(\psi(x)-\psi(y))^2\,J(dx,dy)\le
\sE(\psi,\psi)<\infty.\end{equation}
Since $U$ and $V$ are arbitrarily, we get that for almost all $x\in M$ and each $r>0$,
\begin{equation}\label{faruj-1}J(x, B(x,r)^c)<\infty .\end{equation}

(ii) Let
$D$ be an open set of $M$, and $f(t,z)$ be a bounded and non-negative function on $(0,\infty)\times D^c$.   Then
$$u(t,z):= \begin{cases}
 \bE^z \left[f(t-\tau_D,X_{\tau_D}); \tau_D\le t \right], &  t>0,z\in M_0, \\
0, &  t>0,z\in \mathcal{N}
\end{cases}$$
is non-negative on $(0,\infty)\times M$ and caloric in $(0,\infty)\times D$.
In the proof below, the constants $C_i$ $(i=1, \ldots,6)$ are taken from the definition of $\PHI(\phi)$. For any $x, y\in M_0$ and $0<r \le \frac{1}{2 }d(x,y)$. For any  $0<\varepsilon<r$  and $0<h<(C_1+C_2)\phi(r)/2$, define
$$
f_h(t,z)={\bf 1}_{((C_1+C_2)\phi(r)/2-h,(C_1+C_2)\phi(r)/2)}(t){\bf1}_{ B(y,\varepsilon)}(z), \quad  t>0, z\in M.
$$
For $t\geq (C_1+C_2)\phi(r)/2$, define
\begin{align*}
u_h(t,z)=&\bE^z \left[f_h(t-{\tau_{B(x,r)}},X_{\tau_{B(x,r)}}); {\tau_{B(x,r)}}\le t \right]\\
=&\bP^z\Big(X_{\tau_{B(x,r)}}\in B(y,\varepsilon),
t-(C_1+C_2)\phi(r)/2<\tau_{B(x,r)}< t-(C_1+C_2)\phi(r)/2+h\Big) \end{align*}
if  $ z\in M_0$, and $u_h(t,z)=0$ if $z\in \mathcal{N}.$

According to Lemma \ref{Levy-sys},
for any $z\in B(x,r)\cap M_0$ and $t\geq (C_1+C_2)\phi(r)/2$,
\begin{align*}
u_h(t,z)&=\bE^z\left[ \int_0^{\tau_{B(x,r)}}\,dv\int_{B(y,\varepsilon)}{\bf 1}_{(t-(C_1+C_2)\phi(r)/2, t-(C_1+C_2)\phi(r)/2+h)}(v)\,J(X_v, du)\right]  \\
&=\int_{t-(C_1+C_2)\phi(r)/2}^{t-(C_1+C_2)\phi(r)/2+h} \bE^z \left[ {\bf1}_{(0,\tau_{B(x,r)})}(v) \int_{B(y,\varepsilon)} J(X_v,du)\right] \,dv\\
&= \int_{t-(C_1+C_2)\phi(r)/2}^{t-(C_1+C_2)\phi(r)/2+h} P_v^{B(x,r)}H(z)\,dv,
\end{align*}
where   $H(z):=\int_{B(y,\varepsilon)}\,J(z,du)$.

Applying $\PHI(\phi)$ to $u_h$ in $Q(0,x,C_4\phi(r),r)$, we obtain that for any  $x_0\in B(x,{\eps_1})\setminus (\mathcal{N}_{u_h}\cup \mathcal{N})$ with ${\eps_1}\le C_5r$,
$$u_h((C_1+C_2)\phi(r)/2,x_0)\le C_6u_h((C_3+C_4)\phi(r)/2,x).$$ Now, by the definition of $u_h$ and Proposition \ref{uodest},
\begin{align*}
u_h((C_3+C_4)\phi(r)/2,x)&=\int_{B(x,r)}p^{B(x,r)}\left(\frac{(C_3+C_4)-(C_1+C_2)}{2}\phi(r),x,z\right)\\
&\qquad \qquad \times u_h((C_1+C_2)\phi(r)/2,z)\,\mu(dz)\\
&\le \frac{c_1}{V(x,r)}\int_{B(x,r)} u_h((C_1+C_2)\phi(r)/2,z)\,\mu(dz).
\end{align*}
Combining both inequalities above and integrating by $\frac 1{V(x,{\eps_1})}\int_{B(x,{\eps_1})}\cdots \mu(dx_0)$, we have
\begin{equation}\label{eq:inteobe}\begin{split}
&\frac 1{V(x,{\eps_1})}\int_{B(x,{\eps_1})}u_h((C_1+C_2)\phi(r)/2,x_0)\,\mu(dx_0)\\
&\le \frac{c_2}{V(x,r)}\int_{B(x,r)} u_h((C_1+C_2)\phi(r)/2,z)\,\mu(dz).\end{split}
\end{equation}

According to \eqref{ooo}, $H\in L^1(B(x,r);\mu)$. Then, as $h\to0$,
\begin{equation*}\begin{split}\label{l1}
\Big|\int_{B(x,{\eps_1})}&\Big(\frac 1hu_h((C_1+C_2)\phi(r)/2,z)-H(z)\Big)\,\mu(dz)\Big|\\
\le & \,
\frac 1h\int_0^h\int_{B(x,{\eps_1})}\Big|P_v^{B(x,r)}H(z)-H(z)\Big|\,\mu(dz)\,dv\\
\leq & \,  \frac 1h\int_0^h\|(P_v^{B(x,r)}H-H) \|_{L^1 ({B(x,r)}; \mu)} \,dv\to   \,  0,
\end{split}\end{equation*}
thanks to the continuity of the semigroup $\{P^{B(x,r)}_t\}$ in $L^1({B(x,r)}; \mu)$.
Similarly, we have
\[
\lim_{h\to 0}\Big|\int_{B(x,r)}\Big(\frac 1hu_h((C_1+C_2)\phi(r)/2,z)-H(z)\Big)\mu(dz)\Big|=0.
\]
Thus dividing both sides of \eqref{eq:inteobe} by $h$ and taking $h\to 0$, we have
\begin{equation*}\label{ujs-gene}
\frac 1{V(x,{\eps_1})}\int_{B(x,{\eps_1})}\int_{B(y,\varepsilon)} J(z,du)\,\mu(dz)
\le \frac{c_2}{V(x,r)}\int_{B(x,r)} \int_{B(y,\varepsilon)}J(z,du)\,\mu(dz).
\end{equation*}

Letting $\eps_1 \to0$, by \eqref{ooo}, \eqref{faruj-1} and the Lebesgue differentiation theorem (e.g.\ see \cite[Theorem 1.8]{H}), we find that
for $\mu$-a.e $x\in M$,
\begin{align*}
J(x,B(y,\varepsilon))\le &\frac{c_2}{V(x,r)}\int_{B(x,r)} \int_{B(y,\varepsilon)}J(z,du)\,\mu(dz)=\frac{c_2}{V(x,r)}\int_{B(y,\varepsilon)}\int_{B(x,r)} J(z,du)\,\mu(dz).
\end{align*}
The above inequality implies that $J(x, dy)$ is absolutely continuous with respect to the measure $\mu(dy)$.
So there is a non-negative function $J(x, y)$ so that $J(x, dy)=J(x, y)\, \mu (dy)$.
Since $J(dx, dy)$ is a symmetric measure, we may modify the values of $J(x, y)$ so that it is symmetric in $(x, y)$
for $\mu$-a.e. $x, y\in M$.
Dividing the above by $V(y, \eps)$ and then sending $\eps \to 0$, we have by the Lebesgue differentiation theorem
again that for $\mu$-a.e. $x, y\in M$ and $0<r< \frac{1}{2}d(x, y)$, we have
$$
 J(x,y)\le \frac{c_2}{V(x,r)}\int_{B(x,r)}  J(z, y)\,\mu(dz),
$$
proving $\UJS$. \qed
\end{proof}

\begin{corollary}\label{ujs-ndl}
If  $\VD$, \eqref{polycon}, $\UJS$ and $\NDL(\phi)$ are satisfied, then $\J_{\phi,\le}$ holds. In particular,
  $\J_{\phi,\le}$ holds under
   $\VD$, \eqref{polycon} and $\PHI(\phi)$.
\end{corollary}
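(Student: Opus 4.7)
My plan is to derive the pointwise upper bound on $J$ by combining $\UJS$ applied in both the $x$- and $y$-variables with the L\'evy system formula (Lemma~\ref{Levy-sys}) estimated via $\NDL$. Intuitively, $\UJS$ lets me replace a pointwise value of $J$ by an average over small balls, and then the L\'evy system bound $\bP^{x_0}(X_{\tau_B}\in A)\le 1$ combined with a lower bound on the Dirichlet heat kernel from $\NDL$ controls that double average.

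Fix $x_0, y_0\in M$ with $R:=d(x_0,y_0)>0$, and pick a small parameter $\eta\in(0,\eps]$, where $\eps$ is the constant from $\NDL$. I would set $r=R/4$, $r_0=\eta^3 r$, and $r'_0=R/4$. First I would apply $\UJS$ to $(x_0,y_0)$ to get
\[
J(x_0,y_0)\le \frac{c}{V(x_0,r_0)}\int_{B(x_0,r_0)}J(z,y_0)\,\mu(dz),
\]
then for each $z\in B(x_0,r_0)$ apply $\UJS$ to $(y_0,z)$ together with the symmetry $J(z,y_0)=J(y_0,z)$ to obtain
\[
J(z,y_0)\le \frac{c}{V(y_0,r'_0)}\int_{B(y_0,r'_0)}J(z,w)\,\mu(dw);
\]
the $\UJS$ radius conditions hold since $r_0\le R/2$ and $d(y_0,z)\ge R-r_0\ge 3R/4$. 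Combining yields the double average
\[
J(x_0,y_0)\le \frac{c^2}{V(x_0,r_0)\,V(y_0,r'_0)}\int_{B(x_0,r_0)}J(z,B(y_0,r'_0))\,\mu(dz). \qquad (\ast)
\]

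Next I would estimate the integral in $(\ast)$. With $B=B(x_0,r)$ and $A=B(y_0,r'_0)$, which are disjoint since $r+r'_0=R/2<R$, the L\'evy system gives
\[
1\ge \bP^{x_0}(X_{\tau_B}\in A)=\int_0^\infty\!\!\int_B p^B(s,x_0,z)\,J(z,A)\,\mu(dz)\,ds.
\]
Restricting to $s\in[\phi(\eta^2 r),\phi(\eta r)]$ and $z\in B(x_0,r_0)$, and using $\eta\le \eps$, we have $\eps\phi^{-1}(s)\ge \eps\eta^2 r\ge \eta^3 r=r_0$, so $\NDL$ provides $p^B(s,x_0,z)\ge c_1/V(x_0,\phi^{-1}(s))\ge c/V(x_0,r)$ via $\VD$. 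Condition \eqref{polycon} yields $\phi(\eta r)-\phi(\eta^2 r)\ge c_\eta \phi(r)$ once $\eta$ is chosen small enough, and therefore
\[
\int_{B(x_0,r_0)}J(z,B(y_0,r'_0))\,\mu(dz)\le \frac{C_\eta V(x_0,r)}{\phi(r)}.
\]

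Substituting into $(\ast)$, I would bound $V(x_0,r)/V(x_0,r_0)$ by a constant depending only on $\eta$ via $\VD$, use $V(y_0,r'_0)\asymp V(y_0,R)\asymp V(x_0,R)$ (since $d(x_0,y_0)=R$) and $\phi(r)\asymp \phi(R)$, to arrive at $J(x_0,y_0)\le C/(V(x_0,R)\phi(R))$, which is $\J_{\phi,\le}$. The ``in particular'' clause then follows by combining with Propositions~\ref{ndlb} and \ref{ujs-jle}, which give $\PHI(\phi)\Rightarrow \NDL(\phi)$ and $\PHI(\phi)\Rightarrow \UJS$. The main obstacle is the scale calibration: the exponent $3$ in $r_0=\eta^3 r$ is tuned so that $B(x_0,r_0)$ remains inside the $\NDL$ region $B(x_0,\eps\phi^{-1}(s))$ uniformly on the $s$-interval $[\phi(\eta^2 r),\phi(\eta r)]$, while via \eqref{polycon} that interval still has length comparable to $\phi(r)$---a single-scale choice would either put $z$ outside the $\NDL$ region or collapse the integration interval.
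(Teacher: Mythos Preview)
Your proposal is correct and follows essentially the same strategy as the paper: apply $\UJS$ twice to replace $J(x_0,y_0)$ by a double average, then control the resulting integral via the L\'evy system formula combined with the $\NDL$ lower bound on $p^B$. The only cosmetic difference is ordering: the paper first derives the tail-type estimate $\int_{B(x,r)} J(z,B(x,c_2 r)^c)\,\mu(dz)\le c\,V(x,r)/\phi(r)$ from L\'evy system${}+{}\NDL$ and then invokes $\UJS$, whereas you apply $\UJS$ first and bound the specific integral $\int_{B(x_0,r_0)}J(z,B(y_0,r'_0))\,\mu(dz)$ afterwards; the paper's route yields a slightly more general intermediate inequality, while yours is marginally more direct, but the substance is the same.
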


\begin{proof}
For any $x\in M_0$ and $r,t>0$,  by Lemma \ref{Levy-sys},
\begin{align*}
1 &\ge\bP^x(X_{\tau_{B(x,r)}}\notin B(x,r), \tau_{B(x, r)}\leq t \hbox{ and }
 \tau_{B(x, r)} \hbox{ is a jumping time})\\
& =\int_0^t\int_{B(x,r)} p^{B(x,r)}(s,x,y) J(y, B(x,r)^c)\,\mu(dy)\,ds.
\end{align*}
By using $\NDL(\phi)$ and taking $t=\phi(\eps r)$ (where $\eps\in(0,1)$ is the constant in the definition of $\NDL(\phi)$), we obtain that for any $x\in M_0$ and $r>0$,
\begin{align*}
 1&\ge\int_{t/2}^t \int_{B(x,\eps \phi^{-1}(t/2))} p^{B(x,r)}(s,x,y) J(y, B(x,r)^c)\,\mu(dy)\,ds\\
 &\ge \frac{t}{2}\essinf_{s\in[t/2,t], y\in B(x,\eps\phi^{-1}(t/2))}p^{B(x,r)}(s,x,y)\int_{B(x,\eps\phi^{-1}(t/2))}J(y, B(x,r)^c)\,\mu(dy)\\
 &\ge \frac{c_1 t}{ V(x,\phi^{-1}(t))} \int_{B(x,\eps\phi^{-1}(t/2))}J(y, B(x,r)^c)\,\mu(dy).
\end{align*} Thus, by $\VD$ and \eqref{polycon},  there are constants $c_2, c_3>1$ such that
\begin{equation}\label{faruj}\int_{B(x,r)} J(y, B(x,c_2r)^c)\,\mu(dy)\le \frac{c_3V(x,r)}{\phi(r)}.\end{equation}

For fixed $x,y\in M$, set $r=\frac{d(x,y)}{1+c_2}\le \frac{d(x,y)}{2}.$ Then, by \eqref{ujs} and \eqref{faruj},
\begin{align*}
J(x,y)\le&  \frac{c_4}{V(x,r)}\int_{B(x,r)}J(z,y)\,\mu(dz)\\
\le&\frac{c_4^2}{V(x,r)V(y,r)}\int_{B(x,r)}\int_{B(y,r)} J(z,u)\,\mu(du)\,\mu(dz)\\
\le&\frac{c_4^2}{V(x,r)V(y,r)}\int_{B(x,r)}\int_{B(x,c_2r)^c} J(z,u)\,\mu(du)\,\mu(dz)\\
\le& \frac{c_5}{V(x,r)V(x,r)}\int_{B(x,r)} J(z,B(x,c_2r)^c)\,\mu(dz)
\le\frac{c_6}{V(x,r)\phi(r)},
\end{align*} which completes the proof, thanks to $\VD$ and \eqref{polycon} again.
\qed\end{proof}

We note that by Proposition \ref{uodest}, Corollary \ref{ujs-ndl},
Proposition \ref{pi-e} in the next subsection and
Theorem \ref{T:main-1}, we have $\PHI(\phi)\Longrightarrow \UHK(\phi)$.

\subsection{Consequences of $\NDL(\phi)$}

In this subsection, we present some consequences of $\NDL(\phi)$.
Since $\PHI(\phi)$ implies $\NDL(\phi)$ by
Proposition \ref{ndlb}, this subsection
can be regarded as a continuation of Subsection \ref{phi-section-1}.

\begin{proposition}
\label{pi-e} Assume that  $\VD$, \eqref{polycon}, and $\NDL(\phi)$ hold.
Then
\begin{itemize}
\item[{\rm (i)}]  $\PI(\phi)$ holds.
If furthermore $\RVD$ is satisfied, then $\FK(\phi)$  also holds.
\item[{\rm (ii)}] $\E_{\phi,\ge}$ holds. If in addition $\RVD$ is satisfied, then we have $\E_{\phi,\leq }$ and so $\E_\phi$.
\end{itemize}
In particular, if $\VD$, $\RVD$, \eqref{polycon} and $\PHI (\phi)$ hold, then so do   {\rm (i)}    and    {\rm (ii)}.
\end{proposition}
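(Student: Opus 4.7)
The plan is to establish the conclusions in the order $\E_{\phi,\ge}\Rightarrow \PI(\phi)\Rightarrow \FK(\phi)\Rightarrow \E_{\phi,\le}$, since $\FK(\phi)$ follows from $\PI(\phi)$ (together with $\VD$, $\RVD$, \eqref{polycon}) by Proposition \ref{pi-e-pre}, and then $\E_{\phi,\le}$ follows from $\FK(\phi)$ through Lemma \ref{upper-e}. The final assertion that $\PHI(\phi)$ implies (i) and (ii) is then immediate from Proposition \ref{ndlb}.

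For $\E_{\phi,\ge}$, fix $x\in M_0$ and $r>0$, and set $t_0=\phi(\eps r)$, where $\eps\in(0,1)$ is the constant from Definition \ref{ndl1}. Integrating the $\NDL(\phi)$ lower bound $p^{B(x,r)}(t_0,x,y)\ge c_1/V(x,\phi^{-1}(t_0))$ over $y\in B(x,\eps\phi^{-1}(t_0))\cap M_0$ and comparing $V(x,\eps\phi^{-1}(t_0))$ with $V(x,\phi^{-1}(t_0))$ through $\VD$, we obtain $\bP^x(\tau_{B(x,r)}>t_0)\ge c_2$, hence $\bE^x[\tau_{B(x,r)}]\ge c_2 t_0\ge c_3\phi(r)$ by \eqref{polycon}. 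Conservativeness of $X$ also follows through Proposition \ref{P:3.1}.

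The heart of the argument is $\PI(\phi)$. Fix $B_r=B(x_0,r)$ and let $B=B(x_0,\kappa r)$ for a sufficiently large $\kappa>1$. For $f\in \sF_b$ put $g=(f-\ol f_{B_r})\chi\in \sF_B$, where $\chi\in \sF\cap C_c(B)$ is a cutoff equal to $1$ on $B_r$, whose existence is guaranteed by the regularity of $(\sE,\sF)$. Choosing $t\asymp \phi(r)$ with $t\le \phi(\eps\kappa r)$ and $\phi^{-1}(t)\ge r/\eps$, $\NDL(\phi)$ applied to the ball $B$ gives $p^B(t,x,y)\ge c_4/V(x_0,r)$ for all $x,y\in B_r$. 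Since $\int_{B_r}g\,d\mu=0$,
\begin{equation*}
\int_{B_r}(f-\ol f_{B_r})^2\,d\mu=\frac{1}{2 V(x_0,r)}\int_{B_r\times B_r}(g(x)-g(y))^2\,\mu(dx)\,\mu(dy).
\end{equation*}
Combining the near-diagonal lower bound with the standard semigroup identity
\begin{equation*}
\int_{B\times B}(g(x)-g(y))^2 p^B(t,x,y)\,\mu(dx)\,\mu(dy)=2\int_0^t\sE(g,P_s^B g)\,ds+2\int_B g^2(P_t^B\mathbf{1}-\mathbf{1})\,d\mu\le 2t\,\sE(g,g),
\end{equation*}
where the last inequality uses $P_t^B\mathbf{1}\le \mathbf{1}$ and $\sE(g,P_s^B g)\le \sE(g,g)$ by Cauchy--Schwarz and the $\sE$-contractivity of $P_s^B$, we obtain $\int_{B_r}(f-\ol f_{B_r})^2\,d\mu\le C\phi(r)\,\sE(g,g)$. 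A Leibniz expansion of $\sE(g,g)$ produces a main term $\int_{B\times B}(f(x)-f(y))^2\chi(x)^2\,J(dx,dy)\le \int_{B_{\kappa r}\times B_{\kappa r}}(f(x)-f(y))^2\,J(dx,dy)$, plus a Leibniz error in $(\chi(x)-\chi(y))^2$ and a boundary contribution $\int_B g^2\kappa_B\,d\mu$ with $\kappa_B(x)=\int_{B^c}J(x,dy)$. Both remainders can be absorbed by using the averaged tail bound
\begin{equation*}
\int_{B(x,r)}J(y,B(x,c_5 r)^c)\,\mu(dy)\le \frac{c_6 V(x,r)}{\phi(r)},
\end{equation*}
which is derivable purely from $\NDL(\phi)$, $\VD$, \eqref{polycon} and the L\'evy system formula, exactly as in the first half of the proof of Corollary \ref{ujs-ndl}, together with a judicious choice of $\chi$ and $\kappa$. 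This gives $\PI(\phi)$, and Proposition \ref{pi-e-pre} and Lemma \ref{upper-e} then deliver $\FK(\phi)$ and $\E_{\phi,\le}$.

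The principal obstacle is the cutoff management in the $\PI(\phi)$ step: because neither $\CSJ(\phi)$ nor any pointwise bound on the jumping kernel $J$ is assumed, the Leibniz error and the boundary term $\int_B g^2\kappa_B\,d\mu$ cannot be handled by $\sE$-type cutoff Sobolev inequalities or by the pointwise size of $J$, but only through the averaged tail estimate above. This forces a delicate choice of the cutoff $\chi$ (with $\{\chi<1\}$ confined to a thin annulus sufficiently far from $B_r$) coupled with an enlargement factor $\kappa$ chosen just large enough that the averaged tail bound can dominate $\kappa_B$ on the support of $\chi$.
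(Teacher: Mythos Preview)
Your treatment of $\E_{\phi,\ge}$ and the reductions $\PI(\phi)\Rightarrow\FK(\phi)\Rightarrow\E_{\phi,\le}$ via Proposition~\ref{pi-e-pre} and Lemma~\ref{upper-e} are fine and match the paper (the paper also offers a direct proof of $\E_{\phi,\le}$ from $\NDL(\phi)$ and $\RVD$, but notes your route as an alternative).

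The gap is in the $\PI(\phi)$ step. After you reach $\int_{B_r}(f-\ol f_{B_r})^2\,d\mu\le C\phi(r)\,\sE(g,g)$ with $g=(f-\ol f_{B_r})\chi$, you must bound $\sE(g,g)$ by $\int_{B_{\kappa r}\times B_{\kappa r}}(f(x)-f(y))^2\,J(dx,dy)$. The Leibniz remainder and the killing term both carry the weight $(f-\ol f_{B_r})^2$ over the annulus where $\chi$ varies. The averaged tail bound you invoke, $\int_{B(x,\rho)}J(y,B(x,c_5\rho)^c)\,\mu(dy)\le c_6V(x,\rho)/\phi(\rho)$, is only an $L^1$ estimate for $J(\cdot,B^c)$ and gives no control once multiplied by the arbitrary weight $(f-\ol f_{B_r})^2\chi^2$; you would need either a pointwise bound on $\kappa_B$ (unavailable without $\J_{\phi,\le}$) or a Poincar\'e-type bound on $\int_{B\setminus B_r}(f-\ol f_{B_r})^2\,d\mu$, which is exactly what you are trying to prove. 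No ``judicious choice'' of $\chi$ or $\kappa$ repairs this, since the annular mass of $(f-\ol f_{B_r})^2$ is not dominated by the $B_r$-mass, nor by the jump energy on $B_{\kappa r}\times B_{\kappa r}$.

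The paper sidesteps the cutoff problem entirely by passing to the \emph{resurrected} Dirichlet form $(\bar\sE,\bar\sF)$ on $L^2(B;\mu)$, with $\bar\sE(f,f)=\int_{B\times B}(f(x)-f(y))^2\,J(dx,dy)$ and $\bar\sF=\{f\in L^2(B;\mu):\bar\sE(f,f)<\infty\}$. Since $\bar\sF$ contains $f|_B$ directly (no cutoff needed), and the resurrected heat kernel satisfies $\bar p^B(t,x,y)\ge p^B(t,x,y)$, the $\NDL(\phi)$ lower bound transfers. A Silverstein regularization produces a Hunt process $\wt X$ on an enlarged space $\wt B$ so that the spectral inequality $\bar\sE(f,f)\ge t^{-1}\langle f-\bar P_t f,f\rangle$ and the identity $\langle f-\bar P_t f,f\rangle\ge\tfrac12\,\bE^{\wt\mu}[(f(\wt X_t)-f(\wt X_0))^2;\,t<\tau_B]$ yield $\PI(\phi)$ with the right-hand side already equal to $\int_{B\times B}(f(x)-f(y))^2\,J(dx,dy)$. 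The essential idea you are missing is that working with the resurrected (or reflected) form makes the energy side of the inequality \emph{already} of the correct localized form, so no Leibniz error ever appears.
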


\begin{proof}
(i)  The main idea of the proof is due to \cite[Theorem 5.1]{KS}, which is concerned with second order (degenerate) elliptic operators. See also
 the proof of \cite[Theorem 5.5.2]{Sa} for related arguments.
For any $x_0\in M$ and $r>0$, let $B=B(x_0,r)$.
Define a bilinear form $(\bar \sE, \bar \sF)$ on $L^2(B; \mu)$ by
\begin{align*}
\bar \sE (u, v)&= \int_{B\times B}  (u(x)-u(y))(v(x)-v(y)) J(x, y) \,\mu (dx) \,\mu (dy), \\
\bar \sF &= \left\{ u\in L^2(B; \mu): \bar \sE (u, u)<\infty \right\}.
\end{align*}
One can easily check by using Fatou's lemma that $(\bar \sE, \bar \sF)$ is closable and
is a Dirichlet form on $L^2(B; \mu)$.  Let $\{\bar P_t\}$ be the $L^2$-semigroup associated
with $(\bar \sE, \bar \sF)$.
Let $\bar \sF_B$ be the closure of $\bar \sF \cap C_c (B)$.
Then $(\bar \sE, \bar \sF_B)$ is a regular Dirichlet form on $L^2(B; \mu)$, whose associated
semigroup will be denoted as $\{\bar P^B_t\}$. By \cite[Theorem 5.2.17]{CF},
$(\bar \sE, \bar \sF_B)$ is the resurrected Dirichlet form of $(\sE,  \sF_B)$.
In other words, if we denote by $\bar X^B=\{\bar X^B_t\}$ the Hunt process associated with the regular Dirichlet form
$(\bar \sE, \bar \sF_B)$ on $L^2(B; \mu)$, then $\bar X^B$ is the resurrection of $X^B=\{X^B_t\}$ in $B$,
and so $\bar X^B$ can be obtained from $X^B$ by creation through a Feynman-Kac transform.
Consequently, $\bar X^B  $ has a transition density function ${\bar p}^B(t, x, y)$ with respect to $\mu$
and ${\bar p}^B(t, x, y)\geq p^B(t, x, y)$ for every $t>0$ and $x, y\in B\cap M_0$.
This together with  $\NDL(\phi)$ implies that there exist $\varepsilon\in(0,1)$ and $c_1>0$ such that for all $x_0\in M$ and $x,$ $y \in B(x_0, \eps^2 r)\cap M_0$,
$$
\bar p^B(\phi (\eps r), x, y) \geq  p^B (\phi (\eps r), x, y) \geq \frac{c_1}{V(x_0, r)}.
$$
On the other hand, we know from \cite[Section 6.2]{CF},  $(\bar \sE, \bar \sF)$
is the active reflected Dirichlet space for $(\bar \sE, \bar \sF_B)$.
Although $(\bar \sE, \bar \sF)$ may not be regular as a Dirichlet form on $L^2(B; \mu)$,
by Silverstein \cite[Theorem 20.1]{Si}, there is a locally compact separable
metric space $\wt B$ (called regularizing space)
so that  $(\bar \sE, \bar \sF)$
is  regular   on $L^2(\wt B; \wt \mu)$ and $B$ is
intrinsically
open in $\wt B$.
Here $\wt \mu$ is an extension of $\mu$ to $\wt B$ by setting
$\wt \mu (\wt B\setminus B)=0$.
Let $\wt X=\{\wt X_t\}$ denote the Hunt process on $\wt B$ associated with the regular Dirichlet form
$(\bar \sE, \bar \sF)$   on $L^2(\wt B; \mu)$.
 Then the part process $\wt X^B=\{\wt X^B_t\}$ of $\wt X$ killed upon leaving $B$ has the same distribution as $\bar X^B$.
 Now for $f\in  \bar \sF$, by the basic property of Dirichlet form (see, for example, \cite[(1.1.4)]{CF}),
 \begin{align*}
 \bar{\sE} (f, f)&\geq \frac{1}{\phi (\eps r)} \int_B f(x) (f-\bar P_{\phi(\eps r)}f)(x) \, \mu (dx) \\
&\geq  \frac{1}{2 \phi (\eps r)} \bE^{\wt \mu} \left[ (f(\wt X_{\phi (\eps r)}) -f(\wt X_0) )^2 \right] \\
&\geq  \frac{1}{2 \phi (\eps r)} \bE^{\wt \mu} \left[ ( f (\wt X_{\phi (\eps r)}) -f(\wt X_0) )^2 ;
\phi (\eps r) < \tau_B \right] \\
&= \frac{1}{2 \phi (\eps r)} \int_{B\times B} \bar p^B(\phi (\eps r), x, y) (f(x)-f(y))^2 \,\mu (dx)\, \mu (dy) \\
&\geq  \frac{c_2}{V(x_0, r) \phi (r)} \int_{B(x_0, \eps^2r) } \int_{B(x_0, \eps^2 r)}    (f(x)-f(y))^2\, \mu (dx)\, \mu (dy) \\
&\geq  \frac{c_3 }{  \phi (r)} \int_{B(x_0, \eps^2 r) }     (f(x)- \bar f_{B(x_0, \eps^2 r)}  )^2\, \mu (dx).
\end{align*}
 Recall that  $\bar{f}_D:=\frac{1}{\mu(D)}\int_D f\,d\mu$ for any open set $D$ of $M$.
In the last two inequalities above we have used $\VD$, \eqref{polycon} and the fact that
$$\int_{B(x_0, \eps^2 r)}\left( f(x)- \bar{f}_{B(x_0, \eps^2 r)}\right)^2\,\mu(dx)
=\inf_{a\in \bR}\int_{ B(x_0, \eps^2 r)}\left( f(x)- a\right)^2\,\mu(dx).
$$
This establishes $\PI (\phi)$.

That $\PI(\phi)$ implies $\FK(\phi)$ under additional assumption $\RVD$ is given in Proposition \ref{pi-e-pre}. (Note that, under additional assumption $\RVD$, $\FK(\phi)$ is also a direct consequence of $\PHI(\phi)$, thanks to Propositions \ref{uodest} and \ref{pi-e-pre}.)

(ii) By $\VD$, \eqref{polycon} and $\NDL(\phi)$, for some $\eps\in(0,1)$,
\begin{align*}\bP^x(\tau_{B(x,r)}\ge \phi(\eps r))=&\int_{B(x,r)} p^{B(x,r)}(\phi(\eps r),x,y)\,\mu(dy)\\
\ge& \int_{B(x,\eps ^2 r)} p^{B(x,r)}(\phi(\eps r),x,y)\,\mu(dy)\ge  c_6,\end{align*} and thus $\bE^{x_0}\tau_{B(x_0,r)}\ge c_6 \phi(r).$ This proves $\E_{\phi,\ge}$.

Next, we assume that $\RVD$ is satisfied.  Let $B=B(x_0, r)$ with $x_0\in M_0$ and $r>0$,
and $B'=B(x_0,r/(2{l_\mu}))$, where ${l_\mu}>1$ is the constant in \eqref{e:rvd2}. Then, $\VD$, \eqref{polycon} and $\NDL(\phi)$ give us that for $t= \phi(r/\eps)$ with some $\eps\in(0,1)$,
$$p(t, x,y)\ge \frac{c_1}{V(x_0,r)}, \quad x,y\in B\backslash \mathcal{N}.$$
Fix $y_0\in M$ with $(1+2{l_\mu})r/(2{l_\mu}(1+{l_\mu}))< d(x_0,y_0)< (1+2{l_\mu})r/(2(1+{l_\mu}))$
(such a point $y_0$ indeed exists due to $\RVD$),
then for any $x\in B'\backslash \mathcal{N}$,
\begin{align*}\bP^x(X_t\notin B')\ge& \bP^x(X_t\in B(y_0, {r}/({2(1+{l_\mu})})))=\int_{B(y_0,r/(2(1+{l_\mu})))} p(t, x,y)\,\mu(dy)\\
\ge& \frac{c_2 V(y_0,r/(2(1+{l_\mu})))}{V(x_0,r)}\ge c_3,\end{align*}
where $\VD$ is used in the last inequality.
So, we have $\bP^x(\tau_{B'}>t)\le \bP^x(X_t\in B')\le 1-c_3$ for all $x\in B'\backslash \mathcal{N}$. Hence, by the Markov property,
$\bP^x(\tau_{B'}>kt)\le (1-c_3)^k$, and thus $\bE^x \tau_{B'}\le c_4t$. Since $\bE^{x_0}\tau_{B(x_0,r/(2{l_\mu}))}=\bE^{x_0}\tau_{B'}$, replacing $r/(2{l_\mu})$ by $r$ gives us that
$\bE^{x_0}\tau_{B(x_0,r)}\le c_5 \phi(r),$ where \eqref{polycon} is used in the inequality above. Therefore, $\E_\phi$ holds.  (Note that, by Lemma \ref{upper-e}, under $\VD$ and \eqref{polycon}, $\FK(\phi)$ implies $\E_{\phi,\le}$. Then, $\E_{\phi,\le}$ can be also deduced from $\PHI(\phi)$ directly under additional assumption $\RVD$, thanks to Propositions \ref{uodest} and \ref{pi-e-pre}.)
\qed \end{proof}

Combining all the conclusions of this and previous subsections, we can obtain the following main result in this section.
\begin{theorem} Assume that $\mu$ and $\phi$ satisfy $\VD$, $\RVD$ and \eqref{polycon} respectively. Then the following  hold
\begin{align*}
\PHI(\phi)&\,\,\Longrightarrow \UHKD(\phi)+\NDL(\phi)+\UJS +\E_\phi+  \,\J_{\phi,\le}\\
&\,\Longleftrightarrow \UHKD(\phi)+\NDL(\phi)+\UJS\\
&\,\Longleftrightarrow \UHK(\phi) + \NDL(\phi) +\UJS.
\end{align*}
\end{theorem}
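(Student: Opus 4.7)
The proof is largely a synthesis of the propositions that have just been established, so the plan is less about new ideas and more about carefully routing the implications. All arguments will be carried out under the blanket hypotheses $\VD$, $\RVD$, and \eqref{polycon}.

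First I would dispose of the implication $\PHI(\phi) \Longrightarrow \UHKD(\phi)+\NDL(\phi)+\UJS+\E_\phi+\J_{\phi,\le}$. The conclusion $\UHKD(\phi)$ is exactly Proposition \ref{uodest}; the conclusion $\NDL(\phi)$ is Proposition \ref{ndlb} (which simultaneously gives conservativeness of $X$); and $\UJS$ is Proposition \ref{ujs-jle}. Once $\NDL(\phi)$ and $\UJS$ are in hand, Corollary \ref{ujs-ndl} yields $\J_{\phi,\le}$. Finally, $\E_\phi$ follows from $\NDL(\phi)$ by Proposition \ref{pi-e}(ii), where $\RVD$ is essential for the upper bound on $\bE^x\tau_{B(x,r)}$.

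Next I would prove the equivalence
\[
\UHKD(\phi)+\NDL(\phi)+\UJS\ \Longleftrightarrow\ \UHKD(\phi)+\NDL(\phi)+\UJS+\E_\phi+\J_{\phi,\le}.
\]
The direction $(\Longleftarrow)$ is trivial. For $(\Longrightarrow)$, starting from $\NDL(\phi)+\UJS$ I would invoke Corollary \ref{ujs-ndl} to obtain $\J_{\phi,\le}$, and then Proposition \ref{pi-e}(ii) (using $\RVD$) to obtain $\E_\phi$. No further input is needed.

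For the final equivalence $\UHKD(\phi)+\NDL(\phi)+\UJS\ \Longleftrightarrow\ \UHK(\phi)+\NDL(\phi)+\UJS$, the direction $(\Longleftarrow)$ is immediate since $\UHK(\phi)$ trivially implies $\UHKD(\phi)$. For $(\Longrightarrow)$ I would first extract the auxiliary conditions already established in the previous step: $\J_{\phi,\le}$ (via Corollary \ref{ujs-ndl}) and $\E_\phi$ (via Proposition \ref{pi-e}(ii)). Together with $\UHKD(\phi)$, these are precisely condition (2) in Theorem \ref{T:main-1}, so that theorem delivers $\UHK(\phi)$ and conservativeness of $(\sE,\sF)$.

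There is essentially no technical obstacle here; the proof is a bookkeeping exercise routed through Corollary \ref{ujs-ndl}, Proposition \ref{pi-e}, and Theorem \ref{T:main-1}. The one subtle point worth flagging is that the use of Proposition \ref{pi-e}(ii) and the upgrade to $\UHK(\phi)$ via Theorem \ref{T:main-1} both rely crucially on $\RVD$, which is why the hypothesis $\RVD$ (rather than just $\VD$) appears in the statement. All of the genuine labor — in particular the $\NDL(\phi)$ lower bound for $p^B(t,x,y)$ in a non-geodesic metric space, and the extraction of $\UJS$ from $\PHI(\phi)$ via a L\'evy-system computation — has already been done in Propositions \ref{ndlb} and \ref{ujs-jle}.
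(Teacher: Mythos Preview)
Your proposal is correct and follows essentially the same route as the paper: both proofs assemble the first implication from Propositions \ref{uodest}, \ref{ndlb}, \ref{ujs-jle}, Corollary \ref{ujs-ndl}, and Proposition \ref{pi-e}, and then use Corollary \ref{ujs-ndl} plus Proposition \ref{pi-e} to supply $\J_{\phi,\le}$ and $\E_\phi$, after which Theorem \ref{T:main-1} upgrades $\UHKD(\phi)$ to $\UHK(\phi)$. The paper's version is terser but the logical skeleton is identical.
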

\begin{proof} Note that by Corollary \ref{ujs-ndl}, $\NDL(\phi)+\UJS\Longrightarrow \J_{\phi,\le}$; and that by Proposition \ref{pi-e}, $\NDL(\phi)$ implies $\E_\phi$.
According to  Theorem \ref{T:main-1},
$\UHK(\phi)\,\,+$ conservativeness $\Longleftrightarrow \UHKD(\phi)+ \J_{\phi, \le}+ \E_\phi$.
Then the required assertion now follows from all the previous propositions.
 (Here we note that both $\PHI(\phi)$ and $\NDL(\phi)$
imply the conservativeness of the process $\{X_t\}$, see Proposition \ref{P:3.1} and
Proposition \ref{ndlb}.)
 \qed
 \end{proof}

\subsection{ H\"older regularity}\label{consehi}

Another consequence of $\NDL (\phi)$ is that, it along with $\E_{\phi,\le}$ and $\J_{\phi,\le}$
implies the joint H\"older regularity of
bounded caloric functions. In other words, $\NDL (\phi)+\E_{\phi,\le}+\J_{\phi,\le}$ imply $\PHR(\phi)$ and $\EHR$.
For our purpose, in the following lemma we use the definition of
$\NDL(\phi)$ with $\eps\phi(r)$ and $\phi^{-1}(\eps t)$ replaced by
$ \phi(\eps r)$ and $\eps\phi^{-1}(t)$, respectively.

\begin{lemma}\label{phi-l-2}  Suppose that $\VD$, \eqref{polycon} and $\NDL(\phi)$ hold. For every $0<\delta\le \eps
$ $($where $\eps$ is the constant in the definition of $\NDL(\phi)$$)$, there exists a constant $C_1>0$ such that for every $r>0$, $x\in M_0$, $t\ge \delta\phi( r)$ and any compact set $A\subset [t-\delta \phi(r),t-\delta\phi( r)/2]\times B(x, \phi^{-1}(\eps \delta \phi( r)/2))$,
\be \label{e:6.13}
\bP^{(t,x)} (\sigma_A< \tau_{[t-
\delta \phi (r), t]
\times B(x,r)})\ge C_1\frac{m\otimes \mu(A)}{V(x,r)\phi(r)},
\ee
 where $m\otimes \mu$ is a product of the Lebesgue measure on $\bR_+$ and $\mu$ on $M$.  \end{lemma}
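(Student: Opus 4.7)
The plan is to use an occupation-time identity connecting the hitting probability of the space-time process $Z$ to integrals of the Dirichlet heat kernel $p^{B(x,r)}$, and then invoke $\NDL(\phi)$ to lower bound those integrals.

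First I would observe that because $V_s = t-s$ evolves deterministically, the hitting time $\sigma_A$ must lie in $[\delta\phi(r)/2,\delta\phi(r)]$ (since $A$ has time-coordinate in $[t-\delta\phi(r), t-\delta\phi(r)/2]$), and that for $s$ in this range the event of remaining in $[t-\delta\phi(r),t]\times B(x,r)$ up to time $s$ reduces to $\{s<\tau_{B(x,r)}\}$. Introduce the expected occupation functional
\begin{equation*}
\Phi := \bE^x\left[\int_{\delta\phi(r)/2}^{\delta\phi(r)} \mathbf{1}_A(t-s, X_s)\mathbf{1}_{\{s<\tau_{B(x,r)}\}}\, ds\right].
\end{equation*}
I would then sandwich $\Phi$ from two sides. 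For the upper bound, since the integrand is at most $1$ and the integration interval has length $\delta\phi(r)/2$, and since the integral vanishes off $\{\sigma_A<\tau_{[t-\delta\phi(r),t]\times B(x,r)}\}$, we have
\begin{equation*}
\Phi \le \frac{\delta\phi(r)}{2}\,\bP^{(t,x)}\!\left(\sigma_A<\tau_{[t-\delta\phi(r),t]\times B(x,r)}\right).
\end{equation*}

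For the lower bound, Fubini rewrites $\Phi$ as
\begin{equation*}
\Phi = \int_{\delta\phi(r)/2}^{\delta\phi(r)}\!\int_M p^{B(x,r)}(s,x,y)\,\mathbf{1}_A(t-s,y)\,\mu(dy)\,ds.
\end{equation*}
For $s\in[\delta\phi(r)/2,\delta\phi(r)]$ and $y\in B(x,\phi^{-1}(\eps\delta\phi(r)/2))$, I would check (using \eqref{polycon} to translate between the two equivalent forms of $\NDL(\phi)$ recalled after Definition \ref{ndl1}) that the hypotheses of $\NDL(\phi)$ apply at center $x$, ball radius $r$, time $s$ and point $y$, yielding
\begin{equation*}
p^{B(x,r)}(s,x,y)\ge \frac{c_1}{V(x,\phi^{-1}(s))}\ge \frac{c_2}{V(x,r)},
\end{equation*}
where the second inequality uses $\phi^{-1}(s)\le r$ together with $\VD$ and \eqref{polycon}. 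A change of variables $v=t-s$ then gives
\begin{equation*}
\Phi \ge \frac{c_2}{V(x,r)}\int_{\delta\phi(r)/2}^{\delta\phi(r)}\!\int_M \mathbf{1}_A(t-s,y)\,\mu(dy)\,ds = \frac{c_2\,m\otimes\mu(A)}{V(x,r)}.
\end{equation*}
Combining the two bounds yields \eqref{e:6.13} with $C_1 = 2c_2/\delta$.

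The main obstacle is purely technical: verifying that the spatial radius $\phi^{-1}(\eps\delta\phi(r)/2)$ appearing in the hypothesis is actually within the neighborhood $B(x,\eps\phi^{-1}(s))$ (or $B(x,\phi^{-1}(\eps s))$, depending on which form of $\NDL(\phi)$ is used) on which the Dirichlet heat kernel lower bound is valid, uniformly in $s\in[\delta\phi(r)/2,\delta\phi(r)]$. This boils down to comparing $\phi^{-1}(\eps a)$ with $\eps\phi^{-1}(a)$ at scale $a=\delta\phi(r)/2$, which is handled via the two-sided polynomial control \eqref{polycon}, possibly at the cost of replacing the constant $\eps$ by a constant multiple of $\eps^{\beta_2}$ or $\eps^{\beta_1}$ when invoking $\NDL(\phi)$. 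Apart from this bookkeeping, the argument is a routine occupation-time computation.
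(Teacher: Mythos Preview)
Your proposal is correct and follows essentially the same route as the paper: sandwich the expected occupation time $\Phi$ between an upper bound proportional to $\phi(r)\,\bP^{(t,x)}(\sigma_A<\tau_r)$ and a lower bound obtained by rewriting $\Phi$ as an integral of $p^{B(x,r)}(s,x,y)$ and applying $\NDL(\phi)$. The paper phrases the upper bound via the layer-cake identity $\bE[Y]=\int_0^\infty\bP(Y>u)\,du$ rather than your direct pointwise bound, but the content is identical.

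Your ``main obstacle'' is not actually an obstacle. Use the equivalent form of $\NDL(\phi)$ with time constraint $s\le \eps\phi(r)$ and spatial ball $B(x,\phi^{-1}(\eps s))$ (the form recalled right after Definition~\ref{ndl1}). For $s\in[\delta\phi(r)/2,\delta\phi(r)]$ one has $s\le\delta\phi(r)\le\eps\phi(r)$, and $\phi^{-1}(\eps s)\ge\phi^{-1}(\eps\delta\phi(r)/2)$ by monotonicity of $\phi^{-1}$; the radius in the lemma's hypothesis is chosen to be exactly $\phi^{-1}(\eps\delta\phi(r)/2)$ so that this inclusion is immediate. No comparison of $\phi^{-1}(\eps a)$ with $\eps\phi^{-1}(a)$ is needed.
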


\begin{proof}
The proof is almost the same as that for \cite[Lemma 4.9(i)]{CKK2}.
Let $\tau_r=\tau_{[t-\delta \phi(r), t]\times B(x,r)}$ and $A_s=\{y\in M: (s,y)\in A\}.$ For any $t,r>0$ and $x\in M_0$,  \begin{align*}
\delta \phi(r)\bP^{(t,x)}(\sigma_A<\tau_r)\ge &\int_0^{\delta \phi(r)}\bP^{(t,x)}\left( \int_0^{\tau_r}{\bf 1}_A(t-s, X_s)\,ds>0\right)\,du\\
\ge& \int_0^{\delta \phi(r)}\bP^{(t,x)}\left( \int_0^{\tau_r}{\bf 1}_A(t-s, X_s)\,ds>u\right)\,du\\
=&\bE^{(t,x)} \left[\int_0^{\tau_r} {\bf 1}_A(t-s, X_s)\,ds\right].
\end{align*}
Note that, for any $t\ge \delta \phi(r)$,
 \begin{align*}
\bE^{(t,x)} \left[\int_0^{\tau_r} {\bf 1}_A(t-s, X_s)\,ds\right]
=&\int_{\delta\phi( r)/2}^{\delta \phi(r)}\bP^{(t,x)} \left(\big(t-s, X_s^{B(x,r)}\big)\in A\right)\,ds \\
=&\int_{ \delta \phi(r)/2}^{\delta\phi( r)}\bP^{x} \left(X_s^{B(x,r)}\in A_{t-s}\right)\,ds\\
=&\int_{\delta\phi( r)/2}^{\delta \phi(r)}\,ds\int_{A_{t-s}} p^{B(x,r)} (s, x,y)\,\mu(dy).
\end{align*} By $\VD$, \eqref{polycon} and $\NDL(\phi)$, for any $s\in [\delta \phi(r)/2, \delta \phi(r)]$ and $y\in B(x,\phi^{-1}(\eps \delta \phi( r)/2))\setminus\mathcal{N}$,
$$
p^{B(x,r)}(s, x,y)\ge \frac{c_1}{V(x,r)}.
$$
Thus,
$$
\bE^{(t,x)} \left[\int_0^{\tau_r} {\bf 1}_A(t-s, X_s)\,ds\right]\ge \frac{c_1}{V(x,r)}\int_{ \delta \phi(r )/2}^{\delta \phi(r)}\,ds\int_{A_{t-s}} \,\mu(dy)=\frac{c_1m\otimes \mu(A)}{V(x,r)}.$$ Combining all the conclusions above, we obtain the desired assertion. \qed
\end{proof}

\begin{proposition}\label{ndl-holder}
Assume that $\VD$, \eqref{polycon}, $\NDL(\phi)$, $\E_{\phi,\le}$ and $\J_{\phi,\le}$ hold. For every $\delta \in (0, 1)$,
there exist positive constants
$C>0$ and $\gamma\in(0,1]$, where $\gamma$ is independent of $\delta$,
 so that  for any bounded caloric function $u$ in
$Q(t_0,x_0,\phi(r),r)$,
there is a properly exceptional set ${\cal N}_u\supset {\cal N}$ such that
$$|u(s,x)-u(t, y)|\le C\left( \frac{\phi^{-1}(|s-t|)+d(x, y)}{r} \right)^\gamma  \esssup_{ [t_0, t_0+\phi (r)] \times M}\,|u|
$$
for  every $s, t\in (t_0+\phi(r)-\phi(\delta r), t_0+\phi (r))$ and $x, y \in B(x_0, \delta r)\setminus {\cal N}_u$.
 In other words, under $\VD$ and \eqref{polycon}, $\NDL(\phi)+\E_{\phi,\le}+\J_{\phi,\le}$ imply {\rm PHR}$(\phi)$ and {\rm EHR}.
 \end{proposition}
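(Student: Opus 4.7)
The plan is to run an oscillation-decay argument tailored to the non-local parabolic setting, using Lemma~\ref{phi-l-2} (parabolic hitting estimate) together with Lemma~\ref{Levy-sys} (L\'evy system), $\J_{\phi,\le}$ and $\E_{\phi,\le}$ (to control long-range jump contributions). Since the claimed inequality is homogeneous of degree one in $u$, I would first normalize so that $\|u\|_\ast:=\esssup_{[t_0,t_0+\phi(r)]\times M}|u|\le 1$. Fix a small $\rho\in(0,1)$ whose value is dictated by the constants in \eqref{polycon}, $\NDL(\phi)$, $\E_{\phi,\le}$ and $\J_{\phi,\le}$. For a basepoint $(s_\ast,x_\ast)$ in the region indicated in the statement, define nested cylinders
\[
Q_k:=(s_\ast-\phi(r_k),\,s_\ast)\times B(x_\ast,r_k),\qquad r_k=\rho^k\cdot(r/2),
\]
which sit inside the caloric domain (using the gap between $s_\ast$ and $t_0+\phi(r)$ and between $x_\ast$ and $\partial B(x_0,r)$), together with $M_k:=\esssup_{Q_k}u$, $m_k:=\essinf_{Q_k}u$, $\omega_k:=M_k-m_k$. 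Geometric decay $\omega_k\le C_0\nu^k$ for some $\nu\in(0,1)$ then yields the joint H\"older modulus with exponent $\gamma=\log(1/\nu)/\log(1/\rho)$, by comparing any two sufficiently close points through a common $Q_{k_\ast}$.

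The heart of the proof is the inductive recursion $\omega_{k+1}\le(1-c_\ast/2)\omega_k+C\,\nu_0^k$, which iterates via Lemma~\ref{L:it} to the target decay. To establish it, fix $(s,z)\in Q_{k+1}$ and consider the stopping time $\tau:=\sigma_{A_k}\wedge\tau_{Q_k}$. By a WLOG dichotomy (replacing $u$ by $-u$ at that scale if necessary), I would assume that $A_k:=\{(t,y)\in Q_{k+1}: u(t,y)\le(M_k+m_k)/2\}$ has measure at least $|Q_{k+1}|/2$. Using $\NDL(\phi)$ to bound from below the parabolic Green function of the space-time process killed upon exiting $Q_k$, and $\E_{\phi,\le}$ to bound $\sup_{(t,y)\in Q_k}\bE^{(t,y)}\tau_{Q_k}\le C\phi(r_k)$ from above, I would derive the uniform hitting estimate $\bP^{(s,z)}(\sigma_{A_k}<\tau_{Q_k})\ge c_\ast>0$, independent of $k$ and of $(s,z)\in Q_{k+1}$. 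By the caloric property and optional stopping, $u(s,z)=\bE^{(s,z)}u(Z_\tau)$; on $\{\sigma_{A_k}<\tau_{Q_k}\}$ one has $u(Z_\tau)\le(M_k+m_k)/2$; on a time-boundary exit $u(Z_\tau)\le M_k$; while on a spatial-jump exit $u(Z_\tau)$ is estimated by the value of $u$ at the jump destination.

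For the spatial-jump contribution I would decompose by the annular shells $B(x_\ast,r_{k-j})\setminus B(x_\ast,r_{k-j+1})$ for $j=1,\dots,k$, plus the far exterior. By Lemma~\ref{Levy-sys}, $\J_{\phi,\le}$ (which via Lemma~\ref{intelem} gives $J(x,B(x,r)^c)\le C/\phi(r)$) and $\E_{\phi,\le}$, the probability of landing in shell $j$ is at most $C\rho^{(j-1)\beta_1}$, and on that shell the landing space-time point lies in a slightly enlarged $Q_{k-j}$, so $u(Z_\tau)\le M_{k-j}$; on the far-exterior event one uses the crude bound $|u|\le 1$. Inserting the induction hypothesis $\omega_{k-j}\le C_0\nu^{k-j}$ and summing the resulting geometric series gives
\[
u(s,z)-m_k\le\omega_k(1-c_\ast/2)+C'\nu_0^k,\qquad \nu_0:=\max(\rho^{\beta_1},\nu),
\]
uniformly in $(s,z)\in Q_{k+1}$; taking $\esssup$ yields $\omega_{k+1}\le(1-c_\ast/2)\omega_k+C'\nu_0^k$, and with $\nu$ chosen close to $\max(1-c_\ast/4,\rho^{\beta_1})$ and $C_0$ sufficiently large, the induction closes. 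The principal obstacle is the uniform global hitting estimate $\bP^{(s,z)}(\sigma_{A_k}<\tau_{Q_k})\ge c_\ast$: Lemma~\ref{phi-l-2} supplies only hitting of target regions specifically adapted to the starting point $(s,z)$, whereas $A_k$ is an arbitrary large-measure subset of $Q_{k+1}$. Bridging this gap requires parabolic Green function bounds for the part process in $B(x_\ast,r_k)$, obtained from $\NDL(\phi)$ by integrating the near-diagonal heat kernel lower bounds and matching against the upper bound on $\bE\tau_{Q_k}$ coming from $\E_{\phi,\le}$.
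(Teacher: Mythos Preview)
Your outline is the correct strategy and coincides with the paper's approach: the paper simply invokes Lemma~\ref{phi-l-2} and says the argument is identical to \cite[Theorem~4.14]{CK1}, which is exactly the oscillation-decay scheme you describe (hitting estimate for the ``good'' level set, L\'evy-system control of far jumps through $\J_{\phi,\le}$ and Lemma~\ref{intelem}, and summation of the geometric tail).

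There is, however, one genuine set-up issue in your proposal that you partly sense but misdiagnose. You place the dichotomy set $A_k$ inside $Q_{k+1}$ and also take the starting point $(s,z)\in Q_{k+1}$. Since the space-time process has $V_s=V_0-s$, from $(s,z)$ one can only hit points with time coordinate \emph{smaller} than $s$; if $s$ is near the bottom of $Q_{k+1}$, the reachable portion of $A_k$ may have arbitrarily small measure, and no Green-function argument can repair this. The fix is purely geometric and is exactly how \cite{CK1} arranges things: one takes the dichotomy on a sub-cylinder of $Q_k$ (say the ``lower half'' in time, matching the slab $[t-\delta\phi(r),\,t-\delta\phi(r)/2]$ in Lemma~\ref{phi-l-2}), while $Q_{k+1}$ sits in the ``upper'' part. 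Then for every $(s,z)\in Q_{k+1}$, the target set lies in the admissible time slab relative to $(s,z)$, and Lemma~\ref{phi-l-2} applied with center $z$ and radius comparable to $r_k$ gives $\bP^{(s,z)}(\sigma_{A_k}<\tau_{Q_k})\ge c_\ast$ directly; no separate parabolic Green-function construction is needed, since \eqref{e:6.13} already \emph{is} that lower bound. Note also that the bound $\bE^{(s,z)}\tau_{Q_k}\le\phi(r_k)$ you want from $\E_{\phi,\le}$ is in fact trivial here, since the time component forces exit by time $\phi(r_k)$ deterministically; the role of $\E_{\phi,\le}$ and $\J_{\phi,\le}$ is in the exit-probability and tail bounds that feed the shell decomposition, as in \cite[Lemmas~4.11--4.13]{CK1}.
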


\begin{proof} With estimate \eqref{e:6.13}, the result can be proved in exactly the same way as that for \cite[Theorem 4.14]{CK1}.
We omit the details here.
We note that in the present paper the time evolves as $V_s=V_0-s$, which is opposed to $V_s=V_0+s$ as in \cite[p.\! 37]{CK1}, so here we should reserve the time interval in the statement.  (The statement of \cite[Theorem 3.1]{CKK1} should be corrected in the same way.)
\qed
\end{proof}

\medskip

The following
consequence of  H\"older regularities will be used in Subsection \ref{sec7-2}.

\begin{lemma}\label{green-inequality}
Suppose {\rm EHR} holds. Let $D\subset M$ be an open set with
 $\esssup_{y\in D\cap M_0} \bE^y\tau_{D}<\infty$. Fix a function $f\in B_b({D})$ and set $u=G^{D} f$. Then for any $B(x_0,r)\subset {D}$ and $0<{r_1}\le r$,
$$
 {\rm osc}_{B(x_0,{r_1})\cap M_0} u\le 2
 \sup_{y\in B(x_0, r)\cap M_0} |f (y)|
\, \sup_{y\in B(x_0,r) \cap M_0}\bE^y \tau_{B(x_0, r)}  + c\left( {{r_1}}/{r}\right)^{\theta}
\sup_{z\in D\cap M_0} | u (z) | ,
$$
where $c>0$ and $\theta\in(0,1]$ only depend on  the constants in {\rm EHR}.
\end{lemma}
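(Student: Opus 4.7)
The plan is to exploit the strong Markov property to split $u = G^D f$ into a piece controlled directly by $f$ and $\tau_{B(x_0,r)}$, plus a piece that is bounded and $X$-harmonic on $B(x_0,r)$, to which we can apply $\mathrm{EHR}$.

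More precisely, since $B(x_0,r)\subset D$, the exit time $\tau_{B(x_0,r)}\le \tau_D$ a.s., so by the strong Markov property, for every $x\in B(x_0,r)\cap M_0$,
\[
u(x)=\bE^x\!\!\int_0^{\tau_D}\!\! f(X_s)\,ds = \bE^x\!\!\int_0^{\tau_{B(x_0,r)}}\!\! f(X_s)\,ds + \bE^x\big[u(X_{\tau_{B(x_0,r)}})\big] =: u_1(x)+u_2(x).
\]
For $u_1$, the trivial pointwise bound
\[
|u_1(x)|\le \sup_{y\in B(x_0,r)\cap M_0}|f(y)|\cdot \bE^x\tau_{B(x_0,r)}
\]
immediately gives
\[
\mathrm{osc}_{B(x_0,r_1)\cap M_0}\,u_1 \le 2\sup_{y\in B(x_0,r)\cap M_0}|f(y)|\cdot\sup_{y\in B(x_0,r)\cap M_0}\bE^y\tau_{B(x_0,r)},
\]
which is exactly the first term on the right hand side of the claimed inequality.

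For $u_2$, observe first that because $u=0$ off $D$ while $|u|\le \sup_{z\in D\cap M_0}|u(z)|$ on $D$, we have $\|u_2\|_\infty\le \sup_{z\in D\cap M_0}|u(z)|$. Next, $u_2$ is bounded and $X$-harmonic on $B(x_0,r)$: for any relatively compact open $U\subset B(x_0,r)$ one has $\tau_U\le \tau_{B(x_0,r)}$, and the strong Markov property at $\tau_U$ applied to the bounded random variable $u(X_{\tau_{B(x_0,r)}})$ gives $u_2(x)=\bE^x[u_2(X_{\tau_U})]$, yielding the uniformly integrable martingale property of $s\mapsto u_2(X_{s\wedge\tau_U})$ required in Definition~\ref{pro-har}(ii). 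The only technical nuisance is choosing $\varepsilon\in(0,1)$ in the statement of $\mathrm{EHR}$ conveniently; by Remark~\ref{R:phrehr}(iv), $\mathrm{EHR}$ holds for every $\varepsilon\in(0,1)$ (possibly with a different constant), so we may apply it on $B(x_0,r)$ with, say, $\varepsilon=1/2$, to obtain some constants $c_1>0$ and $\theta\in(0,1]$ with
\[
|u_2(x)-u_2(y)|\le c_1\Big(\frac{d(x,y)}{r}\Big)^{\!\theta}\sup_{z\in D\cap M_0}|u(z)|,\qquad x,y\in B(x_0,r/2)\setminus\mathcal{N}_{u_2}.
\]

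It remains to turn this into an oscillation bound on $B(x_0,r_1)$ for all $r_1\le r$. If $r_1\le r/2$, then for $x,y\in B(x_0,r_1)$ we have $d(x,y)\le 2r_1$, and the bound above produces $\mathrm{osc}_{B(x_0,r_1)\cap M_0}u_2\le 2^{\theta+1}c_1(r_1/r)^\theta \sup_{z\in D\cap M_0}|u(z)|$. If instead $r/2<r_1\le r$, then $(r_1/r)^\theta\ge 2^{-\theta}$, so the trivial bound $\mathrm{osc}\,u_2\le 2\|u_2\|_\infty$ is absorbed into $c(r_1/r)^\theta \sup_{z\in D\cap M_0}|u(z)|$ by enlarging the constant. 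Combining the oscillation bounds for $u_1$ and $u_2$ completes the proof. No serious obstacle is expected: the decomposition is the natural Poisson-type splitting, and all that is needed is harmonicity plus boundedness of $u_2$ so that $\mathrm{EHR}$ applies.
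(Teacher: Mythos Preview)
Your proof is correct and follows essentially the same approach as the paper: both split $u=G^D f$ as $G^{B(x_0,r)}f$ plus the harmonic remainder $\bE^{\,\cdot}[u(X_{\tau_{B(x_0,r)}})]$, bound the first piece trivially by $|f|$ times the exit time, and apply $\mathrm{EHR}$ to the second. Your explicit case split $r_1\le r/2$ versus $r_1>r/2$ is just a more careful spelling-out of what the paper absorbs into its invocation of Remark~\ref{R:phrehr}(iv).
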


\begin{proof}   Note that for any $x\in {D}\cap M_0$,
$$
G^{D} |f|(x)= \bE^x\left[ \int_0^{\tau_{D}} | f(X_t)| \,dt\right] \le
\sup_{y\in D\cap M_0}  |f (y)|
 \, \bE^x\tau_{D}.$$
Consequently, for any $r_1 \in (0, r)$,
\begin{align*}
{\rm osc}_{B(x,{r_1})\cap M_0}  G^{B(x_0, r)} f & \le 2 \sup_{y\in B(x_0,r_1)\cap M_0}  G^{B(x_0, r)}| f| (y) \\
& \le  2 \, \sup_{y\in B(x_0, r)\cap M_0} | f(y)| \, \sup_{y\in B(x_0,r_1) \cap M_0}
\bE^y \tau_{B(x, r)} .
\end{align*}
Since  $  G^{D}  f (y)-G^{B(x_0,r)}f (y) = \bE^y [ G^D f(X_{\tau_{B(x_0, r)}})] = \bE^y [ u(X_{\tau_{B(x_0, r)}})]
$ is harmonic in $B(x_0, r)$, and $u=0$ outside $D$,
we have by $\EHR$ and Remark \ref{R:phrehr}(ii) that
\begin{align*}
&{\rm osc}_{B(x,{r_1}) \cap M_0} u \\
&\leq  {\rm osc}_{B(x_0,{r_1})\cap M_0}  G^{B(x_0, r)} f  + {\rm osc}_{B(x_0,{r_1})\cap M_0}
 ( G^{D}  f -G^{B(x_0,r)}f )\\
&\leq   2 \, \sup_{y\in B(x, r)\cap M_0} | f(y)| \, \sup_{y\in B(x_0,r_1) \cap M_0} \bE^y \tau_{B(x_0, r)}
+ c (r_1/r)^\theta \sup_{y\in D\cap M_0} | \bE^y [ u(X_{\tau_{B(x, r)}})]|\\
&\le  2  \sup_{y\in B(x, r)\cap M_0} |f (y)|  \, \sup_{y\in B(x,r) \cap M_0}\bE^y \tau_{B(x, r)}
+ c\left( {{r_1}}/{r}\right)^{\theta}
\sup_{z\in D \cap M_0} | u (z) |.
\end{align*}
  This proves the lemma.  \qed
  \end{proof}

\section{Equivalences of $\PHI(\phi)$}\label{pf of 1-21}

We have already given some part of the proof
of Theorem \ref{T:PHI}
in Section \ref{sectHarn}. In this section, we
will complete the proof.
In Subsection \ref{suffphi71}, we prove $(1)\Longleftrightarrow (2)\Longleftrightarrow (3)\Longleftrightarrow (4)$.
$(1)\Longleftrightarrow (5)\Longleftrightarrow (6)$ will be proved in Subsection \ref{sec7-2}, and
$(1)\Longleftrightarrow (7)$  in Subsection \ref{sec7-3}.

\subsection{$\PHI(\phi) \Longleftrightarrow \PHI^+(\phi) \Longleftrightarrow  \UHK(\phi)+\NDL(\phi)+\UJS \Longleftrightarrow \NDL(\phi)+\UJS$}\label{suffphi71}

In this subsection, we will establish  $(1)\Longleftrightarrow (2)\Longleftrightarrow (3)\Longleftrightarrow (4)$ in
Theorem \ref{T:PHI}.
Since $(1)\Longrightarrow (3)$ is already proved in Subsection \ref{phi-section-1}, and $(1)\Longrightarrow (2)$ and $(3)\Longrightarrow (4)$ hold trivially,
it remains to show that
prove $(4)\Longrightarrow(3)\Longrightarrow (2)$.

 \begin{lemma}\label{phi-l-1} Assume that $\VD$, \eqref{polycon}, $\UHK(\phi)$, $\NDL(\phi)$ and $\UJS$.
Let $\delta\le \eps $ $($where $\eps\in(0,1)$ is the constant in the
definition of $\NDL(\phi)$$)$, and $\theta\ge 1/2$. Let
$0<\delta_0<\delta$ and $0<\delta_1<\delta_2<\delta_3<\delta_4$ such
that $(\delta_3 -\delta_2)\phi(r)\ge \phi(\delta_0 r)$ and
$\delta_4\phi(r)\le \phi(\delta r)$ for all $r>0$. Set
 $$Q_1=(t_0,t_0+\delta_4\phi( r))\times B(x_0,\delta_0^2 r),\quad Q_2=(t_0,t_0+\delta_4\phi( r))\times B(x_0, r)$$ for $x_0\in M$, $t_0\ge0$ and $r>0.$  Define
$$
Q_3 =[ t_0+\delta _1\phi(r),t_0+ \delta_2 \phi(r) ]\times
B(x_0,\delta_0^2r/2 )\setminus\mathcal{N}
$$
and
$$ Q_4 =[ t_0+\delta_3\phi( r), t_0+\delta_4\phi(r)] \times B(x_0,\delta_0^2r/2)\setminus\mathcal{N}.
$$
 Let $f:(t_0,\infty)\times M\to \bR_+$ be bounded and supported in $(t_0,\infty)\times B(x_0,(1+\theta) r)^c.$ Then there is a constant $C_2>0$ such that the following holds:
$$
\bE^{(t_1,y_1)}f(Z_{\tau_{Q_1}})\le C_2 \bE^{(t_2,y_2)}f(Z_{\tau_{Q_2}})
\quad \hbox{for every }
(t_1,y_1)\in Q_3\textrm{ and  }(t_2,y_2)\in Q_4.
$$
\end{lemma}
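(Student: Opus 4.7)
\medskip\noindent
\textbf{Proof plan for Lemma \ref{phi-l-1}.}
The plan is to express both sides through the L\'evy system formula (Lemma~\ref{Levy-sys}), to bound the LHS from above using $\UJS$ to ``average out'' the jump kernel, to bound the RHS from below using $\NDL(\phi)$ for the Dirichlet heat kernel, and finally to compare the two bounds by arranging the averaging ball from $\UJS$ to fit inside the $\NDL$-admissible ball.

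First, I would apply Lemma~\ref{Levy-sys} with the function $g(s,x,y)={\bf1}_{(x\neq y,\, y\notin B(x_0,(1+\theta)r))}f(t_i-s,y)$ (using that $f$ vanishes inside $B(x_0,(1+\theta)r)$, so all time-exits of $Q_i$ contribute zero). This gives, for $i=1,2$,
\[
\bE^{(t_i,y_i)}[f(Z_{\tau_{Q_i}})]
=\int_0^{t_i-t_0}\!\!\int_{B_i}\! p^{B_i}(s,y_i,z)\!\int_{B(x_0,(1+\theta)r)^c}\! f(t_i-s,u)\,J(z,u)\,\mu(du)\,\mu(dz)\,ds,
\]
where $B_1=B(x_0,\delta_0^2 r)$ and $B_2=B(x_0,r)$. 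I then change variables $\hat s=t_i-s$ so that $f(\hat s,u)$ becomes the common integrand; the range of $\hat s$ on the LHS is $[t_0,t_1]\subset[t_0,t_0+\delta_2\phi(r)]$, while on the RHS it is $[t_0,t_2]$, which contains $[t_0,t_0+\delta_2\phi(r)]$ with room to spare precisely because of the gap $(\delta_3-\delta_2)\phi(r)\ge \phi(\delta_0 r)$.

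For the LHS I would invoke $\UJS$ pointwise: for $z\in B_1$ and $u\in B(x_0,(1+\theta)r)^c$, the triangle inequality combined with $\theta\ge 1/2$ and smallness of $\delta_0$ gives $d(z,u)\ge r$, so $\UJS$ applies with any radius $\rho\le r/2$. Choosing $\rho$ comparable to $\eps\delta_0 r$ (just small enough that $B(z,\rho)\subset B(x_0,R_*)$ uniformly in $z\in B_1$, where $R_*$ is defined below), and using $\VD$ to absorb $V(z,\rho)\asymp V(x_0,\rho)$ together with the trivial bound $\int_{B_1}p^{B_1}(s,y_1,z)\,\mu(dz)\le 1$, I obtain
\[
\mathrm{LHS}\le \frac{C_L}{V(x_0,\rho)}\int_{t_0}^{t_0+\delta_2\phi(r)}\!\!\int_{B(x_0,(1+\theta)r)^c}\! f(\hat s,u)\!\int_{B(x_0,R_*)}\! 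J(w,u)\,\mu(dw)\,\mu(du)\,d\hat s.
\]
For the RHS I would apply $\NDL(\phi)$ to $p^{B_2}(s,y_2,z)$. Set $R_*:=\eps\phi^{-1}\bigl((\delta_3-\delta_2)\phi(r)\bigr)$; by the hypothesis $(\delta_3-\delta_2)\phi(r)\ge\phi(\delta_0 r)$ we have $R_*\ge \eps\delta_0 r$, and the hypothesis $\delta_4\phi(r)\le\phi(\delta r)\le\phi(\eps r)$ keeps $s$ inside the $\NDL$-regime. Since $y_2\in B(x_0,\delta_0^2 r/2)\subset B(x_0,R_*)$ and any $z\in B(x_0,R_*)$ is in the $\NDL$-admissible ball $B(x_0,\eps\phi^{-1}(s))$ for $s\in[(\delta_3-\delta_2)\phi(r),t_2-t_0]$, $\NDL$ yields $p^{B_2}(s,y_2,z)\ge c/V(x_0,\phi^{-1}(s))\ge c'/V(x_0,r)$ on this range. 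After restricting the $s$-integration and switching to $\hat s=t_2-s$ (whose range then contains $[t_0,t_0+\delta_2\phi(r)]$ by the same gap hypothesis), I arrive at
\[
\mathrm{RHS}\ge \frac{C_R}{V(x_0,r)}\int_{t_0}^{t_0+\delta_2\phi(r)}\!\!\int_{B(x_0,(1+\theta)r)^c}\! f(\hat s,u)\!\int_{B(x_0,R_*)}\! J(z,u)\,\mu(dz)\,\mu(du)\,d\hat s.
\]

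Finally, because the inner $w$- and $z$-integrals are over the same ball $B(x_0,R_*)$, dividing the two bounds leaves only the ratio of prefactors $V(x_0,r)/V(x_0,\rho)$, which by $\VD$ is bounded by a constant depending on $\delta_0,\delta,\eps$ and the $\VD$-constants. This gives the claimed inequality with $C_2=C_L V(x_0,r)/(C_R V(x_0,\rho))$.
The main obstacle, and the reason the hypotheses $(\delta_3-\delta_2)\phi(r)\ge\phi(\delta_0 r)$ and $\delta_4\phi(r)\le\phi(\delta r)$ are so cleanly formulated, is precisely to permit this simultaneous matching: one must choose $\rho$ small enough that the $\UJS$-averaging ball $B(z,\rho)$ fits inside the $\NDL$-admissible ball $B(x_0,R_*)$ (uniformly in $z\in B_1$), while $R_*$ must be large enough to contain $y_2$ and any $z\in B(x_0,\delta_0^2 r/2)$. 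Verifying that these two constraints can be met simultaneously, and tracking the resulting $\VD$-comparisons of volumes, is the technical heart of the argument.
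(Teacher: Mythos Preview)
Your argument is correct and is in fact somewhat cleaner than the paper's. Both proofs express the two expectations via the L\'evy system and bound the RHS from below using $\NDL(\phi)$, but the paper treats the LHS differently: it fixes the ``matching ball'' to be $B_{\delta_0^2 r}$ itself, and then must split the $z$-integral into an interior annulus $B_{3\delta_0^2 r/4}$ (where $\UJS$ with radius $\delta_0^2 r/4$ stays inside $B_{\delta_0^2 r}$) and a boundary shell $B_{\delta_0^2 r}\setminus B_{3\delta_0^2 r/4}$ (where the $\UJS$-averaging ball would protrude, so the paper instead invokes $\UHK(\phi)$ to bound $p^{B_{\delta_0^2 r}}(t_1-s,y_1,z)$ directly, using $d(y_1,z)\ge \delta_0^2 r/4$). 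You avoid this case distinction entirely by enlarging the matching ball from $B_{\delta_0^2 r}$ to $B(x_0,R_*)$ with $R_*=\eps\phi^{-1}((\delta_3-\delta_2)\phi(r))\ge \eps\delta_0 r$: $\NDL(\phi)$ still applies on this larger ball (your verification is correct), and now the $\UJS$-averaging ball $B(z,\rho)$ fits inside $B(x_0,R_*)$ for \emph{every} $z\in B_1$, so the bound $\int_{B_1}p^{B_1}\le 1$ suffices with no appeal to $\UHK(\phi)$. The price is a slightly worse constant (through the $\VD$-factor $V(x_0,r)/V(x_0,\rho)$ with $\rho\asymp(\eps-\delta_0)\delta_0 r$), but the structure of the argument is simpler and shows that $\UHK(\phi)$ is not actually needed for this lemma.
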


\begin{proof}
The proof is  the same as that of \cite[Lemma 5.3]{CKK1}.
We present the proof here for the sake of completeness.

 Without loss of generality, we may and do assume that $t_0=0$. For $x_0\in M$ and $s>0$, set $B_{s}=B(x_0,s)$.
 By Lemma \ref{Levy-sys}, for any $(t_2,y_2)\in Q_4$,
\begin{align}\label{pr-lemm-01}
\bE^{(t_2,y_2)}f(Z_{\tau_{Q_2}})&=\bE^{(t_2,y_2)}f(t_2-(\tau_{B_{r}}\wedge t_2), X_{\tau_{B_{r}}\wedge t_2})\nonumber\\
&=\bE^{(t_2,y_2)} \left[\int_0^{t_2}{\bf 1}_{\{t\le \tau_{B_{r}}\}} \,dt\int_{B_{(1+\theta)r}^c} f(t_2-t,v)J(X_t,v)\,\mu(dv)\right]
  \nonumber\\
&=\int_0^{t_2} \,dt\int_{B_{(1+\theta)r}^c} f(t_2-t,v)\bE^{(t_2,y_2)} \left[{\bf1}_{\{t\le \tau_{B_{r}}\}} J(X_t,v)\right]\,\mu(dv)\nonumber\\
&= \int_0^{t_2}\,ds\int_{B_{(1+\theta)r}^c} f(s,v) \bE^{(t_2,y_2)} \left[{\bf1}_{\{t_2-s\le \tau_{B_{r}}\}} J(X_{t_2-s},v)\right]\,\mu(dv)
  \nonumber\\
&=\int_0^{t_2}\,ds\int_{B_{(1+\theta)r}^c} f(s,v)\,\mu(dv)\int_{B_{r}}p^{B_{r}} (t_2-s, y_2,z)J(z,v)\,\mu(dz)\\
&\ge \int_0^{t_1}\,ds\int_{B_{(1+\theta)r}^c} f(s,v)\,\mu(dv)\int_{B_{\delta_0^2r}}p^{B_{r}}( t_2-s, y_2,z)J(z,v)\,\mu(dz). \nonumber
\end{align}
Since for $s\in [0,t_1]$,
$\phi(\delta_0r)\le t_2-t_1\le t_2-s\le \phi(\delta r),$ by $\VD$, \eqref{polycon} and $\NDL(\phi)$, we know that the right hand side of the inequality above is greater than or equal to
$$\frac{c_1}{V(x_0,r)}\int_0^{t_1}\,ds\int_{B_{(1+\theta)r}^c} f(s,v)\,\mu(dv)\int_{B_{\delta_0^2r}}J(z,v)\,\mu(dz).$$ So the proof is complete, once we can obtain that for every $(t_1,y_1)\in Q_3$,
\begin{equation}\label{pr-lemm-02}
\bE^{(t_1,y_1)} f(Z_{\tau_{Q_1}})\le \frac{c_2}{V(x_0,r)}\int_0^{t_1}\,ds\int_{B_{(1+\theta)r}^c} f(s,v)\,\mu(dv)\int_{B_{\delta_0^2r}}J(z,v)\,\mu(dz).\end{equation}

Similar to the argument for \eqref{pr-lemm-01}, we have by using Lemma \ref{Levy-sys},
\begin{align*}
\bE^{(t_1,y_1)}f(Z_{\tau_{Q_1}})=&\int_0^{t_1}\,ds\int_{B_{(1+\theta)r}^c} f(s,v)\,\mu(dv)\int_{B_{\delta_0^2 r}}p^{B_{\delta_0^2 r}} (t_1-s, y_1,z)J(z,v)\,\mu(dz)\\
=&\int_0^{t_1}\,ds\int_{B_{\delta_0^2 r}}p^{B_{\delta_0^2 r}} (t_1-s, y_1,z)\,\mu(dz)\int_{B_{(1+\theta)r}^c} f(s,v)J(z,v)\,\mu(dv).
\end{align*}
Notice that
\begin{align*}
&\int_{B_{\delta_0^2 r}}p^{B_{\delta_0^2 r}} (t_1-s, y_1,z)\,\mu(dz)\int_{B_{(1+\theta)r}^c} f(s,v)J(z,v)\,\mu(dv)\\
&=\int_{B_{\delta_0^2 r}\backslash B_{3\delta_0^2 r/4}}p^{B_{\delta_0^2 r}} (t_1-s, y_1,z)\,\mu(dz)\int_{B_{(1+\theta)r}^c} f(s,v)J(z,v)\,\mu(dv)\\
&\quad + \int_{B_{3\delta_0^2 r/4}}p^{B_{\delta_0^2 r}} (t_1-s, y_1,z)\,\mu(dz)\int_{B_{(1+\theta)r}^c}f(s,v)J(z,v)\,\mu(dv)\\
&=:I_1+I_2.
\end{align*}
 On the one hand, when $z\in (B_{\delta_0^2 r}\backslash B_{3\delta_0^2 r/4})\cap M_0$, we have $\delta_0^2 r/4\le d(y_1,z)\le 3\delta_0^2 r/2$, and so by $\UHK(\phi)$, $\VD$ and \eqref{polycon},
$$
p^{B_{\delta_0^2 r}} (t_1-s, y_1,z)\le \frac{c_3t_1}{V(y_1,d(y_1,z))\phi(d(y_1,z))}\le \frac{c_4}{V(x_0,r)}
$$
 for some constants $c_3,c_4>0$. Hence,  $\int_0^{t_1}I_1\,ds$ is less than or equal to the right hand side of
 \eqref{pr-lemm-02}. On the other hand, for $z\in B_{3\delta_0^2 r/4}$, by $\UJS$ and $\VD$,
\begin{align*}
\int_{B_{(1+\theta)r}^c}J(z,v)f(s,v)\,\mu(dv)\le & \frac{c_5}{V(x_0,r)}\int_{B(z,\delta_0^2 r/4)} J(w,v)\,\mu(dw) \int_{B_{(1+\theta)r}^c}f(s,v)\,\mu(dv)\\
\le& \frac{c_5}{V(x_0,r)}\int_{B_{\delta_0^2 r}}J(w,v)\,\mu(dw)\int_{B_{(1+\theta)r}^c} f(s,v)\,\mu(dv).
\end{align*}
Note that the right hand side of the above inequality does not
depend on $z$. Multiplying both sides by $p^{B_{\delta_0^2 r}}
(t_1-s, y_1,z)$ and integrating over $z\in B_{3\delta_0^2 r/4}$ and
then over $s\in[0,t_1]$, we obtain that $\int_0^{t_1}I_2\,ds$ is
also less than or equal to the right hand side of
\eqref{pr-lemm-02}. This proves the lemma. \qed
\end{proof}

Once again, in the following lemma we use the definition of
$\NDL(\phi)$ with $\eps\phi(r)$ and $\phi^{-1}(\eps t)$ replaced by
$ \phi(\eps r)$ and $\eps\phi^{-1}(t)$, respectively.

\begin{lemma} \label{phi-l-3} Suppose that $\VD$, \eqref{polycon}  and $\NDL(\phi)$ hold. Let
$0<\delta\le \eps /4$ such that $4\delta\phi( 2 r)\le \varepsilon \phi(r)$ for all $r>0$, where $\eps\in(0,1)$ is the constant in the definition of $\NDL(\phi)$. Then
there exists a constant $C_3>0$ such that for every $R>0$, $r\in (0,\phi^{-1}(\varepsilon \delta \phi( R)/2)/2 ]$, $x_0\in M$, $\delta \phi(R)/2 \le t-s\le4\delta\phi( 2 R)$,  $x\in B(x_0,\phi^{-1}(\varepsilon \delta
\phi( R)/2)/2)\setminus\mathcal{N}$, and $z\in
B(x_0,$ $\phi^{-1}(\varepsilon \delta \phi( R)/2)) \setminus\mathcal{N}$,
$$\bP^{(t,z)}(\sigma_{U(s,x,r)}\le \tau_{[s,t]\times B(x_0,R)})\ge C_3\frac{V(x,r)}{V(x,R)},$$ where $U(s,x,r)=\{s\}\times B(x,r).$
\end{lemma}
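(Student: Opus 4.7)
\medskip
\noindent
\textbf{Proof plan for Lemma \ref{phi-l-3}.}
The plan is to reduce the space–time hitting probability to an integral of the killed heat kernel, and then to apply the near-diagonal lower bound $\NDL(\phi)$ pointwise together with the volume doubling inequality \eqref{e:vd2}. Since the time coordinate of the space–time process evolves backwards as $V_u = V_0 - u$, starting from $(t,z)$ the process hits $U(s,x,r) = \{s\}\times B(x,r)$ exactly at time $u = t-s$, and only if $X_{t-s}\in B(x,r)$. Hence
\[
\bP^{(t,z)}\bigl(\sigma_{U(s,x,r)}\le \tau_{[s,t]\times B(x_0,R)}\bigr)
\;=\; \bP^z\!\bigl(X^{B(x_0,R)}_{t-s}\in B(x,r)\bigr)
\;=\; \int_{B(x,r)} p^{B(x_0,R)}(t-s,z,y)\,\mu(dy).
\]

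The next step is to check that $\NDL(\phi)$ applies at the point $(t-s,z,y)$ for every $y\in B(x,r)$. Using the form of $\NDL(\phi)$ indicated above the statement, one needs three inclusions: (a) $t-s\le \eps\phi(R)$, which follows from $t-s\le 4\delta\phi(2R)$ together with the standing assumption $4\delta\phi(2R)\le \eps\phi(R)$; (b) $z\in B(x_0,\eps\phi^{-1}(t-s))$, which follows from $z\in B(x_0,\phi^{-1}(\eps\delta\phi(R)/2)/2)$ and the monotonicity bound $\eps\phi^{-1}(t-s)\ge \eps\phi^{-1}(\delta\phi(R)/2) \ge \phi^{-1}(\eps\delta\phi(R)/2)/2$ coming from $t-s\ge \delta\phi(R)/2$ and \eqref{polycon}; (c) for $y\in B(x,r)$,
\[
d(y,x_0)\le d(y,x)+d(x,x_0)\le r + \tfrac12\phi^{-1}(\eps\delta\phi(R)/2)\le \phi^{-1}(\eps\delta\phi(R)/2)\le \eps\phi^{-1}(t-s),
\]
where the first inequality on the last line uses $r\le \phi^{-1}(\eps\delta\phi(R)/2)/2$. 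Applying $\NDL(\phi)$ therefore yields, for every $y\in B(x,r)\setminus \sN$,
\[
p^{B(x_0,R)}(t-s,z,y)\;\ge\; \frac{c_1}{V(x_0,\phi^{-1}(t-s))}.
\]

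Integrating this bound over $y\in B(x,r)$ gives
\[
\bP^{(t,z)}\bigl(\sigma_{U(s,x,r)}\le \tau_{[s,t]\times B(x_0,R)}\bigr)
\;\ge\; \frac{c_1\, V(x,r)}{V(x_0,\phi^{-1}(t-s))}.
\]
Finally I convert the denominator into $V(x,R)$: since $t-s\le \eps\phi(R)$ and $\phi$ is strictly increasing, $\phi^{-1}(t-s)\le R$, so $V(x_0,\phi^{-1}(t-s))\le V(x_0,R)$, and because $d(x,x_0)\le R/2$, \eqref{eq:vdeno} (equivalently $\VD$) gives $V(x_0,R)\le c_2 V(x,R)$. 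Combining these estimates produces the desired lower bound $C_3 V(x,r)/V(x,R)$ with $C_3 = c_1/c_2$.

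The only mildly technical step is the bookkeeping in (b)–(c) above, where the interplay between $\phi^{-1}(\eps\,\cdot)$ and $\eps\phi^{-1}(\cdot)$ has to be controlled via \eqref{polycon}; the standing assumptions $\delta\le \eps/4$, $r\le \phi^{-1}(\eps\delta\phi(R)/2)/2$ and $4\delta\phi(2R)\le \eps\phi(R)$ are precisely what makes these comparisons go through. Apart from that, the argument is a direct application of $\NDL(\phi)$ and $\VD$.
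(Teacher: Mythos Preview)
Your overall strategy---reduce to $\int_{B(x,r)} p^{B(x_0,R)}(t-s,z,y)\,\mu(dy)$, apply $\NDL(\phi)$ pointwise, then convert volumes via $\VD$---is exactly the paper's approach. However, the bookkeeping in steps (b) and (c) does not go through as written.

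First, you misquoted the hypothesis on $z$: the lemma assumes $z\in B(x_0,\phi^{-1}(\eps\delta\phi(R)/2))$, not the ball of half that radius. Second, and more importantly, you apply $\NDL(\phi)$ in the form requiring $z,y\in B(x_0,\eps\phi^{-1}(t-s))$, which forces you to compare $\eps\phi^{-1}(a)$ with $\phi^{-1}(\eps a)$. The claimed inequality $\eps\phi^{-1}(\delta\phi(R)/2)\ge \phi^{-1}(\eps\delta\phi(R)/2)/2$ does not follow from \eqref{polycon}: for $\phi(r)=r^\beta$ with $\beta>1$ one has $\phi^{-1}(\eps a)/(\eps\phi^{-1}(a))=\eps^{1/\beta-1}$, which blows up as $\eps\to 0$, so neither (b) nor the last inequality in (c) is valid in general.

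The fix is simply to use the equivalent form of $\NDL(\phi)$ mentioned after Definition~\ref{ndl1}, namely with $x,y\in B(x_0,\phi^{-1}(\eps t))$ in place of $B(x_0,\eps\phi^{-1}(t))$. Then from $t-s\ge \delta\phi(R)/2$ one gets $\phi^{-1}(\eps(t-s))\ge \phi^{-1}(\eps\delta\phi(R)/2)$ by monotonicity alone, and both $z$ and every $y\in B(x,r)$ lie in $B(x_0,\phi^{-1}(\eps\delta\phi(R)/2))\subset B(x_0,\phi^{-1}(\eps(t-s)))$ immediately from the hypotheses, with no need to pass $\eps$ through $\phi^{-1}$. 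This is precisely how the paper handles it.
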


\begin{proof}
The left hand side of the desired estimate is equal to
\be\label{eq:no3930}
\bP^z(X_{t-s}^{B(x_0,R)}\in B(x,r))=\int_{B(x,r)} p^{B(x_0,R)} (t-s, z,y)\,\mu(dy).
\ee
By $\VD$, \eqref{polycon}, $\NDL(\phi)$, and the facts that
$\delta \phi(R)/2
\le t-s\le 4\delta \phi(2R)$ and $B(x,r)\subset B(x_0,\phi^{-1}(\varepsilon \delta \phi( R)/2))$,
\eqref{eq:no3930} is greater than or equal to
$$c_1\frac{V(x,r)}{V(z,R)}\ge c_2 \frac{V(x,r)}{V(x,R)}.$$ This proves the desired assertion.
\qed\end{proof}

Having these two lemmas as well as Lemma \ref{phi-l-2} at hand, one
can obtain the following form of $\PHI^+(\phi)$.

\begin{theorem}\label{phi-probab}
Suppose that $\VD$ and \eqref{polycon} hold.
Under $\UHK(\phi)$, $\NDL(\phi)$ and $\UJS$, the following
$\PHI^+(\phi)$ holds:
there exist constants $\delta >0$, $C>1$ and $K\geq 1$
  such that for every $x_0\in M\setminus \cal{N}$,
$t_0\ge0$, $R>0$ and every non-negative function $u$ on
$[0,\infty)\times M$ that is
caloric  on $Q:=(t_0, t_0+ 4 \delta
\phi(C R))\times B(x_0, C R)$, we have
\begin{equation}\label{e:PHI}
\esssup_{(t_1,y_1)\in Q_-}u(t_1,y_1)\le K \,\,\essinf_{(t_2,y_2)\in Q_+}u(t_2,y_2),
\end{equation}
where $Q_-=[t_0+\delta \phi(C R), t_0+2 \delta \phi(C R)]\times
B(x_0,R)$ and $Q_+=[t_0+3 \delta \phi( C R), t_0+ 4\delta \phi( C R))\times
B(x_0,R)$.
\end{theorem}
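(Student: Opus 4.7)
My plan is to prove \eqref{e:PHI} through a Bass--Levin style argument; the three auxiliary lemmas will play the roles of, respectively, mass-to-hitting lower bound (Lemma \ref{phi-l-2}), far-field comparison between $Q_-$ and $Q_+$ (Lemma \ref{phi-l-1}), and point-to-slice hitting lower bound (Lemma \ref{phi-l-3}). I would first note that under the hypotheses, $\NDL(\phi)+\UJS$ implies $\J_{\phi,\le}$ by Corollary \ref{ujs-ndl}, and $\NDL(\phi)$ implies $\E_{\phi,\le}$ by Proposition \ref{pi-e}, so Proposition \ref{ndl-holder} provides a jointly H\"older continuous version of $u$, which I will work with throughout. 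After truncation ($u \wedge N$ with $N \to \infty$) to assume $u$ bounded, and rescaling so that $\essinf_{Q_+} u = 1$, the statement reduces to bounding $\lambda := \esssup_{Q_-} u$ by a universal constant $K$.

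\textbf{Decomposition at a near-maximum point in $Q_-$.} Fix the auxiliary scale $r$ and the constants $\delta_0,\dots,\delta_4$ so that the cylinders $Q_1, Q_2, Q_3, Q_4$ of Lemma \ref{phi-l-1} nest inside $Q$ and satisfy $Q_3 \subset Q_-$ and $Q_4 \subset Q_+$. Pick $(t_1, y_1) \in Q_3$ at which $u(t_1, y_1) \ge \lambda/2$. Applying the caloric property on $Q_1$ and splitting by the location of the exit,
\[
u(t_1, y_1) = \bE^{(t_1, y_1)}\!\left[u(Z_{\tau_{Q_1}}); Z_{\tau_{Q_1}} \in B(x_0,(1+\theta)r)\right] + \bE^{(t_1, y_1)}\!\left[u(Z_{\tau_{Q_1}}); Z_{\tau_{Q_1}} \notin B(x_0,(1+\theta)r)\right].
\]
Lemma \ref{phi-l-1} applied with $f = u\,\mathbf{1}_{B(x_0,(1+\theta)r)^c}$ and an arbitrary $(t_2, y_2) \in Q_4$ bounds the far-field summand by $C_2 \essinf_{Q_+} u = C_2$. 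Hence, once $\lambda > 4 C_2$, the near summand is at least $\lambda/4$; since it is also at most $\alpha \lambda + (\esssup u)\cdot \bP^{(t_1,y_1)}(u(Z_{\tau_{Q_1}}) \ge \alpha \lambda)$, the exit probability into the level set $A_\alpha := \{(s,z)\in W : u(s,z) \ge \alpha \lambda\}$ (for a suitable compact parabolic window $W$ and a fixed $\alpha < 1/4$) is non-trivially bounded below.

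\textbf{Transfer to $Q_+$ via mass of level sets.} Combining this lower bound on the exit probability with a matching upper bound on the hitting probability $\bP^{(t_1,y_1)}(\sigma_{A_\alpha} < \tau_{Q_1})$ coming from $\UHK(\phi)$ (heat kernel upper bound converts exit probability to a density bound on $A_\alpha$), one obtains a quantitative lower bound on $m \otimes \mu(A_\alpha)$, independent of $\lambda$. Now for any $(t_2, y_2) \in Q_+$, the caloric property plus Lemma \ref{phi-l-3} (applied slice-by-slice in the time coordinate) and a Fubini-type integration over the time slices of $A_\alpha$ gives
\[
u(t_2, y_2) \ge \bE^{(t_2, y_2)}\!\left[u(Z_{\sigma_{A_\alpha}}); \sigma_{A_\alpha} < \tau_Q\right] \ge \alpha \lambda \cdot c',
\]
with $c' > 0$ universal; Lemma \ref{phi-l-2} enters as the clean reformulation/check that the time-slice hitting estimates sum to a genuine mass-to-probability bound. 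Since $\essinf_{Q_+} u = 1$, this forces $\lambda \le 1/(c'\alpha) =: K$, proving \eqref{e:PHI}.

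\textbf{Main obstacle.} The hardest part is the joint calibration of scales so that the three lemmas compose coherently without exponential loss in $\lambda$, and in particular the conversion of a "lower bound on exit probability at a level set" into a "lower bound on $m \otimes \mu$-mass of that level set." The latter conversion is where the most delicate matching of the radius $r$, the constants $\delta_0,\dots,\delta_4$ of Lemma \ref{phi-l-1}, and the level $\alpha$ takes place; it is also where $\UHK(\phi)$ is essential. A further subtlety is the backward-time convention $V_s = V_0 - s$: propagation from the later $Q_+$ into the earlier $Q_-$ corresponds to $s$ increasing, so the slice-by-slice application of Lemma \ref{phi-l-3} and the Fubini step must be set up consistently with this orientation. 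Finally, the absence of $\RVD$ excludes the naive "H\"older regularity plus single-point hitting" route (which would require lower bounds of the form $V(y_1, r_0)/V(y_1, CR) \gtrsim (r_0/CR)^\kappa$) and makes the mass-of-level-sets route indispensable.
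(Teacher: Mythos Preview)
Your three-lemma toolkit is the right one, and the far-field step via Lemma \ref{phi-l-1} is exactly how the paper handles the long jumps. The gap is in the near part. In your decomposition at $(t_1,y_1)$ you bound the near summand by $\alpha\lambda + (\esssup u)\cdot \bP^{(t_1,y_1)}(u(Z_{\tau_{Q_1}})\ge\alpha\lambda)$; the relevant $\esssup u$ here is over the near exit region of $Q_1$, not over $Q_-$. With the backward time $V_s=V_0-s$, the exit from $Q_1$ starting at $(t_1,y_1)\in Q_3$ lands at times in $[t_0,t_1]$, which include times strictly earlier than $Q_-$, and in a spatial ball larger than $B(x_0,R)$. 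Thus the near exit region is not contained in $Q_-$, and its supremum can be some $M\gg\lambda$. Your lower bound on the exit probability (hence on $m\otimes\mu(A_\alpha)$) is then of order $\lambda/M$, and the final inequality reads $1=\essinf_{Q_+}u \ge c\,\alpha\lambda\cdot(\lambda/M)$, i.e.\ $\lambda \le c'\sqrt{M}$. This depends on $\|u\|_\infty$ and does not produce a universal $K$; the truncation step does not rescue the argument since $K$ must be uniform in the truncation level.

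The paper's proof is precisely designed to absorb this loss. It is an iterative contradiction argument: one works at a small scale $r = RK^{-1/(2(d_2+\beta_2))}$ depending on the putative large value $K$, and shows, using the three lemmas at that scale around $(t_1,x_1)$, that the supremum of $u$ over a cylinder of radius $\sim 3r/2$ centred at $(t_1,x_1)$ is at least $(1+2\beta)K$ for a fixed $\beta>0$. That is, the step you attempted in one shot only yields ``$M\ge(1+\beta)K$'' for a local supremum $M$. One then picks $(t_2,x_2)$ in that cylinder with $u(t_2,x_2)\ge(1+\beta)K$ and repeats at the new scale $r_2=RK_2^{-1/(d_2+\beta_2)}$. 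The geometrically shrinking radii keep the sequence $(t_k,x_k)$ inside a fixed cylinder while $u(t_k,x_k)\ge(1+\beta)^{k-1}K\to\infty$, contradicting boundedness. Lemmas \ref{phi-l-2} and \ref{phi-l-3} are used (somewhat dual to your sketch) to bound the measure of the high level set $\{u\ge\xi K\}$ \emph{from above} via the value $u(t_*,y_*)\le1$ at a point in $Q_+$, so that its complement has large measure; that complement then feeds the local oscillation step.

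A minor point: your setup invokes Proposition \ref{ndl-holder} via $\E_{\phi,\le}$ from Proposition \ref{pi-e}(ii), but the latter needs $\RVD$, which is not assumed in Theorem \ref{phi-probab}. This is cosmetic---the paper reduces to bounded $u$ by truncation and does not use H\"older continuity here---but it is an inconsistency with your own (correct) remark that the absence of $\RVD$ rules out the H\"older-regularity shortcut.
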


\begin{proof}
 Let $\eps\in (0, 1)$ be the constant in $\NDL (\phi)$.
 Take and fix some $\delta \in (0, \eps /4 ]$ so that $\delta \phi (2r) \leq \phi (\eps r)/4$ for all $r>0$
 and take $\delta_0 \in (0, \delta)$ so that $\phi (\delta_0 r)\leq \delta \phi (r)$
 for all $r>0$. The existence of such $\delta$ and $\delta_0$ is guaranteed by the assumption
 \eqref{polycon}.
 We choose $\delta$ and $\delta_0$ in such a way so that Lemma \ref{phi-l-1} holds
  by taking $\delta_j$ to be $j\delta$ for $j=1,2,3,4$ there.
 Condition \eqref{polycon} ensures that there is a constant $c_0 \in (0, 1/2)$ so that
 $\phi^{-1}(\delta \eps \phi (r)/2) \geq c_0r$ for every $r>0.$
 Take
\begin{equation}\label{e:4.6a}
C=(2/c_0) +2  \quad \hbox{and} \quad  C_0=C-2=2/c_0.
\end{equation}
The reason of defining such $C_0$ is that the conclusion of Lemma \ref{phi-l-3}
holds for any $x, z\in B(x_0, R/C_0)$.

Let $u$ be a  non-negative function on
$[0,\infty)\times M$ that is
caloric on $Q:=(t_0, t_0+ 4 \delta
\phi(C R))\times B(x_0, C R)$. We will show \eqref{e:PHI} holds.

The proof below is mainly based on that of \cite[Theorem 5.2]{CKK1}
with some non-trivial modifications; see also the proof of
\cite[Proposition 4.3]{CK1} or of \cite[Theorem 4.12]{CK2},
whose idea is originally due to \cite[Theorem 3.1]{BL2}.
Truncating $u$ by $n$ outside $Q$ and then passing $n\to \infty$ if
needed, without loss of generality, we may and do assume that
$t_0=0$, and that the function $u$ is bounded on $Q$, see Step 3 in
the proof of  \cite[Theorem 5.2]{CKK1} (e.g.\ page 1085 in
\cite{CKK1}).
Furthermore, by looking at $au+b$ for suitable constants $a$ and
$b$, we may and do assume that $\inf_{(t,y)\in Q_+}u(t,y)=1/2.$ Let
$(t_*,y_*)\in Q_+$ be such that $u(t_*, y_*)\le 1$. It is enough to
show that $u(t,x)$ is bounded from above in $Q_-$ by a constant that
is independent of the function $u$.

For any $t\ge \delta \phi(r)$, set $Q^\downarrow(t,\delta,x,r)=[t-  \delta\phi(r),t]\times B(x, r)$. Note that
$$
m\otimes \mu (Q^\downarrow(t,\delta,x,r))= \delta\phi(r) V(x,  r).
$$
By Lemma \ref{phi-l-2},
there exists a constant $c_1\in(0,1/2)$
so that for any $r\le R/2$ and any compact set $D$ satisfying that
$$
D\subset \left[t-  \delta\phi(r),t-\frac{1}{2}\delta\phi(r)\right]\times B(x,c_0 r) \subset Q^\downarrow(t,\delta,x,r)
$$
and
$$
m\otimes \mu (D)/ m\otimes \mu (Q^\downarrow(t,\delta,x,r))\ge \frac{c_0^{d_2}}{4\wt C_\mu},
$$
we have
$$
\bP^{(t,x)}(\sigma_D <\tau_{Q^\downarrow(t,\delta,x,r)})\ge c_1,
$$
 where $\widetilde C_\mu$ and $d_2$ are the constants in \eqref{eq:vdeno}.
Let $C_2$ be the constant $C_2$ in Lemma \ref{phi-l-1} with
$\delta_j=j\delta$ and $\theta=1/2$. Define $$\eta=\frac{c_1}{3},\quad \xi =\frac{1}{3}  \wedge (C_2^{-1} \eta).$$
We claim that there is a universal constant $K \geq 2$ to be determined later, which is independent
of $R$ and the function $u$, such that $u\le  K$ on $Q_-$. We are going to prove this by contradiction.

Suppose this is not true. Then there is some point $(t_1, x_1)\in Q_-$ such that $u(t_1, x_1)\ge K$.
We will
show that there are a constant $\beta>0$ and a sequence of
points $\{(t_k, x_k)\}$ in $ [t_0+\delta \phi (CR)/2, t_0+2\delta \phi (C R))\times B(x_0, 2R)
\subset Q$ so that $u(t_k, x_k) \geq (1+\beta)^{k-1} K$,
which contradicts to the assumption that $u$ is bounded on $Q$.

Recall that $\beta_1, \beta_2, c_3$ and $ c_4$ are the constants in \eqref{polycon}.
Then, by \eqref{eq:vdeno} and \eqref{polycon}, we have for all $x\in M$ and
all $0<r_1< r_2 \wedge r_3 <\infty$:
\begin{equation}\label{e:ff}
\frac{V(x,r_1 )\phi(r_1)}{V(x,r_2)\phi(r_3)}\ge \frac{1}{c_4 \widetilde
C_\mu} \left(\frac{r_1}{r_2 }\right)^{d_2} \left(\frac{r_1}{r_3 }\right)^{\beta_2} .
\end{equation}
Let $C_3$ be the constant in Lemma \ref{phi-l-3},
and set $r:=RK^{-1/(2(d_2+\beta_2))} $.
We take $K\geq 2$ large enough
so that $K\geq (2\wt C_\mu/ (C_3 \xi \delta_0^{2d_2}))^2 (2C_0)^{d_2}$ and that, in view of \eqref{polycon},
$$
r< R/8 \ \hbox{ and } \ \phi(r)< \frac{1}{8}\phi(R) \qquad \hbox{for all } R>0
\hbox{ and } r=RK^{-1/(2(d_2+\beta_2))}.
$$

 With such $r$, we have by \eqref{e:ff}
\begin{equation}\label{e:4.5}
 \frac{m\otimes \mu (Q^\downarrow(t,\delta,x,r))}{\phi (R) V(x, C_0 R)}=
 \frac{\delta \phi (r) V(x,  r)}{\phi ( R) V(x, C_0 R)}
 \geq \frac{\delta}{c_4 \wt C_\mu C_0^{d_2} \sqrt{K}}.
\end{equation}

Take $\tilde{t}=t_1+(5/2) \delta\phi(r)$ and define $\tilde{U}=\{\tilde{t}\}\times B(x_1, \delta_0^2 r/2)$.
Observe that $t_*-\tilde t\geq \frac12 \delta\phi (CR)$ since $t_*-t_1\geq \delta\phi (CR)$.
 If the
 caloric  function $u\ge \xi K$ on $\tilde{U}$, we would have by \eqref{e:4.6a} and
 Lemma \ref{phi-l-3}  that
\begin{align*}
1\ge u(t_*, y_*)&=\bE^{(t_*,y_*)} u(Z_{\sigma_{\tilde{U} }\wedge \tau_{Q_*}})\ge \xi K \bP^{(t_*,y_*)}(\sigma_{\tilde{U}}\le \tau_{Q_*})\ge \xi K \frac{C_3V(x_1,\delta_0^2 r/2 ) }{V(x_1,C_0R) } \\
&\geq  \frac{C_3 \xi K} {\wt C_\mu }  (\delta_0^2 r/(2C_0 R))^{d_2}\geq \frac{C_3 \xi \delta_0^{2d_2} \sqrt{K}}
{(2C_0)^{d_2} \wt C_\mu} \geq 2,
\end{align*}
where $Q_*= [t_1-\delta \phi (r) , t_*] \times B(x_0,C_0 R)$.
 This contradiction yields that
$$
\hbox{there is some } y_1\in B(x_1, \delta_0^2 r/2) \hbox{ so that }
u(\tilde{t}, y_1)< \xi K.
$$
We next show that
\begin{equation}\label{e:4.10a}
\bE^{(t_1,x_1)}\left[u(Z_{\tau_r}):X_{\tau_r}\notin B(x_1, 3r/2)\right]\le \eta K,
\end{equation}
 where $\tau_r:=\tau_{(t_1-\delta \phi (r), t_1+3\delta \phi (r))\times B(x_1, \delta_0^2 r)}$.
 If it is not true, then we would have by Lemma \ref{phi-l-1} with $\delta_j=j\delta$ $(j=1,2,3,4)$ and
 $\theta=1/2$ that
 \begin{align*}
\xi K \, > \, u(\tilde{t}, y_1) \ge& \bE^{(\tilde{t},y_1)}\left[u(Z_{\tau_{[t_1-\delta\phi(r), t_1+3\delta\phi(r)]\times B(x_1,r)}}):X_{\tau_{[t_1-\delta\phi(r),t_1+3\delta\phi(r)]\times B(x_1,r)}}\notin B(x_1,3r/2)\right]\\
\ge&  C_2^{-1} \bE^{(t_1,x_1)}\left[ u(Z_{\tau_{r}}): X_{\tau_r}\notin B(x_1,3r/2))\right] \\
>& C_2^{-1} \eta K\ge \xi K,
\end{align*}
which is a contradiction. This establishes \eqref{e:4.10a}.

Let $A$ be any compact subset of
$$\tilde{A}:=\left\{(s,y)\in  \Big[t_1-\delta\phi(r),t_1-\frac{1}{2}\delta\phi(r)\Big]\times B(x_1, c_0r ): u(s,y)\ge \xi K\right\},$$ and
define $U_1=\{t_1\}\times B(x_1, \delta_0^2 r)$.  By Lemmas \ref{phi-l-2} and
\ref{phi-l-3} and the strong Markov property,
 \begin{equation}\label{e:4.6}\begin{split}
1& \ge u(t_*,y_*)\ge \bE^{(t_*,y_*)} [u(Z_{\sigma_A}): \sigma_A\le \tau_{Q_*} ] \\
&\ge \bE^{(t_*,y_*)} [u(Z_{\sigma_A}): \sigma_{U_1}<\tau_{Q_*},
 \sigma_A< \tau_{[t_1-\delta \phi (r), t_*] \times B(x_1, 2r)} ] \\
&\ge  \bP^{(t_*,y_*)} (\sigma_{U_1}<\tau_{Q_*}) \inf_{z\in B(x_1,r/2)}\bE^{(t_1,z)} [u(Z_{\sigma_A}):  \sigma_A<\tau_{ [t_1-\delta \phi (r), t_*]\times B(z, r)}]\\
&\ge  C_3\frac{V(x_1,\delta_0^2 r)}{V(x_1,C_0 R)} \cdot \xi K C_1
\inf_{z\in B(x_1,r/2)}
\frac{m\otimes \mu(A)}{V(z, r)\phi(r)}
\\
&\ge \frac{C_1 C_3  \xi K}{c_4 \wt C_\mu} \Big(\frac{\delta_0^2}{2}\Big)^{d_2}\frac{m\otimes \mu(A)}{V(x_1,C_0R)\phi(R)},
\end{split}
\end{equation}
where in the third inequality we used the fact that
$\tau_{[t_1-\delta \phi (r), t_*] \times B(x_1, 2r)}   \leq \tau_{ Q_*}$.
Since $A$ is an arbitrary compact subset of $\wt A$, we have by
 \eqref{e:4.6} that
$$
\frac{m\otimes \mu(\wt A)}{V(x_1,C_0 R)\phi(R)} \leq
\frac{ c_4 \wt C_\mu}{C_1 C_3  \xi K} \left( \frac{2}{\delta_0^2}\right)^{d_2}.
$$
Thus by  \eqref{e:4.5},
$$
\frac{m\otimes \mu(\wt A)}{m\otimes \mu(Q^\downarrow(t_1,\delta,x_1,r))  } \le
\frac{ c_4^2 \wt C_\mu^2 C_0^{d_2}}{\delta C_1 C_3  \xi \sqrt{K}} \left( \frac{2}{\delta_0^2}\right)^{d_2},
$$
which is no larger than $\frac{c_0^{d_2}}{4\wt C_\mu}$ by taking $K$ sufficiently large.
Let
$$D= \Big[t_1-\delta\phi(r),t_1-\frac{1}{2}\delta\phi(r)\Big]\times B(x_1, c_0 r)\setminus \tilde{A}
$$
 and $M=\sup_{(s,y)\in Q^\downarrow(t_1,\delta,x_1,3r/2)}u(s,y)$.
Note that
$$
\frac{m\otimes \mu(\wt D)}{m\otimes \mu(Q^\downarrow(t_1,\delta,x_1,r))  }
= \frac{\delta \phi (r) V(x_1, c_0r)}{2 \delta \phi (r) V(x_1, r)}
-\frac{m\otimes \mu(\wt A)}{m\otimes \mu(Q^\downarrow(t_1,\delta,x_1,r))}
\geq \frac{c_0^{d_2}}{4\wt C_\mu}.
$$
  We have by \eqref{e:4.10a},
  \begin{align*}
K &\le  u(t_1,x_1)=\bE^{(t_1,x_1)} [u(Z_{\sigma_D\wedge \tau_r})]\\
&= \bE^{(t_1,x_1)} [u(Z_{\sigma_D\wedge \tau_r}): \sigma_D<\tau_r]+ \bE^{(t_1,x_1)} [u(Z_{\sigma_D\wedge \tau_r}): \sigma_D\ge \tau_r, X_{\tau_r}\notin B(x_1,3r/2)]\\
& \quad +\bE^{(t_1,x_1)} [u(Z_{\sigma_D\wedge \tau_r}): \sigma_D\ge \tau_r, X_{\tau_r}\in B(x_1,3r/2)]\\
&\le  \xi K \bP^{(t_1,x_1)}(\sigma_D<\tau_r)+ \eta K+ M \bP^{(t_1,x_1)}( \sigma_D\ge \tau_r).
\end{align*}
Therefore,
$$
M/K\ge \frac{1-\eta-\xi\bP^{(t_1,x_1)}(\sigma_D<\tau_r) }{\bP^{(t_1,x_1)}(\sigma_D\ge\tau_r)}\ge \frac{1-\eta-\xi c_1}{1-c_1}\ge \frac{1-(2c_1)/3}{1-c_1}=:1+2\beta,
$$
where $\beta=c_1/(6(1-c_1))$.
Consequently, there exists a point $(t_2,x_2)\in Q^\downarrow(t_1,\delta,x_1,2r)\subset Q$ such that $u(t_2,x_2)\ge (1+\beta)K=:K_2.$

Iterating the procedure above, we can find a sequence of points $\{(t_k,x_k)\}_{k=1}^\infty$ in
 $ [t_0+\delta \phi (C R)/2, t_0+2\delta \phi (CR))\times B(x_0, 2R)$ in the following way. Following the above argument with $(t_2,x_2)$ and $K_2$ in place of $(t_1,x_1)$ and $K$ respectively, we obtain that there exists a point $(t_3,x_3)\in  Q^\downarrow(t_2,\delta,x_2,2r_2)$ such that
$$r_2= RK_2^{-1/(d_2+\beta_2)}=(1+\beta)^{-1/(d_2+\beta_2)}RK^{-1/(d_2+\beta_2)}$$ and $$u(t_3,x_3)\ge (1+\beta)K_2=(1+\beta)^2K=:K_3.$$ We continue this procedure to obtain a sequence of points $\{(t_k,x_k)\}$ such that $(t_{k+1},x_{k+1})\in  Q^\downarrow(t_k,\delta,x_k,2r_k)$ with
$$r_k:=RK_k^{-1/(d_2+\beta_2)}= (1+\beta)^{-(k-1)/(d_2+\beta_2)}  RK^{-1/(d_2+\beta_2)},$$
 and
$$
u(t_{k+1}, x_{k+1})\ge (1+\beta)^k K=:K_{k+1}.
$$
As $0\le t_k-t_{k+1}\le \delta\phi(2r_k)$ and $d(x_k,x_{k+1})\le 2r_k$, we can take $K$ large enough (independent of $R$ and $u$) so that $(t_k,x_k)\in
 [t_0+\delta \phi (CR)/2, t_0+2\delta \phi (C R))\times B(x_0, 2R)$ for all $k$.
This is a contradiction because $u(t_k,x_k)\ge (1+\beta)^{k-1}K$ goes to infinity as $k\to \infty$, while $u$ is bounded on $Q$. We conclude that $u$ is bounded by $K$ in $Q_-$.
The proof is complete. \qed
\end{proof}

\begin{remark}\rm
In \cite[Proposition 3.3]{BBK2}, another proof of parabolic Harnack inequalities is given
by using the Balayage formula. We point out that there is a minor error in its proof there.
Indeed,
in  (3.7) and (3.8)  of \cite{BBK2} and   in lines  5 and 6 from the bottom of p.\ 307  in \cite{BBK2},
 the summations
should be taken over $G-B'$ instead of $B-B'$.
 With these corrections, the proof of \cite[Proposition 3.3]{BBK2} goes through.
\end{remark}

Finally, we prove that under $\NDL(\phi)$, $\J_{\phi, \le}$ is equivalent to $\UHK(\phi)$, which immediately yields that $\NDL(\phi)+ \UJS \Longleftrightarrow \PHI^+(\phi).$

\begin{proposition}\label{phi-phi}  Assume that $\VD$, \eqref{polycon} and $\RVD$ hold. Then, \begin{equation}\label{phi-1}\NDL(\phi)+ \J_{\phi,\le} \Longleftrightarrow \NDL(\phi)+\UHK(\phi)\end{equation} and so
\begin{equation}\label{e:4.13}
\NDL(\phi)+ \UJS \Longleftrightarrow \PHI^+ (\phi)  \Longleftrightarrow \PHI(\phi).
\end{equation}
\end{proposition}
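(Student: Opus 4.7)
My plan is to prove the two equivalences in turn, with \eqref{phi-1} being the substantive step.

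For \eqref{phi-1}, the direction $\NDL(\phi)+\UHK(\phi)\Longrightarrow \NDL(\phi)+\J_{\phi,\le}$ is immediate: $\NDL(\phi)$ forces $X$ to be conservative by Proposition \ref{P:3.1}, so the hypotheses of condition (1) of Theorem \ref{T:main-1} are met, and its equivalence with (2) hands us $\J_{\phi,\le}$. For the converse $\NDL(\phi)+\J_{\phi,\le}\Longrightarrow \NDL(\phi)+\UHK(\phi)$, I would reduce to Theorem \ref{T:main-1} by collecting ingredients. Under $\VD$, $\RVD$ and \eqref{polycon}, Proposition \ref{pi-e} (together with Proposition \ref{P:3.1}) shows that $\NDL(\phi)$ alone already yields $\PI(\phi)$, $\E_\phi$, and conservativeness; and $\PI(\phi)$ then upgrades to $\FK(\phi)$ via Proposition \ref{pi-e-pre}. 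Combined with the hypothesis $\J_{\phi,\le}$, we obtain $\FK(\phi)+\J_{\phi,\le}+\E_\phi$ plus conservativeness. The only remaining task is to derive the on-diagonal upper bound $\UHKD(\phi)$, after which condition (2) of Theorem \ref{T:main-1} immediately supplies $\UHK(\phi)$. To obtain $\UHKD(\phi)$, I would run a Nash/Davies-type iteration that uses $\FK(\phi)$ as the Faber--Krahn inequality on the Dirichlet eigenvalue (the natural analogue of a Nash inequality), $\E_{\phi,\le}$ to control ball exit times, and $\J_{\phi,\le}$ to absorb the non-local tail through a Meyer decomposition of $X$ into its small- and large-jump parts.

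The second equivalence \eqref{e:4.13} then follows quickly. The implication $\PHI^+(\phi)\Longrightarrow \PHI(\phi)$ is immediate from the definitions, as the former is just the latter with the additional requirement $C_k=kC_1$. The implication $\PHI(\phi)\Longrightarrow \NDL(\phi)+\UJS$ combines Propositions \ref{ndlb} and \ref{ujs-jle}. For $\NDL(\phi)+\UJS\Longrightarrow \PHI^+(\phi)$, Corollary \ref{ujs-ndl} promotes $\UJS$ to $\J_{\phi,\le}$ in the presence of $\NDL(\phi)$, so by \eqref{phi-1} we also have $\UHK(\phi)$, and then Theorem \ref{phi-probab} converts $\UHK(\phi)+\NDL(\phi)+\UJS$ into $\PHI^+(\phi)$.

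The main obstacle is the $\UHKD(\phi)$ step above. Since we allow the upper index $\beta_2$ in \eqref{polycon} to be $\ge 2$, Remark \ref{R:csjrem} does not produce $\CSJ(\phi)$ for free from $\J_{\phi,\le}$, so we cannot plug into conditions (3) or (4) of Theorem \ref{T:main-1} to short-circuit this step. The delicate part of the plan is therefore to execute the Nash/Davies iteration without the full cutoff Sobolev inequality as a separate input, relying instead on $\FK(\phi)+\E_\phi+\J_{\phi,\le}$ to supply the local energy and tail estimates needed to iterate a level-set bound on the Dirichlet heat semigroup up to the on-diagonal scale $1/V(x,\phi^{-1}(t))$.
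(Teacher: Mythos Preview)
Your proposal is correct and follows the same logical route as the paper. The only difference is that where you outline a direct Nash/Davies-type iteration to derive $\UHKD(\phi)$ from $\FK(\phi)+\E_\phi+\J_{\phi,\le}$, the paper simply invokes \cite[Theorem~4.25]{CKW1}, which packages precisely this step; so your sketch amounts to reproving the cited result rather than taking a genuinely different path.
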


\begin{proof}
First, note that the process $\{X_t\}$ is conservative due to
$\NDL(\phi)$ (see  Proposition  \ref{P:3.1}).
On the one hand, by  Theorem \ref{T:main-1},   $\UHK(\phi)$ implies $\J_{\phi, \le}$.
On the other hand, according to
Proposition \ref{pi-e}, under $\VD$, \eqref{polycon} and $\RVD$,
$\NDL(\phi)$ implies $\FK(\phi)$ and $\E_\phi$. In particular, the process $\{X_t\}$ possesses a heat kernel.
 Thus we have by \cite[Theorem 4.25] {CKW1}  that $\NDL(\phi)+\J_{\phi, \le}$ imply $\UHKD(\phi)$.
Furthermore, by Theorem \ref{T:main-1}, $\NDL(\phi)+ \J_{\phi,\le}$ imply $\UHK(\phi)$. This proves \eqref{phi-1}.

By Corollary \ref{ujs-ndl}, $\NDL(\phi)+ \UJS \Longrightarrow \J_{\phi,\le}$, which along with \eqref{phi-1} gives us
$$
\NDL(\phi)+ \UJS \Longleftrightarrow\UHK(\phi)+\NDL(\phi)+\UJS.
$$
It now
follows from Propositions \ref{ndlb} and \ref{ujs-jle}, and  Theorem \ref{phi-probab} that
$$ \PHI (\phi)\Longrightarrow   \NDL(\phi)+ \UJS \Longrightarrow \PHI^+ (\phi) .
$$
This establishes assertion \eqref{e:4.13} as $\PHI^+ (\phi) \Longrightarrow \PHI (\phi)$.
\qed \end{proof}

\subsection{$\PHI(\phi)  \Longleftrightarrow {\rm PHR}(\phi)+
\E_\phi +\UJS
\Longleftrightarrow {\rm EHR}+ \E_\phi+\UJS$ }\label{sec7-2}

The main contribution of this subsection is the
following relations among $\PHI(\phi)$, $\PHR(\phi)$  and $\EHR$,
which establish the equivalences
among (1), (5) and (6) of Theorem \ref{T:PHI}.

\begin{theorem}\label{prop-ehi}
 Assume that $\mu$ and $\phi$ satisfy $\VD$, $\RVD$ and \eqref{polycon} respectively. Then
$$
\PHI(\phi) \Longleftrightarrow {\rm PHR}(\phi)+
\E_\phi
+\UJS \Longleftrightarrow {\rm EHR}+ \E_\phi+\UJS.
$$
\end{theorem}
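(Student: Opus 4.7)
\medskip\noindent
\textbf{Plan.} I would prove the theorem by establishing the cycle of implications $(1) \Longrightarrow (5) \Longrightarrow (6) \Longrightarrow (1)$. The first two are largely bookkeeping. For $(1) \Longrightarrow (5)$, I would assemble the consequences of $\PHI(\phi)$ developed in Section \ref{sectHarn}: Proposition \ref{ndlb} yields $\NDL(\phi)$, Proposition \ref{ujs-jle} yields $\UJS$, Corollary \ref{ujs-ndl} then supplies $\J_{\phi,\le}$, and Proposition \ref{pi-e}(ii) upgrades $\NDL(\phi)$ to the full two-sided exit-time estimate $\E_\phi$ by virtue of $\RVD$. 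Feeding $\NDL(\phi) + \E_{\phi,\le} + \J_{\phi,\le}$ into Proposition \ref{ndl-holder} then produces $\PHR(\phi)$. The implication $(5) \Longrightarrow (6)$ is the trivial observation, recorded after Definition \ref{PER}, that $\PHR(\phi) \Longrightarrow \EHR$.

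The substantive direction is $(6) \Longrightarrow (1)$. By Proposition \ref{phi-phi}, with $\UJS$ in hand, it suffices to produce $\NDL(\phi)$. I would first extract $\J_{\phi,\le}$ from $\UJS + \E_\phi$: the L\'evy system formula together with the upper exit-time bound $\E_{\phi,\le}$ delivers an averaged tail estimate of the form $\int_{B(z,r)^c} J(z,y)\,\mu(dy) \le c/\phi(r)$, and integrating the $\UJS$ inequality against $\mathbf{1}_{B(x,r)^c}(y)$ then promotes this to the pointwise bound $J(x,y) \le c/(V(x,d(x,y))\phi(d(x,y)))$. Next, I would combine $\EHR$ with Lemma \ref{green-inequality} applied to $G^D\mathbf{1}$, and more generally to $G^D f$ for bounded $f$, on open $D \subset B(x_0,r)$, together with the $\|G^B\mathbf{1}\|_\infty \le c\phi(r)$ bound from $\E_{\phi,\le}$, to deduce the Faber--Krahn inequality $\FK(\phi)$ via the eigenfunction identity $u = \lambda G^D u$. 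Once $\FK(\phi) + \J_{\phi,\le}$ is established and $\CSJ(\phi)$ is verified (either via Remark \ref{R:csjrem} when applicable, or directly from the pointwise jump bound and \eqref{polycon}), Theorem \ref{T:main-1} delivers $\UHK(\phi)$ and conservativeness. Finally, to establish $\NDL(\phi)$ itself, the idea is that $\E_{\phi,\ge}$ gives $\bP^x(\tau_{B(x_0,r)} > \phi(\eps r)) \ge c$, so $\int_B p^B(t,x,y)\,\mu(dy) \ge c$; $\UHK(\phi)$ rules out spread-out mass, and $\EHR$ applied in the spatial variable of $p^B$ (or, more cleanly, to its Dirichlet Green function) distributes the surviving mass pointwise near the diagonal. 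Proposition \ref{phi-phi} then closes the cycle.

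The main obstacle is the derivation of $\FK(\phi)$ from $\EHR + \E_\phi$: although $\EHR$ immediately yields H\"older regularity of harmonic functions, converting this --- via Lemma \ref{green-inequality} --- into the sharp $\mu(D)$-dependent spectral gain in Faber--Krahn requires a careful iteration on the eigenfunction relation $u = \lambda G^D u$, exploiting $\VD$ to capture the correct $\nu$-exponent and to prevent the oscillation term in Lemma \ref{green-inequality} from swamping the $L^\infty$--$L^1$ duality that produces the gain. A secondary subtlety is verifying $\CSJ(\phi)$: if Remark \ref{R:csjrem} does not apply directly in the present generality, one can sidestep Theorem \ref{T:main-1} and argue for $\NDL(\phi)$ from $\FK(\phi) + \J_{\phi,\le} + \E_\phi + \EHR$ by combining an oscillation estimate on the Dirichlet Green function with the $L^2$-mean value inequality of Proposition \ref{P:mvi2g}, which yields the near-diagonal lower bound without passing through the full $\UHK(\phi)$.
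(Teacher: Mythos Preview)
Your plan for $(1)\Longrightarrow(5)\Longrightarrow(6)$ matches the paper and is fine. The gap is in $(6)\Longrightarrow(1)$.

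Your first move there is to extract $\J_{\phi,\le}$ from $\UJS+\E_\phi$, claiming that ``the L\'evy system formula together with the upper exit-time bound $\E_{\phi,\le}$ delivers an averaged tail estimate of the form $\int_{B(z,r)^c} J(z,y)\,\mu(dy)\le c/\phi(r)$.'' This is incorrect: an \emph{upper} bound on mean exit time carries no information about how large the jump intensity can be --- a larger jump kernel only makes the process leave faster. The L\'evy system identity $1\ge\int_0^t\int_{B}p^{B}(s,x,y)J(y,B^c)\,\mu(dy)\,ds$ yields an averaged tail bound only after inserting a \emph{lower} bound on $p^{B}$, which is exactly what Corollary~\ref{ujs-ndl} does using $\NDL(\phi)$. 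Since $\NDL(\phi)$ is the very thing you are trying to reach, your route is circular at this point. The subsequent plan to verify $\CSJ(\phi)$ via Remark~\ref{R:csjrem} is also insufficient in general, since that remark requires $\beta_2<2$, whereas \eqref{polycon} allows $\beta_2\ge 2$; and your fallback alternative still presupposes $\J_{\phi,\le}$.

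The paper avoids this trap entirely: it proves $\EHR+\E_\phi\Longrightarrow\NDL(\phi)$ (Proposition~\ref{phi-ephi-2}) \emph{without} ever touching $\J_{\phi,\le}$, $\UJS$, or $\CSJ(\phi)$. The route is: (i) Lemma~\ref{ehi-fk} obtains $\FK(\phi)$ from $\EHR+\E_{\phi,\le}$ by a telescoping Green-function estimate on nested balls (not via the eigenfunction identity you propose); (ii) $\FK(\phi)$ gives existence of $p^D$ with a rough on-diagonal upper bound \eqref{k-upper}; (iii) writing $p^D(t,x,\cdot)=-G^D\big(\partial_t p^D(t,x,\cdot)\big)$ and feeding this into Lemma~\ref{green-inequality} yields a spatial oscillation bound on $p^D(t,x,\cdot)$ (Lemma~\ref{com-osc-sup}); (iv) $\E_{\phi,\ge}$ plus Cauchy--Schwarz gives the on-diagonal lower bound $p^B(2t,x,x)\ge c/V(x,\phi^{-1}(t))$, and the oscillation bound spreads this to a near-diagonal lower bound. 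Only then does $\UJS$ enter, to close the loop via Proposition~\ref{phi-phi}.
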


We start with the following key lemma.

\begin{lemma}\label{ehi-fk}  Under $\VD$ and \eqref{polycon},
{\rm EHR} and $\E_{\phi, \le}$ imply $\FK(\phi)$.\end{lemma}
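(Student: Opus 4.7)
The plan is to reduce $\FK(\phi)$ to the $L^\infty$ Green function bound
\begin{equation}\label{eq:fk-target}
\|G^D \mathbf 1\|_\infty \le C\,\phi(r)\,\bigl(\mu(D)/V(x_0,r)\bigr)^\nu
\end{equation}
valid for every $D\subset B(x_0,r)$, with $\nu = \beta_1/(\beta_1+d_2)$. The reduction is standard: if $\phi_1 \ge 0$ is the normalized principal eigenfunction of $-\sL|_D$, then $\phi_1 = \lambda_1 G^D\phi_1 \le \lambda_1\|\phi_1\|_\infty G^D \mathbf 1$, whence $\lambda_1(D)\ge 1/\|G^D \mathbf 1\|_\infty$ and \eqref{eq:fk-target} gives $\FK(\phi)$. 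To prove \eqref{eq:fk-target} I will not work directly with $G^D \mathbf 1$ but with its pointwise majorant $u := G^{B(x_0,2r)}\mathbf 1_D$; the inequality $G^D \mathbf 1\le u$ on $M$ follows from $D\subset B(x_0,2r)$ and the strong Markov property. By $\E_{\phi,\le}$ and \eqref{polycon}, $\|u\|_\infty \le c_1\phi(r)$, and by symmetry of the Green operator,
\[
\int u\,d\mu \;=\; \int_D G^{B(x_0,2r)}\mathbf 1\,d\mu \;\le\; c_1\phi(r)\,\mu(D),
\]
so, using $\VD$, the $\mu$-average of $u$ over $B(x_0,2r)$ is at most $c_2\,\phi(r)\,\mu(D)/V(x_0,r)$.

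Set $M := \|u\|_\infty$ and pick $y^*\in B(x_0,2r)\setminus \mathcal N$ with $u(y^*)\ge M/2$; after mildly enlarging the outer domain (to $B(x_0,4r)$, say) to keep $B(y^*,\rho)$ inside the Green-function domain, Lemma \ref{green-inequality} applied with $f=\mathbf 1_D$ yields, for $0<r_1\le \rho$,
\[
{\rm osc}_{B(y^*,r_1)\cap M_0}\, u \;\le\; 2c_1\phi(\rho) \;+\; c_3(r_1/\rho)^\theta M,
\]
using $\|\mathbf 1_D\|_\infty\le 1$ and $\E_{\phi,\le}$ to bound $\sup_{z\in B(y^*,\rho)}\bE^z[\tau_{B(y^*,\rho)}]\le c_1\phi(\rho)$. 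Choose $\rho$ so that $\phi(\rho) = M/(8c_1)$ and $r_1=c_4\rho$ with $c_3 c_4^\theta\le 1/4$; then ${\rm osc}_{B(y^*,r_1)} u\le M/2$, so $u\ge M/4$ on $B(y^*,r_1)$. Chebyshev together with the average bound gives $\mu(B(y^*,r_1))\le c_5\phi(r)\mu(D)/M$, and $\VD$ gives $V(y^*,r_1)\ge c_6 V(x_0,r)(r_1/r)^{d_2}$, whence
\[
V(x_0,r)(r_1/r)^{d_2} \;\le\; c_7\,\phi(r)\,\mu(D)/M.
\]
Finally $\phi(\rho)\asymp M$ and \eqref{polycon} give $(r/r_1)^{d_2}\le c_8(\phi(r)/M)^{d_2/\beta_1}$, turning the previous line into $M^{1+d_2/\beta_1}\le c_9\,\phi(r)^{1+d_2/\beta_1}\,\mu(D)/V(x_0,r)$, which is \eqref{eq:fk-target}.

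The main obstacle is the balancing in the oscillation estimate. Because $D$ can be geometrically arbitrary (fractal-like, disconnected, etc.), there is no a priori ball around $y^*$ that avoids $D$, so the ``source term'' $2\|\mathbf 1_D\|_\infty\sup_z\bE^z[\tau_{B(y^*,\rho)}]\asymp \phi(\rho)$ is unavoidable and must be shrunk to order $M$ by forcing $\rho = \phi^{-1}(O(M))$; only then does the two-parameter optimization over $(\rho,r_1)$, combined with the polynomial-growth hypothesis \eqref{polycon}, convert the ``small average'' plus ``bounded sup'' input into the explicit volume-ratio gain with exponent $\nu=\beta_1/(\beta_1+d_2)>0$.
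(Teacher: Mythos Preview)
Your approach is correct and takes a genuinely different route from the paper's. The paper works at a fixed point $x_0\in D$ and telescopes $G^{B_0}\mathbf 1_D$ through a dyadic family $B_k=B(x_0,2\delta^kR)$, applying $\EHR$ to each harmonic piece $(G^{B_k}-G^{B_{k+1}})\mathbf 1_D$ and then optimizing over $\delta$; this produces the exponent $\nu=(\theta\wedge\beta_1)/(2d_2+\theta\wedge\beta_1)$. You instead invoke the packaged oscillation bound of Lemma~\ref{green-inequality} once, at a near-maximizer $y^*$ of $u=G^{B}\mathbf 1_D$, and then feed the $L^1$ bound $\int u\,d\mu\le c\phi(r)\mu(D)$ through a Chebyshev/volume comparison. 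This is more direct and in fact yields the larger exponent $\nu=\beta_1/(\beta_1+d_2)$, with $\theta$ entering only through the constant (via $c_4$) but not through $\nu$. Note that Lemma~\ref{green-inequality} uses only $\EHR$, so there is no circularity despite its later placement in the paper.

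Two small cleanups are needed. First, with $u(y^*)\ge M/2$ and ${\rm osc}_{B(y^*,r_1)}u\le M/2$ you only get $u\ge 0$ on $B(y^*,r_1)$; take instead $u(y^*)\ge 3M/4$ (always possible since $M=\sup u$) or halve the constants defining $\rho$ and $c_4$, so that the oscillation is at most $M/4$ and hence $u\ge M/4$ on $B(y^*,r_1)$. Second, you should justify that $y^*$ sits well inside the outer ball: by the strong Markov property at the hitting time of $D$, one has $u(y)=\bE^y[u(X_{\sigma_D});\,\sigma_D<\tau_B]\le\sup_{\overline D}u$ for $y\notin D$, so the maximum of $u$ is attained in $\overline D\subset\overline{B(x_0,r)}$. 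This is what makes the ``mild enlargement to $B(x_0,4r)$'' work; without it, the putative $y^*$ could drift to the boundary of whatever ball you choose.
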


\begin{proof} According to Remark \ref{R:phrehr}(ii), throughout this subsection we may and do assume that the constant $\eps=1/2$ in the definition of $\EHR$.

For any open subset ${D}$ of $M$, let $G^{D}$ be the associated Green operator. Recall that for any open set ${D}$,
it holds that
\begin{equation}\label{ppff-1}
\lambda_1({D})^{-1}\le\sup_{x\in D\cap M_0} \bE^x \tau_D = \sup_{x\in D\cap M_0} G^D{\bf 1}(x).
\end{equation}
For any ball $B=B(x,R)\subset M$ with $x\in M$ and $R>0$, and any open set ${D}\subset B$, we will verify that
\begin{equation}\label{ppff}
\sup_{x\in D\cap M_0}
\bE^x \tau_D \le c\phi(R)\left( \frac{\mu({D})}{V(x,R)}\right)^{\nu},
\end{equation}
where $c>0$ and $\nu \in (0, 1)$ are two constants independent of ${D}$ and $B$.   Once this is proved, $\FK(\phi)$ immediately follows from \eqref{ppff-1} and \eqref{ppff}.

 Fix an arbitrary $x_0\in {D}\cap M_0$.
  Let $R_k=2 \delta^kR$ for $k\ge0$, where $\delta\in(0,1/2]$ is a constant to be determined later.
  Set $B_k=B(x_0, R_k)$ for $k\ge 0$. Clearly $D\subset B_0=B(x_0, 2R)$.
Since $(G^{B_k} -G^{B_{k+1}}) {\bf 1}_D$ is a bounded non-negative function that is harmonic in $B_{k+1}$,
   we have by EHR and the $\mu$-symmetry of the Green operator $G^{B_k}$
   that  for any positive integers $n>k\geq 0$,
  \begin{equation}\label{e:fk1} \begin{split}
 & \sup_{y\in B_{n+1} \cap M_0}
(G^{B_k} -G^{B_{k+1}}) {\bf 1}_D(y) \\
   &\leq  \inf_{y\in B_{n+1} \cap M_0} (G^{B_k} -G^{B_{k+1}}) {\bf 1}_D(y) +
 c_1  \, \delta^{(n-k)\theta} \,
 \sup_{y\in B_{n+1} \cap M_0} |(G^{B_k} -G^{B_{k+1}}) {\bf 1}_D (y)| \\
 &\leq \frac{1}{\mu (B_{n+1}) }\int_{B_{n+1}} (G^{B_k} -G^{B_{k+1}}) {\bf 1}_D (y) \,\mu (dy)
    + c_1  \, \delta^{(n-k)\theta} \,
 \sup_{y\in B_{k} \cap M_0} | G^{B_k} {\bf 1}_D (y)| \\
  &\leq \frac{1}{\mu (B_{n+1}) }\int {\bf 1}_{B_k\cap D} (y) G^{B_k}{\bf 1}_{B_{n+1} } (y)   \, \mu (dy)
    + c_1  \, \delta^{(n-k)\theta} \,
 \sup_{y\in B_{k} \cap M_0} | G^{B_k} {\bf 1} (y)| \\
   &\leq \frac{1}{V(x_0, R_{n+1}) }\mu (D)  \| G^{B_k}{\bf1}\|_\infty
    + c_1  \, \delta^{(n-k)\theta} \,
  \sup_{y\in B_{k} \cap M_0} | G^{B_k} {\bf 1} (y)| ,
   \end{split}
\end{equation}
where $c_1=2^\theta c>0$, and $c$ and $\theta\in(0,1]$ are the constants in EHR. On the other hand, we have by $\E_{\phi, \le}$   that
\begin{equation}\label{e:fk2}
\sup_{y\in B_{k} \cap M_0} | G^{B_k} {\bf 1} (y)|   \leq
\sup_{y\in B_{k}\cap M_0} \bE^y \tau_{B(y, 2R_k)}
\leq c_2 \phi (2R_k).
\end{equation}
Taking $k=0$ and $n=1$ in \eqref{e:fk1} and $k=1$ in \eqref{e:fk2}, we find by \eqref{e:vd2} from VD and \eqref{polycon} that
 \begin{align}\label{e:7.7}
 \bE^{x_0} \tau_D
 &\leq \sup_{y\in B_{2} \cap M_0} G^{B_0}{\bf 1}_D (y)  \nonumber \\
& \leq   \sup_{y\in B_{2} \cap M_0} (G^{B_0} -G^{B_{1}}) {\bf 1}_D(y)  +
 \sup_{y\in B_{2} \cap M_0} G^{B_1} {\bf 1} (y) \nonumber \\
 &\leq  c_3   \left( \frac{\mu (D) }{V(x_0, R_2) }  + \delta^{\theta}\right)
 \phi (2R_0) + c_2 \phi (2R_1) \nonumber \\
 &\leq  c_4  \left( \frac{\mu (D) }{V(x_0, 2R) } \delta^{-2d_2} + \delta^{ \theta}\right)
\phi(R)      + c_4 \phi ( R ) \delta^{ \beta_1} \nonumber \\
&\leq  c_5 \phi (R) \left( \frac{\mu (D) }{V(x , R) } \delta^{-2d_2} + \delta^{ \theta \wedge \beta_1}  \right) .
 \end{align}
 Define $\nu= \frac{\theta\wedge \beta_1}{2d_2+  \theta\wedge \beta_1}$.
 If  $\frac{\mu (D) }{V(x, R) }\leq (1/2)^{2d_2+   \theta\wedge \beta_1}$,
 we take   $\delta = \left( \frac{\mu (D) }{V(x, R) }\right)^{1/(2d_2+ \theta \wedge \beta_1)}$, which is no larger than
 $1/2$,   in \eqref{e:7.7} to deduce
 $$  \bE^{x_0} \tau_D  \leq 2c_5  \phi (R) \left( \frac{\mu (D) }{V(x, R) }\right)^\nu.
 $$
 If $\frac{\mu (D) }{V(x, R) }> (1/2)^{2d_2+  \theta \wedge \beta_1}$, we get from $\E_{\phi,\le}$ that
 $$ \bE^{x_0} \tau_D  \leq c_6  \phi (R) \left( \frac{\mu (D) }{V(x, R) }\right)^\nu.
 $$
 Since $x_0\in D\cap M_0$ is arbitrary, this establishes \eqref{ppff} and hence completes the proof.     \qed
 \end{proof}

By $\VD$, \eqref{polycon} and
\cite[Proposition 7.3]{CKW1},
 $\FK(\phi)$ implies the existence of the Dirichlet
heat kernel $p^{D}(t,\cdot,\cdot)$ for any bounded open subset $D\subset M$, and that there is a constant $C_\nu>0$ such that
for every $x_0\in D$ and $t >0$
\begin{equation}\label{k-upper}
\esssup_{x,y\in D}p^{D}(t, x,y )\le \frac{C_\nu}{V(x_0,r)} \left( \frac{\phi(r)}{t}\right)^{1/\nu},
\end{equation}
where $r = {\rm diam}(D)$, the diameter of $D$.

\medskip

In the following, we will deduce $\NDL(\phi)$ from $\EHR$ and $\E_{\phi,\le}$, through establishing the space regularity of Dirichlet heat kernel. This basic approach is due to \cite[Lemma 3.8, Lemma 3.9 and Proposition 3.5]{HS}. The arguments below is also motivated by these in \cite[Subsections 5.3 and 5.4]{GT}.

\begin{lemma} \label{com-osc-sup} Assume that \eqref{polycon}, $\EHR$  and $\E_{\phi,\le}$ are satisfied. Let ${D}$ be a bounded
 open subset of $M$. Let $t>0$, $x\in {D}\setminus \mathcal{N}$ and $0<{r_1}< \phi^{-1}(t)$ such that
  $0<r_1\le r/2$ and  $B(x,r)\subset {D}$, where $r=\left( \phi^{-1}(t)^{\beta_1} r_1^\theta \right)^{1/(\beta_1+\theta)}$, $\beta_1$ is the constant in \eqref{polycon} and $\theta$ is the H\"older exponent in $\EHR$.
 Then,
$$
\essosc_{y\in B(x,{r_1})} p^{D}(t, x,y)\le C\left(\frac{{r_1}}{\phi^{-1}(t)}\right)^\kappa \esssup_{y\in D}\,  p^{D}(t/2, y,y),
$$
 where $\kappa=\beta_1\theta/(\beta_1+\theta)$, and $C$ is a constant depending on
the constants in \eqref{polycon} and
$\E_{\phi, \leq}$.
\end{lemma}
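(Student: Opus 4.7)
The plan is to reduce the essential oscillation of $u(y):=p^D(t,x,y)$ on $B(x,r_1)$ to the Green-potential oscillation estimate of Lemma~\ref{green-inequality}, and then optimise the auxiliary radius $r$. Since $\EHR$ and $\E_{\phi,\le}$ imply $\FK(\phi)$ by Lemma~\ref{ehi-fk}, the Dirichlet heat kernel $p^D$ exists by \cite[Proposition~7.3]{CKW1}; analyticity of the killed semigroup gives $u\in\operatorname{Dom}(\sL^D)$ for each $t>0$ with $\sL^D u=\partial_t p^D(t,x,\cdot)$, so the resolvent identity $-G^D\sL^D=I$ on $\operatorname{Dom}(\sL^D)$ furnishes the representation $u=-G^D f$ with $f:=\partial_t p^D(t,x,\cdot)$. (If one prefers to avoid the domain question, the same identity holds for the mollified $w_\eps(y)=\eps^{-1}\int_t^{t+\eps}p^D(s,x,y)\,ds\in\operatorname{Dom}(\sL^D)$, and one passes to the limit $\eps\downarrow0$.)

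The key quantitative ingredient is the $L^\infty$-bound $\|f\|_\infty\le CM/t$ with $M:=\esssup_{y\in D}p^D(t/2,y,y)$. To obtain it, I would factor through the semigroup law: with $h:=p^D(t/2,x,\cdot)$, one has $f=\sL^D P^D_{t/2}h=P^D_{t/4}\bigl(\sL^D P^D_{t/4}h\bigr)$. The spectral bound $\|\sL^D P^D_{t/4}\|_{L^2\to L^2}\le C/t$ and $\|h\|_2^2=p^D(t,x,x)\le M$ yield $\|\sL^D P^D_{t/4}h\|_2\le C\sqrt{M}/t$, while Cauchy--Schwarz applied to the kernel of $P^D_{t/4}$ gives $|P^D_{t/4}g(y)|\le\sqrt{p^D(t/2,y,y)}\,\|g\|_2\le\sqrt{M}\,\|g\|_2$. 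The two combine to produce the claimed bound on $f$.

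Feeding $f$ into Lemma~\ref{green-inequality} with $B(x_0,r):=B(x,r)\subset D$, and using $\E_{\phi,\le}$ to bound $\sup_{z\in B(x,r)}\bE^z\tau_{B(x,r)}\le c\phi(r)$ together with the pointwise bound $|u|\le M$ on $D$ (via Cauchy--Schwarz, $p^D(t,x,y)\le\sqrt{p^D(t,x,x)p^D(t,y,y)}\le M$), one arrives at
\[
\essosc_{y\in B(x,r_1)}p^D(t,x,y)\le C_1 M\,\frac{\phi(r)}{t}+C_2\Bigl(\frac{r_1}{r}\Bigr)^{\theta}M.
\]
The lower half of \eqref{polycon} with $R=\phi^{-1}(t)$ gives $\phi(r)/t\le c(r/\phi^{-1}(t))^{\beta_1}$ for $r\le\phi^{-1}(t)$, so balancing $(r/\phi^{-1}(t))^{\beta_1}\asymp(r_1/r)^{\theta}$ forces $r^{\beta_1+\theta}\asymp\phi^{-1}(t)^{\beta_1}r_1^{\theta}$, which is exactly the $r$ prescribed in the hypothesis; each term then equals a constant times $M(r_1/\phi^{-1}(t))^{\beta_1\theta/(\beta_1+\theta)}=M(r_1/\phi^{-1}(t))^{\kappa}$, yielding the lemma.

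The main obstacle I anticipate is establishing the $L^\infty$-derivative bound cleanly in this abstract setting: the spectral argument is transparent in $L^2$, so the $L^2\to L^\infty$ smoothing of $P^D_{t/4}$ controlled by $\sqrt{M}$, and the factorisation $P^D_{t/4}\sL^D P^D_{t/4}$, are essential. Once this is in place, the remaining steps---writing $u$ as a Green potential, invoking Lemma~\ref{green-inequality}, and optimising $r$ via \eqref{polycon}---are governed by the standard doubling estimates and produce the exponent $\kappa=\beta_1\theta/(\beta_1+\theta)$ automatically.
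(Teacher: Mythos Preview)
Your proposal is correct and follows essentially the same route as the paper: represent $u=p^D(t,x,\cdot)$ as $-G^D f$ with $f=\partial_t p^D(t,x,\cdot)$, bound $\|f\|_\infty\le CM/t$ via analyticity of the killed semigroup, apply Lemma~\ref{green-inequality} together with $\E_{\phi,\le}$ and $|u|\le M$, and then optimise $r$ using \eqref{polycon}. The only cosmetic differences are that the paper obtains the derivative bound by writing $\partial_t u(t,y)=-\sE(p^D(t/2,y,\cdot),u(t/2,\cdot))$ and applying Cauchy--Schwarz in the Dirichlet form (rather than factoring through $P^D_{t/4}\sL^D P^D_{t/4}$ as you do), and it justifies $u=-G^D f$ by the explicit integral $u(t,y)=-\int_t^\infty\partial_s p^D(s,x,y)\,ds$, using \eqref{k-upper} to ensure $p^D(s,x,y)\to0$ as $s\to\infty$; this is the point that makes your resolvent identity valid, so you may want to cite \eqref{k-upper} there as well.
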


\begin{proof}
The proof uses some ideas from but is more direct than that of \cite[Lemma 5.10]{GT}.
For fixed $x\in {D}\setminus \mathcal{N}$ and $s>0$, set $u(s, y)=p^{D}(s, x,y)$.
According to Lemma \ref{ehi-fk} and \eqref{k-upper},  $$\int_D u(s, y)^2 \,\mu (dy)= p^D(2s, x, x)<\infty.$$ Since, by the symmetry of $p^D(t/2,z,x)=p^D(t/2, x,z)$,
$$
u(t, y)=\int_D p^{D} (t/2, y, z) p^D (t/2, z, x)\,\mu(dz)=P^D_{t/2} u(t/2, \cdot ) (y),
$$
we have $u(t, \cdot ) \in {\rm Dom}({\cal L}^D)\subset \sF^D$ for every $t>0$.
Thus for $\mu$-a.e.  $y\in D$,
\begin{align*}
\partial_t u(t, y) &= {\cal L}^D P^D_{t/2}  u(t/2, \cdot ) (y)= P^D_{t/2} {\cal L}^D u (t/2,  \cdot ) (y) \\
&= \int_D p^D (t/2, y, z) {\cal L}^D u (t/2,  \cdot ) (z)\, \mu (dz) = - \sE ( p^D(t/2, y, \cdot ),  u (t/2,   \cdot )).
\end{align*}
Hence, by the  Cauchy-Schwarz
inequality and the spectral representation,
\begin{align*}
| \partial_t u(t, y) |  & \leq    \sqrt{\sE ( p^D(t/2, y, \cdot ),  p^D(t/2, y, \cdot )) } \,
\sqrt{ \sE (u (t/2,   \cdot ),  u (t/2,   \cdot ) ) }  \\
&= \sqrt{\sE ( P^D_{t/4} p^D(t/4, y, \cdot ),   P^D_{t/4} p^D(t/4, y, \cdot ) ) } \,
\sqrt{\sE( P^D_{t/4} u (t/4,  \cdot ),   P^D_{t/4} u (t/4,   \cdot )) }  \\
&\leq \sqrt{ (2/t) \,  \| p^D(t/4, y, \cdot )\|^2_{L^2(D; \mu) }} \,
\sqrt{ (2/t)   \, \| u (t/4,  \cdot )\|^2_{L^2(D; \mu) }} \\
&=\frac{2}{t} \sqrt{p^D(t/2, y, y) p^D(t/2, x, x) } \leq \frac2t \esssup_{D\setminus {\cal N}} \,  p^D(t/2, y, y).
\end{align*}
In particular, by \eqref{k-upper}, $f(t, y):= \partial_t u(t, y)$ is a bounded function on $D$ for every $t>0$.
Note that $\lim_{s\to \infty} p^D(s, x, y) =0$ for every $y\in D\setminus {\cal N}$, also thanks to \eqref{k-upper}.
Then we have
\begin{align*}
u(t, y) &= - \int_t^\infty \partial_s p^D(s,x,  y) \,ds =-\int_0^\infty \partial_t p^D(t+r, x, y)\, dr \\
&= -\int_0^\infty  \int_D p^D(r, y, z) \partial_t p^D(t, x, z) \,\mu (dz) \, dr  =-G^D f(t, \cdot) (y).
\end{align*}
    Hence, by  EHR, Lemma \ref{green-inequality}   and $\E_{\phi,\le}$, for any $0<{r_1}\le r/2$,
\begin{align*}
\essosc_{B(x,{r_1})}u (t, \cdot) \le &2  \sup_{y\in B(x,r)\setminus {\cal N}} | f(t, y)| \sup_{y\in B(x, r)\setminus {\cal N}}
\bE^y\tau_{B(x,r)}   + c_1 \left(\frac{{r_1}}{r} \right)^\theta \sup_{y\in D\setminus {\cal N}} |u(t, y)|\\
\le &c_2 \left[ \phi(r) \frac{A}{t}+\left(\frac{{r_1}}{r} \right)^\theta A \right],
\end{align*}
where
$ A= \sup_{z\in D\backslash \mathcal{N}}p^{D}({t/2},z,z)$.
In the last inequality above, we also used the facts that $\sup_{y,z\in D\setminus {\cal N}}p^D(t,y,z)= \sup_{z\in D\setminus {\cal N}}p^D(t,z,z)$ and  $t\mapsto \sup_{z\in D\backslash \mathcal{N}}p^{D}({t},z,z)$ is a decreasing function, see e.g., the proof of Lemma \cite[Lemma 7.9]{CKW1}.

For any $0<r< \phi^{-1}(t)$, by \eqref{polycon},
$$
\frac{\phi(r)}{t} \le c_3\left( \frac{r}{\phi^{-1}(t)}\right)^{\beta_1}  ,
$$
 whence it follows that for any $0<{r_1}\le r/2$ and $0<r<\phi^{-1}(t)$,
 $$
  \essosc_{B(x,{r_1})}u\le C\left[\left( \frac{r}{\phi^{-1}(t)}\right)^{\beta_1}+ \left(\frac{{r_1}}{r} \right)^\theta\right]A.
 $$
  By choosing $r=\left( \phi^{-1}(t)^{\beta_1} r_1^\theta \right)^{1/(\beta_1+\theta)}$
  in the inequality above, we proved the desired assertion. \qed
\end{proof}
\vskip-0.2mm
\begin{lemma}\label{continuity}
Suppose that $\VD$, \eqref{polycon}, {\rm EHR}  and $\E_{\phi,\le}$ hold. Then  for any $x\in M_0$, $t>0$ and $0<{r}\le 2^{-(\beta_1+\theta)/\beta_1} \phi^{-1}(t)$ the following estimate holds
$$
|p^{B(x,\phi^{-1}(t))}(t, x,x)-p^{B(x,\phi^{-1}(t))}(t, x,y)|
\le \left( \frac{{r}}{\phi^{-1}(t) }\right)^\kappa \frac{C}{V(x,\phi^{-1}(t))},\quad y\in B(x,{r})\backslash \mathcal{N},
$$
where $\beta_1$ is the constant in \eqref{polycon}, $\theta$ is the H\"older exponent in {\rm EHR}, and $\kappa$ is the constant in Lemma $\ref{com-osc-sup}$.
\end{lemma}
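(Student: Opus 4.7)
\medskip\noindent
\textbf{Proof proposal.} The plan is to deduce the estimate as a direct consequence of Lemma~\ref{com-osc-sup} applied to the Dirichlet heat kernel on $D := B(x,\phi^{-1}(t))$, with the choice $r_1 := r$. In the notation of Lemma~\ref{com-osc-sup}, the auxiliary radius then becomes $\widetilde r := \bigl(\phi^{-1}(t)^{\beta_1}\, r^{\theta}\bigr)^{1/(\beta_1+\theta)}$. A direct computation shows that the hypothesis $r \le 2^{-(\beta_1+\theta)/\beta_1}\phi^{-1}(t)$ is \emph{exactly} equivalent to $r \le \widetilde r/2$, and since it also forces $r<\phi^{-1}(t)$, one has $\widetilde r \le \phi^{-1}(t)$, so $B(x,\widetilde r)\subset D$. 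Hence Lemma~\ref{com-osc-sup} applies and yields
\[
 \essosc_{y\in B(x,r)} p^{D}(t,x,y)\le C\left(\frac{r}{\phi^{-1}(t)}\right)^{\kappa}\esssup_{y\in D} p^{D}(t/2,y,y).
\]

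Next I would control the supremum on the right. By Lemma~\ref{ehi-fk}, the assumptions $\EHR$ and $\E_{\phi,\le}$ together with \VD and \eqref{polycon} imply $\FK(\phi)$, and so the general Dirichlet heat kernel bound \eqref{k-upper} is available. Applying it to $D=B(x,\phi^{-1}(t))$ (whose diameter is at most $2\phi^{-1}(t)$) at time $t/2$, and using \VD to absorb $V(x,2\phi^{-1}(t))$ into $V(x,\phi^{-1}(t))$ and \eqref{polycon} to bound $\phi(2\phi^{-1}(t))/(t/2)$ by a universal constant, gives
\[
 \esssup_{y\in D} p^{D}(t/2,y,y) \le \frac{C'}{V(x,\phi^{-1}(t))}.
\]

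Finally, I would pass from the essoscillation to the pointwise difference $|p^{D}(t,x,x)-p^{D}(t,x,y)|$ for $y\in B(x,r)\setminus \mathcal{N}$. Since $p^{D}$ is symmetric, this difference equals $|p^{D}(t,x,x)-p^{D}(t,y,x)|$, and because $x,y\in B(x,r)\cap M_0$, it is majorized by $\osc_{z\in B(x,r)\cap M_0} p^{D}(t,z,x)$; this is precisely the quantity controlled by Lemma~\ref{com-osc-sup} (indeed, inspecting that lemma's proof via Lemma~\ref{green-inequality}, the control is on the oscillation over $M_0$--points). Combining the three steps gives the asserted bound with $C=CC'$. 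I do not anticipate a serious obstacle; the only mild point requiring care is the last step, where one must make sure the essoscillation estimate genuinely bounds the pointwise difference at the distinguished point $x$, but this is handled automatically since $x\in M_0$ and the underlying oscillation is taken over $M_0$--points.
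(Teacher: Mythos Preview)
Your proposal is correct and follows essentially the same route as the paper's own proof: apply Lemma~\ref{com-osc-sup} with $D=B(x,\phi^{-1}(t))$ and $r_1=r$, verify that the hypothesis $r\le 2^{-(\beta_1+\theta)/\beta_1}\phi^{-1}(t)$ is precisely what is needed for the conditions $r_1\le \widetilde r/2$ and $B(x,\widetilde r)\subset D$, and then bound $\esssup_{y\in D} p^D(t/2,y,y)$ via \eqref{k-upper}. The paper's proof is terser (it simply says ``This along with \eqref{k-upper} yields the desired assertion''), whereas you spell out the invocation of Lemma~\ref{ehi-fk} to justify \eqref{k-upper} and the passage from the essential oscillation to the pointwise difference at $x\in M_0$; both additions are sound and do not alter the argument.
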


\begin{proof}
 Fix $x\in M_0$ and $t,r_1>0$ with $0<{r}_1\le 2^{-(\beta_1+\theta)/\beta_1}  \phi^{-1}(t).$ We choose $r=\left( \phi^{-1}(t)^{\beta_1} {r}_1^\theta \right)^{1/(\beta_1+\theta)}$ as in Lemma \ref{com-osc-sup}. Then, $0<r_1\le r/2$. By applying Lemma \ref{com-osc-sup} with ${D}=B(x,\phi^{-1}(t))$, we get
$$
\essosc_{y\in B(x,{r}_1)}p^{B(x,\phi^{-1}(t))}(t, x,y)\le C\left( \frac{{r}_1}{\phi^{-1}(t) }\right)^\kappa\esssup_{y\in B(x,\phi^{-1}(t))}
p^{B(x,\phi^{-1}(t))}(t/2, y,y).
$$ This along with \eqref{k-upper} yields the desired assertion. \qed
 \end{proof}

Having all the lemmas at hand, we can obtain the following result.

\begin{proposition} \label{phi-ephi-2}
Let $\VD$, \eqref{polycon}, {\rm EHR} and  $\E_\phi$ be satisfied. Then for any open subset ${D}\subset M$, the semigroup $\{P_t^{D}\}$ possesses the heat kernel $p^{D}(t, x,y)$, and moreover $\NDL(\phi)$ holds true. \end{proposition}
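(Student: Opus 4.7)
The statement has two parts: existence of $p^D$ for every open $D$, and $\NDL(\phi)$. For the first, since $\E_\phi$ entails $\E_{\phi,\le}$, Lemma \ref{ehi-fk} combined with $\EHR$ yields $\FK(\phi)$. The cited \cite[Proposition 7.3]{CKW1} then provides $p^D(t,x,y)$ along with the upper bound \eqref{k-upper} for every bounded open $D$. For a general open $D$, I would exhaust $D$ by bounded open $D_n\uparrow D$ and set $p^D:=\lim_n p^{D_n}$; the limit exists pointwise by the monotonicity $p^{D_n}\le p^{D_{n+1}}$ (from $\tau_{D_n}\uparrow\tau_D$) and is finite because $\FK(\phi)$ together with $\VD$ produce a global heat kernel for $(\sE,\sF)$ via local ultracontractivity. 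The identification with the kernel of $\{P_t^D\}$ is then monotone convergence on both sides of $P_t^{D_n}f(x)=\int p^{D_n}(t,x,y)f(y)\,\mu(dy)$.

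For $\NDL(\phi)$, I first establish an on-diagonal lower bound. Let $c_3$ be the constant in $\E_{\phi,\ge}$ and set $R_\star=\phi^{-1}(t/c_3)$, so that $\bE^x\tau_{B(x,R_\star)}\ge t$; applying $\E_{\phi,\le}$ on $B(y,2R_\star)\supset B(x,R_\star)$ together with \eqref{polycon} yields $\esssup_{y\in B(x,R_\star)}\bE^y\tau_{B(x,R_\star)}\le c_2 t/c_3$. The Markov-type inequality $\bP^x(\tau_B>s)\ge(\bE^x\tau_B-s)/\esssup_{y\in B}\bE^y\tau_B$ (obtained by splitting $\bE^x\tau_B$ at time $s$ and using the strong Markov property) then gives $\bP^x(\tau_{B(x,R_\star)}>t/2)\ge c_3/(2c_2)=:c_4$. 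Writing $\bP^x(\tau_{B(x,R_\star)}>t/2)=\int p^{B(x,R_\star)}(t/2,x,y)\,\mu(dy)$ and applying Cauchy--Schwarz yields
\[
p^{B(x,R_\star)}(t,x,x)\ge c_4^2/V(x,R_\star)\ge c_5/V(x,\phi^{-1}(t)),
\]
where the last step uses $R_\star\le C\phi^{-1}(t)$ from \eqref{polycon} and $\VD$. For $\eps$ small enough, $B(x,R_\star)\subset B(x_0,r)$ whenever $x\in B(x_0,\eps\phi^{-1}(t))$ and $r\ge\phi^{-1}(t)/\eps$, so domain monotonicity of Dirichlet heat kernels together with $\VD$ gives $p^{B(x_0,r)}(t,x,x)\ge c_5/V(x_0,\phi^{-1}(t))$.

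To upgrade from $y=x$ to $y$ near $x$, I would apply Lemma \ref{com-osc-sup} with $D=B(x_0,r)$ and $r_1=\eps''\phi^{-1}(t)$ for a small constant $\eps''$ to be chosen. The auxiliary radius $\tilde r=(\phi^{-1}(t)^{\beta_1}r_1^{\theta})^{1/(\beta_1+\theta)}=(\eps'')^{\theta/(\beta_1+\theta)}\phi^{-1}(t)$ is a fixed fraction of $\phi^{-1}(t)$, so $B(x,\tilde r)\subset D$ holds once $\eps$ is small, and the bound \eqref{k-upper} applied at the scale $r\asymp\phi^{-1}(t)$ gives $\esssup_D p^D(t/2,y,y)\le C'/V(x_0,\phi^{-1}(t))$. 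The lemma then yields
\[
\essosc_{B(x,r_1)}p^D(t,x,\cdot)\le C''(\eps'')^{\kappa}/V(x_0,\phi^{-1}(t)).
\]
Fixing $\eps''$ so that $C''(\eps'')^{\kappa}\le c_5/2$ and then $\eps\le\eps''/2$, any $x,y\in B(x_0,\eps\phi^{-1}(t))\setminus\mathcal{N}$ satisfy $d(x,y)\le\eps''\phi^{-1}(t)$, and subtracting the oscillation bound from the on-diagonal bound gives $p^{B(x_0,r)}(t,x,y)\ge c_5/(2V(x_0,\phi^{-1}(t)))$, which is $\NDL(\phi)$. The main technical hurdle is the bookkeeping: simultaneously calibrating the small constants $\eps,\eps''$ against the several nested containments ($B(x,R_\star)\subset B(x_0,r)$ and $B(x,\tilde r)\subset D$), and the essential use of both directions of $\E_\phi$---the lower bound to make exit times large, the upper bound to control $\esssup_y\bE^y\tau$---without which the Markov estimate for $\bP^x(\tau_{B(x,R_\star)}>t/2)$ collapses.
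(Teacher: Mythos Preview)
Your overall strategy matches the paper's: derive $\FK(\phi)$ from $\EHR+\E_{\phi,\le}$ via Lemma~\ref{ehi-fk}, get an on-diagonal lower bound from $\E_\phi$ and Cauchy--Schwarz, and then use the oscillation control Lemma~\ref{com-osc-sup} to pass to off-diagonal points. Your derivation of $\bP^x(\tau_{B(x,R_\star)}>t/2)\ge c_4$ via the strong Markov inequality is a valid substitute for the paper's citation of \cite[Lemma~4.17]{CKW1}.

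There is, however, a genuine gap in your oscillation step. You apply Lemma~\ref{com-osc-sup} with $D=B(x_0,r)$ and then claim that \eqref{k-upper} ``applied at the scale $r\asymp\phi^{-1}(t)$'' bounds $\esssup_{y\in D}p^{D}(t/2,y,y)$ by $C'/V(x_0,\phi^{-1}(t))$. But in $\NDL(\phi)$ the radius $r$ ranges over all $r\ge \phi^{-1}(t/\eps)$, so $D=B(x_0,r)$ is \emph{not} at scale $\phi^{-1}(t)$ in general. For such $D$, \eqref{k-upper} gives only
\[
\esssup_{y\in D}p^{D}(t/2,y,y)\le \frac{C_\nu}{V(x_0,r)}\Big(\frac{\phi(r)}{t}\Big)^{1/\nu},
\]
and under $\VD$ alone (no $\RVD$ is assumed here) there is no reason for $(\phi(r)/t)^{1/\nu}/V(x_0,r)$ to stay bounded by $C/V(x_0,\phi^{-1}(t))$ as $r\to\infty$. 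Consequently the oscillation bound you obtain can swamp the on-diagonal lower bound, and the argument as written does not close.

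The fix is to reverse the order in which you invoke domain monotonicity. Do \emph{everything} (on-diagonal lower bound \emph{and} oscillation) inside a ball $D'=B(x,c\,\phi^{-1}(t))$ of radius comparable to $\phi^{-1}(t)$: then \eqref{k-upper} with $\mathrm{diam}(D')\asymp\phi^{-1}(t)$ genuinely gives $\esssup_{D'}p^{D'}(t/2,\cdot,\cdot)\le C'/V(x,\phi^{-1}(t))$, and Lemma~\ref{com-osc-sup} yields $p^{D'}(t,x,y)\ge c/V(x_0,\phi^{-1}(t))$ for $y\in B(x,\eps''\phi^{-1}(t))$. Only \emph{after} this do you use $D'\subset B(x_0,r)$ and $p^{B(x_0,r)}\ge p^{D'}$ to conclude $\NDL(\phi)$. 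This is exactly the order the paper follows (via the intermediate Lemma~\ref{continuity}, which is Lemma~\ref{com-osc-sup} specialized to $D=B(x,\phi^{-1}(t))$).
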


\begin{proof}
 The existence of heat kernel $p^{D}(t,x,y)$ associated with the semigroup $\{P_t^{D}\}$ for any open subset ${D}\subset M$ has been stated in the remark below Lemma \ref{ehi-fk}, and so we only need to verify $\NDL(\phi)$.

According to $\E_\phi$ and \cite[Lemma 4.17]{CKW1},
there are constants $\varepsilon\in(0,1)$ and $\delta\in (0,1/2)$ such that for all $x\in M_0$ and for any $t,r>0$ with $t\le \delta \phi(r)$, $\bP^x(\tau_{B(x,r)}\le t)\le \varepsilon.$
In the following, let $B=B(x,r)$ and $0<t\le \delta \phi(r)$. Then for any $x\in B\backslash \mathcal{N}$,
since the process $\{X_t\}$ has no killings inside $M$,
 $$\int_{B}p^B(t, x,y)\,\mu(dy)=\bP^x(\tau_B>t)\ge 1- \varepsilon.$$
Therefore,
$$
p^B(2t, x,x)=\int_{B}p^B(t, x,y)^2\,\mu(dy)\ge \frac{1}{\mu(B)}\left(\int_B p^B(t, x,y)\,\mu(dy)\right)^2\ge \frac{c_1}{V(x,r)}.
$$
In particular, taking $r=\phi^{-1}(t/\delta) >0$ in the inequality above, we arrive at
$$
p^{B(x, \phi^{-1}(t/(2\delta)))}(t, x,x)\ge \frac{c_2}{V(x,\phi^{-1}(t))}.
$$
Furthermore, according to Lemma \ref{continuity}, $\VD$ and \eqref{polycon}, there exists a constant $c_3>0$ such that for any $0<{r}\le 2^{-(\beta_1+\theta)/\beta_1}\phi^{-1}(t/(2\delta))$, we have
$$
|p^{B(x,\phi^{-1}(t/(2\delta)))}(t, x,x)-p^{B(x,\phi^{-1}(t/(2\delta)))}(t, x,y)|
\le \left( \frac{{r}}{\phi^{-1}(t) }\right)^\kappa \frac{c_3}{V(x,\phi^{-1}(t))},\quad y\in B(x,{r})\backslash \mathcal{N},
$$
 where $\beta_1$ is the constant in \eqref{polycon}, $\theta$ is the H\"older exponent in {\rm EHR}, and $\kappa$ is the constant in Lemma $\ref{com-osc-sup}$.

Combining with both inequalities above and choosing $\eta\in (0,1)$ small enough such that $\eta^\kappa c_3\le \frac{1}{2}c_2$ and $\eta \phi^{-1}(t) \le 2^{-(\beta_1+\theta)/\beta_1}\phi^{-1}(t/(2\delta))$ for all $t>0$, one can get that for any $x\in M_0$ and $y\in B(x,\eta \phi^{-1}(t))\backslash \mathcal{N}$,
 \begin{align*}
 &p^{B(x,\phi^{-1}(t/(2\delta)))}(t, x,y) \\
 &\ge  p^{B(x,\phi^{-1}(t/(2\delta)))}(t, x,x)-|p^{B(x,\phi^{-1}(t/(2\delta)))}(t, x,x)-p^{B(x,\phi^{-1}(t/(2\delta)))}(t, x,y)|\\
& \ge \frac{c_2}{2V(x,\phi^{-1}(t))}.
\end{align*}
That is, thanks to $\VD$ and \eqref{polycon} again, there are constants $c_i>0$ $(i=4,5,6)$ such that $0<2c_4\le c_5$ and for any $x\in M_0$ and $y\in B(x,2c_4\phi^{-1}(t))\backslash \mathcal{N}$,
$$ p^{B(x,c_5\phi^{-1}(t))}(t, x,y) \ge  \frac{c_6}{V(x,\phi^{-1}(t))}.$$

Now, for any $x_0\in M$ and $r,t>0$ such that $(c_4+c_5)\phi^{-1}(t)\le r$, we have
$B(x,c_5\phi^{-1}(t))\subset B(x_0,r)$ for all $x\in B(x_0,c_4\phi^{-1}(t))$, and so
$$ p^{B(x_0,r)}(t, x,y) \ge p^{B(x,c_5\phi^{-1}(t))}(t, x,y)\ge \frac{c_6}{V(x,\phi^{-1}(t))},\quad x,y\in B(x_0,c_4 \phi^{-1}(t))\backslash \mathcal{N}.$$
This proves that $\NDL(\phi)$ holds true with $\eps=c_4\wedge \frac{1}{c_4+c_5}$.
 \qed\end{proof}

Note that by
Proposition \ref{phi-ephi-2} and Proposition \ref{P:3.1},
 ${\rm EHR} +\E_\phi$ imply the conservativeness of the process
$X=\{X_t; t\geq 0\}$
(see Proposition  \ref{P:3.1}).

\medskip

Next, we present the proof of Theorem \ref{prop-ehi}.

{\medskip\noindent {\bf Proof of Theorem \ref{prop-ehi}. }}
  That
$\PHI(\phi)\Longrightarrow \NDL(\phi)+ \E_\phi+\UJS+\J_{\phi,\le}$
has been established in Subsection \ref{phi-section-1}, where $\RVD$ is used.
Since $\NDL +
 \E_{\phi, \leq}
+\J_{\phi,\le} \Longrightarrow {\rm PHR}(\phi)$ by Proposition \ref{ndl-holder}, we have
$\PHI(\phi)$ implies ${\rm PHR}(\phi)+ \E_\phi+\UJS$.

On the other hand, by  Proposition \ref{phi-ephi-2} and \eqref{e:4.13}
 (where $\RVD$ is used too),
we have
$$ {\rm EHR}+ \E_\phi + \UJS\Longrightarrow \NDL(\phi)+ \UJS\Longleftrightarrow \PHI(\phi).
$$
This completes the proof of the theorem.
\qed

\subsection{$\PI(\phi)+\J_{\phi,\le}+ \CSJ(\phi)+\UJS\Longleftrightarrow \PHI(\phi)$}\label{sec7-3}
In this subsection, we will prove the above mentioned equivalence in
Theorem \ref{T:PHI}.
Note that, under $\VD$, \eqref{polycon} and $\RVD$,
$\PHI(\phi)$
$\Longrightarrow\PI(\phi)+\J_{\phi,\le}+ \CSJ(\phi)+\UJS$
has already been proved
by combining
the results in Subsection \ref{phi-section-1},
Propositions \ref{pi-e} and Theorem \ref{T:main-1}.
So all we need is to prove the following theorem.

\begin{theorem}\label{P:gg2}
Assume that $\mu$ and $\phi$ satisfy $\VD$, $\RVD$  and \eqref{polycon} respectively. Then
$$\PI(\phi)+\J_{\phi,\le}+ \CSJ(\phi)+\UJS\Longrightarrow \PHI(\phi).$$
 \end{theorem}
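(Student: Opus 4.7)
The plan is to show that the hypotheses $\PI(\phi)+\J_{\phi,\le}+\CSJ(\phi)+\UJS$ imply condition (6) of Theorem \ref{T:PHI}, namely $\EHR+\E_\phi+\UJS$, and then invoke Theorem \ref{prop-ehi} to conclude $\PHI(\phi)$. The preparatory deductions are immediate: $\PI(\phi)$ implies $\FK(\phi)$ by Proposition \ref{pi-e-pre}; then $\FK(\phi)+\J_{\phi,\le}+\CSJ(\phi)$ yields $\UHK(\phi)$ with conservativeness by Theorem \ref{T:main-1}, and Proposition \ref{P:exit} gives $\E_\phi$. At this point we have $\UHK(\phi)$, $\E_\phi$, $\J_{\phi,\le}$, $\CSJ(\phi)$, $\PI(\phi)$, and $\UJS$ in hand, so the only remaining task is to derive $\EHR$.

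For $\EHR$ I would run a non-local De Giorgi--Nash--Moser iteration on a bounded $\sE$-harmonic function $u$ in $B(x_0,r)$. From $\CSJ(\phi)$ one derives Caccioppoli-type energy estimates for the truncations $(u-k)_\pm$, and combined with $\PI(\phi)$ these produce reverse Poincar\'e estimates suitable for Moser iteration. The $L^2$-mean value inequality (Proposition \ref{P:mvi2g}) and Lemma \ref{oppo} give the sup-norm control needed on super-level sets, but both involve the nonlocal tail $\T(u;x_0,s)$ for various radii $s$. This is where $\UJS$ enters: the inequality $J(x,y)\le cV(x,s)^{-1}\!\int_{B(x,s)}J(z,y)\,\mu(dz)$ lets one dominate the tail contribution at a single point $x$ by a spatial average over a small ball around $x$, which can then be absorbed into integrals of $u$ over annuli that are already being produced by the iteration. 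Iterating the resulting oscillation estimate along a dyadic sequence $B(x_0,\delta^k r)$ should yield geometric decay $\essosc_{B(x_0,\delta^k r)} u \le C\rho^k \,\esssup_M|u|$ for some $\rho\in(0,1)$, which is exactly $\EHR$. Once $\EHR+\E_\phi+\UJS$ is established, Theorem \ref{prop-ehi} delivers $\PHI(\phi)$.

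The main obstacle is the Moser iteration itself. We do not have a pointwise lower bound on $J$, since $\J_{\phi,\ge}$ fails in general under our hypotheses (as Examples \ref{E:1.2}--\ref{E:1.3} illustrate), so the classical non-local De Giorgi--Nash--Moser scheme for stable-like jump processes, which routinely invokes two-sided bounds on $J$, cannot be applied directly. Every appeal to a lower bound on $J$ in the standard argument has to be replaced by a consequence of $\UJS$, and the non-local tail $\T$ must be controlled uniformly across all truncation levels and all dyadic scales. The bookkeeping required to close the iteration without $\J_{\phi,\ge}$, in the generality of metric measure spaces rather than on $\bR^d$, is the delicate technical point of the proof.
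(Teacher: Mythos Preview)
Your overall strategy is exactly the paper's: reduce to $\EHR+\E_\phi+\UJS$ and then invoke Theorem \ref{prop-ehi}. The preparatory steps ($\PI(\phi)\Rightarrow\FK(\phi)$ via Proposition \ref{pi-e-pre}, and $\E_\phi$ via Proposition \ref{P:exit}) are also correct.

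The substantive gap is in your derivation of $\EHR$. You place $\UJS$ at the centre of the oscillation argument, claiming it is needed to control the nonlocal tail $\T(u;x_0,s)$. This is a misconception: in the paper, $\EHR$ is obtained from $\PI(\phi)+\J_{\phi,\le}+\CSJ(\phi)$ \emph{alone}, without any appeal to $\UJS$ (see Proposition \ref{P:ho} and the Remark following the proof of Theorem \ref{P:gg2}). The tail $\T$ is an integral of $|u|$ against the reference weight $1/(V\phi)$, not against $J$, so $\UJS$ gives no direct leverage on it; rather, the tail is controlled inductively across dyadic scales using only $\J_{\phi,\le}$ and the oscillation bounds already established at larger radii (this is the content of the claim \eqref{e:ho03}). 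The role of $\UJS$ in the whole argument is confined to the final step through Theorem \ref{prop-ehi}.

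The idea you are missing is Moser's logarithmic lemma in non-local form (Proposition \ref{L:log-l}): for a bounded non-negative superharmonic $u$ on $B_R$ one tests the equation with $\varphi^2/(u+l)$, where $\varphi$ is the $\CSJ(\phi)$ cutoff, and obtains an energy bound for $\log(u+l)$ on $B_r$ with an error governed by the tail of $u_-$. Combined with $\PI(\phi)$ this gives a uniform bound on the mean oscillation of $v=\big[\log\frac{a+l}{u+l}\big]_+\wedge\log b$ (Corollary \ref{C:PIandlog}), which in turn yields a quantitative smallness estimate for the measure of the sub-level set $\{u_j\le 2\eps w(r_j)\}$. From there a De Giorgi iteration based on Lemma \ref{oppo} (not Proposition \ref{P:mvi2g}) closes to give geometric oscillation decay. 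Your sketch ``Caccioppoli from $\CSJ(\phi)$ $+$ reverse Poincar\'e from $\PI(\phi)$'' does not produce this logarithmic estimate, and without it there is no mechanism to pass from the $L^2$ mean-value inequality to an oscillation reduction in the absence of $\J_{\phi,\ge}$.
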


 First of all, note that $\PI(\phi)+\J_{\phi,\le}+ \CSJ(\phi)$ imply the conservativeness of the process. Indeed,
$\PI(\phi)+\RVD $ imply $\FK(\phi)$  by Proposition \ref{pi-e-pre},  and
$\FK(\phi)+\J_{\phi,\le}+ \CSJ(\phi)$ imply $\E_\phi$
by Proposition \ref{P:exit}. Furthermore,
$\J_{\phi,\le}+\E_\phi$ imply the conservativeness of the process
(see \cite[Lemma 4.21]{CKW1}).

To prove the theorem, we begin with the following logarithmic lemma, which plays the key role in the proof of H\"{o}lder continuity of harmonic functions. The proof below is motivated by that of \cite[Lemma 1.3]{CKP1}.

\begin{proposition}\label{L:log-l} Let $B_r=B(x_0,r)$ for some $x_0\in M$ and $r>0$.
Assume that $u\in \sF_{B_R}^{loc}$ is a bounded and superharmonic function in a ball
$B_R$ such that $u\ge 0$ on $B_R$. If $\VD$, \eqref{polycon}, $\CSJ(\phi)$ and $\J_{\phi,
\le}$ hold, then for any $l>0$ and $0<2r\le R$,
$$\int_{B_r\times B_r}\left[\log  \Big(\frac{u(x)+l}{u(y)+l} \Big)\right]^2\, J(dx,dy)\le \frac{c_1V(x_0,r)}{\phi(r)}\bigg(1+\frac{\phi(r)}{\phi(R)}\frac{\T\,(u_-; x_0,R)}{l}\bigg),$$
 where $\T\,(u_-;x_0,R)$ is the nonlocal tail of $u_-$ in $B(x_0,R)$ defined by \eqref{def-T}, and  $c_1$ is a constant independent of $u$, $x_0$, $r$, $R$ and $l$.\end{proposition}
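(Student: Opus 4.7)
The plan is to adapt the strategy from \cite{CKP1} (originally for fractional $p$-Laplacians) to our Dirichlet form framework. Set $v := u + l$, so that $v \geq l > 0$ on $B_R$ and $v \in \sF_{B_R}^{loc}$ is bounded and superharmonic (an additive constant does not affect superharmonicity). First I would use $\CSJ(\phi)$ to produce a cutoff $\eta \in \sF_b$ with $\eta \equiv 1$ on $B_r$ and $\mathrm{supp}\,\eta \subset B_{r'} \subset B_R$, with radii chosen so that the $\CSJ(\phi)$ inequality can be applied inside $B_R$ (feasible since $2r \leq R$, after replacing $r$ by a fixed fraction of itself if needed). The test function will be $\psi := \eta^2/v$; since $v\geq l>0$ and $v$ is bounded on $\mathrm{supp}\,\eta$, the chain rule for Dirichlet forms gives $\psi \in \sF\cap C_c(B_R)$ with $\psi \geq 0$.

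Next, I exploit superharmonicity: $\sE(v,\psi)=\sE(u,\psi)\geq 0$. Using the pointwise identity
\begin{equation*}
(v(x)-v(y))\Big(\frac{\eta^2(x)}{v(x)}-\frac{\eta^2(y)}{v(y)}\Big) = -\eta(x)\eta(y)\frac{(v(x)-v(y))^2}{v(x)v(y)} + R(x,y),
\end{equation*}
where $R(x,y)$ collects the cross-terms involving $(\eta(x)-\eta(y))(\eta(x)+\eta(y))$ (and a factor arising from the splitting $\eta^2(x)v(y)-\eta^2(y)v(x)$), and then symmetrizing in $J$ and applying Young's inequality to absorb a small multiple of the leading negative term into $R$, I would arrive at an inequality of the shape
\begin{equation*}
\int \eta(x)\eta(y)\frac{(v(x)-v(y))^2}{v(x)v(y)}\,J(dx,dy) \leq C\int (\eta(x)-\eta(y))^2\,J(dx,dy) + (\mbox{tail term}).
\end{equation*}
Combined with the elementary numerical inequality $|\log(a/b)|^2 \leq (a-b)^2/(ab)$ for $a,b>0$ and the fact that $\eta\equiv 1$ on $B_r$, the left-hand side dominates $\int_{B_r\times B_r}\big[\log(v(x)/v(y))\big]^2\,J(dx,dy)$, which is exactly the quantity to be estimated.

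What remains is to bound the two terms on the right. The cutoff energy $\int (\eta(x)-\eta(y))^2 J(dx,dy) = \Gamma(\eta,\eta)(M)$ is controlled by $\CSJ(\phi)$ applied with $f\equiv 1$ (together with $\J_{\phi,\le}$ and Lemma \ref{intelem} for the outer part), giving a bound of order $V(x_0,r)/\phi(r)$. The tail term arises from the portion of $\sE(v,\psi)$ with $y\notin B_R$, where $v(y)=u(y)+l$ may be as negative as $l-u_-(y)$: using $v(x)\geq l$ on $\mathrm{supp}\,\eta$ and the elementary bound $|v(x)-v(y)|/v(x)\leq 1+u_-(y)/l$ for $u(x)\geq 0$, and then invoking $\J_{\phi,\le}$, I can match the definition of $\T(u_-;x_0,R)$ in \eqref{def-T} and extract a bound of order $V(x_0,r)\,\T(u_-;x_0,R)/(\phi(R)\,l)$. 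Assembling these two bounds yields the asserted inequality.

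The hard part will be the Young's inequality step in the algebraic expansion. The ratio $v(x)/v(y)$ is a priori unbounded, so the cross-term $(\eta(x)-\eta(y))(v(x)-v(y))(\eta(x)+\eta(y))(v(x)+v(y))/(v(x)v(y))$ that appears after symmetrization must be split with weights chosen so that one piece absorbs into a small multiple of the leading term $\eta(x)\eta(y)(v(x)-v(y))^2/(v(x)v(y))$, while the other piece reduces to the purely cutoff-dependent integral $(\eta(x)-\eta(y))^2 J(dx,dy)$ with all extra $v$-factors cancelling exactly. Getting this cancellation right, so that the $v$-dependence survives only in the logarithmic left-hand side and in the tail contribution (via $l$ and $u_-$), is the technical core of the argument; the rest is assembling the $\CSJ(\phi)$ and $\J_{\phi,\le}$ estimates.
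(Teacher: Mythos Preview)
Your overall strategy coincides with the paper's first proof: test superharmonicity against $\eta^2/(u+l)$, split the resulting energy into a near part over $B_{2r}\times B_{2r}$ and a far part over $B_{2r}\times B_{2r}^c$, extract the logarithmic integral from the near part, and bound the far part by the tail $\T(u_-;x_0,R)$ using $\J_{\phi,\le}$. The cutoff energy is indeed controlled by $cV(x_0,r)/\phi(r)$; the paper obtains this via the capacity bound from \cite[Proposition~2.3(5)]{CKW1} rather than $\CSJ(\phi)$ with $f\equiv 1$, but your route also works.

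The one place where your sketch is incomplete is precisely the step you flag as the ``hard part.'' Your proposed Young's inequality on the cross-term $(\eta(x)-\eta(y))(\eta(x)+\eta(y))(v(x)-v(y))(v(x)+v(y))/(v(x)v(y))$ runs into the factor $(v(x)+v(y))^2/(v(x)v(y))=v(x)/v(y)+v(y)/v(x)+2$, which is unbounded; no choice of weights makes the $v$-dependence cancel there. The paper sidesteps this entirely by rewriting the near integrand \emph{exactly} as
\[
(v(x)-v(y))\Big(\frac{\eta^2(x)}{v(x)}-\frac{\eta^2(y)}{v(y)}\Big)
=\eta(x)\eta(y)\Big[\tfrac{\eta(y)}{\eta(x)}+\tfrac{\eta(x)}{\eta(y)}
-\tfrac{\eta(x)v(y)}{\eta(y)v(x)}-\tfrac{\eta(y)v(x)}{\eta(x)v(y)}\Big],
\]
and then applying the elementary inequality $\frac{a}{b}+\frac{b}{a}-2\ge(\log a-\log b)^2$ with $a=v(y)/\eta(y)$ and $b=v(x)/\eta(x)$. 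This immediately gives
\[
I_1\le -\int \eta(x)\eta(y)\Big[\log\frac{v(y)}{\eta(y)}-\log\frac{v(x)}{\eta(x)}\Big]^2 J(dx,dy)
+\int(\eta(x)-\eta(y))^2 J(dx,dy),
\]
with no residual $v$-ratios to absorb. Since $\eta\equiv 1$ on $B_r$, the first term dominates the desired logarithmic integral. (The paper also gives a second, interpolation-based proof that avoids the algebra altogether and uses only Cauchy--Schwarz; you may find that route more robust.) A minor remark: you only need $\eta^2/v\in\sF_{B_{3r/2}}$, not continuity, to invoke Theorem~\ref{equ-har}.
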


\begin{proof}
 According to $\CSJ(\phi)$, $\J_{\phi, \le}$ and  \cite[Proposition 2.3(5)]{CKW1},
we can choose $\varphi\in \sF_{B_{3r/2}}$ related
to $\mbox{Cap} (B_r,B_{3r/2})$ such that
\begin{equation}\label{cap-estimate-1}
\sE(\varphi,\varphi)\le 2  \mbox{Cap} (B_r,B_{3r/2}) \le
\frac{c_1V(x_0,r)}{\phi(r)}.
 \end{equation}
Since $u$ is  a bounded and superharmonic function in a ball $B_R$
and $\frac{\varphi^2}{u+l}\in \sF_{B_{3r/2}}$ for any $l>0$,
we have by Theorem \ref{equ-har} that
\begin{align*}0\le &\sE\big(u,\frac{\varphi^2}{u+l}\big)\\
=&\int_{B_{2r}\times B_{2r}} (u(x)-u(y))\Big(
\frac{\varphi^2(x)}{u(x)+l}-
\frac{\varphi^2(y)}{u(y)+l}\Big)\,J(dx,dy)\\
&+2\int_{B_{2r}\times B_{2r}^c} (u(x)-u(y))
 \frac{\varphi^2(x)}{u(x)+l}\,J(dx,dy)\\
=&\int_{B_{2r}\times B_{2r}} \Big((u(x)+l)-(u(y)+l)\Big)\Big(
\frac{\varphi^2(x)}{u(x)+l}-
\frac{\varphi^2(y)}{u(y)+l}\Big)\,J(dx,dy)\\
&+2\int_{B_{2r}\times B_{2r}^c} (u(x)-u(y))
 \frac{\varphi^2(x)}{u(x)+l}\,J(dx,dy)\\
=&\int_{B_{2r}\times B_{2r}}
\varphi(x)\varphi(y)\bigg(\frac{\varphi(y)}{\varphi(x)}+\frac{\varphi(x)}{\varphi(y)}
-\frac{\varphi(x)(u(y)+l)}{\varphi(y)(u(x)+l)}- \frac{\varphi(y)(u(x)+l)}{\varphi(x)(u(y)+l)}\bigg)\,J(dx,dy)\\
&+2\int_{B_{2r}\times B_{2r}^c} (u(x)-u(y))
 \frac{\varphi^2(x)}{u(x)+l}\,J(dx,dy)\\
=&:I_1+I_2.\end{align*}

Applying the inequality
$$\frac{a}{b}+ \frac{b}{a} -2 = (a-b)(b^{-1}-a^{-1})\ge (\log a- \log b)^2,\quad a,b>0$$ with $a=\frac{u(y)+l}{\varphi(y)}$ and $b=\frac{u(x)+l}{\varphi(x)}$, we find that
 \begin{align*}
 &\frac{\varphi(x)(u(y)+l)}{\varphi(y)(u(x)+l)} + \frac{\varphi(y)(u(x)+l)}{\varphi(x)(u(y)+l)}- \frac{\varphi(y)}{\varphi(x)} - \frac{\varphi(x)}{\varphi(y)}\\
 &\ge \left(   \log\frac{u(y)+l}{\varphi(y)}- \log \frac{u(x)+l}{\varphi(x)} \right)^2- \bigg(\frac{\varphi(y)}{\varphi(x)}+\frac{\varphi(x)}{\varphi(y)}-2    \bigg),
\end{align*} and so
\begin{align*}
 I_1\le &-\int_{B_{2r}\times B_{2r}} \varphi(x)\varphi(y) \left(   \log\frac{u(y)+l}{\varphi(y)}- \log \frac{u(x)+l}{\varphi(x)} \right)^2 \,J(dx,dy)\\
 &+\int_{B_{2r}\times B_{2r}} (\varphi(x)-\varphi(y))^2\,J(dx,dy).
\end{align*}

On the other hand, due to the fact that $u\ge0$ on $B_R$, for all
$x\in B_{2r}$ and $y\in B_R\setminus B_{2r}$,
$$\frac{u(x)-u(y)}{u(x)+l}\le 1;$$ while for all $x\in B_{2r}$ and
$y\in B_R^c$,
$$\frac{u(x)-u(y)}{u(x)+l}\le \frac{(u(x)-u(y))_+}{u(x)+l}\le
\frac{u(x)+u_-(y)}{u(x)+l}\le 1+l^{-1} u_-(y).$$ Therefore,
$$I_2\le 2\int_{B_{2r}\times B_{2r}^c}\varphi^2(x)\,J(dx,dy)+2l^{-1}\int_{B_{2r}\times
 B_R^c}u_-(y)\varphi^2(x)\,J(dx,dy).$$

Combining all the estimates above and the fact that $\varphi=1$ on $B_r$, we obtain
\begin{align*}&\int_{B_r\times B_r} \left[\log  \bigg(\frac{u(x)+l}{u(y)+l}\bigg) \right]^2
\,J(dx,dy)\\
 &\le \int_{B_{2r}\times B_{2r}} \varphi(x)\varphi(y) \left(
\log\frac{u(y)+l}{\varphi(y)}- \log \frac{u(x)+l}{\varphi(x)}
\right)^2
\,J(dx,dy)\\
&\le \int_{B_{2r}\times B_{2r}}
(\varphi(x)-\varphi(y))^2\,J(dx,dy)+
2\int_{B_{2r}\times B_{2r}^c}\varphi^2(x)\,J(dx,dy)\\
&\quad +2l^{-1}\int_{B_{2r}\times B_{R}^c}
 u_-(y)\varphi^2(x)\,J(dx,dy)\\
&\le \sE(\varphi,\varphi)+
\frac{c_2V(x_0,r)}{\phi(R)l}\T\, (u_-; x_0,R),\end{align*}
where the last inequality follows from $\J_{\phi,\le}$ and the fact that for any $x\in B_{3r/2}$ and $y\in B_R^c$ with $R\ge 2r$,
$$\frac{V(x_0,d(x_0,y))\phi(d(x_0,y))}{V(x,d(x,y))\phi(d(x,y))}\le c'\left(1+\frac{d(x_0,x)}{d(x,y)}\right)^{\beta_2+\alpha_2}\le c'\left(1+\frac{3r/2}{R-3r/2}\right)^{\beta_2+\alpha_2}\le c'',$$
thanks to $\VD$ and \eqref{polycon}. Hence, the desired
assertion follows from the inequality and \eqref{cap-estimate-1}. \qed
\end{proof}

For the diffusion case, Proposition \ref{L:log-l} was originally due to Moser.
In that case, one can use the Leibniz rule, but for the jump case some more care is required.
See \cite[Corollary 7.7]{KZ} for a related inequality.
In the following we give another proof that is more robust.

\begin{proof}{\bf (Another proof of Proposition \ref{L:log-l})}
For a function $v$ on $M$ and for fixed $x,y\in M$, write
\[
\bar v(t)=\bar v_{xy}(t):=tv(x)+(1-t)v(y),\quad~~~~t\in [0,1].
\]
Take $\varphi\in \sF_{B_{3r/2}}$ as in
\eqref{cap-estimate-1} in the previous proof.
For any $x,y\in M$ and $l>0$, it holds that
\begin{align*}
(u(x)&-u(y))\left[\vp(x)^2/(u(x)+l)-\vp(y)^2/(u(y)+l)\right]\\=&
\int_0^1\left[\frac d{dt}\frac{\bar \vp^2}{(\bar u+l)}(s)\right]\frac d{dt}(\bar u(s)+l)\,ds\\
=&\int_0^1\frac{2\bar \vp(s)\frac d{dt}\bar \vp(s)}{(\bar u(s)+l)}\frac d{dt}(\bar u(s)+l)\,ds
-\int_0^1\left[\frac{\bar \vp}{(\bar u+l)}(s)\right]^2\left[\frac d{dt}(\bar u(s)+l)\right]^2\,ds\\
=&\int_0^12\left[\bar\vp(s)\frac d{dt}\bar \vp(s)\right]\left[\frac d{dt}\log (\bar u(s)+l)\right]\,ds
-\int_0^1\bar\vp(s)^2\left[\frac d{dt}\log (\bar u(s)+l)\right]^2\,ds.
\end{align*}
Multiplying $J(x,y)$
and integrating over $B_{2r}\times B_{2r}$ w.r.t. $\mu\times \mu$ in both
sides of the equality above, we have
\begin{equation}\begin{split}
&\int_{B_{2r}\times B_{2r}}\int_0^1\bar\vp(s)^2\left[\frac d{dt}\log (\bar
u(s)+l)\right]^2\,ds\,J(dx,dy)\\
&\qquad+{\sE}(u,\vp^2/(u+l))-2\int_{B_{2r}\times B_{2r}^c} (u(x)-u(y))
 \frac{\varphi^2(x)}{u(x)+l}\,J(dx,dy)\\
&= 2\int_{B_{2r}\times B_{2r}}\int_0^1\left[\bar\vp(s)\frac d{dt}\bar
\vp(s)\right]\left[\frac d{dt}\log (\bar u(s)+l)\right]\,ds\,
J(dx,dy)\\
&\le 2\left[\int_{B_{2r}\times B_{2r}}\int_0^1\bar\vp(s)^2\Big(\frac d{dt}\log
(\bar u(s)+l)\Big)^2\,ds\,J(dx,dy)
\right]^{1/2}\\
&\quad\times  \left[\int_{B_{2r}\times B_{2r}}\int_0^1\Big(\frac d{dt}\bar \vp(s)\Big)^2\,ds\,
J(dx,dy)\right]^{1/2}\\
&\le2\left[\int_{B_{2r}\times B_{2r}}\int_0^1\bar\vp(s)^2\Big(\frac d{dt}\log
(\bar u(s)+l)\Big)^2\,ds\,J(dx,dy)
\right]^{1/2}\sE(\vp,\vp)^{1/2}.\label{eq:niirviw}
\end{split}\end{equation}

In the following, we set $$K:=\int_{B_{2r}\times B_{2r}}\int_0^1\bar\vp(s)^2\Big(\frac d{dt}\log
(\bar u(s)+l)\Big)^2\,ds\,J(dx,dy).$$
Now, as in the previous proof,
\[2\,\bigg|\int_{B_{2r}\times B_{2r}^c} (u(x)-u(y))
 \frac{\varphi^2(x)}{u(x)+l}\,J(dx,dy)\bigg|=|I_2|\le \sE(\vp,\vp)+
\frac{c_2V(x_0,r)}{\phi(R)l}\T\, (u_-; x_0,R).
\]
Further, noting that $u+l$ is bounded and  superharmonic  on $2B$, we have by Theorem \ref{equ-har} that
$\sE(u,\varphi^2/(u+l))\ge 0$. Plugging these and \eqref{cap-estimate-1} into \eqref{eq:niirviw}, we have
\[
K-\frac{c_1V(x_0,r)}{\phi(r)}
-\frac{c_2V(x_0,r)}{\phi(R)l}\T\, (u_-; x_0,R)\le 2K^{1/2}\left(\frac{c_1V(x_0,r)}{\phi(r)}\right)^{1/2}.
\]
We thus obtain
\[
K \le \frac{c_3V(x_0,r)}{\phi(r)}\bigg[1+\frac{\phi(r)}{\phi(R)}\frac{\T\,(u_-; x_0,R)}{l}\bigg].
\]

On the other hand, since $\vp=1$ on $B_r$, using the Cauchy-Schwarz inequality we have
\begin{align*}
K &\ge \int_{B_{2r}\times B_{2r}}(\vp(x)^2\wedge \vp(y)^2)
\int_0^1\left[\frac d{dt}\log (\bar u(s)+l)\right]^2\,ds\,J(dx,dy)\\
&\ge \int_{B_{2r}\times B_{2r}}(\vp(x)^2\wedge \vp(y)^2)
\left[\int_0^1\frac d{dt}\log (\bar u(s)+l)\,ds\right]^2\,J(dx,dy)\\
&\ge \int_{B_r\times B_r}\left[\log (u(y)+l)-\log (u(x)+l)\right]^2\, J(dx,dy).
\end{align*}
We therefore prove the desired inequality, by combining all the inequalities above.
\qed\end{proof}

As a consequence of Proposition \ref{L:log-l}, we have the following
statement.

\begin{corollary}\label{C:PIandlog} Let $B_r=B(x_0,r)$ for some $x_0\in M$ and $r>0$.
Assume that $u\in \sF_{B_R}^{loc}$ is a bounded and
superharmonic function in a ball
$B_R$ such that $u\ge 0$ on $B_R$.  For any $a,l>0$ and $b>1$,
define
$$v= \Big[\log \Big(\frac{a+l}{u+l}\Big)\Big]_+ \wedge \log b.$$ If $\VD$, \eqref{polycon}, $\CSJ(\phi)$, $\J_{\phi, \le}$ and $\PI(\phi)$ hold, then for
any $l>0$ and $0<2 \kappa r\le R$,
$$\frac{1}{V(x_0,r)}\int_{B_r} (v-\ol v_{B_r})^2 d\mu\le  c_1\bigg(1+ \frac{\phi(r)}{\phi(R)} \frac{\T\,(u_-; x_0,R)}{l}\bigg),$$
 where $\kappa\ge 1$ is the constant in $\PI(\phi)$, $\ol v_{B_r}= \frac{1}{\mu(B_r)}\int_{B_r}v\,d\mu$ and $c_1$ is a constant independent of $u$, $x_0$, $r$, $R$ and $l$.
\end{corollary}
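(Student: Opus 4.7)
The plan is to combine the Poincaré inequality $\PI(\phi)$ with the logarithmic estimate of Proposition \ref{L:log-l} via a Markovian contraction argument.

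First, I would observe that the map $F(t) = t_+ \wedge \log b$ is $1$-Lipschitz on $\bR$, so setting $w := \log\bigl((a+l)/(u+l)\bigr) = \log(a+l) - \log(u+l)$, one has $v = F(w)$ and consequently
\[
|v(x)-v(y)| \;\le\; |w(x)-w(y)| \;=\; \Bigl|\log\bigl(u(x)+l\bigr)-\log\bigl(u(y)+l\bigr)\Bigr|
\qquad \hbox{for all } x,y\in M.
\]
Since $u$ is bounded and $u\geq 0$ on $B_R$, the function $\log(u+l)$ is bounded on $B_R$, and since $u\in\sF_{B_R}^{loc}$ and the logarithm is Lipschitz on $[l,\|u\|_\infty+l]$, we have $v\in\sF_{B_{\kappa r}}\subset \sF_b$, so $\PI(\phi)$ can be applied to $v$ on $B_r$ (using that $2\kappa r\le R$).

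Next, I would apply $\PI(\phi)$ and then the pointwise contraction estimate above:
\[
\int_{B_r}(v-\bar v_{B_r})^2\,d\mu
\;\le\; C\phi(r)\int_{B_{\kappa r}\times B_{\kappa r}}(v(x)-v(y))^2\,J(dx,dy)
\;\le\; C\phi(r)\int_{B_{\kappa r}\times B_{\kappa r}}\Bigl[\log\tfrac{u(x)+l}{u(y)+l}\Bigr]^2 J(dx,dy).
\]
Now I would invoke Proposition \ref{L:log-l} with the ball $B_{\kappa r}$ in place of $B_r$ (legitimate because $2\kappa r\le R$) to bound the last double integral by
\[
\frac{c\,V(x_0,\kappa r)}{\phi(\kappa r)}\left(1+\frac{\phi(\kappa r)}{\phi(R)}\,\frac{\T(u_-;x_0,R)}{l}\right).
\]

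Finally I would clean up using $\VD$ and \eqref{polycon}: the former gives $V(x_0,\kappa r)\le C_\mu^{\lceil \log_2 \kappa\rceil}V(x_0,r)$, and the latter gives $\phi(r)/\phi(\kappa r)\asymp 1$ and $\phi(\kappa r)/\phi(R)\le c\,\phi(r)/\phi(R)$ (with constants depending on $\kappa$ but not on $r, R, l, u$). Dividing through by $V(x_0,r)$ then yields the claimed bound. There is no serious obstacle here; the only points requiring a little care are verifying that the truncated function $v$ lies in the domain of the Dirichlet form so that $\PI(\phi)$ is applicable (handled by the Lipschitz composition above together with $u\in\sF_{B_R}^{loc}$ being bounded), and keeping track of the $\kappa$-dependent constants from $\VD$ and \eqref{polycon} when passing from the ball of radius $\kappa r$ back to $B_r$.
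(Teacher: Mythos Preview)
Your proposal is correct and follows essentially the same approach as the paper: apply $\PI(\phi)$ to $v$, use the $1$-Lipschitz contraction $|v(x)-v(y)|\le|\log(u(x)+l)-\log(u(y)+l)|$, invoke Proposition~\ref{L:log-l} on $B_{\kappa r}$, and absorb the $\kappa$-dependent factors via $\VD$ and \eqref{polycon}. One small slip: the claim ``$v\in\sF_{B_{\kappa r}}\subset\sF_b$'' is not quite right (neither containment holds as stated, since $v$ need not vanish outside $B_{\kappa r}$ and elements of $\sF_{B_{\kappa r}}$ need not be bounded); what you actually need---and what your Lipschitz-composition argument together with $u\in\sF^{loc}_{B_R}$ bounded does give---is that $v$ agrees on $B_{\kappa r}$ with an element of $\sF_b$, which suffices for $\PI(\phi)$.
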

\begin{proof}By $\PI(\phi)$ and \eqref{polycon}, we have
$$\int_{B_r} (v-\ol v_{B_r})^2 d\mu\le c_2\phi(r)\int_{B_{\kappa r}\times B_{\kappa r}} (v(x)-v(y))^2\,J(dx,dy).$$ Observing that $v$ is a truncation of the sum of a constant and $\log(u+l)$,
$$\int_{B_{\kappa r}\times B_{\kappa r}} (v(x)-v(y))^2\,J(dx,dy)\le\int_{B_{\kappa r}\times B_{\kappa r}}\left(\log  \Big(\frac{u(x)+l}{u(y)+l} \Big)\right)^2\, J(dx,dy).$$ Hence, it suffices to apply Proposition \ref{L:log-l}
to conclude the assertion.\qed \end{proof}

\begin{proposition}\label{P:ho} Let $B_r=B(x_0,r)$ for some $x_0\in M$ and $r>0$.
Assume that $u\in \sF_{B_R}^{loc}$ is a bounded and
harmonic function in a ball
$B_R$. If $\VD$, $\RVD$, \eqref{polycon}, $\CSJ(\phi)$, $\J_{\phi, \le}$ and $\PI(\phi)$ hold, there are constants $\gamma\in(0,\beta_1)$ and $c>0$ such that
\begin{equation}\label{e:hocon}\essosc_{B_{r'}}u\le c\Big(\frac{r'}{r}\Big)^\gamma\left[\left(\frac{1}{V(x_0,2r)}\int_{B(x_0,{2r})} u^2\,d\mu \right)^{1/2} +\T\, (u; x_0,r) \right],\end{equation} where $0<r'\le r< R/2.$ In particular, suppose that $\VD$, $\RVD$ and \eqref{polycon} hold, then we have
$$ \PI(\phi)+\J_{\phi,\le}+ \CSJ(\phi)\Longrightarrow  {\rm EHR}.$$
\end{proposition}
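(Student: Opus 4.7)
The proof is a Moser--De Giorgi oscillation reduction adapted to the non-local setting, using the logarithmic estimate (Proposition \ref{L:log-l}, Corollary \ref{C:PIandlog}) together with the $L^2$-mean value inequality (Proposition \ref{P:mvi2g}) and the Caccioppoli-type bound (Lemma \ref{oppo}). Note that Proposition \ref{pi-e-pre} combined with $\VD$, $\RVD$, \eqref{polycon} and $\PI(\phi)$ produces $\FK(\phi)$, so the last two inputs are indeed available under the stated hypotheses.

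The heart of the argument is a one-step oscillation reduction: there exist constants $\eta\in(0,1/(4\kappa))$ and $\theta\in(0,1)$ depending only on the data (with $\kappa$ the constant in $\PI(\phi)$) such that for every $x_0\in M$, $0<\rho\le r$, and any bounded harmonic $u$ on $B_{2r}$,
\begin{equation*}
\essosc_{B_{\eta\rho}} u \;\le\; \theta\,\essosc_{B_\rho} u \;+\; C\,\bigl(\tfrac{\rho}{r}\bigr)^{\beta_1}\,\mathcal{M}(u),
\end{equation*}
where $\mathcal{M}(u):=\bigl(\tfrac{1}{V(x_0,2r)}\int_{B_{2r}}u^2\,d\mu\bigr)^{1/2}+\T(u;x_0,r)$. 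To prove this, let $\omega_\rho=\essosc_{B_\rho}u$. By considering one of $\pm(u-\esssup_{B_\rho}u)$, $\pm(u-\essinf_{B_\rho}u)$, which remain $\sE$-harmonic on $B_{2r}$, I may select a non-negative representative $\tilde u$ on $B_\rho$ for which $\mu(\{\tilde u\ge\omega_\rho/2\}\cap B_\rho)\ge \tfrac12 V(x_0,\rho)$. With the truncation parameter $l>0$ calibrated of order $(\rho/r)^{\beta_1}\T(u;x_0,r)$ (so as to absorb the $u_-$-tail arising from the shift) and $b>1$ large, apply Corollary \ref{C:PIandlog} to $\tilde u$ with $a=\omega_\rho/2$ to bound the $L^2$-oscillation of $w:=[\log\tfrac{\omega_\rho/2+l}{\tilde u+l}]_+\wedge\log b$. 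The measure condition pins $\bar w_{B_{\rho/\kappa}}$ below a universal constant, while the level set $\{w\ge \log b\}$ is then small (by Chebyshev). A standard De Giorgi iteration on the levels $\{w\ge k\}$, driven by the $L^2$-mean value inequality applied to the $\sE$-subharmonic function $(w-k)_+$ together with the Caccioppoli estimate of Lemma \ref{oppo}, then yields $w\le C_0$ pointwise on $B_{\eta\rho}$, which unwinds to $\tilde u\ge c\,\omega_\rho$ on $B_{\eta\rho}$ up to the claimed tail term, and hence $\essosc_{B_{\eta\rho}}u\le (1-c)\omega_\rho$ plus tail.

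Iterating the one-step reduction along the geometric sequence $\rho_k=\eta^k r$ and choosing $\gamma\in(0,\beta_1)$ with $\theta<\eta^\gamma$ makes the tail series geometric and delivers
\begin{equation*}
\essosc_{B_{\eta^k r}} u\;\le\;\theta^k \essosc_{B_r}u+C'\eta^{k\gamma}\mathcal{M}(u),\qquad k\ge 0,
\end{equation*}
with $\essosc_{B_r}u$ itself controlled by $\mathcal{M}(u)$ via Proposition \ref{P:mvi2g} applied to $\pm u$. For arbitrary $r'\in(0,r]$, picking $k$ with $\eta^{k+1}r<r'\le\eta^k r$ yields \eqref{e:hocon}. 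The implication ``$\Longrightarrow\EHR$'' is then immediate: for bounded harmonic $u$ on $B(x_0,r)$ and $x,y\in B(x_0,r/2)$, apply \eqref{e:hocon} on $B(x,r/4)\subset B(x_0,r)$, bound the $L^2$-average by $\esssup_M|u|$, and use $\T(u;x,\cdot)\le c\,\esssup_M|u|$ (which follows from $\J_{\phi,\le}$ via Lemma \ref{intelem}).

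The main obstacle is the tail bookkeeping across scales: each invocation of Proposition \ref{L:log-l} or Lemma \ref{oppo} injects a factor $\phi(\rho)/\phi(r)\cdot\T(\cdot;x_0,r)/l$, so the truncation $l$ must be small enough that the sublevel set $\{\tilde u+l\le\omega_\rho/2+l\}$ retains its relative measure in $B_\rho$, yet large enough that these tail contributions remain summable under \eqref{polycon}. Balancing these competing constraints along the dyadic sequence forces the restriction $\gamma\in(0,\beta_1)$ in the final estimate.
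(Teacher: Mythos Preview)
Your overall strategy is the right one and matches the paper's: use Corollary~\ref{C:PIandlog} to show that the ``bad'' sublevel set of the shifted function has small relative measure, then run a De~Giorgi-type iteration driven by Lemma~\ref{oppo} to upgrade smallness in measure to a pointwise lower bound, hence an oscillation reduction. However, two points in your sketch do not go through as written.

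\textbf{The De Giorgi iteration is on the wrong function.} You propose to iterate on level sets of the log-type function $w=[\log\tfrac{a+l}{\tilde u+l}]_+\wedge\log b$ and assert that $(w-k)_+$ is $\sE$-subharmonic, so that Proposition~\ref{P:mvi2g} and Lemma~\ref{oppo} apply to it. This is false: for non-local forms, $\log(\tilde u+l)$ is not harmonic (even when $\tilde u$ is), and truncations of it have no reason to be subharmonic. The paper uses the log function \emph{only} to obtain the initial smallness of measure (your Chebyshev step), after which the De~Giorgi iteration is carried out on the functions $w_i=(k_i-u_j)_+$ with $k_i$ decreasing from $2\eps\,w(r_j)$ to $\eps\,w(r_j)$. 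Here $u_j$ is harmonic, so $k_i-u_j$ is harmonic (hence $\sE$-subharmonic), and Lemma~\ref{oppo} applies with $\theta=k_i-k_{i+1}$ and $v=w_{i+1}$. Lemma~\ref{L:it} then closes the iteration, yielding $u_j\ge\eps\,w(r_j)$ on $B_{j+1}$.

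\textbf{The tail cannot be handled by a one-step reduction.} Your claimed recursion $\essosc_{B_{\eta\rho}}u\le\theta\,\essosc_{B_\rho}u+C(\rho/r)^{\beta_1}\mathcal{M}(u)$, with $l\sim(\rho/r)^{\beta_1}\T(u;x_0,r)$, does not follow. In Corollary~\ref{C:PIandlog} one must take $R=\rho$ (since $\tilde u\ge0$ only on $B_\rho$), and then $\T(\tilde u_-;x_0,\rho)$ picks up contributions from every annulus $B_{r_i}\setminus B_{r_{i+1}}$ between $\rho$ and $r$, each of size $\essosc_{B_{r_i}}u$, with \emph{no} factor $(\rho/r)^{\beta_1}$. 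Your choice of $l$ makes the tail/$l$ ratio blow up as $\rho\to0$. The paper resolves this by proving the oscillation bound $\essosc_{B_j}u\le w(r_j)$ directly by induction on $j$: the induction hypothesis yields $\esssup_{B_i}|u_j|\le 3w(r_i)$ for $i\le j$, which sums to the crucial estimate $\T((u_j)_-;x_0,r_j)\le c_0\sigma^{-\gamma}w(r_j)$ (the paper's \eqref{e:ho03}). Only then does the choice $l=\sigma^{\beta_1-\gamma}w(r_j)$ make the tail term in Corollary~\ref{C:PIandlog} bounded uniformly in $j$. Your final paragraph correctly identifies this balancing as the main obstacle, but the inductive structure---not a single one-step reduction---is what makes it work.
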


\begin{proof} (i) First, by $\J_{\phi,\le}$ and Lemma \ref{intelem}, it is easy to see that
$$\T\, (u; x_0,r)\le c'\|u\|_\infty,\quad r>0.$$ Thus,
assuming \eqref{e:hocon},
there is a constant $c''>0$ such that for all $0<r<R/2,$
$$\essosc_{B_{r}}u\le c''\Big(\frac{r}{R}\Big)^\gamma \|u\|_\infty.$$ From this, we can easily see that,
 once \eqref{e:hocon} is proved, EHR is yielded.

(ii) In the following, we mainly prove \eqref{e:hocon}. We begin with the argument of \cite[Theorem 1.2]{CKP1}. Before starting, let us fix some notations. For any $j\ge0$ and $0<2r<R$, let
$r_j=r\sigma^j$ and $B_j=B_{r_j}$, where
$\sigma\in(0,1/(4\kappa)]$ and $\kappa\ge1$ is the constant in $\PI(\phi)$. Let us define $$
w(r_0)=w(r)=2C_0\Bigg[\left(\frac{1}{V(x_0,2r)}\int_{B(x_0,{2r})}
u^2\,d\mu \right)^{1/2}+ \T\, (u; x_0, r)\Bigg]$$ with the constant
$C_0$ given in \eqref{e:mvi2g-1} of Proposition \ref{P:mvi2g}, and
$$w(r_j)=\Big(\frac{r_j}{r_0}\Big)^\gamma w(r_0)$$ for some
$\gamma\in(0,\beta_1).$ In order to prove the required assertion, it will suffice to verify that
\begin{equation}\label{e:ho}\essosc_{B_j}u\le w(r_j),\quad j\ge 0.\end{equation}
Indeed, for any $0<r'\le r$, we can choose $j\ge 0$ such that $r_{j+1}<r'\le r_j$. Then, by \eqref{e:ho}, we have
$$\essosc_{B_{r'}}u\le \essosc_{B_j} u\le w(r_j)\le \sigma^\gamma \Big(\frac{r_{j+1}}{r}\Big)^\gamma w(r)\le \sigma^\gamma \Big(\frac{r'}{r}\Big)^\gamma w(r).$$ Thus, the required assertion holds with $c=2C_0\sigma^\gamma.$

(iii) We will prove \eqref{e:ho} by induction. For this,  note that $\PI(\phi)+\RVD $ imply $\FK(\phi)$  by Proposition \ref{pi-e-pre}. Then, according to the definition
of $w(r_0)$ and Proposition \ref{P:mvi2g}, \eqref{e:ho}
holds for $j=0$, since both the functions $u_+$ and $u_-$
bounded subharmonic in $B_R$.

Now, we make an induction assumption and assume that \eqref{e:ho} is valid
for all $0\le i\le j$ for some $j\ge0$, and then we prove it holds
also for $j+1$. We have that either
\begin{equation}\label{e:ho01}\frac{\mu(2B_{j+1}\cap \{u\ge \essinf_{B_j}
u+w(r_j)/2\})}{\mu(2B_{j+1})}
\ge \frac{1}{2},\end{equation} or
\begin{equation}\label{e:ho02}\frac{\mu(2B_{j+1}\cap \{u\le \essinf_{B_j}
u+w(r_j)/2\})}{\mu(2B_{j+1})}
\ge \frac{1}{2}\end{equation} must
hold. If \eqref{e:ho01} holds, we set $u_j:=u-\essinf_{B_j}u$, and if
\eqref{e:ho02} holds, we set $u_j:=w(r_j)-(u-\essinf_{B_j}u)$. In both
cases we have $u_j\ge 0$ on $B_j$ and \begin{equation}\label{e:ho021}\frac{\mu(2B_{j+1}\cap
\{u_j\ge w(r_j)/2\})}{\mu(2B_{j+1})}
\ge \frac{1}{2}\end{equation} holds.
Clearly,  $u_j$ is bounded and harmonic in $B_R$ satisfying that
\begin{equation}\label{e:ho022}\begin{split}\esssup_{B_i}|u_j|\le& w(r_j)+\esssup_{B_i}|u-\essinf_{B_j}u|\\
\le & w(r_i)+\esssup_{B_i}|u-\essinf_{B_i}u|+ |\essinf_{B_i}u-\essinf_{B_j}u|\\
\le& 2 w(r_i)+\esssup_{B_i}u-\essinf_{B_i}u\\
\le& 3w(r_i),\quad 0\le i\le j.\end{split}\end{equation}

We now claim that under the induction assumption we have
\begin{equation}\label{e:ho03} \T\, (u_j; x_0, r_j)\le c_0 \sigma^{-\gamma} w(r_j),\end{equation}
where $c_0>0$ is independent of $u$, $x_0$, $r$ and $\sigma$. Indeed,
we have
\begin{align*} \T\, (u_j ; x_0, r_j)=&\phi(r_j)\sum_{i=1}^j\int_{B_{i-1}\setminus
B_i}\frac{|u_j(x)|}{V(x_0,d(x_0,x))\phi(d(x_0,x))}\,\mu(dx)\\
&+\phi(r_j)\int_{B_0^c}\frac{|u_j(x)|}{V(x_0,d(x_0,x))\phi(d(x_0,x))}\,\mu(dx)\\
\le &
\phi(r_j)\sum_{i=1}^j\esssup_{B_{i-1}}|u_j|\int_{B_i^c}\frac{1}{V(x_0,d(x_0,x))\phi(d(x_0,x))}\,\mu(dx)\\
&+ \phi(r_j)\int_{B_0^c}\frac{|u_j(x)|}{V(x_0,d(x_0,x))\phi(d(x_0,x))}\,\mu(dx)\\
\le& c_1\sum_{i=1}^j \frac{\phi(r_j)}{\phi(r_i)}w(r_{i-1}),
\end{align*}where in the last inequality we have used \eqref{e:ho022}, Lemma \ref{intelem}, $$|u_j|\le w(r_0)+\esssup_{B_0} |u|+|u|,\quad \quad j\ge0$$ and
\begin{align*}&\int_{B_0^c}\frac{|u_j(x)|}{V(x_0,d(x_0,x))\phi(d(x_0,x))}\,\mu(dx)\\
&\le
c'\bigg[\frac{1}{\phi(r_0)}\big(\esssup_{B_0}|u|+w(r_0)\big)+\int_{B_0^c}\frac{|u(x)|}{V(x_0,d(x_0,x))\phi(d(x_0,x))}\,\mu(dx)\bigg]\\
&\le
c''\frac{w(r_0)}{\phi(r_0)}\le c''\frac{w(r_0)}{\phi({r_1})} .\end{align*} Note that, in the second inequality above we used the fact that $$\esssup_{B_0}|u|\le \esssup_{B_0}u^++\esssup_{B_0}u^-\le w(r_0)$$ deduced from Proposition \ref{P:mvi2g}. Estimating further, we have
\begin{align*} \sum_{i=1}^j \frac{\phi(r_j)}{\phi(r_i)}w(r_{i-1})&=
w(r_0)\Big(\frac{r_j}{r_0}\Big)^\gamma\sum_{i=1}^j\frac{\phi(r_j)}{\phi(r_i)}\Big(\frac{r_{i-1}}{r_j}\Big)^\gamma\\
&\le
c_2w(r_0)\Big(\frac{r_j}{r_0}\Big)^\gamma\sum_{i=1}^j\Big(\frac{r_j}{r_i}\Big)^{\beta_1}\Big(\frac{r_{i-1}}{r_j}\Big)^\gamma\\
&=c_2w(r_0)\Big(\frac{r_j}{r_0}\Big)^\gamma\sum_{i=1}^j\Big(\frac{r_{i-1}}{r_i}\Big)^\gamma\Big(\frac{r_j}{r_i}\Big)^{\beta_1-\gamma}\\
&\le \frac{c_2\sigma^{-\gamma}}{1-\sigma^{\beta_1-\gamma}} w(r_j)\le
c_3\sigma^{-\gamma} w(r_j),
\end{align*} where we used \eqref{polycon} in the first inequality, and used $\sigma\in(0,1/(4\kappa)]$ and $\beta_1>\gamma$ in the second inequality. Hence, \eqref{e:ho03} is proved with $c_0$
independent of $\sigma$.

Next, consider the function $v$ defined as follows
$$
v:= \Big[\log\Big(\frac{w(r_j)/2+l}{u_j+l}\Big)\Big]_+
\wedge k,\quad k,l>0.
$$
Using the fact $\sigma\in(0,1/(4\kappa)]$ again and applying Corollary \ref{C:PIandlog}, we get
$$\frac{1}{\mu(2B_{j+1})}\int_{2B_{j+1}} (v-\ol v_{2B_{j+1}})^2\,d\mu\le c_4\left(1+l^{-1}\frac{\phi(r_{j+1})}{\phi(r_j)} \T\, (u_j; x_0,
r_j)\right).$$This, along with \eqref{e:ho03} and \eqref{polycon}, yields that
$$\frac{1}{\mu(2B_{j+1})}\int_{2B_{j+1}} (v-\ol
v_{2B_{j+1}})^2\,d\mu\le c_5 \left(1+l^{-1}
\sigma^{\beta_1-\gamma}w(r_j)\right).$$Hence, choosing
$l=\eps w(r_j)$ with $\eps=\sigma^{\beta_1-\gamma}$, we get that
\begin{equation}\label{e:ho04}\frac{1}{\mu(2B_{j+1})}\int_{2B_{j+1}} (v-\ol
v_{2B_{j+1}})^2\,d\mu\le c_6.\end{equation}

To continue, denote in short $\tilde{B}=2B_{j+1}$. We obtain from \eqref{e:ho021} that
\begin{align*} k&=\frac{1}{\mu(\tilde{B}\cap\{u_j\ge
w(r_j)/2\})}\int_{\tilde{B}\cap\{u_j\ge w(r_j)/2\}}k\,d\mu\\
&=\frac{1}{\mu(\tilde{B}\cap\{u_j\ge
w(r_j)/2\})}\int_{\tilde{B}\cap\{v=0\}}k\,d\mu\\
&\le \frac{2}{\mu(\tilde{B})}\int_{\tilde{B}}(k-v)\,d\mu=2(k-\ol
v_{\tilde{B}}).
\end{align*}By integrating the preceding inequality over the set
$\tilde{B}\cap \{v=k\}$, we further obtain
\[
\frac{\mu(\tilde{B}\cap \{v=k\})}{\mu(\tilde{B})}k \le
\frac{2}{\mu(\tilde{B})} \int_{\tilde{B}\cap\{v=k\}}(k-\ol
v_{\tilde{B}})\,d\mu\le \frac{2}{\mu(\tilde{B})}
\int_{\tilde{B}}|v-\ol v_{\tilde{B}}|\,d\mu\le c_7,\]
where \eqref{e:ho04} and the Cauchy-Schwarz inequality are used in the last inequality. Let us take
$$k=\log\left(\frac{w(r_j)/2+\eps w(r_j)}{3\eps
w(r_j)}\right)=\log\left(\frac{
\frac{1}{2}+\eps}{3\eps}\right)\approx\log\Big(\frac{1}{\eps}\Big),$$
and so we have
\be\label{e:ho05}\frac{\mu(\tilde{B}\cap \{u_j\le 2\eps
w(r_j)\})}{\mu(\tilde{B})}\le \frac{c_7}{k}\le \frac{c_8}{-\log
\sigma}.\ee

(iv) We are now in a position to start a suitable iteration to deduce the
desired oscillation reduction. From here we make essential changes of the argument in the proof of \cite[Theorem 1.2]{CKP1}. Note that, in the setting of \cite{CKP1} the proof is heavily based on the fractional Poincar\'{e} inequalities (see \cite[(5.11)]{CKP1}), which however are not available in the present situation. To deal with this difficulty, we apply Lemma \ref{oppo} instead. In the following, we fix $j\ge0$. First, for any $i\ge0$, we define
$$\varrho_i=(1+2^{-i})r_{j+1},
\quad B^i=B_{\varrho_i}$$ and set
$$k_i=(1+2^{-i})\eps w(r_j),\quad w_i=(k_i-u_j)_+,\quad A_i=\frac{\mu(B^i\cap \{u_j\le k_i\})}{\mu(B^i)}.$$
Then, we have by $\VD$ and Lemma \ref{oppo} that
\begin{align*} A_{i+2}(k_{i+1}-k_{i+2})^2&=\frac{1}{\mu(B^{i+2})}\int_{B^{i+2}\cap \{u_j\le k_{i+2}\}} (k_{i+1}-k_{i+2})^2\,d\mu\\
&\le \frac{1}{\mu(B^{i+2})}\int_{B^{i+2}} w_{i+1}^2\,d\mu\\
&\le \frac{c_8}{(k_i-k_{i+1})^{2\nu}}\left( \frac{1}{\mu(B^{i+1})}\int_{B^{i+1}} w_{i}^2\,d\mu\right)^{1+\nu}\left(\frac{\varrho_{i+2}}{\varrho_{i+1}-\varrho_{i+2}}\right)^{\beta_2}\\
&\quad \times \left[1+\frac{1}{k_i-k_{i+1}}\left(\frac{\varrho_{i+2}}{\varrho_{i+1}-\varrho_{i+2}}\right)^{d_2+
\beta_2-\beta_1} \T\,(w_i; x_0, \varrho_{i+1})\right]\\
&\le \frac{c_9}{[(2^{-i}-2^{-i-1})\eps w(r_j)]^{2\nu}}\left[(\eps w(r_j))^2 A_i\right]^{1+\nu}\left(\frac{1}{2^{-i}-2^{-i-1}}\right)^{\beta_2}\\
&\quad \times \left[1+\frac{1}{(2^{-i}-2^{-i-1})\eps w(r_j)}\left(\frac{1}{2^{-i}-2^{-i-1}}\right)^{d_2+
\beta_2-\beta_1} \T\,(w_i; x_0, r_{j+1})\right]\\
&\le c_{10} [\eps w(r_j)]^2 A_i^{1+\nu} 2^{(1+2\nu+d_2+2\beta_2-\beta_1)i}\left(1+\frac{ 1}{\eps w(r_j)}\T\,(w_i; x_0, r_{j+1})\right),
\end{align*} where $\nu$ is the constant in $\FK(\phi)$, and in the third inequality we have used the facts that $u_j\ge 0$ on $B^{i+1}\subset B_j$ and
$$\int_{B^{i+1}}w_i^2\,d\mu\le k_i^2 \mu(B^{i+1}\cap \{w_i\ge0\})\le c'(\eps w(r_j))^2 \mu(B^i\cap\{u_j\le k_i\}).$$
Hence,
$$A_{i+2}\le c_{11} A_i^{1+\nu} 2^{(3+2\nu+d_2+2\beta_2-\beta_1)i}\left(1+\frac{ 1}{\eps w(r_j)}\T\,(w_i; x_0, r_{j+1})\right).$$ Note that, by the facts that $u_j\ge0$, $w_i\le 2\eps w(r_j)$ on $B_j$ and $|w_i|\le |u_j|+2\eps w(r_j)$ on $M$,
\begin{align*}\T\,(w_i; x_0, r_{j+1})&=
\phi(r_{j+1})\int_{B_j\setminus B_{j+1}} \frac{|w_i(x)|}{V(x_0,d(x_0,x))\phi(d(x_0,x))}\,\mu(dx)\\
&\quad +\frac{\phi(r_{j+1})}{\phi(r_j)}\T\,(w_i;x_0,r_j)\\
&\le c_{12}\bigg(2\eps w(r_j)\phi(r_{j+1})\int_{B_{j+1}^c}
\frac{\mu(dx)}{V(x_0,d(x_0,x))\phi(d(x_0,x))}\\
&\qquad\quad  + 2\eps w(r_j)\phi(r_{j+1})\int_{B_{j}^c}
\frac{\mu(dx)}{V(x_0,d(x_0,x))\phi(d(x_0,x))}\\
&\qquad\quad + \frac{\phi(r_{j+1})}{\phi(r_j)}\T\,(u_j;x_0,r_j)\bigg)\\
&\le c_{12}\left(\eps w(r_{j})+ \sigma^{\beta_1} \T\,(u_j; x_0, r_j)\right)\\
&\le c_{13}\left(1+ \frac{\sigma^{\beta_1-\gamma}}{\eps}\right)\eps w(r_{j})\le 2c_{13}\eps w(r_{j}),
\end{align*} where the second and the third inequalities follow from Lemma \ref{intelem} and \eqref{e:ho03},
respectively, and the last inequality is due to $\eps=\sigma^{\beta_1-\gamma}$.
Combining with all the conclusions above, we arrive at
$$A_{i+2}\le c_{14} A_i^{1+\nu} 2^{(3+2\nu+d_2+2\beta_2-\beta_1)i}.$$
Let $c^*=c_{14}^{-1/\nu}2^{-(3+2\nu+d_2+2\beta_2-\beta_1)/\nu^2}$ and choose the constant $\sigma\in\Big(0,\frac{1}{4} \wedge \exp^{-\big(\frac{c_8}{c^*}\big)}\Big).$ Then, by \eqref{e:ho05},  $$A_0\le c^*= c_{14}^{-1/\nu}2^{-(3+2\nu+d_2+2\beta_2-\beta_1)/\nu^2}.$$ According to Lemma \ref{L:it}, we can deduce that $\lim_{i\to\infty}A_i=0$. Therefore, $u_j\ge \eps w(r_j)$ on $B_{{j+1}}$, and then we can find that
$$ \essosc_{B_{j+1}}u=\esssup_{B_{j+1}}u_j-\essinf_{B_{j+1}}u_j\le (1-\eps)w(r_j)=(1-\eps)\sigma^{-\gamma} w(r_{j+1}),$$ where the inequality above follows from the fact that $\esssup_{B_{j+1}}u_j\le w(r_j)$, since under \eqref{e:ho01} $$\esssup_{B_{j+1}}u_j=\esssup_{B_{j+1}}u-\essinf_{B_j}u\le \esssup_{B_j}u-\essinf_{B_j}u\le w(r_j),$$ or under \eqref{e:ho02} $$\esssup_{B_{j+1}}u_j=w(r_j)-\essinf_{B_{j+1}}(u-\essinf_{B_j}u)\le w(r_j).$$
Taking finally $\gamma\in(0,\beta_1)$ small enough such that
$\sigma^\gamma\ge 1-\eps=1-\sigma^{\beta_1-\gamma},$ we obtain that
$$\essosc_{B_{j+1}} u\le w(r_{j+1})$$ holds, proving the induction step and finishing
the proof of \eqref{e:ho}. \qed
\end{proof}

We are now in a position to present the proof of the main theorem in this subsection.

\medskip

 \noindent {\bf Proof of Theorem \ref{P:gg2}.}
By Theorem \ref{prop-ehi},
it suffices to prove that
$$
 \PI(\phi)+\J_{\phi,\le}+ \CSJ(\phi)+\UJS\Longrightarrow {\rm EHR}+\E_\phi+\UJS.
$$
As
mentioned in the remark below Theorem \ref{P:gg2}, under $\VD$, $\RVD$ and \eqref{polycon},
$$ \PI(\phi)+\J_{\phi,\le}+ \CSJ(\phi)\Longrightarrow   \E_\phi.$$
On the other hand, according
to Proposition \ref{P:ho} (where $\RVD$ is used again),
$$ \PI(\phi)+\J_{\phi,\le}+ \CSJ(\phi)\Longrightarrow  {\rm EHR}.$$  The proof is complete.
\qed

\begin{remark}\rm By the proof above and Propositions \ref{phi-ephi-2} and \ref{pi-e}, under $\VD$, $\RVD$ and \eqref{polycon}, we have the following relations without using $\UJS$:
$$ \PI(\phi)+\J_{\phi,\le}+ \CSJ(\phi)\Longrightarrow  {\rm EHR} +\E_\phi\Longrightarrow \NDL(\phi)\Longrightarrow\PI(\phi)+\E_\phi.$$
\end{remark}

\medskip \noindent
{\bf Proof of Corollary \ref{C:1.25}.}   Assume $\PHI (\phi)$ and
$\J_{\phi, \geq }$ are satisfied. Then by Theorem
\ref{T:PHI}(4), $\J_\phi$ and $\CSJ (\phi)$ hold. So by
Theorem \ref{T:main}(4), $\HK (\phi)$ also holds.

Conversely, assume $\HK (\phi )$ holds. By Theorem \ref{T:main},
$\J_\phi$ and $\CSJ (\phi )$ are satisfied. Note that $\UJS$ holds
trivially because of $\J_\phi$. Thus by Theorem \ref{T:PHI}
again, $\PHI(\phi)$ holds. \qed

\section{Applications and Examples}\label{Sectin-ex}

The stability results in Theorem \ref{T:PHI}  allow us to obtain PHI for a large class of symmetric
jump processes using ``transferring method"; that is, by first establishing PHI for
a particular symmetric jump process with jumping kernel $J(x, y)$, we can then use
Theorem \ref{T:PHI} to obtain PHI for other symmetric jump processes whose jumping kernels
are comparable to $J(x, y)$. Examples are given in \cite[Section 6.1]{CKW1} on fractals
that support anomalous diffusions with two-sided heat kernel estimates.
The subordination of these diffusion processes enjoy $\HK (\phi)$ and hence $\PHI (\phi)$  by Corollary \ref{C:1.25}, and so can
be served as the base examples.
For readers' convenience,
we give one concrete example here on the Sierpinski gasket.

\begin{example}\label{E:5-1SG}{\bf (Subordinations of diffusions on fractal-like manifolds.)}\quad \rm
We first define the $2$-dimensional Sierpinski gasket and Brownian motion on it.
Let $a_1=(0,0), a_2=(1,0), a_3=(1/2,\sqrt 3/2)$, and set
$F_i(x)=(x-a_i)/2+a_i$ for $i=1,2,3$.
Then, there exists unique non-void compact set such that
$K=\cup_{i=1}^3F_i(K)$; we call $K$ the 2-dimensional
Sierpinski gasket. Let $V_0=\{a_1,a_2,a_3\}$ and set
\[V_k:=\bigcup_{1\le i_1,\cdots,i_k\le 3}F_{i_1}\circ\cdots\circ F_{i_k}
(V_0),~~~\hat K_{{\rm pre}}:=\bigcup_{k\ge 0}2^kV_k~~\mbox{and}~~\hat K
:=\bigcup_{k\ge 0}2^kK.\]
$\hat K_{{\rm pre}}$ is called a pre-gasket, and $\hat K$ is called an unbounded gasket.
Let $d(\cdot,\cdot)$ be the geodesic distance on $\hat K$ (which is comparable to the Euclidean metric) and let $\mu$ be the
(normalized) Hausdorff measure on $\hat K$ with respect to $d$.
Brownian motion has been constructed on $\hat K$ and it has been proved in \cite{BP} that
its heat kernel $\{q(t,x,y): t>0, x,y\in \hat K\}$ enjoys the following estimates for
all $t>0, x,y\in \hat K$:
\begin{align}
c_1t^{-{d_f}/{d_w}}\exp \Big(-c_2 \Big(\frac {d(x,y)^{d_w}}{t}\Big)^{\frac 1{d_w-1}}\Big)
\le q(t,x,y)
\le c_3t^{-{d_f}/{d_w}}\exp \Big(-c_4 \Big(\frac {d(x,y)^{d_w}}{t}\Big)^{\frac 1{d_w-1}}\Big),
\label{eq:aron}\end{align}
where $d_f=\log 3/\log 2$ is the Hausdorff dimension and $d_w=\log 5/\log 2$ is called the walk dimension.

We next consider a $2$-dimensional Riemannian manifold (a fractal-like manifold) $M$, whose global structure is
like that of the fractal. It can be constructed from $\hat K_{{\rm pre}}$ by changing each bond
to a cylinder and smoothing the connection to make it a manifold.
One can naturally construct a Brownian motion on the surfaces of cylinders. Using the stability of sub-Gaussian heat kernel estimates (see for instance \cite{BBK1} for details), one can show that any divergence operator ${\mathcal L}=\sum_{i,j=1}^2\frac{\partial}
{\partial x_i}(a_{ij}(x)\frac{\partial}{\partial x_j})$
in local coordinates on such manifolds that satisfies
the uniform elliptic condition obeys the following heat kernel estimates for
all $t>0, x,y\in M$:

\begin{align}
\frac{c_1}{V(x,\Psi^{-1}(t))}\exp \Big(-c_{2}\Big(\frac{\Psi(d(x,y))}t\Big)^{\gamma_1}\Big)
&\le  q(t,x,y)\label{frachk2}\\
&\le
\frac{c_3}{V(x,\Psi^{-1}(t))}\exp \Big(-c_{4}\Big(\frac{\Psi(d(x,y))}t\Big)^{\gamma_2}\Big),
\nonumber\end{align}
where $V(x,r)\asymp r^2\wedge r^{d_f}$ for all $x\in M$, $\Psi(s)=s^2\vee s^{d_w}$,
and $\gamma_1,\gamma_2>0$ are some constants.

We now subordinate the diffusion $\{Z_t\}$ whose heat kernel enjoys
\eqref{frachk2}. Let $\{\xi_t\}$ be a subordinator that is
independent of $\{Z_t\}$; namely, it is an increasing L\'evy process
on $\bR_+$. Let $\bar \phi$ be the Laplace exponent of the
subordinator, i.e. $$\bE[\exp (-\lambda \xi_t)]=\exp
(-t\bar\phi(\lambda)),\quad \lambda, t>0.$$
In this example, for simplicity we consider the case
$\bar\phi(t)=t^{\alpha_1/2}+t^{\alpha_2/2}$ for some $0<\alpha_1\le
\alpha_2<2$, in which case, $\{\xi_t\}$ is a sum of independent
$\alpha_1/2$- and $\alpha_2/2$-subordinators.
The process $\{X_t\}$ defined by $X_t=Z_{\xi_t}$ for any $t\ge0$ is called a subordinate process. Define
\begin{equation}\label{bieibfis}
\phi(r)=\frac 1{\bar\phi(1/\Psi(r))}.
\end{equation}
It is easy to see $\phi$ satisfies \eqref{polycon}.
As discussed in \cite[Section 6.1]{CKW1},
the heat kernel for $\{X_t\}$ satisfies $\HK(\phi)$ (hence $\PHI (\phi)$ as well) with
\begin{equation} \label{e:6.9}
\phi(r)=r^{ \alpha_2}1_{\{r\le 1\}}+r^{ \alpha_1 d_w/2}1_{\{r>1\}},
\end{equation}
which is (up to constant multiplicative) the same as
\eqref{bieibfis}. Note that $ \alpha_1 d_w/2>2$ when $\alpha_1$ is close to $2$.

It follows from our stability theorem for heat kernels, Theorem \ref{T:main}, that
 for any symmetric pure jump process on the above
mentioned space whose jumping kernel enjoys $\J_\phi$ with $\phi$  given by
\eqref{e:6.9},  its heat kernel enjoys the estimates
$\HK(\phi)$, hence $\PHI (\phi)$
holds for these processes.
\end{example}

\medskip

The following example is taken from \cite{CKi},
which shows that $\PHI$ holds for the trace of Brownian motion on Sierpinski gasket on one side of the big triangle, by using the characterization of
$\PHI$ from the main result of this paper, Theorem \ref{T:PHI}.

\begin{example}\label{E:5.2}{\bf (Trace of Brownian motion on
the Sierpinski gasket.)} \rm
\quad
Let $K$ be the two-dimensional
Sierpinski
gasket obtained from the unit triangle with
vertices $a_1=(0, 0)$, $a_2=(1, 0)$ and $a_3=(1/2, \sqrt{3}/2)$
as in Example \ref{E:5-1SG}. It is known that there is a Brownian motion $X$ on $K$.
Let $Y=\{Y_t; t\geq 0\}$ be
the trace process of $X$ on the line segment $I$ connecting $a_1$ and $a_2$.  That is, $Y_t=X_{\tau_t}$ for $t\geq 0$,
where $\tau_t=\inf\{s>0: A_s>t\}$ and $A_t$ is the
 positive continuous additive functional  of $X$ whose Revuz measure $\mu$ is the one-dimensional Lebesgue measure restricted to $I$.
 The trace process $Y$ is an $\mu$-symmetric pure jump
process on $I$  with jumping measure $J(dx, dy)=J(x, y) \,\mu (dx) \,\mu (dy)$ (cf. \cite{CF, FOT}).
For convenience, we identify
the line segment $I$ with  the unit interval $[0, 1]$.
Denote the Dirichlet form of $Y$ on $L^2(I; \mu)$ by $(\sE, \sF)$. Then the domain of the Dirichlet form $\sF$ is the same as that of the symmetric $\alpha$-stable process
on $I$, where $\alpha = \log(10/3)/\log 2 \in (1, 2)$,
and their corresponding $\sE_1$-energies (i.e. $\sE_1(u, u)$) are comparable.
Kigami \cite{Kig2}   computed the jumping kernel $J(x, y)$.
From which, it is easy to deduce that there
is a constant $c_1 >0$ so that
\begin{equation}\label{e:5.5}
J(x, y) \leq c_1 |x-y|^{-(1+\alpha)} \quad \hbox{for all } x, y\in [0, 1].
\end{equation}
However, $J(x, y)$   vanishes on some open subset of $[0, 1]^2$
and is  comparable to $|x-y|^{-(1+\alpha)}$
only on a proper subset $U$ of $[0, 1]^2$. Let $g_1 (x, y)=(x/2, y/2)$ and $g_2 (x, y) = ((1+x)/2, (1+y)/2)$.
Define
  \begin{equation}\label{e:5.6}
  D_1=\{(x, y) \in [0, 1]^2:  |x-y|\geq 1/2\}, \ \
    D_{k+1} = g_1 (D_k) \cup g_2 (D_k) \hbox{ for }
    k\geq 1 \mbox{ and}  \ \
  U= \cup_{k\geq 1} D_k.
  \end{equation}
Then there is a constant $c_2>0$ so that
\begin{equation}\label{e:5.7}
J(x, y) \geq c_2 |x-y|^{-(1+\alpha)} \quad \hbox{for all } (x, y) \in U.
\end{equation}
Define $\phi (r)=r^\alpha$. Then $\J_{\phi, \leq}$ holds for  the trace process $Y$ in view of \eqref{e:5.5}.

For $x\in I=[0, 1]$, define $U_x=\{y\in I: (x, y)\in U\}$.
Then from the definition of $U$ in \eqref{e:5.6}
(details are given in \cite{CKi}),
 it is easy to see that
there exist constants $c_3, c_4 \in (0, 1)$ such that
for every $x\in I$ and $0<r\leq 1$,
\begin{equation}\label{e:5.8}
\mu(U_x \cap A(x, c_3r, r)) \ge c_4\mu(A(x, c_3r, r)).
\end{equation}
Here for $0<r_1<r_2$,
$ A(x, r_1, r_2) := B(x, r_2) \setminus B(x, r_1)$.
Thus by \eqref{e:5.7}, for every  $B_r=B(x, r)$ with $x\in I$ and $r\in (0, 1]$ and every
$f\in \sF_{B_r}$,
\begin{align*}
\int_{B_r} (f-\bar f_{B_r})^2 \,d\mu
 &\leq C_1 \phi (r) \int_{B_r\times B_r } (f(x)-f(y))^2 \frac{1}{|x-y|^{1+\alpha}} \,dx \,dy \\
&\leq   C_2 \phi (r) \int_{B_r\times B_r } (f(x)-f(y))^2 J(x, y)\, dx \,dy,
\end{align*}
where the first inequality is due to the Poincar\'e inequality for symmetric $\alpha$-stable process on $I$, and the second inequality
is due to \eqref{e:5.7}, \eqref{e:5.8} and an argument similar to that of
\cite[Theorem 1.1]{BKS}. Hence the finite range version of $\PI (\phi)$ holds for $Y$.
By Remark \ref{R:csjrem}, $\SCSJ(\phi)$ and hence $\CSJ (\phi)$ automatically holds since $\alpha <2$.
It is easy to see that $\UJS$  holds as well in view of \eqref{e:5.5} and \eqref{e:5.7}.
Therefore by Theorem  \ref{T:PHI}
and Remark \ref{rem:boun}, the finite range version of $\PHI (\phi)$ holds for $Y$; that is,
$\PHI (\phi)$ holds
for non-negative caloric functions of $Y$ in any cylinder
 with $r\leq 1$.

Since the jumping kernel $J(x, y)$ vanishes on some open subsets
of $[0, 1]^2$,
it does not satisfy $\J_\phi$ condition. Hence $Y$ does not have two-sided heat kernel estimates
$\HK(\phi)$. However, by \eqref{e:5.5} and
\cite[the proof of Theorem 1.2 and Remark 4.4]{CK2},
we can show that the transition density function $p(t, x, y)$ of $Y$ with respect to the Lebesgue measure $\mu$ on $I$ has the following upper bound estimate: there is a constant $c_5>0$
so that
$$
p(t, x, y) \leq c_5 \left( t^{-1/d} \wedge \frac{t}{|x-y|^{1+\alpha}} \right)
\quad \hbox{for all } (t, x, y)\in (0, 1]\times I \times I.
$$
That is, $\UHK (\phi)$ holds for $Y$ over any bounded time interval.
Although the corresponding lower bound estimate fails for $Y$, a main result of
\cite{CKi}
asserts that the corresponding lower bound holds for $Y$ over the subset $U$ of $I^2$;
that is, there is a constant $c_6>0$ so that
$$
p(t, x, y) \geq c_6 \left( t^{-1/d} \wedge \frac{t}{|x-y|^{1+\alpha}} \right)
\quad \hbox{for all } (t, x, y)\in (0, 1]\times U.
$$
\end{example}

\medskip

In the remainder of this section, we give some more
details for Examples \ref{E:1.2}-\ref{E:1.3}, and
show that some conditions
in the equivalence statements of Theorem \ref{T:PHI} are necessary through
two more examples.

 \medskip

\noindent {\bf Example \ref{E:1.2}} (continued):
Here we provide some more details for this example.
Clearly $\J_{\phi,\le}$ and $\UJS$ hold.
Since $\PI (\phi)$ holds for rotationally symmetric stable process on $\bR^d$ with
$\phi (r)=r^\alpha$,
it follows from \cite[Example 3]{DK} or
\cite[Theorem 1.1]{BKS}
that $\PI (\phi)$ holds for the symmetric non-local Dirichlet form with jumping kernel $J(x, y)$.
By Remark \ref{R:csjrem}, $\SCSJ(\phi)$ and hence $\CSJ (\phi)$ holds.
So we have $\PHI (\phi)$ by Theorem \ref{T:PHI}.
However, since $\J_{\phi,\ge}$ does not hold,
$\HK(\phi)$ does not hold either in view of Theorem \ref{T:main}.

\bigskip

\noindent {\bf Example \ref{E:1.3}} (continued):
We now provide some more details for this example.
Clearly $\J_{\phi,\le}$ holds.
We show below in Proposition \ref{usj-cone}  that $\UJS$ holds
when the domain parameter $r_\theta$ in the increment condition of $\xi (x)$ is sufficiently
small.
On the other hand, by \cite[Theorem 1.1]{BKS},
there is a constant $C_0 >0$ so that for every ball
$B\subset \bR^d$, every $f\in L^2(B; dx)$ and every $\beta \in [\alpha_1, 2)$
$$
  \int_{B\times B} \frac{(f(x)-f(y))^2}{|x-y|^{d+\beta}}\,   \left( {\bf 1}_{\Gamma_\theta (x)}(y)+ {\bf 1}_{\Gamma_\theta (y)}(x)\right)  \,dx\, dy
\geq  C_0 \int_{B\times B} \frac{(f(x)-f(y))^2}{|x-y|^{d+\beta}}\, dx \,dy.
$$
Integrating in $\beta$ with respect to the probability measure $\nu$ over $[\alpha_1, \alpha_2]$  yields
\begin{align*}
 &\int_{B\times B} (f(x)-f(y))^2 J(x, y) \,dx \,dy \\
 &\geq C^{-1}  \int_{\alpha_1}^{\alpha_2}
 \int_{B\times B} \frac{(f(x)-f(y))^2}{|x-y|^{d+\beta}}\,   \left( {\bf 1}_{\Gamma_\theta (x)}(y)+ {\bf 1}_{\Gamma_\theta (y)}(x)\right)  \,dx\, dy\, \nu (d \beta) \\
 &\geq  C^{-1} C_0 \int_{B\times B} \frac{(f(x)-f(y))^2} {|x-y|^d \phi (|x-y|) }\, dx \,dy,
\end{align*}
where $C\geq 1$ is the constant in \eqref{e:1.5}. The other direction of the inequality
$$
\int_{B\times B} (f(x)-f(y))^2 J(x, y)\, dx \,dy \leq C  \int_{B\times B} \frac{(f(x)-f(y))^2} {|x-y|^d \phi (|x-y|) }\, dx \,dy
$$
follows directly from \eqref{e:1.5}. It has been established in \cite{CK2}  that the symmetric Markov process on $\bR^d$ with jumping kernel $\frac1{|x-y|^d \phi (|x-y|)}$
has two-sided heat kernel estimates $\HK (\phi)$ and so $\PI (\phi)$ holds for this process in view of  Corollary \ref{C:1.25} and Theorem \ref{T:PHI}.
Hence we deduce from the above inequalities  that $\PI (\phi)$ holds for the symmetric non-local Dirichlet form with jumping kernel
$J(x,y)$.
By Remark \ref{R:csjrem},
$\SCSJ(\phi)$ and hence $\CSJ (\phi)$ hold.
Therefore $\PHI(\phi)$ holds by Theorem \ref{T:PHI}.
Moreover, by Theorem \ref{T:PHI}, $\PHR (\phi)$, $\EHR$ as well as $\E_\phi$ hold.

\begin{prop}\label{usj-cone}
$\UJS$ holds for Example $\ref{E:1.3}$ when the parameter $r_\theta  > 0$ in \eqref{eq:xixisin} is sufficiently small.
\end{prop}
\begin{proof}
Case 1, $x\in \Xi(y)$: In this case, either $x\in \Gamma_\theta (y)$ or
$x\in \ol{B(y, 1)}$. In each case, it is easy to verify that for $0<r\leq d(x, y)/2$,
\[
|\{z\in B(x,r): z\in \Xi(y)\}|\ge c_{\theta,1}r^2
\]
with some $c_{\theta,1}>0$, hence $\UJS$ holds.

Case 2, $y\in \Xi (x)$ and $d(x,y)\le 1$: In this case, $y\in \ol{B(x, 1)}$,
and it is once again easy to verify  that for $0<r\leq d(x, y)/2$,
\begin{equation}\label{eq:GPQs}
|\{z\in B(x,r): y\in \Xi(z)\}|\ge c_{\theta,2}r^2
\end{equation}
 with some $c_{\theta,2}>0$, hence $\UJS$ holds.

Case 3, $y\in \Xi (x)$ and $d(x,y)> 1$: In this case, $y\in \Gamma_\theta (x)$.
 We further divide it into two cases. Recall that $c_\theta >0$ is the constant so that
$\xi (x+c_\theta)
 = \xi (x) + 2\pi$
for $x\in \bR$.

 i) When $r \in (0,  c_\theta]$:
Let $s\in (0,r\wedge r_\theta]$, and $y'$ be either
     $y-(s,0)$ or $y+(s,0)$.
   In  each case the angle $\angle yxy'$ is at most $\sin^{-1}s$. Hence  by  assumption
   \eqref{eq:xixisin} and translation,
 we have either $y\in \Gamma_\theta (x+(s,0))$ for all $s\le r\wedge r_\theta$ or
 $y\in \Gamma_\theta (x-(s,0))$ for all $s\le r\wedge r_\theta$.
 Suppose the former holds. (One can argue similarly
if the latter holds.) Then, because
$v(x)$ depends only on the first coordinate of $x$, we have either
$y\in \Gamma_\theta (x+(s,s'))$ for all $s'\le r\wedge r_\theta$ or $y\in \Gamma_\theta (x+(s,-s'))$ for all $s'\le r\wedge r_\theta$. Suppose the former holds. (Again, we can argue similarly if the latter holds.) Then we have
\begin{equation}\label{e:5.10}
y\in \Gamma_\theta (z)\quad \mbox{ for all }~z\in D:=\{x+(s,s'): s,s'\in (0,r\wedge r_\theta]\}.
\end{equation}
Hence $|D|\asymp (r\wedge r_\theta)^2$ so that \eqref{eq:GPQs} holds, which implies $\UJS$.

ii) When $r> c_\theta$:
Let
\[
H_j:=\{z=(z_1,z_2):
z_1=x_1+jc_\theta,\, d(z,y)\ge d(x,y)\}\cap \Gamma_\theta (x;y^c),\quad j\in \bZ,
\]
where $\Gamma_\theta (x;y^c)$ is the connected component of $\Gamma_\theta (x)\setminus \{x\}$
that does not contain $y$.
Depending on whether $\Gamma_\theta (x)$ contains $\{z=(z_1,z_2): z_1=x_1\}$ or not
and depending on the angle of the cone, it holds that either the length of $H_j\cap B(x,r)$ is
of order $([r/c_\theta]-|j|)$ or $|j|$ for either
$j=0,1,\cdots, [r/c_\theta]$ or $j=0,-1,\cdots, -[r/c_\theta]$. Among the four cases, let us discuss
the first case (the other cases can be discussed similarly).
Since $\Gamma_\theta (z)$ is a translation of $\Gamma_\theta (x)$
(because of the periodicity of $v(\cdot)$), it holds that
$y\in \Gamma_\theta (z)$ for all $z\in H_j$.
By the conclusion \eqref{e:5.10}
of Case 3 i) with $r=c_\theta$,
for any $j=0,1,\cdots, [r/c_\theta]$,
there exists
a rectangle $G_j$ with width $c_\theta \wedge r_\theta$ from Case 3 i) and
having $ H_j\cap B(x,r)$ as one of its  vertical side
such that $y\in \Gamma_\theta (z')$ for all $z'\in G_j$.
Hence, $|G_j| \geq (c_\theta \wedge r_\theta)([r/c_\theta]-j)$, and so
$|\cup_{j=1}^{[r/c_\theta]}G_j|\ge
\sum_{j=1}^{[r/c_\theta]} (c_\theta \wedge r_\theta)([r/c_\theta]-j)
\geq c_{\theta,3} r^2$ for some $c_{\theta,3}>0$.
Thus  \eqref{eq:GPQs} holds with $c_{\theta,2}>0$ being replaced by a different constant $c_{\theta,4}>0$, which implies $\UJS$.
  \qed
\end{proof}

\medskip

\begin{example}\label{E:82-3}{\bf ($\EHI$ and $\E_\phi$ do not imply
$\PHI(\phi)$.)}\quad \rm Let $M=\bR^2$ and $1<\alpha <2$. Consider a
symmetric L\'evy process $X=\{X_t\}$ on $\bR^2$ with the L\'evy
measure  of the form
$$ \nu(dx) =h(x)\,dx:= |x|^{-2-\alpha} f(x/|x|)\,dx , $$
where $f: \sS^{1} \to \bR_+$ is bounded and symmetric.
Then, it is proved
in \cite[Corollary 13]{BS} that $\EHI$ holds for non-negative harmonic functions.
In fact, \cite[Theorem 1]{BS} gives more general fact in $\bR^d$ setting with $d\ge 1$ that $\EHI$ holds for non-negative harmonic functions on $B(0,1)$ if
and only if there is a constant $C>0$ such that the following holds
$$
\int_{B(y,1/2)}|y-v|^{\alpha-d}h(v)\,dv\le C\int_{B(y,1/2)}h(v)\,dv,\quad |y|>1.$$

Let us take a particular choice of $f$ given as follows. For $i\in \bN$, let $\theta_i=(3\pi/8)4^{-i}$ and
$\theta_i'=(3\pi/8)2^{-i}$. Note that $\sum_{i=1}^\infty(\theta_i+\theta_i')=\pi/2$. Define
$$H=\Big\{e^{\theta\sqrt {-1}}, e^{-\theta\sqrt {-1}}, -e^{\theta\sqrt {-1}}, -e^{-\theta\sqrt {-1}}: \theta\in A\Big\},$$ where \[
A=[0,\theta_1)\cup\Big(\bigcup_{n=1}^\infty\Big[\sum_{i=1}^n(\theta_i+\theta_i'),\sum_{i=1}^n(\theta_i+\theta_i')+\theta_{n+1}\Big)\Big).
\] Set $f(x)={\bf1}_H(x)$. Then, writing $\xi_n=\sum_{i=1}^n(\theta_i+\theta_i')+\theta_{n+1}/2$ and $J(x,y)=h(x-y)$,
we see that $$J(e^{\xi_n\sqrt {-1}},0)=1.$$
Setting $H_n=\{e^{\theta\sqrt{-1}}:\theta\in [\xi_n-\theta_{n+1}/2,\xi_n+\theta_{n+1}/2)\}$, we have for large $n$,
\begin{align*}
V(e^{\xi_n\sqrt
{-1}},2^{-n-1})^{-1}\int_{B(e^{\xi_n\sqrt
{-1}},2^{-n-1})}J(z,0)\,dz&\le c(2^{n+1})^2
\int_{B(e^{\xi_n\sqrt{-1}},2^{-n-1})}{\bf1}_{H_n}(z/|z|)\,dz\\
&\le c'4^n2^{-n-1}4^{-n-1}\le c_02^{-n},
\end{align*}
so $\UJS$ does not hold. Therefore, by
Theorem \ref{T:PHI}, $\PHI(\phi)$
can not
hold in this case.

We will briefly explain why $\E_\phi$ holds with $\phi(r)=r^\alpha$. Note that the corresponding generator can be written as follows
\[
{\cal L}u(x)=\int_{\bR^2}\big(u(x+z)-u(x)-\nabla u(x)\cdot z{\bf1}_{\{|z|<1\}}\big)\,\nu(dz).
\]For $g\in C_b^2(\bR^2)$ with
$0\le g\le 1$, let $g_r(y)=g(y/r)$ for $r>0$. Then, by similar computations as in
\cite[Lemma 13.4.1]{KSV}, we have $|{\cal L}g_r|\le c_1r^{-\alpha}$, and so
$\bP^0(\tau_{B(0,r)}\le t)\le c_2t/r^\alpha$ for all $t,r>0$. This implies
\[
\bE^0[\tau_{B(0,r)}]\ge \frac{r^\alpha}{2c_2}\bP^0(\tau_{B(0,r)}\ge r^\alpha/(2c_2))\ge
\frac{r^\alpha}{4c_2},
\]
so that (since the process is the L\'evy process) $\E_{\phi,\ge}$ holds.
Next we have by the L\'{e}vy system formula,
\begin{align*}
\bP^0(\tau_{B(0, r)}\leq r^\alpha)
&\geq
\bP^0(X \mbox{ hits }  B(0, 6r)\setminus B(0, 3r) \hbox{ by time } r^\alpha) \\
&\ge \bP^0(X_{r^\alpha\wedge \tau_{B(0,r)}}\in B(0, 6r)\setminus B(0, 3r))\\
&= \bE^0\left[\int_0^{r^\alpha\wedge \tau_{B(0,r)}}\nu ((B(0, 6r)\setminus B(0, 3r))-X_s)\,ds\right]\\
&\ge  \nu (B(0, 5r)\setminus B(0, 4r)) \, \bE^0[r^\alpha\wedge \tau_{B(0,r)}] \\
&\geq   \frac{c_3}{r^\alpha} \bE^0[r^\alpha\wedge \tau_{B(0,r)}] \\
&\ge \frac{c_3}{r^{\alpha}}\cdot \frac{r^\alpha}{2c_2} \,
 \bP^0 \left(\tau_{B(0,r)}\ge {r^\alpha}/ (2c_2) \right)
\ge \frac{c_3}{4c_2}=:c_4.
\end{align*}
It follows that $\bP^0(\tau_{B(0,r)}> r^\alpha)\le 1-c_4$.
Iterating this as in the proof of
Proposition \ref{pi-e}(ii), we obtain $\E_{\phi,\le}$.
\end{example}

Though the following example is not
in the framework of our paper since the L\'evy measure is singular to the Lebesgue measure on $\bR^d$,
it illustrates that  in the context of symmetric jump processes,
$\EHI$ in general does not follow from $\EHR$ and $\E_\phi$ alone.

 \begin{example}\label{E:82-3-0}
{\bf ($\EHR$ and $\E_\phi$ do not imply $\EHI$
 nor
$\PHI(\phi)$.)}\rm \quad
Let $M=\bR^3$ and $0<\alpha <2$. Consider a symmetric process
$X_t=(X^{(1)}_t, X^{(2)}_t, X^{(3)}_t)$, where $X^{(i)}_t$,
 $i=1,2,3$, are independent
$1$-dimensional symmetric $\alpha$-stable processes.
In \cite{BC}, it is proved that $X=\{X_t; t\geq 0\}$ satisfies $\EHR$ and $\E_\phi$ with $\phi(r)=r^\alpha$, but
$\EHI$ and, consequently $\PHI (\phi)$, fails too.
In addition,
in this case one can easily see that $\UJS$ does not hold.  (We note that
in \cite{BC}, the authors discussed more general processes on $\bR^d$ that are expressed by
a system of stochastic differential equations $dX_t=A(X_{t-})\,dZ_t$, where
$Z^{(i)}_t$, $1\le i\le d$, are independent $1$-dimensional symmetric $\alpha$-stable processes
and $A$ is a matrix-valued function which is bounded, continuous and non-degenerate.) We also note that for this example, $\PI(\phi)$ and $\SCSJ(\phi)$ are satisfied by \cite[Example 4]{DK} and Remark \ref{R:csjrem}, respectively.
\end{example}

\medskip

\noindent{\bf Acknowledgements}. We thank Dr. M. Murugan for spotting
the inconsistent use of time direction in Definition \ref{PER}\,(iii) of
 an earlier version of this paper.
We also thank Prof. M. Kassmann and Dr. T. Schulze for pointing out an error in  Example \ref{E:1.3}  of
an earlier version of this paper. We are  grateful to
the referees for reading the paper carefully and helpful comments.

\vskip 0.2truein

\noindent {\bf Zhen-Qing Chen}

\smallskip \noindent
Department of Mathematics, University of Washington, Seattle,
WA 98195, USA

\noindent
E-mail: \texttt{zqchen@uw.edu}

\medskip

\noindent {\bf Takashi Kumagai}:

\smallskip \noindent
 Research Institute for Mathematical Sciences,
Kyoto University, Kyoto 606-8502, Japan

\noindent Email: \texttt{kumagai@kurims.kyoto-u.ac.jp}

\medskip

\noindent {\bf Jian Wang}:

\smallskip \noindent
College of Mathematics and Informatics \& Fujian Key Laboratory of Mathematical
Analysis and Applications (FJKLMAA) \& Center for Applied Mathematics of Fujian Province (FJNU), Fujian Normal University, 350007 Fuzhou,
P.R. China.

\noindent Email: \texttt{jianwang@fjnu.edu.cn}

 \end{document}